\documentclass[11pt,reqno]{amsart}
\usepackage[top=1.2in, bottom=1.2in, left=1.2in, right=1.2in]{geometry}               
\usepackage{hyperref,multirow}
\usepackage{graphicx}
\usepackage{enumitem}
\usepackage{amssymb}
\usepackage{epstopdf}
\usepackage{amsmath,amsfonts,amssymb,bm}
\usepackage{amsthm}
\usepackage{float}
\usepackage{algorithm}
\usepackage[]{algpseudocode}
\usepackage[english]{babel}
\usepackage[utf8]{inputenc}
\usepackage[normalem]{ulem}
\usepackage{cancel}
\usepackage{longtable,array}
\usepackage{cite}

\allowdisplaybreaks

\usepackage{titlesec} 
\titleformat{\section}{\vskip10pt\large\bfseries}{\thesection.}{0.5em}{\centering\vspace{5pt}}
\titleformat{\subsection}{\vskip10pt\normalsize\bfseries}{\thesubsection.}{0.5em}{}
\titleformat{\subsubsection}{\vskip10pt\normalsize\bfseries}{\thesubsection.}{0.5em}{}

\usepackage{lipsum,titletoc}
\titlecontents{section} 
[2.5em]                 
{\rmfamily}
{\contentslabel{2.3em}} 
{\hspace*{-2.3em}} {\titlerule*[1pc]{.}\contentspage}

\titlecontents{subsection} 
[4.8em]                 
{\rmfamily}             
{\contentslabel{2.3em}} 
{\hspace*{-2.3em}} {\titlerule*[1pc]{.}\contentspage}

\theoremstyle{definition}

\theoremstyle{plain}
\newtheorem{theorem}{Theorem}[section]
\newtheorem{lemma}{Lemma}[section]
\newtheorem{definition}{Definition}[section]

\newtheorem{remark}{Remark}[section]

\numberwithin{theorem}{section}
\numberwithin{proposition}{section}
\numberwithin{lemma}{section}
\numberwithin{equation}{section}
\usepackage{lipsum}
\usepackage{amsfonts}
\usepackage{graphicx}
\usepackage{epstopdf}
\ifpdf
  \DeclareGraphicsExtensions{.eps,.pdf,.png,.jpg}
\else
  \DeclareGraphicsExtensions{.eps}
\fi

\graphicspath{{./Figures}} 

\usepackage{subfigure}
\usepackage{amssymb}
\usepackage{algorithm}
\usepackage{xcolor}
\newcommand{\bs}[1]{\boldsymbol{#1}}
\newcommand{\uu}[1]  {{\bs{#1}} }

\newcommand{\dep}{{{\bs{d}}}}

\newcommand{\vel}{{\bs\dot{\dep}}} 

\newcommand{\grad}   { \uu{\nabla}    }

\def\d{\mathrm{d}}
\newcommand{\eps}   {\varepsilon}

\newcommand{\stress}{{\bs \sigma}}

\newcommand{\bv}{{\bs v}}

\newcommand{\bx}{{\bs x}}

\newcommand{\bu}{\uu{u}}

\newcommand{\bw}{{\bs w}}
\newcommand{\bn}{{\bs n}}

\def\R{{\mathbb{R}}}

\newcommand{\bV}{{\bs V}}

\newcommand{\bld}[1]{\boldsymbol{#1}}

\newcommand{\bp}{\bld{p}}

\newcommand{\bX}{{\bm X}}

\DeclareMathAlphabet\mathbfcal{OMS}{cmsy}{b}{n}

\floatname{algorithm}{Algorithm}

\algnewcommand{\algorithmicendif}{\textbf{end}}
\algblockdefx[IF]{If}{EndIf}[1]{\algorithmicif\ #1\ \algorithmicthen}{\algorithmicendif}
\algblockdefx[For]{for}{EndFor}[1]{\algorithmicif\ #1\ \algorithmicthen}{\algorithmicendif}

\newcommand{\jump}[1]{\bigl[\hspace{-0.03in}\bigl[#1\bigr]\hspace{-0.03in}\bigr]}

\def\eqd{\stackrel{\mathrm{def}}{=}}
\title[]{Convergence of a C\MakeLowercase{ut}FEM for fluid--structure interaction with a deforming interface
}
\author[M.A. Fernández]{Miguel A. Fernández}
\address{M.A. Fernández: Sorbonne Universit\'e \& CNRS, UMR 7598 LJLL -- Inria, France}
\email{miguel.fernandez@inria.fr}

\author[B. Li]{Buyang Li}
\address{B. Li: Department of Applied Mathematics,
The Hong Kong Polytechnic University, HK}
\email{buyang.li@polyu.edu.hk}

\author[M. Olshanskii]{Maxim Olshanskii}
\address{M. Olshanskii: Department of Mathematics,
University of Houston,
Houston, TX 77204, USA}
\email{maolshanskiy@uh.edu}
\thanks{
The work of B. Li was supported by the National Natural Science Foundation of China (Project No. 12525111) and a fellowship from the Research Grants Council of Hong Kong (Project No. PolyU RFS2324-5S03). 
The work of M. Olshanskii was supported in part by the U.S. National Science Foundation under grant DMS-2408978. 
This work was initiated during a research visit of B.~Li and M. Olshanskii at Inria Paris, partially funded by  Inria.
}

\begin{document}
\maketitle

\begin{abstract}
We present a rigorous error analysis for a semidiscrete immersed
CutFEM applied to fluid--structure interaction (FSI) problems involving an incompressible viscous fluid and an elastic solid. The method employs a Lagrange multiplier to enforce the fluid--solid coupling conditions while fully accounting for the effect of the structural deformation on the fluid domain. For sufficiently regular FSI solutions and finite elements of degree $k\geq3$, we prove convergence of order $k$ for the fluid velocity and solid displacement and convergence of order $k-1$ for the fluid pressure in the energy norms associated with the FSI problem. To the best of our knowledge, this is the first rigorous convergence analysis of an unfitted finite element method for a fully coupled FSI system with a deforming interface. 
\end{abstract}

\tableofcontents 

\section{Introduction}

Fluid--structure interaction (FSI) systems, in which an elastic body is coupled to an
incompressible viscous fluid across a moving interface, arise in many applications in
engineering and biomechanics, including aeroelasticity, parachute dynamics, blood flow
and cardiovascular devices, and biological locomotion; see, e.g.,
\cite{letallec-mouro-01,takizawa-tezduyar-12,Bazilevs2013,kamensky2015immersogeometric,mullert-et-al-10,tian-et-al-14,bergmann-et-al-22}.
Their numerical approximation is challenging for several intertwined reasons. The fluid
subproblem is posed in an Eulerian domain whose boundary is itself part of the unknown
solution, while the solid is naturally described in Lagrangian variables. The coupling
conditions impose both kinematic continuity and balance of stresses on the evolving
fluid--solid interface. In addition, the incompressibility constraint and the added-mass
effect may lead to severe stability restrictions or strong algebraic coupling, especially
when the structure is light or when the fluid and solid densities are comparable.

A classical approach for FSI problems with moderate interface displacement is to use
fitted moving meshes, most often within an arbitrary Lagrangian--Eulerian (ALE)
formulation of the fluid equations; see, e.g.,
\cite{donea1982arbitrary,nomura-hughes-92,stein2003,takizawa2012space,landajuela2017coupling}.
Fitted ALE methods have the advantage that interface conditions can be imposed directly
on a mesh boundary. However, mesh quality may deteriorate under large deformations and rotations. These limitations have motivated the
development of unfitted and immersed approaches, in which the fluid mesh is not required
to conform to the current solid position.

Among such methods, immersed boundary and immersed finite element methods (FEMs) 
\cite{peskin2002,Zhang2004,Mittal2005,boffi-cavallini-gastaldi-11,wang2013modified}
represent the solid by suitable forcing terms or by distributed coupling operators. Fictitious
domain and distributed Lagrange multiplier methods
\cite{GlowinskiDistributedLagrange,Baaijens,DEHART2003103,dossantos2008,cottet-et-al-08,astorino2009,richter-13,boffi2015finite,kamensky2015immersogeometric,BoffiGastaldi,BOILEVINKAYL2019744}
embed the structure into a larger computational domain and enforce the coupling by
constraints or penalties. These techniques are attractive for problems with large motion
because the mesh does not need to follow the interface. At the same time, standard
unfitted formulations may lose accuracy near the interface, since the discrete spaces do
not resolve the discontinuities, jumps, or boundary conditions induced by the moving
solid. CutFEM and related XFEM/Nitsche formulations address this
issue by integrating the equations on the physical, cut domain and by adding stabilization
terms, such as ghost penalties, that restore robustness with respect to arbitrarily small
cut elements; see, e.g.,
\cite{hansbo-hansbo-02,hansbo-hansbo-04,zilian-legay-08,MayerGerstenbergerWall2009,MayerPoppGerstenbergerWall2010,sawada-tezuka-11,burman-hansbo-12,burman-hansbo-14,burman-et-al-14,massing-et-al-14,burman-fernandez-14,alauzet-et-al-15,zonca2018unfitted,ager-et-al-19,GuzmOlsh}.

The numerical analysis of unfitted FEMs for moving-domain and
interface problems has progressed substantially in recent years. For prescribed evolving
domains or interfaces, CutFEM and Eulerian FEMs have been analyzed
for parabolic problems, transport-diffusion problems, and linearized Navier--Stokes type
equations; see, for instance,
\cite{zunino-13,lehrenfeld2019eulerian,NeilanOlsh}. These works
develop many of the tools that are indispensable in moving-geometry analysis, such as
transport identities, norm equivalences between nearby surfaces, and geometric
perturbation estimates. Nevertheless, FSI with a deforming interface presents an additional
difficulty: the interface motion is not prescribed. The interface position is determined by the numerical
solid displacement, and the error in displacement is coupled  with the error in the fluid
solution.

Indeed, in an unfitted FSI discretization, the error in the solid displacement alters the numerical fluid domain, the numerical interface, the surface normal, and the pullback of the interface traction. Consequently, the fluid error estimates depend on the error in the interface position and on suitable Lipschitz bounds for the numerical deformation. Conversely, the error in the solid displacement is driven by the numerical fluid traction and by the defect in the kinematic coupling condition, both of which depend on the fluid velocity, pressure, and Lagrange multiplier. Thus, the geometry error affects the fluid error, while the fluid error feeds back into the geometry error. Closing this loop when the standard energy estimate fails to provide suitable Lipschitz bounds for the displacement is the central analytical difficulty.

There are further complications specific to unfitted FSI. Since the moving interface may
cut the background mesh in an arbitrary way, trace inequalities, inverse inequalities, and
inf--sup constants must be uniform with respect to both the cut position and time. The
ghost-penalty stabilization must control finite element functions on an extension of the
physical fluid domain, while remaining consistent for smooth exact solutions. Interface
terms involve functions living on different surfaces, such as the exact interface, the interpolated interface, and the numerical
interface. The analysis must therefore compare bulk and surface
integrals over several moving domains and must track the dependence of these comparisons
on the displacement error. In addition, the Lagrange multiplier represents the pulled-back
fluid traction and is naturally estimated in negative Sobolev norms, which makes its
coupling with the solid equation particularly delicate.

Much of the rigorous error analysis available for FSI concerns linearized or
small-displacement models in which the deformation of the fluid domain is neglected and
the interface is fixed in time; see, e.g.,
\cite{letallec-mani-00,du-gunzburger-hou-lee-04,astorino-grandmont-10,fernandez-13,burman-fernandez-14,fernandez-mullaert-15,bukac-muha-16,GONZALEZ2018275,bukac-canic-21,burman2023convergence,annese2023splitting,burman2022fully,li-sun-xie-yu-24}.
For genuinely moving-interface FSI, rigorous convergence results are much scarcer.
With the exception of  \cite{von-wahl-richter-23}, which considers the unfitted approximation of a simplified parabolic/rigid-body coupled problem, all the existing analyses for moving interfaces are limited to fitted
mesh approximations, such as \cite{KLLP-2017,Li-Tang-2025,Gao-Li-Tang-2025} for solution-driven surface evolutions and \cite{MR2437776,buyang-fsi-ale} for FSI problems with moving interfaces. A convergence theory for an unfitted
FEM for a fully coupled FSI problem with a deforming interface has
therefore remained open.

The purpose of this paper is to provide such an analysis. We study a semidiscrete
immersed CutFEM for the FSI problem where an incompressible Navier--Stokes fluid is coupled to a linearly elastic
solid. The coupling is imposed by a Lagrange multiplier representing the fluid traction
pulled back to the reference solid interface. The fluid equations are integrated over the
current numerical fluid domain, while ghost-penalty stabilization provides robustness and
extension estimates on the unfitted background mesh. The method fully accounts for the
deformation of the fluid domain and does not rely on a small-displacement or fixed-domain
approximation.

The main contribution is a rigorous a priori error analysis of the proposed semidiscrete CutFEM for the FSI problem with a moving interface.
The proof is based on several new ingredients. First, we introduce an interpolated configuration ${\bs X}_H^*$ of the exact configuration ${\bs X}$ and compare the exact interface $\Gamma[{\bs X}]$, interpolated interface $\Gamma[{\bs X}_H^*]$, and numerical interface $\Gamma[{\bs X}_H]$
through families of intermediate interfaces. Transport formulas with respect to these
auxiliary surface parameters allow us to express domain and interface perturbation errors
in terms of the displacement error. This provides a systematic way to quantify how errors
in the interface position enter the fluid and multiplier equations.

Second, we establish uniform norm equivalences, trace estimates, inverse estimates, and
inf--sup estimates on the moving cut domains. These estimates are uniform with respect
to the position of the interface in the background mesh and with respect to time, as long
as the numerical deformation remains sufficiently close to the exact deformation. The
ghost-penalty terms play a key role in controlling finite element functions on the extended
fluid region needed for these estimates.

Third, we derive estimates for the projected kinematic mismatch 
${\bf P}_{\Gamma[{\bs X}_H]}{\bs e}_{\bs u}
-
\partial_t{\bs e}_{\bs d}\circ{\bs X}_H^{-1}$
on $\Gamma[{\bs X}_H]$, 
where ${\bf P}_{\Gamma[{\bs X}_H]}$ denotes the $L^2$ projection onto the discrete
interface multiplier space. This estimate is crucial for transferring control between the
fluid velocity error ${\bs e}_{\bs u}$ and the solid velocity error ${\bs e}_{\bs d}$ on the moving interface. It is also one of
the mechanisms by which the two-way dependence between fluid error and geometry error
is closed.

Fourth, we obtain stability estimates for the pressure and for the traction multiplier in
the relevant cut-domain and negative Sobolev norms. These estimates require a uniform
unfitted inf--sup condition for the pressure, duality estimates on moving finite element
interface spaces, and carefully constructed extensions from the interface into the solid
finite element space. To avoid loss of order in the pressure-related consistency terms, we
also use a Ritz projection adapted to the cut fluid domain.

Finally, since the validity of the above geometric estimates requires Lipschitz control of
the numerical deformation and of the velocity error, we use a continuation-in-time
argument. We first assume that suitable $W^{1,\infty}$ and $L^\infty$ bounds hold up to a
maximal time $t_*$. Under this assumption we prove energy estimates with constants
independent of $t_*$. For finite elements of degree $k\geq 3$, the resulting convergence
rates, combined with finite element inverse inequalities, imply the very bounds assumed
in the continuation hypothesis. Hence the estimates extend beyond $t_*$ unless
$t_*=T$. This closes the nonlinear argument and yields convergence on the whole time
interval on which the exact solution is smooth.
The resulting estimate, stated in Theorem~\ref{THM}, gives order $k$ convergence in the
natural energy norms for the fluid velocity and solid displacement, together with order $k-1$ convergence for the pressure and the interface traction multiplier. To the
best of our knowledge, this is the first rigorous convergence analysis of an unfitted FEM for a fully coupled FSI system with a genuinely deforming interface.

The rest of the paper is organized as follows. Section~\ref{sect:setting} states the FSI
model and the interface conditions. Section~\ref{section:numerical} introduces the
Lagrange multiplier formulation, the unfitted FEM, the assumptions, and
the main theorem. Section~\ref{sect:interp} defines the interpolated exact solution and
derives the error equation. Section~\ref{section:outline} outlines the
continuation argument. Section~\ref{sect:prelim} collects geometric, trace, and
transport estimates needed in the analysis. The consistency estimates are proved in
Sections~\ref{section:consistency} and~\ref{section:consistency-2}, while the stability
and error estimates are developed in Section~\ref{sect:error}. Numerical experiments are
reported in Section~\ref{sect:num}. Several auxiliary technical results are provided in the
appendices.

\section{Problem setting}\label{sect:setting}
We consider a fluid–structure interaction system in which the fluid is modeled by the incompressible Navier–Stokes equations in Eulerian form, while the immersed solid is described by the linear elasticity equations in Lagrangian form. 
Let $\Omega \subset \mathbb{R}^d$, with $d=2,3$, be a time-independent computational domain and $\widehat\Omega^{\rm s} \subset  \mathbb{R}^d$  the reference configuration of the immersed structure. The boundaries of $\Omega$ and $\widehat{\Omega}^{\rm s}$ are  respectively denoted by $\Gamma^{\rm f}$ and $\widehat\Gamma$. For a given final time $T>0$, the  motion of the solid body is described in terms of the map $\bs X: \widehat\Omega^{\rm s} \times [0,T] \rightarrow \mathbb{R}^d$, so that  ${\bs X}({\bs x},t)$ describes  the position at time $t \in [0,T]$ of the material point ${\bs x}\in \widehat\Omega^{\rm s}$. 
We introduce the solid displacement  $\dep: \widehat\Omega^{\rm s} \times [0,T]\rightarrow \mathbb{R}^d$ as 
$ \dep(\bs x,t) \eqd {\bs X} (\bs x,t) - \bs x $. In what follows, for functions $f$ defined in a space-time domain, we will write $f(t)\eqd f(\cdot,t)$.
 The current configuration of the solid is defined as the instantaneous  image of $\widehat\Omega^{\rm s}$ by the motion map $\bs X$, namely, 
 $$
 \Omega^{\rm{s}}(t) \eqd \Omega^{\rm{s}}[\bs X(t)] \eqd \bs X (\widehat \Omega^{\rm s},t)
 $$
 and it is assumed that $ \Omega^{\rm s}(t) \subsetneq \Omega$  for all $t\in [0,T]$.  The time-dependent fluid domain is hence defined as 
 \begin{equation} \label{eq:omegaft}
 \Omega^{\rm f}(t) \eqd \Omega^{\rm f}[\bs X(t)] \eqd \Omega \backslash \overline{\Omega^{\rm s}(t)} ,
 \end{equation}
 and its non-cylindrical space-time trajectory as 
$$
 \mathcal Q^{\rm f} \eqd \bigcup_{t \in [0,T] }\Omega^{\rm f}(t) \times \{ t \}.
$$
We also introduce the  fluid velocity $\bu: \mathcal Q^{\rm f} \rightarrow \mathbb{R}^d$ and pressure 
$p :\mathcal Q^{\rm f} \rightarrow \mathbb{R} $ fields. 
 The current configuration of the fluid-structure interface is  
$$
\Gamma(t) \eqd \Gamma[\bs X(t)] \eqd\partial \Omega^{\rm s}(t) =  {\bs X}(\widehat\Gamma,t) . 
$$
In particular, it holds $\partial \Omega^{\rm f}(t) = \Gamma(t) \cup \Gamma^{\rm f}$; see Figure \ref{fig:domain}. 

\begin{figure}[h!]
\centering\includegraphics[width=0.9\textwidth]{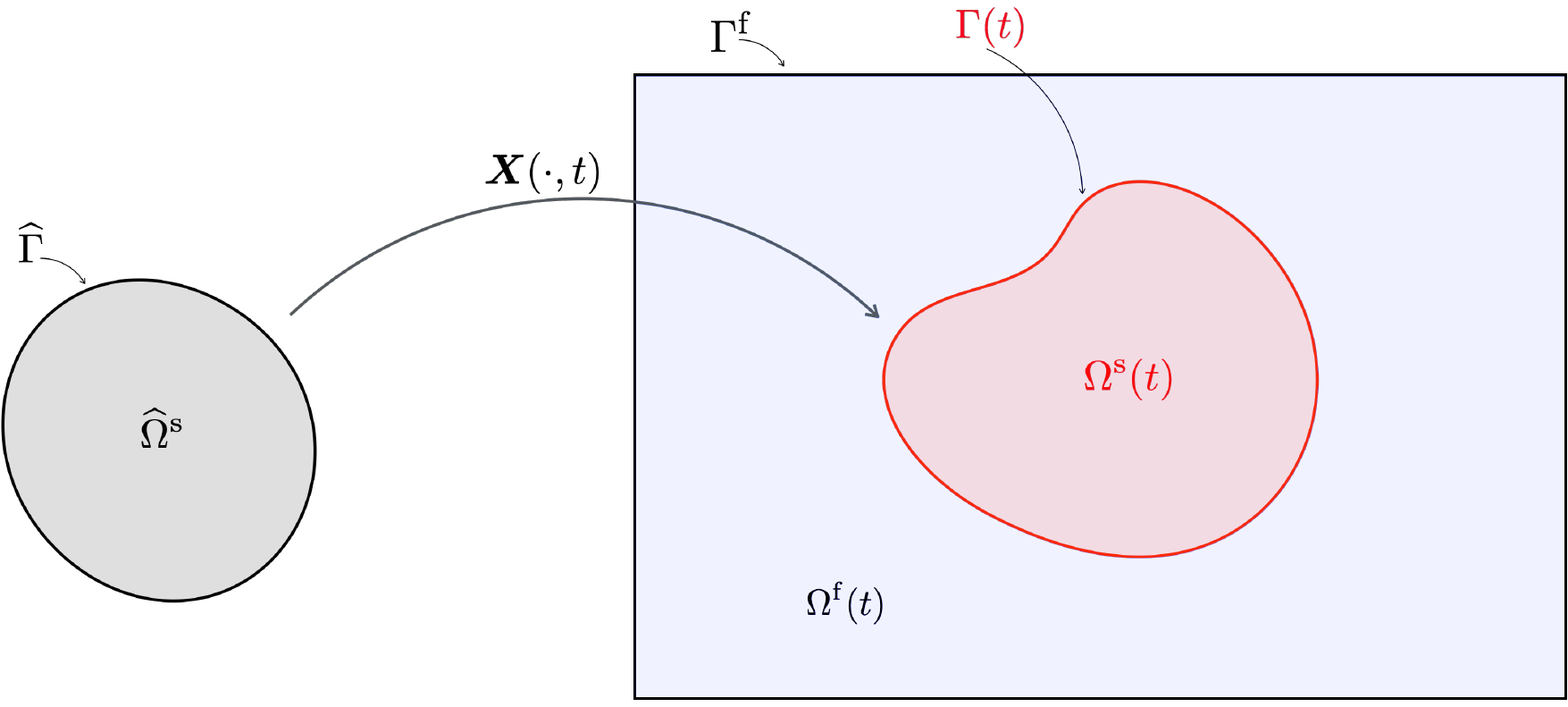}
\caption{Fluid, solid and the solid reference domains, with $\Omega = \Omega^{\rm f}(t) \cup \overline{\Omega^{\rm s}(t)}$. 
}\label{fig:domain}
\end{figure}

The dynamics of the fluid and the solid  are governed by the Navier--Stokes and elasticity equations, respectively, i.e., 
\begin{subequations}\label{eq:coupling-PDE}
\begin{align}
\rho^{\rm f}\left( \partial_t \bu   + \bu\cdot\grad\bu\right)
- \grad\cdot \stress (\bu,p)  
&= \bs 0 
\hspace{10pt}\mbox{in}\,\,\, \Omega^{\rm{f}}(t), 
\label{eq:fluid-u}\\
\grad\cdot \bu 
&= 0 
\hspace{10pt}\mbox{in}\,\,\, \Omega^{\rm{f}}(t), 
\label{eq:fluid-div}\\
\rho^{\rm s} \partial_{tt}{\bs d} - \grad\cdot{\bs\sigma}^{\rm s}({\bs d}) 
&= {\bs 0}
\hspace{10pt}\mbox{in}\,\,\,\widehat{\Omega}^{\rm s}, 
\label{eq:solid-d}
\end{align}
\end{subequations}
where $\rho^{\rm f}$ and $\rho^{\rm s}$ stand for the fluid and solid densities and 
$$
\stress(\bu, p) \eqd  2\mu^{\rm f}\bs D(\bu) -p {\bs I}_{d\times d},\quad 
{\bs\sigma}^{\rm s}({\bs d}) \eqd  2\mu^{\rm s}{\bs D}({\bs d}) + \lambda^{\rm s}(\grad\cdot {\bs d}) {\bs I}_{d\times d} ,
$$ 
respectively  denote the fluid and solid stress tensors. Here, the symbol 
$ \bs D (\bu) \eqd \frac12\big( \grad \bu + (\grad \bu)^{\top}\big)$  stands  for  the symmetric part of the gradient, ${\bs I}_{{d\times d}}$ denotes the  identity matrix in $\mathbb{R}^{d\times d}$,  
$\mu^{\rm f}$ the viscosity of the fluid and  $\mu^{\rm s} ,\lambda^{\rm s}$  the Lam\'e constants of the elastic structure. For simplicity, the analysis is presented after normalizing the physical coefficients to 
$$
\mu^{\rm f}=\mu^{\rm s}=\frac12,\quad \rho^{\rm f} = \rho^{\rm s} = \lambda^{\rm s}=1.
$$ 
This modification only simplifies the corresponding  bilinear forms and does not change the structure of the method. 

We consider the following standard interface conditions for the FSI system in \eqref{eq:coupling-PDE}, where $\bn_{\widehat\Gamma}$ denotes the unit normal vector on $\widehat\Gamma$ pointing to the solid: 
\begin{subequations}\label{eq:coupling-nl}
\begin{align}
{\bs u}&= \partial_t{\bs d} \circ {\bs X}^{-1} \hspace{5pt}  \mbox{on}\quad  \Gamma(t), \label{eq:coupling-nl-a}\\
\det(\grad{\bs X}) \,({\bs\sigma} (\bu,p) \circ{\bs X}) (\grad{\bs X})^{-\top}{\bs n}_{\widehat\Gamma} &= {\bs\sigma}^{\rm s}({\bs d}) {\bs n}_{\widehat\Gamma} \hspace{13.5pt}  \mbox{on}\quad \widehat\Gamma. \label{eq:coupling-nl-b}
\end{align}
\end{subequations}
Moreover, for the sake of simplicity we set 
\begin{equation}
{\bs u} = {\bs 0}\quad \mbox{on}\quad \Gamma^{\rm f} . \label{eq:coupling-nl-c}
\end{equation}
The FSI system \eqref{eq:coupling-PDE}--\eqref{eq:coupling-nl-c} is complemented with the following initial conditions: 
\begin{align}\label{initial-condition}
\begin{aligned}
\bu (0) = \bu_0 &\quad \mbox{in}\quad \Omega^{\rm f}(0),\\
{\bs d}(0) = {\bs d}_0, \quad \partial_t{\bs d}(0) = \vel_0& \quad \mbox{in}\quad  \widehat{\Omega}^{\rm s}.
\end{aligned}
\end{align}
The well-posedness of the complete FSI system \eqref{eq:coupling-PDE}–\eqref{initial-condition} has been analyzed in \cite{Coutand-Shkoller-2005}.

\section{Numerical method}\label{section:numerical}

In this section, we introduce the numerical approximation of the coupled FSI system \eqref{eq:coupling-PDE}--\eqref{initial-condition}. 
The proposed numerical method combines a Lagrange multiplier treatment of the interface coupling with a CutFEM-based fictitious domain formulation in the fluid. We conclude this section by stating the main result of this work. 

\subsection{Integral formulation with Lagrange multiplier}
\label{section:integral-form}

For regular solutions of the FSI problem, the interface condition in \eqref{eq:coupling-nl-b} can be equivalently written in the following integral form through the application of the Piola transformation (see, e.g., \cite{ciarlet-88}): 
\begin{align}\label{interface-weak}
\int_{\Gamma(t)} {\bs\sigma}({\bs u},p){\bs n}_{\Gamma(t)}\cdot{\bs\eta}
&= \int_{\widehat\Gamma} {\bs\sigma}^{\rm s}({\bs d}) {\bs n}_{\widehat\Gamma}\cdot({\bs\eta} \circ{\bs X})
\quad \forall {\bs\eta}\in L^2\big(\Gamma(t)\big)^d ,
\end{align}
where ${\bs n}_{\Gamma(t)}$ denotes the unit normal vector on $\Gamma(t)$ pointing to the solid. 
Let $\widehat{\bs\sigma} : \widehat \Gamma \times [0,T]\rightarrow \mathbb{R}^d $ denote the trace of the fluid traction on the fluid-structure interface, i.e., the normal component of the Cauchy stress pulled back to the reference solid configuration:
\begin{align}\label{def-hat-sigma}
\widehat{\bs\sigma}(t)
\eqd \big[{\bs\sigma}\big({\bs u}(t),p(t)\big)\,{\bs n}_{\Gamma(t)}\big]\circ \bs X(t)
\quad \text{on}\quad  \widehat\Gamma 
\end{align}
for $t\in[0,T]$. 
By using the interface condition in \eqref{eq:coupling-nl-a} and the integral formulation \eqref{interface-weak} of the interface force balance \eqref{eq:coupling-nl-b}, we see that any sufficiently regular solution to the FSI system \eqref{eq:coupling-PDE}–\eqref{eq:coupling-nl} satisfies the following relation for every $t\in(0,T]$: 
\begin{equation}\label{eq:weak-fsi}
\begin{aligned}
&\int_{\Omega^{\rm f}(t)} \partial_t \bu \cdot \bv
+ \int_{\Omega^{\rm f}(t) } (\bu\cdot\bs \nabla)\bu \cdot \bv
+ \int_{\Omega^{\rm f}(t)} \bs D(\bu):\bs D(\bv) \\
&\quad - \int_{\Omega^{\rm f}(t)} p\bs \nabla\cdot \bv
+ \int_{\Omega^{\rm f}(t)} q\bs \nabla\cdot  \bu
+ \int_{\widehat{\Omega}^{\rm s}} \partial_{tt}{\bs d} \cdot \bw
+ \int_{\widehat{\Omega}^{\rm s}} [{\bs D}({\bs d}) : {\bs D}({\bs w}) + (\grad\cdot{\bs d})(\grad\cdot{\bs w}) ] \\
&\quad - \int_{\Gamma(t)} \widehat{\bs\sigma}\circ \bs X^{-1}
\cdot \big(\bv - \bw\circ \bs X^{-1}\big)
+ \int_{\Gamma(t)} \bs\lambda\circ \bs X^{-1}
\cdot \big(\bu - \partial_t{\bs d}\circ \bs X^{-1}\big) = 0 
\end{aligned}
\end{equation}
for all $(\bv,q,\bw,\bs\lambda)\in H^1(\Omega^{\rm f}(t))^d
\times L^2(\Omega^{\rm f}(t))
\times H^1(\widehat{\Omega}^{\rm s})^d
\times L^2(\widehat\Gamma)^d$ with  ${\bs v}\vert_{\Gamma^{\rm f}} = {\bs 0}$. 
One can interpret  $\widehat{\bs\sigma}$ as a Lagrange multiplier enforcing the kinematic coupling condition
${\bs u} = \partial_t{\bs d}\circ {\bs X}^{-1} $ on $\Gamma(t)$.

\begin{remark}\upshape
In the weak formulation~\eqref{eq:weak-fsi}, the Lagrange multiplier $\widehat{\bs \sigma}$ is defined on the reference interface $\widehat{\Gamma}$, whereas the integral enforcing the kinematic interface condition
$\bu = \partial_t \bs d \circ \bs X^{-1}$
is evaluated on the current interface $\Gamma(t)$.
This mixed reference/current formulation is particularly  convenient for the CutFEM spatial semi-discrete approximation in the fluid domain proposed in Section~\ref{section:FEM} and  facilitates the a priori error analysis of the resulting method. Alternatively, one may enforce the coupling entirely on the reference configuration, viz.,
$$
- \int_{\widehat{\Gamma}} \widehat{\bs\sigma}\cdot \big(\bv \circ \bs X - \bw\big)\,\mathrm{d}\widehat{s}
+ \int_{\widehat{\Gamma}} \bs\lambda \cdot \big(\bu \circ \bs X - \partial_t{\bs d}\big)\,\mathrm{d}\widehat{s},
$$
as in~\cite{BoffiGastaldi}.
For that approach, however, a complete error analysis is still not available.
\end{remark}

\subsection{Mesh and finite element spaces}\label{section:mesh}

Let $\widehat{\mathcal T}_H^{\rm s}$ be the set of simplices (triangles in 2D and tetrahedra in 3D) forming a shape-regular and quasi-uniform triangulation of $\widehat\Omega^{\rm s}_H$. Thus 
$$
\widehat\Omega^{\rm s}_H \eqd \bigcup_{\widehat K\in \widehat{\mathcal T}_H^{\rm s}} \widehat K, \quad  
\widehat\Gamma_H \eqd \partial \widehat\Omega^{\rm s}_H
$$ 
approximate the reference domain $\widehat\Omega^{\rm s}$ and the reference interface $\widehat\Gamma$, respectively. We allow $\widehat{\mathcal T}_H^{\rm s}$ to contain  curved simplices that interpolate the interface $\widehat\Gamma $ using polynomial mappings of degree $k\ge 2$ from a reference flat triangle/tetrahedron, 
the setup used to define isoparametric finite elements of degree $k$ (see, e.g.,   \cite{Lenoir-1986}). 
In addition, we denote by ${\mathcal T}_h$ a regular triangulation of the ambient fluid computational domain $\Omega$ (see Figure~\ref{fig:meshes}), with
$
\Omega \eqd \bigcup_{K\in {\mathcal T}_h} K  .
$
\begin{figure}[h!]
        \centering    \includegraphics[width=0.6\textwidth]{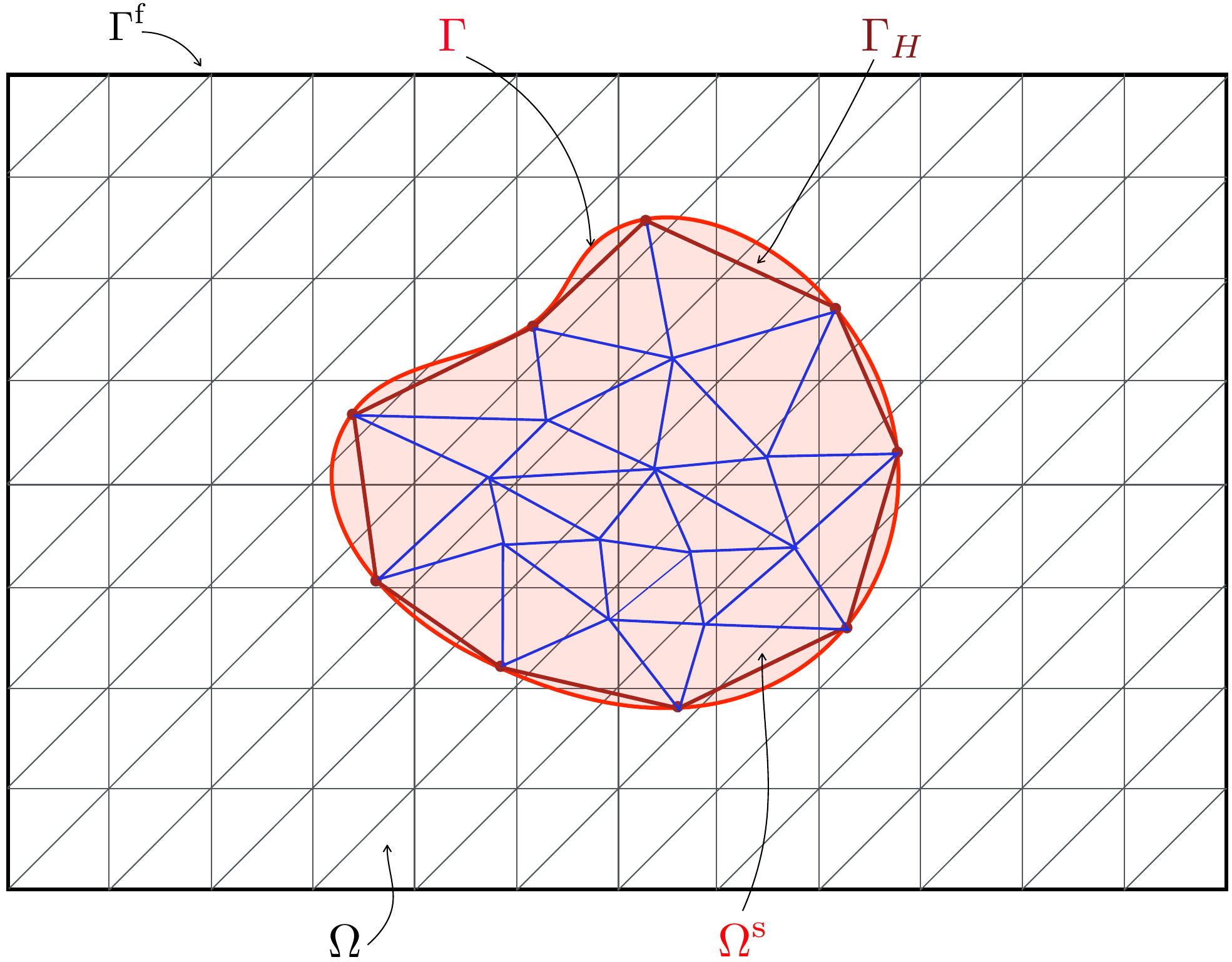}
         \caption{The computational domain with the background mesh, the embedded solid domain, its triangulation,  and the fluid-solid interfaces.
         }
      \label{fig:meshes}
\end{figure}

We consider a finite element method  with isoparametric fitted mesh for the reference solid domain and unfitted mesh for the fluid domain, by introducing the following finite element spaces for the fluid velocity and pressure, the solid displacement, and the pull-back normal stress (pulled back to the reference interface $\widehat\Gamma$):
$$
\begin{aligned}
 {\bs V}_h^{\rm f} \eqd  & \big\{  {\bs v}_h \in H^1_0(\Omega)^d: \bv_h\vert_K \in \mathbb{P}_k(K)^d\quad \forall K\in {\mathcal T}_h \big\} ,\\ 
 Q_h^{\rm f} \eqd  & \big\{ q_h \in H^1(\Omega):  q_h\vert_K \in \mathbb{P}_{k-1}(K)\quad \forall K\in {\mathcal T}_h \big\},\\ 
 \widehat{\bs V}_H^{\rm s} \eqd  & \big\{  \bs w_H \in H^1(\widehat\Omega^{\rm s}_H)^d: \bw_H\vert_{\widehat K} \in \mathbb{P}_k(\widehat K)^d\quad \forall \widehat K\in \widehat{\mathcal T}_H^{\rm s} \big\},\\
 \widehat{\bs \Lambda}_H \eqd  & \big\{  \bs w_H\vert_{\widehat\Gamma_H}: \bw_H \in  \widehat{\bs V}_H^{\rm s}\big\} , 
\end{aligned}
$$
where $\mathbb{P}_k(K)$ stands for the space of polynomials of degree up to $k$ on the simplex $K$. The notation $\bw_H\vert_{\widehat K} \in \mathbb{P}_k(\widehat K)^d$ in defining the solid displacement finite element space $\widehat{\bs V}_H^{\rm s}$ simply means that the pull-back of $\bw_H|_{\widehat K}$ onto a reference flat triangle/tetrahedron is a polynomial of degree up to $k$. 

In what follows, 
we denote by $(\bu_h,p_h, \dep_H, \widehat{\bs \sigma} _H)$ the  
spatial semi-discrete 
finite element approximation of 
$(\bu,p, {\bs d}, \widehat{\bs \sigma} )$, with  
$\big(\bu_h(t),p_h(t), {\bs d}_H(t), \widehat{\bs\sigma} _H(t)\big)\in {\bs V}_h^{\rm f} \times  Q_h^{\rm f} \times \widehat{\bs V}_H^{\rm s} \times \widehat{\bs \Lambda}_H$
for $t \in [0,T]$.
Note that $(\bu_h(t),p_h(t))$ 
is  defined into the whole fluid computational  domain $\Omega$.
Let ${\bs I}$ denote the identity map on $\R^d$, i.e., ${\bs I}(\bs x)= \bs x$ for $\bs x\in\R^d$.
We can then introduce the approximated solid motion map as $\bs X_H \eqd  {\bs I} + {\bs d}_H $. Assuming that for each $t\in [0,T]$ the map $\bs X_H(t)$ is invertible and orientation preserving, we can define the numerical domains: 
\begin{align}
\Omega^{\rm s}[\bs X_H(t)] 
\eqd  \bs X_H\big(\widehat{\Omega}^{\rm s}_H,t\big) ,\quad 
\Omega^{\rm f}[\bs X_H(t)] 
\eqd \Omega \backslash \overline{\Omega^{\rm s}[\bs X_H(t)]}
\end{align}
and the numerical interface 
\begin{align}
\Gamma[\bs X_H(t)] \eqd  \bs X_H\big(\widehat \Gamma_H,t\big) =   \partial \Omega^{\rm s}[\bs X_H(t)] . 
\end{align}

\subsection{Unfitted finite element approximation}\label{section:FEM}

We consider the following  spatial semi-discrete unfitted finite-element method: for $t\in(0,T]$,  find $\big(\bu_h(t),p_h(t), {\bs d}_H(t), \widehat{\bs\sigma} _H(t)\big)\in {\bs V}_h^{\rm f} \times  Q_h^{\rm f} \times \widehat{\bs V}_H^{\rm s} \times \widehat{\bs \Lambda}_H,$ 
satisfying the following relation: 
\begin{equation}
\label{eq:discrete-semi-fsi}
\begin{aligned}
&\int_{\Omega^{\rm f}[\bs X_H]} \partial_t \bs u_h \cdot {\bs v}_h  + \int_{\Omega^{\rm f}[\bs X_H]}   (\bs u_h \cdot \grad) \bu_h \cdot {\bs v}_h 
+   \int_{\Omega^{\rm f}[\bs X_H]} \bs D(\bu_h): \bs D(\bv_h)  \\
& - \int_{\Omega^{\rm f}[\bs X_H]}  p_h \grad\cdot \bv_h + \int_{\Omega^{\rm f}[\bs X_H]}  q_h \grad\cdot \bu_h \\
& +  \int_{\widehat{\Omega}^{\rm s}_H} \partial_{tt}{\bs d}_H \cdot \bw_H + \int_{\widehat{\Omega}^{\rm s}_H} [{\bs D}({\bs d}_H) : {\bs D}({\bs w}_H) + (\grad\cdot{\bs d}_H)(\grad\cdot{\bs w}_H) ] \\
& - \int_{\Gamma[\bs X_H]} \widehat{\bs \sigma} _H \circ {\bs X}_H^{-1} \cdot ( \bv_h - \bw_H \circ {\bs X}_H^{-1}) 
+ \int_{\Gamma[\bs X_H]} \bs \lambda_H \circ {\bs X}_H^{-1} \cdot ( \bu_h - (\partial_t{\bs d}_H) \circ {\bs X}_H^{-1})  \\
& + \epsilon_h g_h^{\rm u} (\partial_t\bu_h,\bv_h ) + g_h^{\rm u} (\bu_h,\bv_h ) + g_h^{\rm p} (p_h,q_h ) 
= 0 
\end{aligned}
\end{equation}
for  all $(\bv_h,q_h, \bw_H, \bs \lambda_H)\in {\bs V}_h^{\rm f} \times  Q_h^{\rm f} \times \widehat{\bs V}_H^{\rm s} \times \widehat{\bs \Lambda}_H$, where $g_h^{\rm u}$ and $g_h^{\rm p}$ are numerical stabilization-extension  forms consisting of global ghost-penalty terms acting on ${\bs V}_h^{\rm f}\times {\bs V}_h^{\rm f}$ and $Q_h^{\rm f}\times Q_h^{\rm f}$ (see, e.g., \cite{burman-10, Burman_Hansbo_Larson_Zahedi_2025,maxim-and-whal-25}):
\begin{equation}
\begin{split}
g_h^{\rm u}(\bu,\bv) 
&\eqd \sum_{F\in {\mathcal F}_h} \sum_{j=1}^{k} h^{2j-1}\int_F \jump{\partial^j_{\bn_F} \bu} \jump{\partial^j_{\bn_F} \bv} ,\\
g_h^{\rm p}(p,q) 
&\eqd \sum_{F\in {\mathcal F}_h} \sum_{j=1}^{k-1} h^{2j+1} \int_F  \jump{\partial^j_{\bn_F} p}\jump{\partial^j_{\bn_F}q} ,
\end{split}
\label{eqn:GPForms}
\end{equation}
with $\jump{\partial^j_{\bn_F} \bv}$ denoting the jump of the $j$th-order normal derivative on the face $F$ in ${\mathcal F}_h $, the collection of all internal faces from ${\mathcal T}_h$, and $\epsilon_h\in(0,h]$ is a small stabilization parameter. The stabilization terms vanish if ${\bs u}$ and $p$ are smooth functions rather than functions from the finite element spaces. 

We choose the discrete initial solid displacement $\dep_H(0)$ to be the interpolation of the exact initial solid displacement $\dep_0$ (see Section~\ref{section:initial-error}). Thus the numerical initial interface $\Gamma[{\bs X}_H(0)]$, which defines the numerical initial fluid domain $\Omega^{\rm f}[{\bs X}_H(0)]$, interpolates the exact initial interface $\Gamma(0)$. 
We assume that the (divergence free) initial velocity ${\bs u}_0$ can be smoothly extended to $\Omega$ and choose ${\bs u}_h(0)={\bs u}_{0,h}$, where $({\bs u}_{0,h},p_{0,h}) \in {\bs V}_h^{\rm f} \times Q_h^{\rm f}$ is the Ritz projection of $({\bs u}_0,0)$ onto the finite element space ${\bs V}_h^{\rm f} \times Q_h^{\rm f}$, defined by 
\begin{equation}
\begin{aligned}\label{def-Ritz-0}
& \int_{\Omega^{\rm f}[\bs X_H]} \bs D({\bs u}_{0,h}-{\bs u}_0): \bs D(\bv_h)  - \int_{\Omega^{\rm f}[\bs X_H]} p_{0,h} \grad\cdot\bv_h + \int_{\Omega^{\rm f}[\bs X_H]}  q_h \grad\cdot {\bs u}_{0,h}\\
&\quad + g_h^{\rm u} ({\bs u}_{0,h},\bv_h ) 
+ g_h^{\rm p} (p_{0,h},q_h ) = 0 
\end{aligned}
\end{equation}
for all  $({\bs v}_h,q_h)\in {\bs V}_h^{\rm f} \times Q_h^{\rm f}$; see Section \ref{section:consistency-2} for its definition and approximation properties. The reason for choosing the initial fluid velocity via Ritz projection is explained in Section \ref{section:initial-error}, and particularly  to have the properties \eqref{ghost-p0h-property}--\eqref{ghost-ep-0} required in the error analysis. 

By the definition of the solid-displacement finite element space, the two finite element functions $\partial_t{\bs d}_H(0)|_{\widehat\Gamma_H}$ and $[\partial_t{\bs d}_H(0)] \circ {\bs X}_H(0)^{-1}|_{\Gamma[{\bs X}_H(0)]}$ have the same pull-back onto the reference flat triangle/tetrahedron. Therefore, on the reference interface $\widehat\Gamma_H$, we can define $\partial_t{\bs d}_H(0)$ via the relation 
\begin{equation}\label{eq:init-nodal-boundary}
[\partial_t{\bs d}_H(0)] \circ {\bs X}_H(0)^{-1}|_{\Gamma[{\bs X}_H(0)]} = {\bf P}_{\Gamma[{\bs X}_H(0)]}{\bs u}_{0,h} ,
\end{equation}
where ${\bf P}_{\Gamma[{\bs X}_H(0)]}$ denotes the $L^2$ projection onto the finite element space ${\bs\Lambda}_H(\Gamma[{\bs X}_H(0)])$ on the interface $\Gamma[{\bs X}_H(0)]$; see its definition in \eqref{FE-space-GammaXH}. At the interior solid nodes, we simply set  $\partial_t{\bs d}_H(0)$ to the interior nodal values of $\vel_0$. 

It should be noted  that the initial solid velocity on the interface $\Gamma[{\bs X}_H(0)]$ is chosen to be the $L^2$ projection of ${\bs u}_{0,h}$ in order to ensure the following compatibility condition at $t=0$:
\begin{align}\label{initial-constraint}
\int_{\Gamma[\bs X_H]} {\bs\lambda}_H \circ {\bs X}_H^{-1} \cdot \big( \bu_h - (\partial_t{\bs d}_H) \circ {\bs X}_H^{-1}\big) = 0
\quad\forall\, {\bs\lambda}_H \in \widehat{\bs\Lambda}_H.
\end{align}
Since \eqref{eq:discrete-semi-fsi} requires this condition to hold for $t \in (0,T]$, the initial values must be compatible with it, which guarantees local well-posedness of \eqref{eq:discrete-semi-fsi}, as shown in Appendix~\ref{section:wellposed}.

\begin{remark}\upshape\upshape 
The energy balance of the scheme can be improved by ``symmetrizing'' the inertia term discretization, e.g., by replacing $\int_{\Omega^{\rm f}[\bs X_H]}   (\bs u_h \cdot \grad) \bu_h \cdot {\bs v}_h $ by the following term in the left-hand side of \eqref{eq:discrete-semi-fsi}:  
$$
\frac12 \int_{\Omega^{\rm f}[\bs X_H]}
\big[({\bs u}_h\cdot\grad) {\bs u}_h \cdot {\bs v}_h 
- ({\bs u}_h\cdot\grad) {\bs v}_h \cdot {\bs u}_h\big] 
+ \frac12 \int_{\Gamma[\bs X_H]} \big(\partial_t{\bs d}_H\cdot{\bs n}_{\Gamma[\bs X_H]}\big) {\bs u}_h \cdot {\bs v}_h . 
$$
However, this is not required by the error analysis below provided the mesh sizes $H$ and $h$ are sufficiently small. 
\end{remark}

\subsection{The main theoretical result}

In this paper, we provide an 
a priori error estimate for the unfitted finite element method
\eqref{eq:discrete-semi-fsi} applied to the fully coupled FSI problem \eqref{eq:coupling-PDE}–\eqref{eq:coupling-nl}. The error bound (see Theorem~\ref{THM} below) controls the
fluid velocity, the solid displacement and velocity, the pressure, and the
interface traction errors in their natural norms, on any time interval over which the
exact solution remains smooth.
 This is proved under the following assumptions on the domain, regularity of solution and mesh sizes.
\medskip

\paragraph{\bf Basic assumptions:}
\begin{enumerate}
  \item[(C1)] The solid reference domain $\widehat{\Omega}^{\mathrm{s}}$ is bounded with a smooth boundary. The computational domain $\Omega$ is either a bounded smooth domain or a convex polygonal (polyhedral) domain such that the $H^2\times H^1$ regularity of the following Stokes problem holds:
  \begin{align}\label{def-v-q-0}
  \left\{
  \begin{aligned}
  -\grad\cdot({\bs D}(\bv) + q{\bs I}) = {\bs f} &\quad \mbox{in}\quad \Omega^{\rm f}(t), \\
  \grad\cdot{\bs v}=0 &\quad\mbox{in}\quad \Omega^{\rm f}(t), \\
  ({\bs D}(\bv) + q{\bs I}){\bs n} = \bs 0 &\quad\mbox{on}\quad \Gamma(t), \\
  {\bs v}= \bs 0 &\quad \mbox{on}\quad \Gamma^{\rm f} .
  \end{aligned}
  \right.
  \end{align}
  Namely, whenever ${\bs f} \in L^2(\Omega^{\rm f}(t))$ the solution of \eqref{def-v-q-0} satisfies the following estimate: 
  \begin{align}\label{H2-H1-estimate-Stokes}
  \|{\bs v}\|_{H^2(\Omega^{\rm f}(t))} + \|q\|_{H^1(\Omega^{\rm f}(t))}
  \leq C\|{\bs f}\|_{L^2(\Omega^{\rm f}(t))} . 
  \end{align}

  \item[(C2)] The solid displacement ${\bs d}:
  \widehat{\Omega}^{\mathrm{s}}\times[0,T]\rightarrow\R^d$ is smooth, and for each $t\in[0,T]$ the mapping
  \[
  \boldsymbol{X}(t) = {\bs I} + {\bs d}(t) : \widehat{\Omega}^{\mathrm{s}}\to \mathbb{R}^d
  \]
  is an orientation-preserving diffeomorphism. Moreover, the inverse deformation gradient is uniformly bounded in time, i.e., 
  \[
  \max_{t\in[0,T]} \|\nabla \boldsymbol{X}^{-1}(t)\|_{L^{\infty}(\Omega^{\mathrm{s}}[\boldsymbol{X}(t)])}
  \;\le\; C_0
  \]
  for some constant $C_0>0$.

  \item[(C3)] The fluid solution $(\boldsymbol{u},p)$ is smooth in the space-time fluid domain  $\mathcal Q^{\rm f}$
  and admits a smooth extension to $\Omega\times[0,T]$. To simplify the notation, we use the same notation $(\bu,p)$ to denote the extended functions  on $\Omega\times[0,T]$.

  \item[(C4)] Mesh conditions: Our analysis requires the mesh sizes $h$ and $H$ in the fluid and solid  satisfy the condition {$c_1 H\le h\le c_2H$}, with some constants $c_1>0$ and $c_2>0$. However, in order to facilitate the distinction between fluid and solid quantities we keep different notations for both mesh parameters.   
\end{enumerate} 

Under condition (C1), the discrete reference domain $\widehat{\Omega}^{\rm s}_H$ constructed by using isoparametric finite elements of degree $k$ has the following approximation property (see \cite{Lenoir-1986}): There exists a globally Lipschitz continuous, piecewise smooth (smooth on each simplex) and bijective map 
$
\widehat{\bs\Phi}_H : \widehat{\Omega}^{\rm s}_H \rightarrow  \widehat{\Omega}^{\rm s}
$
such that $\widehat\Gamma = \widehat{\bs\Phi}_H(\widehat\Gamma_{ H})$ and
\begin{equation}\label{GeomApprox}
\|\widehat{\bs\Phi}_H - {\bs I} \|_{L^{\infty}(\widehat{\Omega}^{\rm s}_H)} +
H\,\| \widehat{\bs\Phi}_H - {\bs I} \|_{W^{1,\infty}(\widehat{\Omega}^{\rm s}_H)}\le C_1 H^{k+1} ,
\end{equation} 
where $C_1$ is a constant depending on the smoothness of the domain $\widehat{\Omega}^{\rm s}$. 

 The main result of the paper is stated in the following theorem.

\begin{theorem}\label{THM}
Under the assumptions (C1)--(C4), with $\epsilon_h\in(0,h]$, there exist positive constants $h_0$ and $H_0$ such that, for $h\le h_0$ and $H\le H_0$, the finite element approximation provided by \eqref{eq:discrete-semi-fsi} with finite elements of degree $k\ge 3$ exists and satisfies the following error bound{\rm:}
\begin{align}\label{Est:main}
&\max_{t\in[0,T]} \left\{ \| {\bs u}_h - {\bs u} \|_{L^2(\Omega^{\rm f}[\bs X_H])}^2 
+ \| {\bs d}_H - {\bs d}\circ\widehat{\bs\Phi}_H \|_{H^1(\widehat{\Omega}^{\rm s}_H)}^2 
+ \| \partial_t\dep_H - \partial_t\dep\circ\widehat{\bs\Phi}_H \|_{L^2(\widehat{\Omega}^{\rm s}_H)}^2 \right\}  \notag\\
&\quad\, 
+ \int_0^{T} \| {\bs u}_h - {\bs u} \|_{H^1(\Omega^{\rm f}[\bs X_H])}^2 \d t \notag\\
&\quad\, 
+ H^2 \int_0^{T} \left( \| p_h - p \|_{L^2(\Omega^{\rm f}[\bs X_H])}^2 
+ \| \widehat{\bs\sigma}_H - \widehat{\bs\sigma}\circ\widehat{\bs\Phi}_H \|_{H^{-\frac12}(\widehat\Gamma_H)}^2 \right) \d t 
\notag\\
&\le C(h^{2k}+H^{2k}) ,  
\end{align}
where the constants $h_0$, $H_0$ and $C$ may depend on $T$ and the norms of the exact solution, but are independent of $\epsilon_h\in(0,h]$. 
\end{theorem}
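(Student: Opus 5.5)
The proof will be a continuation-in-time (bootstrap) argument built on an error decomposition around an interpolated exact solution.

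\textbf{Reference solution and error equation.} I first introduce interpolated quantities. For the solid, let ${\bs d}_H^*$ be the Lagrange (or Ritz) interpolant of ${\bs d}\circ\widehat{\bs\Phi}_H$ in $\widehat{\bs V}_H^{\rm s}$, and set ${\bs X}_H^*={\bs I}+{\bs d}_H^*$. For the fluid, take a Ritz-type projection $({\bs u}_h^*,p_h^*)$ of the extended $({\bs u},p)$ onto ${\bs V}_h^{\rm f}\times Q_h^{\rm f}$, built on the cut domain $\Omega^{\rm f}[{\bs X}_H]$ and including the ghost-penalty forms, exactly as in \eqref{def-Ritz-0}. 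For the multiplier, take the $L^2$ projection of $\widehat{\bs\sigma}\circ\widehat{\bs\Phi}_H$ onto $\widehat{\bs\Lambda}_H$. Substituting these into \eqref{eq:discrete-semi-fsi} and subtracting the exact relation \eqref{eq:weak-fsi} gives an error equation for
\[
({\bs e}_{\bs u},e_p,{\bs e}_{\bs d},{\bs e}_{\bs\sigma})=({\bs u}_h-{\bs u}_h^*,\,p_h-p_h^*,\,{\bs d}_H-{\bs d}_H^*,\,\widehat{\bs\sigma}_H-\widehat{\bs\sigma}_H^*).
\]
Its consistency defect has three parts: interpolation residuals; ghost-penalty consistency, which vanishes for smooth functions; and geometric perturbation terms from integrating over $\Omega^{\rm f}[{\bs X}_H]$ and $\Gamma[{\bs X}_H]$ instead of $\Omega^{\rm f}[{\bs X}]$ and $\Gamma[{\bs X}]$. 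I handle the geometric terms through the intermediate families ${\bs X}_\theta={\bs X}_H^*+\theta({\bs X}_H-{\bs X}_H^*)$ and the corresponding family between ${\bs X}$ and ${\bs X}_H^*$. Transport formulas in $\theta$ then bound each domain/interface difference by $\|{\bs e}_{\bs d}\|_{H^1}$, or by $\|{\bs e}_{\bs d}\|_{L^2(\widehat\Gamma_H)}$, plus $O(H^k)$, with constants depending on $W^{1,\infty}$ norms of ${\bs X}_H$.

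\textbf{Bootstrap hypothesis and energy estimate.} Let $t_*$ be the maximal time up to which
\[
\|{\bs X}_H-{\bs X}_H^*\|_{W^{1,\infty}}\le H^{1/2}\quad\text{and}\quad\|{\bs e}_{\bs u}\|_{W^{1,\infty}}\le 1 .
\]
At $t=0$ these hold by the choice of initial data and the compatibility condition \eqref{initial-constraint}. On $[0,t_*]$ the uniform cut-domain tools apply: norm equivalence with ghost-penalty extension, trace and inverse inequalities, and the unfitted inf--sup condition. Test the error equation with $({\bs e}_{\bs u},e_p,\partial_t{\bs e}_{\bs d},{\bs e}_{\bs\sigma})$. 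The multiplier terms do not cancel exactly, because the discrete kinematic constraint only enforces ${\bf P}_{\Gamma[{\bs X}_H]}{\bs e}_{\bs u}-\partial_t{\bs e}_{\bs d}\circ{\bs X}_H^{-1}$ up to a consistency residual. I therefore need a separate estimate for this projected mismatch, obtained by differentiating the constraint in time and transporting $\Gamma[{\bs X}_H]$.

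The time derivative on the moving domain creates a boundary flux $\int_{\Gamma[{\bs X}_H]}(\partial_t{\bs d}_H\cdot{\bs n})|{\bs e}_{\bs u}|^2$. This term is controlled by the trace inequality together with the $L^\infty$ bound on $\partial_t{\bs d}_H$.

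The right-hand side contains $e_p$ and ${\bs e}_{\bs\sigma}$. I bound them as follows:
\begin{itemize}
\item $\|e_p\|$ by the unfitted inf--sup condition;
\item $\|{\bs e}_{\bs\sigma}\|_{H^{-1/2}}$ by duality, using a discrete harmonic extension of interface functions into $\widehat{\bs V}_H^{\rm s}$.
\end{itemize}
Both estimates involve $\|\partial_t{\bs e}_{\bs u}\|$ and $\|\partial_{tt}{\bs e}_{\bs d}\|$ in dual norms. These are weighted by $H$, which is exactly the source of the $H^2$ factor and the loss of one order in \eqref{Est:main}. To keep the pressure consistency from losing an order, I use the cut Ritz projection. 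Absorbing terms and applying Gronwall then gives
\[
\sup_{t\le t_*}\big(\|{\bs e}_{\bs u}\|^2+\|{\bs e}_{\bs d}\|_{H^1}^2+\|\partial_t{\bs e}_{\bs d}\|^2\big)+\int_0^{t_*}\|{\bs e}_{\bs u}\|_{H^1}^2\le C(h^{2k}+H^{2k}),
\]
with $C$ independent of $t_*$ and $\epsilon_h$.

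\textbf{Closing the loop and main obstacle.} By the inverse inequality,
\[
\|{\bs e}_{\bs d}\|_{W^{1,\infty}}\lesssim H^{-d/2}\|{\bs e}_{\bs d}\|_{H^1}\lesssim H^{k-d/2},
\]
which is $\le H^{3/2}$ for $k\ge3$ and $d\le3$. This is strictly inside the bootstrap hypothesis; the velocity bound is handled the same way. Continuity therefore extends the hypothesis past $t_*$ unless $t_*=T$, which also gives existence of the discrete solution on $[0,T]$ via the local well-posedness of Appendix~\ref{section:wellposed}. The triangle inequality with the interpolation estimates and \eqref{GeomApprox} then yields \eqref{Est:main}.

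I expect the main obstacle to be the coupled estimate for the multiplier and the kinematic mismatch. The multiplier enters the solid equation only in the $H^{-1/2}$ norm, while the geometric errors feed back through $\Gamma[{\bs X}_H]$. The loop closes only if every geometric perturbation term is bounded linearly by $\|{\bs e}_{\bs d}\|_{H^1}$ with a constant controlled by the bootstrap bounds, and does not require $\|{\bs e}_{\bs d}\|_{W^{1,\infty}}$ in a non-small way. My first approach would be to integrate by parts in time the multiplier--solid velocity pairing, moving $\partial_t$ onto the interpolated traction, which is smooth.
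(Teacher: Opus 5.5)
Your architecture matches the paper's: an interpolated reference solution, $\theta$-families of intermediate interfaces with transport formulas, inf--sup bounds for $e_p$ and ${\bs e}_{\bs\sigma}$, a cut Ritz projection against loss of order in the pressure consistency, and continuation in time. But the step that makes $e_p$ and ${\bs e}_{\bs\sigma}$ controllable is missing, so your Gronwall argument does not close.

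\textbf{Missing control of the time derivatives.} The inf--sup bounds \eqref{ep-L2} and \eqref{esigma-H-half-hat} contain $\|\partial_t{\bs e}_{\bs u}\|_{L^2}$ and $\|\partial_{tt}{\bs e}_{\bs d}\|_{L^2}$, and you give no way to bound them. Passing to dual norms is circular. The momentum and solid equations, tested against functions that neither vanish on $\Gamma[{\bs X}_H]$ nor are discretely divergence free, reproduce $e_p$ and ${\bs e}_{\bs\sigma}$. Yet $e_p$ and ${\bs e}_{\bs\sigma}$ do enter your energy identity, through \eqref{error-eq-h}, \eqref{error-eq-n} and $R_{h,H}^*({\bs e}_{\bs u},e_p,\partial_t{\bs e}_{\bs d},{\bs e}_{\bs\sigma})$.

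The paper closes this with a second energy estimate, testing \eqref{error-eq} with $(\partial_t{\bs e}_{\bs u},0,\partial_{tt}{\bs e}_{\bs d},{\bs 0})$ (Section~\ref{section:dteu}). The term $\int e_p\,\grad\cdot\partial_t{\bs e}_{\bs u}$ is handled by differentiating the discrete divergence constraint in time. This produces $g_h^{\rm p}(\partial_t e_p,e_p)$, which is why the Ritz initial datum and \eqref{ghost-ep-0} are needed. The term $\int{\bs e}_{\bs\sigma}\circ{\bs X}_H^{-1}\cdot(\partial_t{\bs e}_{\bs u}-\partial_{tt}{\bs e}_{\bs d}\circ{\bs X}_H^{-1})$ is handled by differentiating the kinematic constraint in time.

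The resulting bound \eqref{dte-estimates} loses powers of $h^{-1}$ and $H^{-1}$ through inverse trace inequalities on the moving interface. This loss, not a dual-norm weighting, is the origin of the factor $H^2$ in \eqref{Est:main}. It is affordable only because $e_p$ and ${\bs e}_{\bs\sigma}$ appear in the energy identity with small prefactors.

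Even then, the pairing $\int_{\Gamma[{\bs X}_H]}{\bs e}_{\bs\sigma}^1\cdot({\bs e}_{\bs d}^1\cdot\grad{\bs u})$ inside \eqref{error-eq-n} cannot be bounded by $\|{\bs e}_{\bs\sigma}\|_{H^{-1/2}}\|{\bs e}_{\bs d}\|_{H^{1/2}}$, because ${\bs e}_{\bs\sigma}$ is controlled only with the weight $H$. The paper replaces ${\bs e}_{\bs d}\cdot\grad{\bs u}$ by its interpolant via the super-approximation estimate \eqref{super-approx}. It then uses the solid equation \eqref{esigma-test} to trade the multiplier for solid terms, together with the $L^4$ bound \eqref{esigma-L4}.

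The same second estimate also yields $H^2\max_t\|{\bs e}_{\bs u}\|_{1,\Omega[{\bs X}_H]}^2\lesssim h^{2k}+H^{2k}$ in \eqref{Bound2-sec6}. Without it, your claim that the velocity part of the bootstrap is ``handled the same way'' fails. The first energy estimate controls the ghost penalty at a fixed time only through $\epsilon_h g_h^{\rm u}({\bs e}_{\bs u},{\bs e}_{\bs u})$. Moreover, $\|{\bs e}_{\bs u}\|_{L^2(\Omega^{\rm f}[{\bs X}_H])}$ does not control ${\bs e}_{\bs u}$ on cut elements. So no $L^\infty$ bound on $\Omega^{\rm f}_{\rm ext}[{\bs X}_H]$ uniform in $\epsilon_h$ follows.

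\textbf{Two further points.} First, your hypothesis $\|{\bs X}_H-{\bs X}_H^*\|_{W^{1,\infty}}\le H^{1/2}$ lets $\Gamma[{\bs X}_H]$ lie $O(H^{1/2})$ away from $\Gamma[{\bs X}_H^*]$. That is far outside the $O(h)$ band in which the ghost-penalty extension \eqref{H1ext}, the uniform trace inequalities and the intermediate interfaces are available. The paper's hypothesis \eqref{ind-1} requires:
\begin{itemize}
\item $\|{\bs e}_{\bs d}\|_{L^\infty}\le\kappa\min(h,H)$;
\item an $L^\infty$ bound on $\partial_t{\bs e}_{\bs d}$, which you use for the boundary flux without assuming it;
\item $\|{\bs e}_{\bs d}\|_{H^1}+\|\partial_t{\bs e}_{\bs d}\|_{L^2}\le H^{3/2}$, which is what absorbs quadratic geometric terms such as $\|{\bs e}_{\bs d}\|_{H^1(\widehat\Gamma_H)}\|{\bs e}_{\bs d}\|_{W^{1,4}(\widehat\Gamma_H)}$.
\end{itemize}
These all close by inverse estimates, so this part is repairable.

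Second, with test functions $({\bs e}_{\bs u},e_p,\partial_t{\bs e}_{\bs d},{\bs e}_{\bs\sigma})$ the multiplier-error terms \eqref{error-eq-j} and \eqref{error-eq-m} cancel exactly. The projected mismatch is needed instead for the geometric term \eqref{sigma-n-eu-3}. After $\theta$-transport, that term pairs $\partial_t{\bs e}_{\bs d}$, which is controlled only in $L^2(\widehat\Omega^{\rm s}_H)$ and so has no useful trace bound, with $\grad_\Gamma\cdot{\bs e}_{\bs d}$.

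Integrating by parts in time, as you propose, also differentiates the difference of the surface measures of $\Gamma[{\bs X}_H]$ and $\Gamma[{\bs X}_H^*]$. This produces tangential derivatives of $\partial_t{\bs e}_{\bs d}$, and a spatial integration by parts returns the same product of $\partial_t{\bs e}_{\bs d}$ and $\grad{\bs e}_{\bs d}$ on the interface.

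The paper instead splits $\partial_t{\bs e}_{\bs d}\circ{\bs X}_H^{-1}={\bf P}_{\Gamma[{\bs X}_H]}{\bs e}_{\bs u}+(\partial_t{\bs e}_{\bs d}\circ{\bs X}_H^{-1}-{\bf P}_{\Gamma[{\bs X}_H]}{\bs e}_{\bs u})$. It bounds both parts in $H^{1/2}$ by \eqref{H12-PXeu} and \eqref{eu-dot-ed-Lp-H}, and integrates by parts on the smooth interface $\Gamma(t)$. The mismatch estimate itself comes from the undifferentiated constraint via $\theta$-transport, not from differentiating it in time.
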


\begin{remark}\label{remark0_THM}\upshape 
The restriction $k\ge 3$ is technical rather than essential. It is used only in \eqref{requive-kgeq3}, where an inverse estimate for finite element functions is invoked to control the $W^{1,4}$-error between the Lagrange interpolant and the Ritz projection by the corresponding $H^1$-error. If a direct $W^{1,p}$-error estimate for the Ritz projection in the CutFEM setting is established in future work, then the conclusion of Theorem \ref{THM} would immediately extend to the case $k=2$. This is a problem of independent interest and will be considered in a separate work. 
\end{remark}

\begin{remark}\label{remark-epsilonh}\upshape
The inclusion of the small stabilization term
$\epsilon_h g_h^{\rm u} (\partial_t\bu_h,\bv_h )$ in the numerical scheme is primarily for the convenience of proving local well-posedness of the spatial semidiscrete problem \eqref{eq:discrete-semi-fsi} (adding this stabilization term changes the differential algebraic equations to ordinary differential equations, as detailed  in Appendix~\ref{section:wellposed}). The use of stabilization for the discrete time derivative is also supported by the recent analysis of parabolic TraceFEM in \cite{Bouck-Nochetto-Shakipov-Yushutin-2026}, where it is shown to be instrumental in establishing uniform inf--sup stability and favorable conditioning. Although that work concerns stationary surface problems, its conclusions provide further motivation for time-derivative stabilization in unfitted finite element methods. By proving Theorem~3.1 with a constant $C$ independent of $\epsilon_h\in(0,h]$, and then passing to the limit $\epsilon_h\to0$ using a compactness argument for the finite-dimensional-valued numerical solutions (the details are omitted), one obtains existence of numerical solutions and the same error bound for $\epsilon_h=0$.
\end{remark}

\begin{remark}\upshape\label{remark1_THM}\upshape 
For clarity in stating the assumptions and the main theorem, we have focused on the setting where the solid region is fully immersed in the fluid region. This choice allows us to prescribe the external boundary condition $\boldsymbol{u}=\bs 0$ on $\Gamma^{\mathrm{f}}$ and to present the proof within this specific framework. However, the arguments developed in this paper extend directly---without essential modification---to alternative configurations in which the solid is not fully immersed in the fluid. 
\end{remark}


\begin{remark}\upshape\label{remark3_THM}\upshape 
When the interface $\Gamma(t)$ reaches  the external boundary $\partial\Omega$, the solution may develop singularities along the contact line $\Gamma(t)\cap\partial\Omega$. These singularities are analogous to the well-known corner and edge singularities in polygonal or polyhedral domains. Establishing a rigorous convergence theory in this setting---under reduced regularity assumptions on the solution and with locally graded meshes toward corners and edges---is considerably more involved and is deferred to future work.
\end{remark}

The rest of this paper is devoted to the proof of Theorem~\ref{THM}. 

\section{Interpolated solution and error equation}\label{sect:interp}

Throughout the paper, let $C>0$ denote a generic constant that is independent of the mesh sizes $h$ and $H$, of the physical time $t$, and of the critical time $t_*$ introduced in Section~\ref{section:outline} below. We write $A \lesssim B$ to indicate that $A \le CB$ for such a constant $C$, and we use $A \simeq B$ to denote $A \lesssim B \lesssim A$.  Since the analysis involves several configurations, interfaces, and transported error functions, a summary of the main notation used throughout the paper is provided in Appendix~\ref{appendix:notation-summary}. 

\subsection{Interpolated solution}

For $t\in[0,T]$, we define the Lagrange interpolants of the exact solid displacement and the (pulled-back) traction by
\begin{align}\label{def-sigma-s}
\dep_H^*(t)\eqd {\bs I}_H^{\rm s}\big(\dep(t)\circ \widehat{\bs\Phi}_H\big),
\qquad
\widehat{\bs\sigma}_H^*(t)\eqd {\bs I}_H^{\rm s}\big(\widehat{\bs\sigma}(t)\circ \widehat{\bs\Phi}_H\big),
\end{align}
which take values in the finite element spaces $\widehat{\bs V}_H^{\rm s}$ and $\widehat{\bs \Lambda}_H$, respectively. We also use the shorthand
\begin{align*}
 \vel_H^*(t)\eqd\partial_t \dep_H^*(t)
= {\bs I}_H^{\rm s}\big(\partial_t{\bs d}(t)\circ \widehat{\bs\Phi}_H\big),\quad
\vel_H \eqd  \partial_t\dep_H, 
\quad  \vel \eqd  \partial_t\dep . 
\end{align*}

Under the standing assumption that $\dep(t)$ and $\widehat{\bs\sigma}(t)$ are smooth, the Lagrange interpolants satisfy, for all $t\in[0,T]$ (see \cite{Lenoir-1986}):
\begin{align}\label{interpl-error-d}
\begin{aligned}
\| \dep_H^*(t)\circ \widehat{\bs\Phi}_H^{-1} - \dep(t) \|_{L^\infty(\widehat{\Omega}^{\rm s})}
+ H\,\| \dep_H^*(t)\circ \widehat{\bs\Phi}_H^{-1} - \dep(t) \|_{W^{1,\infty}(\widehat{\Omega}^{\rm s})}
&\lesssim H^{k+1},\\
\| \vel_H^*(t)\circ \widehat{\bs\Phi}_H^{-1} - \vel(t) \|_{L^\infty(\widehat{\Omega}^{\rm s})}
+ H\,\| \vel_H^*(t)\circ \widehat{\bs\Phi}_H^{-1} - \vel(t) \|_{W^{1,\infty}(\widehat{\Omega}^{\rm s})}
&\lesssim H^{k+1},\\
\| \widehat{\bs\sigma}_H^*(t)\circ \widehat{\bs\Phi}_H^{-1} - \widehat{\bs\sigma} \|_{L^\infty(\widehat\Gamma)}
&\lesssim H^{k+1}.
\end{aligned}
\end{align}
Consequently, the interpolated motion map ${\bs X}_H^*\eqd \bs I +\dep_H^*$ satisfies
\begin{align}\label{interpl-error-X}
\| {\bs X}_H^*(t)\circ \widehat{\bs\Phi}_H^{-1} - {\bs X}(t) \|_{L^\infty(\widehat{\Omega}^{\rm s})}
+ H\,\| {\bs X}_H^*(t)\circ \widehat{\bs\Phi}_H^{-1} - {\bs X}(t) \|_{W^{1,\infty}(\widehat{\Omega}^{\rm s})}
\lesssim H^{k+1}.
\end{align}
Similarly, for $t\in[0,T]$ let $(\bu_h^*(t),p_h^*(t)) \eqd ({\bs I}_h^{\rm f}\bu(t),\, I_h^{\rm f}p(t))$ be the interpolants of $(\bu(t),p(t))$ onto $ {\bs V}_h^{\rm f}\times Q_h^{\rm f}$. They satisfy the estimates
\begin{align}\label{interpl-u}
\| \bu_h^*(t)-\bu(t) \|_{L^\infty(\Omega)}
+ h\,\| \bu_h^*(t)-\bu(t)\|_{W^{1,\infty}(\Omega)}
\lesssim h^{k+1},\quad \| p_h^*(t)-p(t) \|_{L^\infty(\Omega)}
\lesssim h^{k}.
\end{align} 
We define the interpolated solid and fluid subdomains, and the corresponding interface, by
\begin{equation}\label{def-Omega*}
\Omega^{\rm s}[{\bs X}_H^*(t)] \eqd 
\bs X_H^*(\widehat{\Omega}^{\rm s}_H,t) , 
\quad 
\Omega^{\rm f}[{\bs X}_H^*(t)] \eqd \Omega \backslash \overline{{\Omega}^{\rm s}[{\bs X}_H^*(t)]} ,\quad 
\Gamma[\bs X_H^*(t)] \eqd 
{\bs X}_H^*(\widehat\Gamma_H,t)
\end{equation}
From ${\bs X}(t):\widehat{\Omega}^{\rm s}\to \Omega^{\rm s}(t)$ and $\widehat{\bs\Phi}_H:\widehat{\Omega}^{\rm s}_H\to \widehat{\Omega}^{\rm s}$, we consider the compositional map 
\begin{equation}\label{eq:map-phiHt}
{\bs\Phi}_H(t)
\eqd {\bs X}(t)\circ \widehat{\bs\Phi}_H \circ {\bs X}_H^*(t)^{-1}
:\ \Omega^{\rm s}[{\bs X}_H^*(t)] \longrightarrow \Omega^{\rm s}(t).
\end{equation}
Finally, it follows that the finite element interpolant $(\bs u_h^*, p_h^*, \dep_H^*, \widehat{\bs \sigma}_H^*)$ satisfies the following integral identity for all $t\in [0,T] $ and $({\bs v}_h, q_h, \bs w_H, \bs \lambda_H)\in  {\bs V}_h^{\rm f} \times  Q_h^{\rm f} \times \widehat{\bs V}_H^{\rm s} \times  \widehat{\bs \Lambda}_H$: 
\begin{align}\label{eq:interp-semi-fsi}
&\int_{\Omega^{\rm f}[\bs X_H^*]} \partial_t \bs u_h^* \cdot {\bs v}_h  + \int_{\Omega^{\rm f}[\bs X_H^*]} (\bu_h^* \cdot \grad) \bu_h^* \cdot {\bs v}_h 
+  \int_{\Omega^{\rm f}[\bs X_H^*]} \bs D(\bu_h^*): \bs D(\bv_h)  \notag\\ 
& - \int_{\Omega^{\rm f}[\bs X_H^*]}  p_h^* \grad\cdot\bv_h + \int_{\Omega^{\rm f}[\bs X_H^*]}  q_h \grad\cdot\bu_h^* \notag\\
& +  \int_{\widehat{\Omega}^{\rm s}_H} \partial_t\vel_H^* \cdot \bw_H + 
\int_{\widehat{\Omega}^{\rm s}_H} [{\bs D}({\bs d}_H^*) : {\bs D}({\bs w}_H) + (\grad\cdot{\bs d}_H^*)(\grad\cdot{\bs w}_H) ]  \notag\\
& - \int_{\Gamma[\bs X_H^*]} \widehat{\bs \sigma} _H^* \circ ({\bs X}_H^*)^{-1}\cdot ( \bv_h - \bw_H \circ ({\bs X}_H^*)^{-1})  \notag\\
&
+ \int_{\Gamma[\bs X_H^*]} \bs \lambda_H \circ ({\bs X}_H^*)^{-1}\cdot ( \bu_h^* - \vel_H^* \circ ({\bs X}_H^*)^{-1}) \notag\\
&
+ \epsilon_h g_h^{\rm u} (\partial_t\bu_h^*,\bv_h ) + g_h^{\rm u} (\bu_h^*,\bv_h ) + g_h^{\rm p} (p_h^*,q_h ) 
= R_{h,H}^*({\bs v}_h, q_h, \bs w_H, \bs \lambda_H) ,
\end{align}
where $R_{h,H}^*({\bs v}_h, q_h, \bs w_H, \bs \lambda_H)$ denotes the consistency error of the spatial discretization, given by the following relation 
\begin{subequations}\label{def-RhH}
\begin{align}
R_{h,H}^*({\bs v}_h, q_h, \bs w_H, \bs \lambda_H) \eqd & 
\int_{\Omega^{\rm f}[\bs X_H^*]} \partial_t \bs u_h^* \cdot {\bs v}_h  
- \int_{\Omega^{\rm f}(t)} \partial_t \bs u \cdot {\bs v}_h \label{RhH-a}\\
&+ \int_{\Omega^{\rm f}[\bs X_H^*]}   (\bu_h^* \cdot \grad) \bu_h^* \cdot {\bs v}_h 
- \int_{\Omega^{\rm f}(t)}   (\bu \cdot \grad) \bu \cdot {\bs v}_h \label{RhH-b}\\
& +  \int_{\Omega^{\rm f}[\bs X_H^*]} \bs D(\bu_h^*): \bs D(\bv_h)  -  \int_{\Omega^{\rm f}(t)} \bs D(\bu): \bs D(\bv_h) \label{RhH-c}\\ 
&  - \int_{\Omega^{\rm f}[\bs X_H^*]}  p_h^*\, \grad\cdot \bv_h +  \int_{\Omega^{\rm f}(t)}  p \, \grad\cdot \bv_h \label{RhH-d}\\
& + \int_{\Omega^{\rm f}[\bs X_H^*]}  q_h \grad\cdot \bu_h^* -   \int_{\Omega^{\rm f}(t)}  q_h \grad\cdot \bu \label{RhH-e}\\
& +  \int_{\widehat{\Omega}^{\rm s}_H} \partial_{t}\vel_H^* \cdot \bw_H -   \int_{\widehat{\Omega}^{\rm s}} \partial_{t}\vel \cdot \bw_H \circ \widehat{\bs\Phi}_H^{-1} \label{RhH-f}\\
& +\int_{\widehat{\Omega}^{\rm s}_H} [{\bs D}({\bs d}_H^*) : {\bs D}({\bs w}_H) + (\grad\cdot{\bs d}_H^*)(\grad\cdot{\bs w}_H) ] \notag\\
& - \int_{\widehat{\Omega}^{\rm s}} [{\bs D}({\bs d}) : {\bs D}(\bw_H \circ \widehat{\bs\Phi}_H^{-1}) + (\grad\cdot{\bs d})\grad\cdot (\bw_H \circ \widehat{\bs\Phi}_H^{-1}) ] \label{RhH-g}\\
 & - \int_{\Gamma[\bs X_H^*]} \widehat{\bs \sigma} _H^* \circ ({\bs X}_H^*)^{-1}  \cdot \big( \bv_h - \bw_H  \circ ({\bs X}_H^*)^{-1} \big) \label{RhH-h}\\
& + \int_{\Gamma(t)} \widehat{\bs \sigma} \circ {\bs X}^{-1}  \cdot \big( \bv_h - \bw_H  \circ \widehat{\bs\Phi}_H^{-1} \circ  {\bs X}^{-1} \big) \label{RhH-i}\\
&+ \int_{\Gamma[\bs X_H^*]} \bs \lambda_H \circ ({\bs X}_H^*)^{-1} \cdot \big( \bu_h^* - \vel_H^*  \circ ({\bs X}_H^*)^{-1} \big) \label{RhH-j}\\
&- \int_{\Gamma(t)} \bs \lambda_H \circ \widehat{\bs\Phi}_H^{-1}  \circ {\bs X}^{-1}  \cdot \big( \bu - \vel  \circ {\bs X}^{-1} \big) \label{RhH-k}\\
& + \epsilon_h g_h^{\rm u} (\partial_t\bu_h^*,\bv_h ) + g_h^{\rm u} (\bu_h^*,\bv_h ) + g_h^{\rm p} (p_h^*,q_h ) . \label{RhH-l}
\end{align}
\end{subequations}

\subsection{Estimates of the initial errors}
\label{section:initial-error}

We define the discrete error functions   
\begin{subequations}\label{def-eu-ed}
\begin{align}
&{\bs e}_{\bs u} \eqd {\bs u}_h - {\bs u}_h^*,
\quad {e}_{p} \eqd p_h - p_h^* \quad\mbox{in}\quad\Omega,\\
&{\bs e}_\dep \eqd \dep_H - \dep_H^* \quad\mbox{in}\quad\widehat{\Omega}^{\rm s}_H, 
\quad {\bs e}_{{\bs \sigma} } \eqd {\widehat{\bs \sigma} }_H - {\widehat{\bs \sigma} }_H^* \quad\mbox{on}\quad\widehat{\Gamma}_H .
\end{align}
\end{subequations}
As indicated in Section~\ref{section:FEM}, 
the discrete initial displacement in \eqref{eq:discrete-semi-fsi} is set to  $\dep_H(\cdot,0) =  {\bs I}_H^{\rm s} (\dep_0\circ\widehat{\bs\Phi}_H)$. This guarantees that ${\bs X}_H(0)={\bs X}_H^*(0)$ and ${\bs e}_\dep(0)=\bs 0$. 
The fluid initial velocity is set to ${\bs u}_h(0)={\bs u}_{0,h}$, where $({\bs u}_{0,h},p_{0,h}) \in {\bs V}_h^{\rm f} \times Q_h^{\rm f}$ is the Ritz projection of $({\bs u}_0,0)$   defined by \eqref{def-Ritz-0}. Since ${\bs X}_H(0)={\bs X}_H^*(0)$, the approximation properties of the Ritz projection in Section \ref{section:consistency-2} imply the following error bound at $t=0$: 
\begin{align}\label{uh0-error}
\|{\bs e}_{\bs u}(0)\|_{H^1(\Omega^{\rm f}[\bs X_H(0)])}^2
+ \|p_{0,h}\|_{L^2(\Omega^{\rm f}[{\bs X}_H(0)])}^2 
+ g^{\rm u}_h({\bs e}_{\bs u}(0),{\bs e}_{\bs u}(0)) 
+ g^{\rm p}_h(p_{0,h},p_{0,h})
\lesssim  h^{2k} . 
\end{align}
With this  choice of initial data, after taking $({\bs v}_h,{\bs w}_H,{\bs\lambda}_H)=\bs 0 $ in \eqref{eq:discrete-semi-fsi} and ${\bs v}_h=\bs 0$ in \eqref{def-Ritz-0}, and by subtracting the resulting expressions, we get 
\begin{align}\label{ghost-p0h-property}
g_h^{\rm p} (p_h(0)-p_{0,h}, q_h ) 
=
-\int_{\Omega^{\rm f}[\bs X_H(0)]}  q_h \grad\cdot (\bu_h(0) -\bu_{0,h}) 
= 0 
\quad\forall\, q_h\in Q_h^{\rm f} . 
\end{align}
Therefore, choosing $q_h=e_p(0)\eqd p_h(0)-p_h^*(0)$ in the above relation, using the bound of $ g^{\rm p}_h(p_{0,h},p_{0,h})$ provided by \eqref{uh0-error}, and by noting that the ghost penalty vanishes on smooth functions, we get    
\begin{align}\label{ghost-ep-0}
g_h^{\rm p} \big(e_p(0),e_p(0)\big) 
\lesssim h^{2k}. 
\end{align}
This property will be required in our error analysis below.  

Differentiating in time ${\bs e}_{\bs d}={\bs d}_H-{\bs I}_H^{\rm s}({\bs d}\circ\widehat{\bs\Phi}_H)$ at $t=0$  yields $\partial_t{\bs e}_{\bs d}(0)=\vel_H(0)-{\bs I}_H^{\rm s}({\bs\dot{\bs d}}_0\circ\widehat{\bs\Phi}_H)$. Then, using \eqref{eq:init-nodal-boundary}, the stability of the interface $L^2$-projection, the approximation properties of ${\bs u}_{0,h}$ in \eqref{uh0-error}, and the trace estimate from the interface, we obtain
\begin{multline*}
\| \partial_t\dep_H(0)\circ{\bs X}_H(0)^{-1} - {\bs u}_0  \|_{L^2(\Gamma[{\bs X}_H(0)])} 
= 
\| {\bf P}_{\Gamma[{\bs X}_H(0)]} {\bs u}_{0,h}  - {\bs u}_0  \|_{L^2(\Gamma[{\bs X}_H(0)])} \\ 
\lesssim
\| {\bf P}_{\Gamma[{\bs X}_H(0)]}{\bs u}_0 - {\bs u}_0  \|_{L^2(\Gamma[{\bs X}_H(0)])}
+ \| {\bs u}_{0,h}  - {\bs u}_0 \|_{L^2(\Gamma[{\bs X}_H(0)])} \lesssim H^{k} + h^{k} .
\end{multline*}
On the other hand, the initial data satisfy the kinematic compatibility
condition $\dot{\bs d}_0={\bs u}_0\circ{\bs X}(0)$, so that
${\bs I}_H^{\rm s}(\dot{\bs d}_0\circ\widehat{\bs\Phi}_H)
={\bs I}_H^{\rm s}[{\bs u}_0\circ{\bs X}(0)\circ\widehat{\bs\Phi}_H]$, and it follows that
\begin{multline*}
\|{\bs u}_0 - ({\bs I}_H^{\rm s}({\bs\dot{\bs d}}_0 \circ\widehat{\bs\Phi}_H)) \circ {\bs X}_H(0)^{-1} \|_{L^2(\Gamma[{\bs X}_H(0)])} \\
\lesssim 
\|{\bs u}_0 - {\bs u}_0\circ{\bs X}(0) \circ\widehat{\bs\Phi}_H\circ {\bs X}_H(0)^{-1}\|_{L^2(\Gamma[{\bs X}_H(0)])} \notag\\
+\|[{\bs\dot{\bs d}}_0\circ\widehat{\bs\Phi}_H - {\bs I}_H^{\rm s}({\bs\dot{\bs d}}_0\circ\widehat{\bs\Phi}_H) ]\circ {\bs X}_H(0)^{-1} \|_{L^2(\Gamma[{\bs X}_H(0)])} 
\lesssim H^{k} .
\end{multline*}
The two estimates above imply, after pulling the functions back to the reference interface, 
$$
\|\partial_t{\bs e}_{\bs d}(0)\|_{L^2(\widehat\Gamma_H)}
=
\| \partial_t\dep_H(0)- {\bs I}_H^{\rm s}({\bs\dot{\bs d}}_0\circ\widehat{\bs\Phi}_H)\|_{L^2(\widehat\Gamma_H)}\lesssim H^k + h^{k} .
$$
This and $\partial_t{\bs e}_{\bs d}(0)=\partial_t\dep_H(0) - {\bs I}_H^{\rm s}({\bs\dot{\bs d}}_0\circ\widehat{\bs\Phi}_H)=\bs 0$ at the interior nodes of $\widehat\Omega_H^{\rm s}$ imply that, after using the inverse estimates of finite element functions, 
\begin{align}\label{dtdh0-error}
\|\partial_t{\bs e}_{\bs d}(0)\|_{L^2(\widehat\Omega_H^{\rm s})}
\lesssim H^{k+\frac12} + h^{k}H^{\frac12}. 
\end{align}

\subsection{Error equation}

The difference between \eqref{eq:discrete-semi-fsi} and \eqref{eq:interp-semi-fsi} yields the following error equation: 
\begin{subequations}\label{error-eq}
\begin{align}
& \int_{\Omega^{\rm f}[\bs X_H]} \partial_t {\bs e}_{\bs u} \cdot {\bs v}_h \label{error-eq-a}\\
& + \Big( \int_{\Omega^{\rm f}[\bs X_H]} - \int_{\Omega^{\rm f}[\bs X_H^*]} \Big) \partial_t {\bs u}_h^* \cdot {\bs v}_h \label{error-eq-b}\\
& + \int_{\Omega^{\rm f}[\bs X_H]} {\bs e}_{\bs u} \cdot \grad \bu_h \cdot {\bs v}_h \label{error-eq-c}\\
& + \int_{\Omega^{\rm f}[\bs X_H]} {\bs u}_h^* \cdot \grad {\bs e}_{\bs u}  \cdot {\bs v}_h \label{error-eq-d}\\
& + \Big( \int_{\Omega^{\rm f}[\bs X_H]} - \int_{\Omega^{\rm f}[\bs X_H^*]} \Big) {\bs u}_h^* \cdot \grad {\bs u}_h^* \cdot {\bs v}_h \label{error-eq-e}\\
&
+ \int_{\Omega^{\rm f}[\bs X_H]} \bs D({\bs e}_{\bs u}): \bs D(\bv_h) - \int_{\Omega^{\rm f}[\bs X_H]}  e_p \grad\cdot \bv_h
+ \int_{\Omega^{\rm f}[\bs X_H]}  q_h \grad\cdot {\bs e}_{\bs u}  \label{error-eq-f}\\ 
& + \int_{\Omega^{\rm f}[\bs X_H]} {\bs\sigma}(\bu_h^*,p_h^*): \grad \bv_h 
- \int_{\Omega^{\rm f}[\bs X_H^*]} {\bs\sigma}(\bu_h^*,p_h^*): \grad \bv_h 
\label{error-eq-g} \\
& + \Big( \int_{\Omega^{\rm f}[\bs X_H]} - \int_{\Omega^{\rm f}[\bs X_H^*]} \Big) q_h \grad\cdot \bu_h^* \label{error-eq-h}\\
& +  \int_{\widehat{\Omega}^{\rm s}_H} \partial_{tt} {\bs e}_{\dep} \cdot \bw_H 
+ \int_{\widehat{\Omega}^{\rm s}_H} [{\bs D}({\bs e}_\dep) : {\bs D}({\bs w}_H) + (\grad\cdot {\bs e}_\dep)(\grad\cdot{\bs w}_H) ] \label{error-eq-i}\\ 
& - \int_{\Gamma[\bs X_H]} {\bs e}_{{\bs \sigma} }\circ{\bs X}_H^{-1} \cdot ( \bv_h - \bw_H \circ{\bs X}_H^{-1}) \label{error-eq-j}\\
& - \int_{\Gamma[\bs X_H]} {\widehat{\bs \sigma} }_H^*\circ{\bs X}_H^{-1} \cdot ( \bv_h - \bw_H \circ{\bs X}_H^{-1}) 
\label{error-eq-k}\\
& + \int_{\Gamma[\bs X_H^*]} {\widehat{\bs \sigma} }_H^* \circ({\bs X}_H^*)^{-1}\cdot ( \bv_h - \bw_H\circ({\bs X}_H^*)^{-1}) 
\label{error-eq-l}\\
& + \int_{\Gamma[\bs X_H]} \bs \lambda_H \circ{\bs X}_H^{-1}\cdot ( {\bs e}_{\bs u} - \partial_t{\bs e}_\dep \circ{\bs X}_H^{-1} ) \label{error-eq-m}\\
& + \int_{\Gamma[\bs X_H]} \bs \lambda_H \circ{\bs X}_H^{-1} \cdot \big( \bu_h^*  - \partial_t{\bs d}_H^* \circ{\bs X}_H^{-1} \big) \notag\\
& - \int_{\Gamma[\bs X_H^*]} \bs \lambda_H \circ({\bs X}_H^*)^{-1}\cdot \big( \bu_h^* - \partial_t{\bs d}_H^* \circ({\bs X}_H^*)^{-1}\big) \label{error-eq-n}\\[5pt] 
& + \epsilon_h g_h^{\rm u}(\partial_t{\bs e}_{\bs u},{\bs v}_h)  + g_h^{\rm u} ({\bs e}_{\bs u},\bv_h ) + g_h^{\rm p} (e_p,q_h ) \label{error-eq-o}\\[5pt]
& = - R_{h,H}^*({\bs v}_h, q_h, \bs w_H, \bs \lambda_H) . \label{error-eq-p} 
\end{align}
\end{subequations}

\section{Outline of the proof}\label{section:outline}

In this section, we outline the main idea underlying the proof of Theorem~\ref{THM}. Let $\kappa>0$ be a fixed universal constant, and let $\Omega^{\rm f}_{\rm ext}[\bs X_H] $ be a $2\kappa h$-neighborhood of the numerical fluid region $\Omega^{\rm f}[\bs X_H]$, defined by 
\begin{align}\label{Extended-region}
\Omega^{\rm f}_{\rm ext}[\bs X_H] \eqd \bigcup\left\{K\in {\mathcal T}_h: {\rm dist}(K,\Omega^{\rm f}[{\bs X}_H]) \le 2\kappa h\right\} . 
\end{align}
We begin the proof by assuming that there exists a maximal time $t_* \in(0,T]$  such that
\begin{subequations}\label{ind-1}
\begin{align}
\max_{t\in[0,t_*]}\|{\bs e}_\dep\|_{L^\infty(\widehat{\Omega}^{\rm s}_H)} &\le \kappa\,\min(h,H), \label{ind-1-1}\\[2mm]
\max_{t\in[0,t_*]}\!\left\{\|{\bs e}_\dep\|_{W^{1,\infty}(\widehat{\Omega}^{\rm s}_H)} + \|\partial_t{\bs e}_{\bs d}\|_{L^\infty(\widehat{\Omega}^{\rm s}_H)}
+ \|{\bs e}_{\bs u}\|_{L^\infty(\Omega^{\rm f}_{\rm ext}[\bs X_H])}  \right\} &\le H^{1/4}, \label{ind-1-2}\\[2mm]
\max_{t\in[0,t_*]}\left\{ \|{\bs e}_\dep\|_{H^1(\widehat{\Omega}^{\rm s}_H)} 
+\|\partial_t{\bs e}_\dep\|_{L^2(\widehat{\Omega}^{\rm s}_H)} \right\}&\le H^{3/2}. \label{ind-1-3}
\end{align}
\end{subequations}
Such a positive time $t_*>0$ exists. Indeed, the relation  ${\bs e}_{\bs d}(0)=\bs 0$ and the initial-value estimates in \eqref{uh0-error} and \eqref{dtdh0-error} give
$$
\|{\bs e}_{\bs u}(0)\|_{H^1(\Omega^f_{\rm ext}[X_H(\cdot,0)])}
\lesssim h^k, \quad 
\|\partial_t{\bs e}_{\bs d}(0)\|_{L^2(\widehat\Omega_H^s)}
\lesssim h^k+H^k. 
$$
Consequently, for $k\ge2$ and sufficiently small $h\simeq H$,
finite element inverse estimates imply that all the inequalities
in the continuation hypothesis \eqref{ind-1} hold strictly at $t=0$.
The local well-posedness and continuity in time of \eqref{eq:discrete-semi-fsi}, established in Appendix~\ref{section:wellposed},  imply that they continue to hold over a nontrivial interval $[0,t_*]$. 

Testing the  error equation \eqref{error-eq} with $\boldsymbol{e}_{\bs u}$ and proceeding with appropriate  estimates yield the error bound \eqref{Bound1} for all $t\in [0,t_*]$. 
Since the semidiscrete FEM  \eqref{eq:discrete-semi-fsi} is essentially a system of ODEs (as shown in Appendix~\ref{section:wellposed}) which is locally well-posed from time $t_*$, invoking the time-continuity of the semidiscrete finite element solution, we obtain that the estimates \eqref{ind-1} and \eqref{Bound1}--\eqref{Bound-ep-esigma} remain valid for a strictly larger time interval $[0,t_*+\varepsilon_{h}]$, for some $\varepsilon_{h}>0$, albeit with possibly larger constants (i.e., the entire error analysis of
Sections \ref{section:consistency}--\ref{section:consistency-2} remains valid when \eqref{ind-1} is relaxed by a fixed factor), say
\begin{align}\label{Bound1-sec6}
& \max_{t\in[0,t_*+\varepsilon_{h}]} \left\{ \| {\bs e}_{\bs u} \|_{L^2(\Omega^{\rm f}[\bs X_H])}^2 + \|\partial_t{\bs e}_\dep\|_{L^2(\widehat{\Omega}^{\rm s}_H)}^2 
+ \|{\bs e}_\dep\|_{H^1(\widehat{\Omega}^{\rm s}_H)}^2 \right\} 
+ \int_0^{t_*+\varepsilon_{h}} \| {\bs e}_{\bs u}\|_{1, \Omega[\bs X_H]}^2 \d t \notag\\
&\le 2C_1(h^{2k}+H^{2k}) ,\\
\label{Bound2-sec6}
& H^2\max_{t\in[0,t_*+\varepsilon_{h}]}\|{\bs e}_{\bs u}\|_{1,\Omega[{\bs X}_H]}^2
+H^2\int_0^{t_*+\varepsilon_{h}} \big( \| e_p \|_{0,\Omega[{\bs X}_H]}^2 
+  \| {\bs e}_{{\bs \sigma} } \|_{H^{-\frac12}(\widehat\Gamma_H)}^2 \big) \d t \notag\\
&\le 2C_2(h^{2k}+H^{2k}) ,
\end{align}
where the norms $\| \cdot \|_{1, \Omega[\bs X_H]}$ and $\| \cdot \|_{0, \Omega[\bs X_H]}$ are defined in \eqref{1-0-norm}. 
For sufficiently small mesh parameters $h$ and $H\simeq h$, and finite elements of degree $k\geq 3$ (this is required only in \eqref{requive-kgeq3} as discussed in Remark \ref{remark0_THM}, otherwise $k\geq 2$ would be sufficient), this estimate ensures that (employing finite element inverse estimates in the worst case $d = 3$) 
\begin{align}
\max_{t\in[0,t_*+\varepsilon_{h}]}\|{\bs e}_\dep\|_{L^\infty(\widehat{\Omega}^{\rm s}_H)} 
&\leq 
C\max_{t\in[0,t_*+\varepsilon_{h}]} H^{-\frac12}\|{\bs e}_\dep\|_{H^1(\widehat{\Omega}^{\rm s}_H)} \notag\\
&\leq 
CH^{-\frac12} (h^k+H^k) 
\leq \kappa \min(h,H),\\
\max_{t\in[0,t_*+\varepsilon_{h}]} \|{\bs e}_\dep\|_{W^{1,\infty}(\widehat{\Omega}^{\rm s}_H)} 
&\leq 
C\max_{t\in[0,t_*+\varepsilon_{h}]} H^{-\frac32}\|{\bs e}_\dep\|_{H^1(\widehat{\Omega}^{\rm s}_H)} \notag\\
&\leq 
CH^{-\frac32} (h^k+H^k)  
\leq H^{\frac14} ,\\[5pt]
\max_{t\in[0,t_*+\varepsilon_{h}]} \|\partial_t{\bs e}_\dep\|_{L^\infty(\widehat{\Omega}^{\rm s}_H)} 
&\leq 
C\max_{t\in[0,t_*+\varepsilon_{h}]} H^{-\frac32} \|\partial_t{\bs e}_\dep\|_{L^2(\widehat{\Omega}^{\rm s}_H)} \notag\\
&\leq 
CH^{-\frac32} (h^k+H^k) 
\leq H^{\frac14} ,\\[5pt]
\max_{t\in[0,t_*+\varepsilon_{h}]} \|{\bs e}_{\bs u}\|_{L^\infty(\Omega^{\rm f}_{\rm ext}[\bs X_H])} 
&\leq 
C\max_{t\in[0,t_*+\varepsilon_{h}]} h^{-\frac12} \|{\bs e}_{\bs u}\|_{H^1(\Omega^{\rm f}_{\rm ext}[\bs X_H])} \notag\\
&\leq 
C\max_{t\in[0,t_*+\varepsilon_{h}]} h^{-\frac12} \|{\bs e}_{\bs u}\|_{1,\Omega[{\bs X}_H]} \notag\\
&\leq 
CH^{-1}h^{-\frac12} (h^k+H^k)
\leq H^{\frac14} ,
\end{align}
and
\begin{align}
\max_{t\in[0,t_*+\varepsilon_{h}]}\left\{ \|{\bs e}_\dep\|_{H^1(\widehat{\Omega}^{\rm s}_H)} 
+\|\partial_t{\bs e}_\dep\|_{L^2(\widehat{\Omega}^{\rm s}_H)} \right\} 
\leq C(h^k+H^k) 
\leq H^{\frac32}.
\end{align}
Therefore, the continuation hypothesis \eqref{ind-1} remains valid on $[0,t_*+\varepsilon_{h}]$ with the same constants. 
This leads to a contradiction with the maximality of $t_*$ if $t_*<T$, and we therefore conclude that $t_*=T$, i.e., this bound must hold for the entire time interval $[0,T]$. Consequently, for sufficiently small $h$ and $H$, we have error estimates on the entire time interval $[0,T]$.

\section{Preliminaries}\label{sect:prelim}

\subsection{Norm equivalence for interpolated interfaces}

From ${\bs X}_H^* \eqd {\bs I}+{\bs d}_H^*$ and ${\bs X} \eqd {\bs I}+{\bs d}$, it follows that ${\bs X}_H^*\circ\widehat{\bs\Phi}_H^{-1} - {\bs X} = (\widehat{\bs\Phi}_H^{-1}-{\bs I}) + ({\bs d}_H^*\circ\widehat{\bs\Phi}_H^{-1} - {\bs d})$ and therefore, as a result of \eqref{GeomApprox} and \eqref{interpl-error-d}, we have 
\begin{align}\label{aux865}
\|{\bs X}_H^*\circ\widehat{\bs\Phi}_H^{-1} - {\bs X} \|_{W^{1,\infty}(\widehat{\Omega}^s)} \le C H^k. 
\end{align}

Since the  map ${\bs X}:\widehat{\Omega}^{\rm s}\rightarrow\Omega^{\rm s}(t)$ and its restriction ${\bs X}:\widehat\Gamma\rightarrow\Gamma(t)$ are assumed to be smooth, invertible and orientation preserving, the estimate \eqref{aux865} implies that  the interpolated maps ${\bs X}_H^*:\widehat{\Omega}^{\rm s}_H\rightarrow\Omega^{\rm s}[{\bs X}_H^*]$ and ${\bs X}_H^*:\widehat\Gamma_H \rightarrow\Gamma[{\bs X}_H^*]$ are piecewise smooth on each element, globally invertible and orientation preserving for sufficiently small mesh size $H$ (smaller than some positive constant). Moreover, ${\bs X}_H^*$ satisfies the following estimates: 
\begin{subequations}\label{area-element}
\begin{align}
&c_1^* \le \det(\grad {\bs X}_H^*) \le c_2^*  \\
&c_1^* \le \det(\grad_{\widehat\Gamma_H} {\bs X}_H^* (\grad_{\widehat\Gamma_H} {\bs X}_H^*)^{\rm T}+{\bs n}_{\widehat\Gamma_H}({\bs n}_{\widehat\Gamma_H})^{\rm T}) 
\le c_2^* 
\end{align}
\end{subequations}
for some positive constants $c_1^*$ and $c_2^*$. The first inequality of \eqref{area-element} allows us to control the change of the volume elements under transformation ${\bs X}_H^*$, while the second inequality allows us to control the change of the surface area elements. 
Therefore, for a function $f$ defined on $\Gamma[{\bs X}_H^*]$,  the following norm equivalence holds for $1\le p\le \infty$ (a proof of this result can be found in \cite[Lemma 3.2]{Gao-Li-Tang-2025}): 
\begin{subequations}\label{norm-equiv-interpl-1}
\begin{align}
\|f\|_{L^p(\Gamma[{\bs X}_H^*])} &\simeq \|f\circ {\bs X}_H^*\|_{L^p(\widehat\Gamma_H )} , \\
\|\grad_{\Gamma[{\bs X}_H^*]} f\|_{L^p(\Gamma[{\bs X}_H^*])} &\simeq \|\grad_{\widehat\Gamma_H} (f\circ {\bs X}_H^*)\|_{L^p(\widehat\Gamma_H )} .
\end{align}
\end{subequations}

Since $\grad{\bs X}(t)$ is invertible from the tangent plane of $\widehat\Gamma$ to the tangent plane of $\Gamma(t)$, and the area elements on these two interfaces are equivalent, the results of \eqref{norm-equiv-interpl-1} also imply the following norm-equivalence relations:  
\begin{subequations}\label{norm-equiv-interpl-2}
\begin{align}
\|f \circ {\bs\Phi}_H(t) \|_{L^p(\Gamma[{\bs X}_H^*])} & \simeq \| f \|_{L^p(\Gamma(t))} , \\
\| \grad_{\Gamma[{\bs X}_H^*]} [f \circ {\bs\Phi}_H(t)] \|_{L^p(\Gamma[{\bs X}_H^*])} &\simeq \| \grad_{\Gamma(t)} f \|_{L^p(\Gamma(t))} ,
\end{align}
\end{subequations}
where ${\bs\Phi}_H(t):\Gamma[{\bs X}_H^*]\rightarrow \Gamma(t)$ is defined in \eqref{eq:map-phiHt}. In fact, if we introduce the intermediate mapping 
\begin{align}\label{def-XH-star-theta}
{\bs X}_H^{*,\theta} \eqd (1-\theta) {\bs X} + \theta {\bs X}_H^*\circ \widehat{\bs\Phi}_H^{-1}: \widehat\Omega^s\rightarrow \mathbb{R}^d
\end{align} 
and set $
\Gamma[{\bs X}_H^{*,\theta}] \eqd {\bs X}_H^{*,\theta}(\widehat\Gamma) $, 
then 
\begin{subequations}\label{norm-equiv-interpl-3}
\begin{align}
\|f \circ {\bs X} \circ ({\bs X}_H^{*,\theta})^{-1}\|_{L^p(\Gamma[{\bs X}_H^{*,\theta}])} &\simeq \| f \|_{L^p(\Gamma(t))} , \\
\| \grad_{\Gamma[{\bs X}_H^{*,\theta}]} [f \circ {\bs X} \circ ({\bs X}_H^{*,\theta}])^{-1}] \|_{L^p(\Gamma[{\bs X}_H^{*,\theta}])} &\simeq \| \grad_{\Gamma(t)} f \|_{L^p(\Gamma(t))} 
\end{align}
\end{subequations}
for all $\theta\in[0,1]$.

\begin{remark}\upshape\upshape 
The estimates \eqref{interpl-error-X} and \eqref{ind-1-1} guarantee that $\Omega^{\rm f}(t)$ and $\Gamma[\bs X_H^*]$ are contained in the extended $2\kappa h$-neighborhood $\Omega^{\rm f}_{\rm ext}[\bs X_H]$ of \eqref{Extended-region}, so that a trace inequality from the interface $\Gamma[\bs X_H^*]$ to the bulk region $\Omega^{\rm f}_{\rm ext}[\bs X_H]$ can be used; see Appendix \ref{appendix:trace_inequlaity}. 
\end{remark}

\begin{remark}\upshape\upshape 
For any $t\in[0,t_*]$, the inequality \eqref{ind-1-2} and the norm equivalence \eqref{norm-equiv-interpl-1} imply 
\begin{align}\label{ind-2}
&\|{\bs e}_\dep \circ ({\bs X}_H^*)^{-1}\|_{W^{1,\infty}(\Gamma[{\bs X}_H^*])} \simeq \|{\bs e}_\dep\|_{W^{1,\infty}(\widehat\Gamma_H)} \le H^{\frac14} .
\end{align}
For sufficiently small mesh size $H$, we obtain from \eqref{ind-1-2} and \eqref{ind-2} that 
\begin{equation}\label{aux964}
\max_{t\in[0,t_*]} \left\{  \|{\bs e}_\dep\|_{W^{1,\infty}(\widehat{\Omega}^{\rm s}_H)}  + \|{\bs\dot{\bs e}}_\dep\|_{L^\infty(\widehat{\Omega}^{\rm s}_H)}
+ \|{\bs e}_\dep \circ ({\bs X}_H^*)^{-1}\|_{W^{1,\infty}(\Gamma[{\bs X}_H^*])} \right\}  \le \frac12 .
\end{equation} 
In particular, the condition $\|{\bs e}_\dep \circ ({\bs X}_H^*)^{-1}\|_{W^{1,\infty}(\Gamma[{\bs X}_H^*])}\le \frac12$ guarantees that the map ${\bs I} + {\bs e}_\dep \circ ({\bs X}_H^*)^{-1}:\Gamma[{\bs X}_H^{*}]\rightarrow\mathbb{R}^d$ is invertible, and therefore, the map ${\bs X}_H={\bs X}_H^*+{\bs e}_\dep=({\bs I} + {\bs e}_\dep \circ ({\bs X}_H^*)^{-1})\circ {\bs X}_H^*$ is invertible as well, because it is the composition of two invertible maps. Quantitatively, we have 
\begin{align}\label{W1infty-XH-inv}
&\max_{t\in[0,t_*]} \|({\bs X}_H)^{-1} \|_{W^{1,\infty}(\Gamma[{\bs X}_H])} \notag\\
&\le \max_{t\in[0,t_*]}  \|({\bs I} + {\bs e}_\dep \circ ({\bs X}_H^*)^{-1})^{-1}\|_{W^{1,\infty}(\Gamma[{\bs X}_H])} \|({\bs X}_H^*)^{-1}\|_{W^{1,\infty}(\Gamma[{\bs X}_H^*])} 
\le C.
\end{align}
Moreover, inequality \eqref{aux964} and the interpolation estimates for $\dep_H^*$ , $\vel_H^*$,  ${\bs X}_H^* $ in \eqref{interpl-error-d} and \eqref{aux865} yield the following bound: 
\begin{align}
\max_{t\in[0,t_*]} \left\{ \|\dep_H\|_{W^{1,\infty}(\widehat{\Omega}^{\rm s}_H)} + \|{\bs\dot \dep}_H\|_{L^\infty(\widehat{\Omega}^{\rm s}_H)} 
+ \|{\bs X}_H\|_{W^{1,\infty}(\widehat{\Omega}^{\rm s}_H)} \right\}  
&\le C .
\end{align}
\end{remark}

\subsection{Norm equivalence on the numerical and interpolated interfaces}\label{section:norm-equiv-2}

Let ${\bs X}_H^\theta \eqd (1-\theta) {\bs X}_H^* + \theta {\bs X}_H$ on $\widehat\Gamma_H$ for $\theta\in[0,1]$, so that $\partial_\theta {\bs X}_H^\theta = {\bs X}_H-{\bs X}_H^* = {\bs e}_\dep$ on $\widehat\Gamma_H$. The condition
$\max_{t\in[0,t_*]} \|{\bs e}_\dep \circ ({\bs X}_H^*)^{-1}\|_{W^{1,\infty}(\Gamma[{\bs X}_H^*])}  \le \frac12$
in \eqref{aux964} ensures that ${\bs X}_H^\theta = {\bs X}_H^* + \theta {\bs e}_{\bs d}$ is sufficiently close to ${\bs X}_H^*$ for all $\theta\in[0,1]$, and in particular that norms of functions transported between the corresponding interfaces are equivalent (see, e.g., \cite[Lemma 4.3]{KLLP-2017}). Specifically, the following norm-equivalence relations hold for $t\in[0,t_*]$ and $p\in[1,+\infty]$:
\begin{subequations}\label{norm-equiv-2}
\begin{align}
\| f \circ {\bs X}_H^* \circ({\bs X}_H^\theta)^{-1} \|_{L^p(\Gamma[\bs X_H^\theta])} & \simeq \| f \|_{L^p(\Gamma[\bs X_H^*])} , \\
\| f \circ {\bs X}_H^* \circ({\bs X}_H^\theta)^{-1} \|_{W^{1,p}(\Gamma[\bs X_H^\theta])} & \simeq \| f \|_{W^{1,p}(\Gamma[\bs X_H^*])}  .
\end{align}
\end{subequations} 
In particular, the case $\theta=1$ indicates that the norms of functions transported between $\Gamma[\bs X_H^*]$ and $\Gamma[\bs X_H]$ are equivalent. The norm-equivalence relations in \eqref{norm-equiv-interpl-1} and \eqref{norm-equiv-2} imply the following result for $t\in[0,t_*]$ and $p\in[1,+\infty]$:
\begin{equation}\label{norm-equiv-GH}
\| f \|_{L^p(\Gamma[{\bs X}_H])} \simeq \| f\circ {\bs X}_H\|_{L^p(\widehat\Gamma_H)},\quad
\| f \|_{W^{1,p}(\Gamma[{\bs X}_H])} \simeq \| f\circ {\bs X}_H\|_{W^{1,p}(\widehat\Gamma_H)}.
\end{equation} 
Moreover, $|{\bs X}_H^\theta -{\bs X}_H | = (1-\theta) |{\bs e}_{\bs d}|\leq \kappa h$ and therefore all the intermediate surfaces $\Gamma[{\bs X}_H^\theta]$ are in the extended fluid region $\Omega^{\rm f}_{\rm ext}[{\bs X}_H]$.  

Let ${\bs X}_H^{\#,\theta} \eqd (1-\theta){\bs X} + \theta {\bs X}_H \circ \widehat{\bs\Phi}_H^{-1} : \widehat\Gamma \rightarrow \mathbb{R}^d$ be the map which determines a family of interfaces $\Gamma[{\bs X}_H^{\#,\theta}]$, $\theta\in[0,1]$, intermediate between $\Gamma(t)$ and $\Gamma[{\bs X}_H]$. For sufficiently small $h$ and $H$, the intermediate interfaces $\Gamma[{\bs X}_H^{\#,\theta}]$ can also be shown to lie in the extended fluid region $\Omega^{\rm f}_{\rm ext}[{\bs X}_H]$.  
When $\theta$ moves from $0$ to $1$, the intermediate interface $\Gamma[{\bs X}_H^{\#,\theta}]$ moves from $\Gamma(t)$ to $\Gamma[{\bs X}_H]$ with the following velocity (defined on $\Gamma[{\bs X}_H^{\#,\theta}]$): 
\begin{align}\label{def-ej-theta}
{\bs e}_{\bs X}^{\#,\theta}
&= ({\bs X}_H \circ \widehat{\bs\Phi}_H^{-1} - {\bs X} ) \circ ({\bs X}_H^{\#,\theta})^{-1} \notag\\
&= ({\bs X}_H - {\bs X}_H^*)\circ \widehat{\bs\Phi}_H^{-1} \circ ({\bs X}_H^{\#,\theta})^{-1} + ({\bs X}_H^*\circ \widehat{\bs\Phi}_H^{-1} - {\bs X}) \circ ({\bs X}_H^{\#,\theta})^{-1} \notag\\
&= {\bs e}_{\bs d} \circ \widehat{\bs\Phi}_H^{-1} \circ ({\bs X}_H^{\#,\theta})^{-1} + {\bs e}_{\bs X}^* \circ ({\bs X}_H^{\#,\theta})^{-1} , 
\end{align}
where ${\bs e}_{\bs X}^*=({\bs X}_H^*\circ \widehat{\bs\Phi}_H^{-1} - {\bs X})$. Using an argument similar to the one of \eqref{norm-equiv-interpl-1}, \eqref{norm-equiv-interpl-2}, and \eqref{norm-equiv-2}, we obtain the following norm-equivalence relations for $t \in [0,t_*]$ and $p\in[1,+\infty]$:
\begin{subequations}\label{norm-equiv-X-theta}
\begin{align}
&\| f \circ {\bs X} \circ({\bs X}_H^{\#,\theta})^{-1} \|_{L^p(\Gamma[{\bs X}_H^{\#,\theta}])} \simeq \| f \|_{L^p(\Gamma(t))} \simeq \| f \circ {\bs X}\circ\widehat{\bs\Phi}_H\|_{L^p(\widehat\Gamma_H)} , \\
&\| f \circ {\bs X} \circ({\bs X}_H^{\#,\theta})^{-1} \|_{W^{1,p}(\Gamma[{\bs X}_H^{\#,\theta}])} \simeq \| f \|_{W^{1,p}(\Gamma(t))} 
\simeq \| f \circ {\bs X}\circ\widehat{\bs\Phi}_H\|_{W^{1,p}(\widehat\Gamma_H)} ,\\
&\| f \circ {\bs X} \circ({\bs X}_H^{\#,\theta})^{-1} \|_{H^{s}(\Gamma[{\bs X}_H^{\#,\theta}])} \simeq \| f \|_{H^{s}(\Gamma(t))} 
\simeq \| f \circ {\bs X}\circ\widehat{\bs\Phi}_H\|_{H^{s}(\widehat\Gamma_H)} ,\,\,s\in[0,1],
\end{align}
\end{subequations} 
where the last result follows from interpolating the first two results with $p=2$. 

Since $\grad\cdot{\bs u}$ and all its partial derivatives vanish in  $\overline{\Omega^f(t)}\supset\Gamma(t)$, by using a Taylor expansion of $\grad\cdot{\bs u}$ at a point on $\Gamma(t)$ we obtain 
\begin{subequations}\label{div-u-small}
\begin{align}
|\grad\cdot{\bs u}| &= O({\rm dist}(x,\Gamma(t))^m)\,\,\,\mbox{near}\,\,\,\Gamma(t)\,\,\,\mbox{for any fixed integer $m\geq 1$},
\end{align}
with a constant depending on $m$ and on $\|{\bs u}\|_{C^{m+1}(\overline\Omega)}$.
Since $|{\bs X}_H^\theta\circ\widehat{\bs\Phi}_H^{-1}-{\bs X}|\lesssim h$ and $|{\bs X}_H^{\#,\theta}-{\bs X}|\lesssim h$, as implied by \eqref{aux865} and \eqref{ind-1-1}, it follows that $|\grad\cdot{\bs u}|$ is arbitrarily high-order small near $\Gamma(t)$, i.e., 
\begin{align}\label{div-u-small-2}
|\grad\cdot{\bs u}| &= O(h^m)\,\,\,\mbox{on}\,\,\,\Gamma[{\bs X}_H^\theta]\,\,\,\mbox{and}\,\,\, \Gamma[{\bs X}_H^{\#,\theta}]\,\,\,\mbox{for all $\theta\in[0,1]$ and $m\geq 1$}.
\end{align}
\end{subequations} 

Similarly, let ${\bs X}_H^{*,\theta} \eqd (1-\theta){\bs X} + \theta {\bs X}_H^* \circ \widehat{\bs\Phi}_H^{-1} : \widehat\Gamma \rightarrow \mathbb{R}^d$ be the map which determines a family of interfaces $\Gamma[{\bs X}_H^{*,\theta}]$, $\theta\in[0,1]$, intermediate between $\Gamma(t)$ and $\Gamma[{\bs X}_H^*]$. When $\theta$ moves from $0$ to $1$, the intermediate interface $\Gamma[{\bs X}_H^{*,\theta}]$ moves from $\Gamma(t)$ to $\Gamma[{\bs X}_H^*]$ with the velocity ${\bs e}_{\bs X}^{*,\theta}
= ({\bs X}_H^* \circ \widehat{\bs\Phi}_H^{-1} - {\bs X} ) \circ ({\bs X}_H^{*,\theta})^{-1} $, which satisfies the following estimate:
\begin{align}\label{def-ej-s-theta}
\| {\bs e}_{\bs X}^{*,\theta} \|_{L^\infty(\Gamma[{\bs X}_H^{*,\theta}])} + H \| {\bs e}_{\bs X}^{*,\theta} \|_{W^{1,\infty}(\Gamma[{\bs X}_H^{*,\theta}])} 
\lesssim H^{k+1} .   
\end{align}
For the same reason as in \eqref{norm-equiv-interpl-1}, \eqref{norm-equiv-interpl-2} and \eqref{norm-equiv-2}, the following norm-equivalence relations hold for $t\in[0,t_*]$ and $1\le p\le\infty$: 
\begin{subequations}\label{norm-X-s-theta}
\begin{align}
&\| f \circ {\bs X} \circ({\bs X}_H^{*,\theta})^{-1} \|_{L^p(\Gamma[{\bs X}_H^{*,\theta}])} \simeq \| f \|_{L^p(\Gamma(t))} \simeq \| f \circ {\bs X}\circ\widehat{\bs\Phi}_H\|_{L^p(\widehat\Gamma_H)} , \\
&\| f \circ {\bs X} \circ({\bs X}_H^{*,\theta})^{-1} \|_{W^{1,p}(\Gamma[{\bs X}_H^{*,\theta}])} \simeq \| f \|_{W^{1,p}(\Gamma(t))} 
\simeq \| f \circ {\bs X}\circ\widehat{\bs\Phi}_H\|_{W^{1,p}(\widehat\Gamma_H)} .
\end{align}
\end{subequations} 

\subsection{Transport formulas for parametrized domains and interfaces}

We denote $\vel_H^\theta \eqd [(1-\theta)\vel_H^*+\theta\vel_H] \circ ({\bs X}_H^\theta)^{-1}$. Then the interface $\Gamma[\bs X_H^\theta]$ moves with velocity $\vel_H^\theta$ as time $t$ varies in $(0,t_\star]$, and the interface $\Gamma[\bs X_H^\theta]$ moves with velocity ${\bs e}_\dep^\theta = {\bs e}_{\bs d}\circ ({\bs X}_H^\theta)^{-1}$ as $\theta$ varies in $[0,1]$. Under this setting, the following formulas have been proved in the literature (see, e.g.,  \cite{Dziuk-Elliott-2013,Elliott-Ranner-2021,Lidstrom-2011}) and will be frequently used further in the stability analysis: 
\begin{subequations}\label{dt-int-Omega}
\begin{align}
\frac{\d}{\d t} \int_{\Omega^{\rm f}[\bs X_H^\theta]} g 
&= \int_{\Omega^{\rm f}[\bs X_H^\theta]} \partial_t g + \int_{\Gamma[\bs X_H^\theta]}  g \, \vel_H^\theta \cdot {\bs n}_{\Gamma[\bs X_H^\theta]} \,,
\label{dt-int-Omega-1}\\
\frac{\d}{\d \theta} \int_{\Omega^{\rm f}[\bs X_H^\theta]} g 
&= \int_{\Omega^{\rm f}[\bs X_H^\theta]} \partial_\theta g + \int_{\Gamma[\bs X_H^\theta]} g \, {\bs e}_{\bs d}^\theta\cdot {\bs n}_{\Gamma[\bs X_H^\theta]} \,, 
\label{dt-int-Omega-2}\\
\frac{\d}{\d \theta} \int_{\Gamma[\bs X_H^\theta]} g^\theta 
&= \int_{\Gamma[\bs X_H^\theta]} \partial_\theta^\bullet g^\theta + \int_{\Gamma[\bs X_H^\theta]} g^\theta \, \grad_{\Gamma[\bs X_H^\theta]}\cdot {\bs e}_{\bs d}^\theta \, ,
\label{dt-int-Gamma-1} 
\end{align} 
where $\partial_\theta^\bullet g^\theta$ denotes the material derivative of $g^\theta$ with respect to the velocity ${\bs e}_{\bs d}^\theta$ of the surface $\Gamma[\bs X_H^\theta]$ (velocity with respect to $\theta$). In particular, if $g^\theta \eqd g|_{\Gamma[\bs X_H^\theta]}$ for a function $g$ defined on $\Omega$, then $\partial_\theta^\bullet g = {\bs e}_{\bs d}^\theta\cdot\grad g$ on $\Gamma[\bs X_H^\theta]$; if $g^\theta \eqd g\circ ({\bs X}_H^\theta)^{-1}$ for a function $g$ defined on $\widehat\Gamma_H$, then $\partial_\theta^\bullet g^\theta = 0$ on $\Gamma[\bs X_H^\theta]$. 

We shall also use the following standard evolving-surface identities (see, e.g., \cite{Dziuk-Elliott-2013}): if $\Gamma^\theta$ is a surface which moves with velocity ${\bs V}^\theta$ and $g^\theta$ is a function defined on the surface $\Gamma^\theta$, then
\begin{align*}
\partial_\theta^\bullet {\bs n}_{\Gamma^\theta}
  &=
  -(\nabla_{\Gamma^\theta}{\bs V}^\theta)^{\rm T} {\bs n}_{\Gamma^\theta},\\
|\partial_\theta^\bullet \nabla_{\Gamma^\theta} g^\theta|
  &\lesssim
  |\nabla_{\Gamma^\theta}{\bs V}^\theta|
  |\nabla_{\Gamma^\theta}g^\theta|
  +
  |\nabla_{\Gamma^\theta}\partial_\theta^\bullet g^\theta| .
\end{align*}
In particular, since $\Gamma[{\bs X}_H^\theta]$ moves with velocity ${\bs e}_{\bs d}^\theta$ as $\theta$ changes and moves with velocity $\vel_H^\theta = [(1-\theta)\vel_H^*+\theta\vel_H] \circ ({\bs X}_H^\theta)^{-1}$ as $t$ changes, we have 
\begin{align}\label{dt-int-Gamma-1b} 
\partial_\theta^\bullet {\bs n}_{\Gamma[{\bs X}_H^\theta]}
&=
-(\nabla_{\Gamma[{\bs X}_H^\theta]}{\bs e}_{\bs d}^\theta)^{\rm T} {\bs n}_{\Gamma[{\bs X}_H^\theta]}
\quad\mbox{and}\quad 
\partial_t^\bullet {\bs n}_{\Gamma[{\bs X}_H^\theta]}
= 
-(\nabla_{\Gamma[{\bs X}_H^\theta]}\vel_H^\theta)^{\rm T} {\bs n}_{\Gamma[{\bs X}_H^\theta]}, \\
\label{dt-int-Gamma-1c} 
|\partial_{\theta}^\bullet\nabla_{\Gamma[{\bs X}_H^\theta]}g^\theta|
&\le
C|\nabla_{\Gamma[{\bs X}_H^\theta]}{\bs e}_{\bs d}^\theta|
|\nabla_{\Gamma[{\bs X}_H^\theta]}g^\theta| 
+ |\nabla_{\Gamma[{\bs X}_H^\theta]}\partial_{\theta}^\bullet g^\theta| .
\end{align}

The bulk transport identities \eqref{dt-int-Omega-1}--\eqref{dt-int-Omega-2} are consequences of the Reynolds transport theorem for moving domain (see, for example, \cite{Elliott-Ranner-2021,Lidstrom-2011}). The surface identity \eqref{dt-int-Gamma-1} is the standard surface transport theorem (see \cite{Dziuk-Elliott-2013}). Similarly to \eqref{dt-int-Gamma-1}, the following identity holds for the surface $\Gamma[{\bs X}_H^{*,\theta}]$: 
\begin{align}
\frac{\d}{\d \theta} \int_{\Gamma[{\bs X}_H^{*,\theta}]} g^\theta 
&= \int_{\Gamma[\bs X_H^{*,\theta}]} \partial_\theta^\bullet g^\theta + \int_{\Gamma[\bs X_H^{*,\theta}]} g^\theta \, \grad_{\Gamma[\bs X_H^{*,\theta}]}\cdot {\bs e}_{\bs X}^{*,\theta} , \label{dt-int-Gamma-2} 
\end{align}
where ${\bs e}_{\bs X}^{*,\theta}$ is defined in Section \ref{section:norm-equiv-2} as the velocity of surface $\Gamma[{\bs X}_H^{*,\theta}]$ with respect to $\theta$. 
\end{subequations}

\subsection{Trace inequality at the numerical interface}
\label{section:trace}

Thanks to \eqref{ind-1}, the intermediate interfaces $\Gamma[{\bs X}_H^\theta]$, $\theta\in[0,1]$, are all contained in the extended fluid region $\Omega^{\rm f}_{\rm ext}[\bs X_H]$. Therefore, it is possible to derive a trace inequality which controls the $L^2$ norm of a function on $\Gamma[\bs X_H^\alpha]$ by the $H^1$ norm of the function on $\Omega^{\rm f}_{\rm ext}[\bs X_H]$. 

To this end, we invoke the fundamental theorem of calculus together with the surface Reynolds transport theorem. For a finite element function or smooth function $f$, using the shorthand ${\bs e}_{\bs d}^\theta\eqd{\bs e}_{\bs d}\circ({\bs X}_H^\theta)^{-1}$, we obtain
\begin{align}\label{L2-f-interface}
\begin{aligned}
\int_{\Gamma[\bs X_H^\alpha]} | f |^2 - \int_{\Gamma[\bs X_H^*]} | f |^2 
&= \int_0^{\alpha} \Big( \frac{\d}{\d\theta}\int_{\Gamma[\bs X_H^\theta]} | f |^2\Big) \d\theta \\
&= \int_0^{\alpha} \int_{\Gamma[\bs X_H^\theta]} ( 2f \partial_\theta^\bullet f + |f|^2\grad_{\Gamma[\bs X_H^\theta]}\cdot {\bs e}_{\bs d}^\theta) \d\theta \\
&= \int_0^{\alpha} \int_{\Gamma[\bs X_H^\theta]}( 2f ({\bs e}_{\bs d}^\theta\cdot \grad f) + |f|^2\grad_{\Gamma[\bs X_H^\theta]}\cdot {\bs e}_{\bs d}^\theta) \d\theta .
\end{aligned}
\end{align}
Inequalities \eqref{ind-1}--\eqref{ind-2} and \eqref{norm-equiv-2} imply that 
\begin{equation}\label{aux1006}
\|{\bs e}_{\bs d}^\theta\|_{L^{\infty}(\Gamma[\bs X_H^\theta])} \le C h ,\quad 
\|\grad_{\Gamma[\bs X_H^\theta]}\cdot {\bs e}_{\bs d}^\theta\|_{L^\infty(\Gamma[\bs X_H^\theta])}\le C .
\end{equation}
Using the uniform  trace inequalities (see, e.g.,~\cite{GuzmOlsh}) for finite element functions on the fluid mesh (note that $\Gamma[{\bs X}_H^\theta]$ is contained in the extended fluid region $\Omega^{\rm f}_{\rm ext}[{\bs X}_H]$) one gets:  
$$
\|f\|_{L^2(\Gamma[\bs X_H^\theta])} \le Ch^{-\frac12}\|f\|_{L^2(\Omega^{\rm f}_{\rm ext}[\bs X_H])},\quad 
\|\grad f\|_{L^2(\Gamma[\bs X_H^\theta])} \le Ch^{-\frac12}\|\grad f\|_{L^2(\Omega^{\rm f}_{\rm ext}[\bs X_H])} , 
$$ 
for any finite element function $f$ in either ${\bs V}_h^{\rm f}$ or $Q_h^{\rm f}$, with a constant $C>0$ which is independent of $h$ and the position of the interface $\Gamma[\bs X_H^\theta]$ in the mesh. Therefore, substituting these estimates into \eqref{L2-f-interface} and using \eqref{aux1006} gives
$$
\int_{\Gamma[\bs X_H^\alpha]} | f |^2 - \int_{\Gamma[\bs X_H^*]} | f |^2
\le C \|f\|_{L^2(\Omega^{\rm f}_{\rm ext}[\bs X_H])} \|\grad f\|_{L^2(\Omega^{\rm f}_{\rm ext}[\bs X_H])} 
+  C \int_0^\alpha \int_{\Gamma[\bs X_H^\theta]} | f |^2\d\theta
$$
for $\alpha\in[0,1]$ and for any $f$ in ${\bs V}_h^{\rm f}$ or $Q_h^{\rm f}$.
By applying Gronwall's inequality, we obtain the following result for $f$ in ${\bs V}_h^{\rm f}$ or $Q_h^{\rm f}$:   
\begin{align}\label{L2-f-trace-1}
\begin{aligned}
\max_{\theta\in[0,1]} \| f \|_{L^2(\Gamma[\bs X_H^\theta])}^2
\lesssim \| f \|_{L^2(\Gamma[\bs X_H^*])}^2 + \|f\|_{L^2(\Omega^{\rm f}_{\rm ext}[\bs X_H])} \|\grad f\|_{L^2(\Omega^{\rm f}_{\rm ext}[\bs X_H])} . 
\end{aligned}
\end{align}

Similarly, using the definition of 
$
{\bs X}_H^{*,\theta} 
$
and $\Gamma[{\bs X}_H^{*,\theta}]$
in \eqref{def-XH-star-theta} for $\theta\in [0,1]$, we set 
\begin{equation}\label{def-eXs}
{\bs e}_{\bs X}^*\eqd {\bs X}_H^*\circ \widehat{\bs\Phi}_H^{-1} - {\bs X} \,\,\,\mbox{on}\,\,\, \widehat\Gamma, 
\quad
{\bs e}_{\bs X}^{*,\theta}\eqd{\bs e}_{\bs X}^*\circ ({\bs X}_H^{*,\theta})^{-1} \,\,\,\mbox{on}\,\,\, \Gamma[{\bs X}_H^{*,\theta}] . 
\end{equation}
We then compare $\|f\|_{L^2(\Gamma[\bs X_H^*])}^2$ with $\|f\|_{L^2(\Gamma[\bs X])}^2$ as follows:
\begin{align}\label{L2-f-interface-2}
\begin{aligned}
\int_{\Gamma[\bs X_H^{*,\alpha}]} | f |^2 - \int_{\Gamma[\bs X]} | f |^2 
&= \int_0^\alpha \Big( \frac{\d}{\d\theta}\int_{\Gamma[{\bs X}_H^{*,\theta}]} | f |^2\Big) \d\theta \\
&= \int_0^\alpha \int_{\Gamma[\bs X_H^{*,\theta}]} 2f \partial_\theta^\bullet f + |f|^2\grad_{\Gamma[\bs X_H^{*,\theta}]}\cdot {\bs e}_{\bs X}^{*,\theta} \d\theta \\
&= \int_0^\alpha \int_{\Gamma[\bs X_H^{*,\theta}]} 2f ({\bs e}_{\bs X}^{*,\theta}\cdot \grad f) + |f|^2\grad_{\Gamma[\bs X_H^{*,\theta}]}\cdot {\bs e}_{\bs X}^{*,\theta} \d\theta .
\end{aligned}
\end{align}
From \eqref{interpl-error-X} and \eqref{norm-equiv-interpl-3} it follows that
\begin{equation}\label{aux1048}
\|{\bs e}_{\bs X}^{*,\theta}\|_{L^{\infty}(\Gamma[\bs X_H^{*,\theta}])} \le CH^{k+1} , \quad 
\|\grad_{\Gamma[\bs X_H^{*,\theta}]}\cdot {\bs e}_{\bs X}^{*,\theta}\|_{L^\infty(\Gamma[\bs X_H^{*,\theta}])}\le CH^k .
\end{equation}
Using uniform trace inequalities for finite element functions $f$ on the fluid mesh and noting that $\Gamma[{\bs X}_H^{*,\theta}] \subset \Omega^{\rm f}_{\rm ext}[{\bs X}_H]$, we have
$$
\|f\|_{L^2(\Gamma[\bs X_H^{*,\theta}])} \le Ch^{-\frac12}\|f\|_{L^2(\Omega^{\rm f}_{\rm ext}[\bs X_H])},
\quad
\|\grad f\|_{L^2(\Gamma[\bs X_H^{*,\theta}])} \le Ch^{-\frac12}\|\grad f\|_{L^2(\Omega^{\rm f}_{\rm ext}[\bs X_H])}.
$$ 
Substituting these bounds into \eqref{L2-f-interface-2}, invoking \eqref{aux1048}, and assuming $H^{k+1}\le c\,h$, we obtain
\begin{align}\label{L2-f-trace-err}
\begin{aligned}
&\int_{\Gamma[\bs X_H^{*,\alpha}]} | f |^2 - \int_{\Gamma[\bs X]} | f |^2\\
&\le C \|f\|_{L^2(\Omega^{\rm f}_{\rm ext}[\bs X_H])} \|\grad f\|_{L^2(\Omega^{\rm f}_{\rm ext}[\bs X_H])} 
+ CH^k\int_0^\alpha\int_{\Gamma[\bs X_H^{*,\theta}]} | f |^2 \d\theta 
\quad\mbox{for}\,\,\,\alpha\in[0,1].
\end{aligned}
\end{align}
Applying Gronwall’s inequality yields, for $f\in {\bs V}_h^{\rm f}$ or $f\in Q_h^{\rm f}$, 
\begin{align}\label{L2-f-trace-2}
\begin{aligned}
\max_{\alpha\in[0,1]}\| f \|_{L^2(\Gamma[{\bs X}_H^{*,\alpha}])}^2
\lesssim \| f \|_{L^2(\Gamma(t))}^2 + \|f\|_{L^2(\Omega^{\rm f}_{\rm ext}[\bs X_H])} \|\grad f\|_{L^2(\Omega^{\rm f}_{\rm ext}[\bs X_H])} . 
\end{aligned}
\end{align}

For the smooth interface $\Gamma(t)$, the following trace inequalities hold with a constant $C$ that is uniform in time:
\begin{align} \label{aux1076}
\begin{aligned}
\| f \|_{L^2(\Gamma(t))}^2
&\le C\,\|f\|_{L^2(\Omega^{\rm f}[\bs X])}\, \| f\|_{H^1(\Omega^{\rm f}[\bs X])} \\
&\le C\,\|f\|_{L^2(\Omega^{\rm f}_{\rm ext}[\bs X_H])}\, \| f\|_{H^1(\Omega^{\rm f}_{\rm ext}[\bs X_H])}.
\end{aligned}
\end{align}
Combining \eqref{L2-f-trace-1} and \eqref{L2-f-trace-2}, we conclude that for $f\in{\bs V}_h^{\rm f}$ or $f\in Q_h^{\rm f}$,
\begin{subequations} \label{L2-f-trace}
\begin{align}
\| f \|_{L^2(\Gamma[{\bs X}_H^\theta])}^2
&\le C\,\|f\|_{L^2(\Omega^{\rm f}_{\rm ext}[\bs X_H])}\, \| f\|_{H^1(\Omega^{\rm f}_{\rm ext}[\bs X_H])}.
\end{align}
In particular, evaluating at the endpoints $\theta=0$ and $\theta=1$ yields
\begin{align}
\| f \|_{L^2(\Gamma[{\bs X}_H])}^2
&\le C\,\|f\|_{L^2(\Omega^{\rm f}_{\rm ext}[\bs X_H])}\, \| f\|_{H^1(\Omega^{\rm f}_{\rm ext}[\bs X_H])}, \\
\| f \|_{L^2(\Gamma[{\bs X}_H^*])}^2
&\le C\,\|f\|_{L^2(\Omega^{\rm f}_{\rm ext}[\bs X_H])}\, \| f\|_{H^1(\Omega^{\rm f}_{\rm ext}[\bs X_H])}.
\end{align}
\end{subequations}

\section{Consistency estimates: Part I}\label{section:consistency}

In this section, we establish estimates for the remainder $R_{h,H}^*({\bs v}_h, q_h , {\bs w}_H, 0)$ of the error equation \eqref{error-eq}, whose definition is given in \eqref{def-RhH} (with $\bs \lambda_H=\bs 0$ therein). This result will be used in Sections \ref{section:ep}--\ref{section:dteu} in establishing stability estimates. For simplicity of notation, we define the following penalized $H^1$ and $L^2$ norms:
\begin{subequations}\label{1-0-norm}
\begin{align}
\|{\bs v}_h\|_{1,\Omega[{\bs X}_H]}^2 &\eqd\|{\bs v}_h\|_{H^1(\Omega^{\rm f}[{\bs X}_H])}^2+ g_h^{\rm u}({\bs v}_h,{\bs v}_h),\\
\|{\bs v}_h\|_{0,\Omega[{\bs X}_H]}^2 &\eqd\|{\bs v}_h\|_{L^2(\Omega^{\rm f}[{\bs X}_H])}^2+ h^2 g_h^{\rm u}({\bs v}_h,{\bs v}_h),\\
\|q_h\|_{0,\Omega[{\bs X}_H]}^2 &\eqd\|q_h\|_{L^2(\Omega^{\rm f}[\bs X_H])}^2+ g_h^{\rm p}(q_h,q_h)
\end{align}
\end{subequations}
for $({\bs v}_h,q_h) \in \bs V_h^{\rm f}
\times  Q_h^{\rm f}$.
Owing to \cite[Lemma~5.2]{lehrenfeld2019eulerian}, the $H^1(\Omega^{\rm f}_{\rm ext}[\bs X_H])$ and $L^2(\Omega^{\rm f}_{\rm ext}[\bs X_H])$ norms of a finite element function are bounded by the $H^1$ and $L^2$ norms on $\Omega^{\rm f}[\bs X_H]$ plus a ghost penalty term:
\begin{subequations}\label{H1ext}
\begin{align}
\|\bv_h\|_{H^1(\Omega^{\rm f}_{\rm ext}[\bs X_H])}^2
    &\le C\,  \|{\bs v}_h\|_{1,\Omega[{\bs X}_H]}^2 ,\\
\|\bv_h\|_{L^2(\Omega^{\rm f}_{\rm ext}[\bs X_H])}^2
    &\le C\,  \|{\bs v}_h\|_{0,\Omega[{\bs X}_H]}^2 
\end{align}
\end{subequations}
for all $\bv_h\in {\bs V}_h^{\rm f}$ and 
with a  constant $C$ depending only on $\kappa$ from \eqref{Extended-region}. 
From \eqref{H1ext} the following equivalence relations follow: 
\begin{subequations} \label{0-0-norm}
\begin{align}
\|\bv_h\|_{1,\Omega[{\bs X}_H]}^2 &\simeq\|{\bs v}_h\|_{H^1(\Omega^{\rm f}_{\rm ext}[\bs X_H])}^2+ g_h^{\rm u}(\bv_h,\bv_h) , \label{0-0-norm-a}\\
\|\bv_h\|_{0,\Omega[{\bs X}_H]}^2 &\simeq\|{\bs v}_h\|_{L^2(\Omega^{\rm f}_{\rm ext}[\bs X_H])}^2 + h^2g_h^{\rm u}(\bv_h,\bv_h)   , \label{0-0-norm-a2}\\
\|q_h\|_{0,\Omega[{\bs X}_H]}^2 &\simeq\|q_h\|_{L^2(\Omega^{\rm f}_{\rm ext}[\bs X_H])}^2+ g_h^{\rm p}(q_h,q_h) \label{0-0-norm-b}
\end{align}
\end{subequations}
for $({\bs v}_h,q_h) \in \bs V_h^{\rm f}
\times  Q_h^{\rm f}$.
In addition, we will use the following Korn inequality with ghost-penalty terms: 
\begin{align}\label{Korn-inequlaity}
\|{\bs v}_h\|_{H^1(\Omega^f[{\bs X}_H])}^2 + g_h^{\rm u}({\bs v}_h,{\bs v}_h)
\lesssim
\|{\bs v}_h\|_{L^2(\Omega^f[{\bs X}_H])}^2
+
\|{\bs D}({\bs v}_h)\|_{L^2(\Omega^f[{\bs X}_H])}^2
+
g_h^{\rm u}({\bs v}_h,{\bs v}_h) ,
\end{align}
which is a consequence of the standard CutFEM extension estimate (for example, see 
\cite[Lemma~5.4, equations~(5.38)--(5.40)]{MassingSchottWall2018}
and
\cite[Lemma~4.1, equations~(4.3)--(4.4)]{BurmanClausMassing2015}) and the Korn inequality (for example, see 
\cite[equations~(5.13)--(5.17)]{MassingSchottWall2018}
and
\cite[equations~(4.10)--(4.12)]{BurmanClausMassing2015}).

\begin{lemma}\label{Lemma:RhH}
Under the assumptions of Theorem \ref{THM} and \eqref{ind-1}, the remainder $R_{h,H}^*$ defined in \eqref{def-RhH} satisfies the following estimate{\rm:} 
\begin{align}\label{aux1129}
| R_{h,H}^*({\bs v}_h, q_h , {\bs w}_H, 0) | 
&\lesssim (h^k+H^k) (\|{\bs v}_h\|_{1,\Omega[{\bs X}_H]} + \|q_h\|_{0,\Omega[{\bs X}_H]} 
+ \|{\bs w}_H\|_{H^1(\widehat{\Omega}_H^{\rm s})} ) .
\end{align}
\end{lemma}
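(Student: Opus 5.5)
The plan is to estimate the terms \eqref{RhH-a}--\eqref{RhH-l} of $R_{h,H}^*$ one group at a time, with $\bs\lambda_H=\bs 0$ so that \eqref{RhH-j}--\eqref{RhH-k} drop out. The groups are:
\begin{itemize}
\item the bulk fluid terms \eqref{RhH-a}--\eqref{RhH-e};
\item the solid terms \eqref{RhH-f}--\eqref{RhH-g};
\item the interface traction terms \eqref{RhH-h}--\eqref{RhH-i};
\item the ghost penalty terms \eqref{RhH-l}.
\end{itemize}

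\emph{Bulk fluid terms.} Each has the form $\int_{\Omega^{\rm f}[{\bs X}_H^*]} F_h - \int_{\Omega^{\rm f}(t)} F$, where $F_h$ is built from $(\bu_h^*,p_h^*)$ and $F$ from the smooth extended $(\bu,p)$ given by (C3). I split each term as
$$
\int_{\Omega^{\rm f}[{\bs X}_H^*]}(F_h-F)+\Big(\int_{\Omega^{\rm f}[{\bs X}_H^*]}-\int_{\Omega^{\rm f}(t)}\Big)F .
$$
The first piece is bounded by \eqref{interpl-u}. This gives $h^k$ times $\|\bv_h\|_{H^1}$, $\|q_h\|_{L^2}$ or $\|\bv_h\|_{L^2}$ on $\Omega^{\rm f}_{\rm ext}[{\bs X}_H]$. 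Since $\Omega^{\rm f}[{\bs X}_H^*]\subset\Omega^{\rm f}_{\rm ext}[{\bs X}_H]$ by \eqref{ind-1-1} and \eqref{interpl-error-X}, these norms are controlled by $\|\bv_h\|_{1,\Omega[{\bs X}_H]}$ and $\|q_h\|_{0,\Omega[{\bs X}_H]}$ through \eqref{0-0-norm}.

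The second piece is a domain perturbation. I will write it with the Reynolds formula \eqref{dt-int-Omega-2} transplanted to the family $\Gamma[{\bs X}_H^{*,\theta}]$:
$$
\Big(\int_{\Omega^{\rm f}[{\bs X}_H^*]}-\int_{\Omega^{\rm f}(t)}\Big)F=\int_0^1\int_{\Gamma[{\bs X}_H^{*,\theta}]}F\,{\bs e}_{\bs X}^{*,\theta}\cdot{\bs n}\,\d\theta .
$$
Then \eqref{def-ej-s-theta} gives $\|{\bs e}_{\bs X}^{*,\theta}\|_{L^\infty}\lesssim H^{k+1}$. The $L^2$ norm of $F$ (e.g.\ $\bs D(\bu):\bs D(\bv_h)$, or $q_h\grad\cdot\bu$) on the intermediate interfaces is bounded by the trace inequality \eqref{L2-f-trace-2} together with \eqref{aux1076}. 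This yields at worst a factor $h^{-1/2}$, so the term is of order $H^{k+1}h^{-1/2}\lesssim H^k$. For \eqref{RhH-e} the second piece is even of arbitrarily high order by \eqref{div-u-small}.

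\emph{Solid terms.} For \eqref{RhH-f}--\eqref{RhH-g}, I pull the integrals over $\widehat\Omega^{\rm s}$ back to $\widehat\Omega^{\rm s}_H$ via $\widehat{\bs\Phi}_H$. The Jacobian and metric perturbations are $O(H^k)$ by \eqref{GeomApprox}, and the interpolation errors of $\dep$ and $\vel$ are $O(H^k)$ in $W^{1,\infty}$ by \eqref{interpl-error-d}. Together these give $H^k\|\bw_H\|_{H^1(\widehat\Omega^{\rm s}_H)}$.

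\emph{Interface traction terms.} For \eqref{RhH-h}--\eqref{RhH-i}, the two integrals both equal integrals over $\widehat\Gamma_H$ after transformation, but with different area elements. Parametrizing by ${\bs X}_H^{*,\theta}\circ\widehat{\bs\Phi}_H$ and using the surface transport formula \eqref{dt-int-Gamma-2}, I decompose the difference into three parts. The first is the interpolation error $\widehat{\bs\sigma}_H^*-\widehat{\bs\sigma}\circ\widehat{\bs\Phi}_H=O(H^{k+1})$. The second is the area-element variation $\grad_\Gamma\cdot{\bs e}_{\bs X}^{*,\theta}=O(H^k)$. The third is the variation of $\bv_h$ along the path, ${\bs e}_{\bs X}^{*,\theta}\cdot\grad\bv_h$, which is $O(H^{k+1})|\grad\bv_h|$. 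The test functions are then controlled as follows: $\bv_h$ by the trace bounds \eqref{L2-f-trace}, \eqref{L2-f-trace-2} and \eqref{H1ext}, with the factor $h^{-1/2}$ absorbed as before; and $\bw_H$ by the standard trace inequality on $\widehat\Omega^{\rm s}_H$ combined with the norm equivalences \eqref{norm-X-s-theta}.

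\emph{Ghost penalty terms.} For \eqref{RhH-l}, the jumps of normal derivatives of the smooth $\bu$ and $p$ vanish. Hence $g_h^{\rm u}(\bu_h^*,\bv_h)=g_h^{\rm u}(\bu_h^*-\bu,\bv_h)$, and by Cauchy--Schwarz with face-trace and interpolation estimates this is bounded by $h^k g_h^{\rm u}(\bv_h,\bv_h)^{1/2}$. The same argument applies to $g_h^{\rm p}$ and to $\epsilon_h g_h^{\rm u}(\partial_t\bu_h^*,\cdot)$, using $\epsilon_h\le h$.

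The main obstacle I expect is the pressure term \eqref{RhH-d}. There $p_h^*-p=O(h^k)$ only, so the test function must be measured in the $H^1$ norm, which is acceptable. However, the domain perturbation involves $p\,\grad\cdot\bv_h$ on the intermediate interfaces, and I must make sure the $h^{-1/2}$ trace loss is compensated by $H^{k+1}$ rather than $H^k$. A related difficulty is checking that every intermediate surface stays inside $\Omega^{\rm f}_{\rm ext}[{\bs X}_H]$, so that the uniform cut trace inequalities apply; I will verify this via \eqref{ind-1-1}.
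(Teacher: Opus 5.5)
Your proposal is correct and follows essentially the same route as the paper. Both use the split into an interpolation part plus a domain perturbation along $\Gamma[{\bs X}_H^{*,\theta}]$ via \eqref{dt-int-Omega-2}, the same surface-transport decomposition of \eqref{RhH-h}$+$\eqref{RhH-i}, and the same Cauchy--Schwarz argument for the ghost penalties; your direct pullback of the solid terms by $\widehat{\bs\Phi}_H$ is just a more explicit version of what the paper leaves as ``the same way as \eqref{RhH-c}''.

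The pressure term you flag is no harder than \eqref{RhH-c}. The perturbation velocity ${\bs e}_{\bs X}^{*,\theta}$ is $O(H^{k+1})$, and the $h^{-1/2}$ loss comes from the elementwise inverse trace estimate $\|\grad\bv_h\|_{L^2(\Gamma[{\bs X}_H^{*,\theta}])}\lesssim h^{-1/2}\|\grad\bv_h\|_{L^2(\Omega^{\rm f}_{\rm ext}[{\bs X}_H])}$, giving $H^{k+1}h^{-1/2}\lesssim H^k$. You should cite that estimate rather than \eqref{L2-f-trace-2}, which is stated only for continuous finite element functions and so does not apply to $\grad\bv_h$.
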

\begin{proof}
Let us consider, for instance, the bulk term \eqref{RhH-c} and the interface term \eqref{RhH-h}$+$\eqref{RhH-i}. Then \eqref{RhH-a}--\eqref{RhH-b} and \eqref{RhH-d}--\eqref{RhH-g} can be estimated in the same way as \eqref{RhH-c}, and \eqref{RhH-j}$+$\eqref{RhH-k}=0 because of ${\bs\lambda}_H=0$ here. 

Let $\Omega^{\rm f}[{\bs X}_H^{*,\theta}]$ and $\Gamma[{\bs X}_H^{*,\theta}]$ be the bulk domain and interface, respectively, determined by the map ${\bs X}_H^{*,\theta} \eqd (1-\theta){\bs X} + \theta {\bs X}_H^* \circ \widehat{\bs\Phi}_H^{-1} : \widehat\Omega^{\rm s} \rightarrow \mathbb{R}^d$; see Section \ref{section:norm-equiv-2}. Then, by using the transport formula in \eqref{dt-int-Omega-2}, we have 
\begin{align*}
\eqref{RhH-c}
&= \int_{\Omega^{\rm f}[\bs X_H^*]} \bs D({\bs u}_h^*-{\bs u}): \bs D(\bv_h)  +  \int_0^1\frac{\d}{\d\theta}\int_{\Omega^{\rm f}[\bs X_H^{*,\theta}]} \bs D(\bu): \bs D(\bv_h) \d\theta \notag\\
&= \int_{\Omega^{\rm f}[\bs X_H^*]} \bs D({\bs u}_h^*-{\bs u}): \bs D(\bv_h)  +  \int_0^1\int_{\Gamma[\bs X_H^{*,\theta}]} ({\bs e}_{\bs X}^{*,\theta}\cdot{\bs n}_{\Gamma[\bs X_H^{*,\theta}]}) \bs D(\bu): \bs D(\bv_h) \d\theta  \notag\\
&\lesssim h^k\|{\bs v}_h\|_{H^1(\Omega^{\rm f}[\bs X_H^*])}
+ H^{k+1} \max_{\theta\in[0,1]} \|{\bs v}_h\|_{H^1(\Gamma[\bs X_H^{*,\theta}])} \notag\\
&\lesssim h^k\|{\bs v}_h\|_{H^1(\Omega^{\rm f}_{\rm ext}[\bs X_H])} 
+ H^{k+1}h^{-\frac12}\|{\bs v}_h\|_{H^1(\Omega^{\rm f}_{\rm ext}[\bs X_H])} \quad\mbox{(inverse trace estimate)} \notag\\
&\lesssim (h^k+H^k) \|{\bs v}_h\|_{1,\Omega[\bs X_H]} 
\quad\mbox{(under the condition $H\le h^{\frac12}$)}. 
\end{align*}

Using the notations $\widehat{\bs\sigma}^{*,\theta}\eqd \widehat{\bs\sigma} \circ ({\bs X}_H^{*,\theta})^{-1}$ and ${\bs w}_H^{*,\theta}\eqd {\bs w}_H \circ \widehat{\bs\Phi}_H^{-1} \circ ({\bs X}_H^{*,\theta})^{-1}$, we have 
\begin{subequations}
\begin{align}
\eqref{RhH-h} + \eqref{RhH-i} 
= & -  \int_{\Gamma[{\bs X}_H^*]} (\widehat{\bs \sigma} _H^* -\widehat{\bs\sigma}\circ\widehat{\bs\Phi}_H)\circ \bs ({\bs X}_H^*)^{-1} \cdot \big( \bv_h - \bw_H  \circ (\bs X_H^*)^{-1} \big) \notag\\
& - \Big[ \int_{\Gamma[{\bs X}_H^{*,1}]} \widehat{\bs\sigma}^{*,1} \cdot \big( \bv_h - \bw_H^{*,1} \big) - \int_{\Gamma[{\bs X}_H^{*,0}]} \widehat{\bs\sigma}^{*,0}  \cdot \big( \bv_h - \bw_H^{*,0} \big) \Big] \notag\\
= & -  \int_{\Gamma[{\bs X}_H^*]} (\widehat{\bs \sigma} _H^* -\widehat{\bs\sigma}\circ\widehat{\bs\Phi}_H)\circ \bs ({\bs X}_H^*)^{-1} \cdot \big( \bv_h - \bw_H  \circ (\bs X_H^*)^{-1} \big) \notag\\
& - \int_0^1 \frac{\d}{\d\theta} \int_{\Gamma[{\bs X}_H^{*,\theta}]} \widehat{\bs\sigma}^{*,\theta} \cdot \big( \bv_h - \bw_H^{*,\theta} \big) \d\theta \notag\\
= & -  \int_{\Gamma[{\bs X}_H^*]} (\widehat{\bs \sigma} _H^* -\widehat{\bs\sigma}\circ\widehat{\bs\Phi}_H)\circ \bs ({\bs X}_H^*)^{-1} \cdot \big( \bv_h - \bw_H  \circ (\bs X_H^*)^{-1} \big) \label{RhH-hi-a}\\
& - \int_0^1 \int_{\Gamma[{\bs X}_H^{*,\theta}]} \big[ \widehat{\bs\sigma}^{*,\theta} \cdot \big( \bv_h - \bw_H^{*,\theta} \big) \grad_{\Gamma[{\bs X}_H^{*,\theta}]}\cdot{\bs e}_{\bs X}^{*,\theta} 
+ \widehat{\bs\sigma}^{*,\theta} \cdot ({\bs e}_{\bs X}^{*,\theta} \cdot \grad \bv_h ) \big] \d\theta \label{RhH-hi-b} .
\end{align}
\end{subequations}
Then \eqref{RhH-hi-a} and \eqref{RhH-hi-b} can be bounded by using the estimates of interpolation error: 
$$
\|\widehat{\bs \sigma} _H^* - \widehat{\bs \sigma} \circ \widehat{\bs\Phi}_H\|_{L^\infty(\widehat\Gamma_H)} \lesssim H^{k+1} 
\quad\mbox{and}\quad
\| {\bs e}_{\bs X}^{*,\theta} \|_{W^{1,\infty}(\widehat{\Omega}^{\rm s}_H)} \lesssim H^k .
$$
This leads to the following estimates (using the norm equivalence in \eqref{norm-equiv-interpl-1}): 
\begin{align*}
|\eqref{RhH-hi-a} + \eqref{RhH-hi-b} | 
&\lesssim H^k 
 \max_{\theta\in[0,1]} ( \|{\bs v}_h\|_{L^2(\Gamma[{\bs X}_H^{*,\theta}])} + \|{\bs w}_H\|_{L^2(\widehat\Gamma_H)} ) 
+ H^{k+1} \max_{\theta\in[0,1]} \|\grad {\bs v}_h\|_{L^2(\Gamma[{\bs X}_H^{*,\theta}])} \notag\\
&\le CH^k (\|{\bs v}_h\|_{H^1(\Omega^{\rm f}_{\rm ext}[\bs X_H])}+ \|{\bs w}_H\|_{H^1(\widehat{\Omega}^{\rm s}_H)} ) 
+H^{k+1}h^{-\frac12} \|{\bs v}_h\|_{H^1(\Omega^{\rm f}_{\rm ext}[\bs X_H])} \notag\\
&\le CH^k (\|{\bs v}_h\|_{H^1(\Omega^{\rm f}_{\rm ext}[\bs X_H])}+ \|{\bs w}_H\|_{H^1(\widehat{\Omega}^{\rm s}_H)} ) 
\quad\mbox{(under the condition $H\lesssim h^{\frac12}$)} ,
\end{align*} 
where the second-to-last inequality uses the finite element inverse trace estimate. 

A particular point that requires careful attention concerns the treatment of the velocity ghost-penalty terms in \eqref{RhH-l}. In contrast to most existing CutFEM formulations, where the ghost penalty is applied only locally near the embedded interface, our approach employs a \emph{global} ghost-penalty stabilization (see, e.g., \cite{maxim-and-whal-25}). This distinction slightly complicates the analysis and requires separate estimates for the associated terms.

Recall that the ghost-penalty bilinear form defines a semi-norm on the discrete velocity space. Consequently, it vanishes for sufficiently smooth functions and allows us to apply the Cauchy--Schwarz inequality. Using these properties, we obtain
\[
\begin{split}
  g_h^{\mathrm{u}}(\boldsymbol{u}_h^*, \boldsymbol{v}_h)
  &= g_h^{\mathrm{u}}(\boldsymbol{u}_h^* - \boldsymbol{u}, \boldsymbol{v}_h)
  \le 
  |g_h^{\mathrm{u}}(\boldsymbol{u}_h^* - \boldsymbol{u}, \boldsymbol{u}_h^* - \boldsymbol{u})|^{1/2}
  |g_h^{\mathrm{u}}(\boldsymbol{v}_h, \boldsymbol{v}_h)|^{1/2} \\
  &\le
  |g_h^{\mathrm{u}}(\boldsymbol{u}_h^* - \boldsymbol{u}, \boldsymbol{u}_h^* - \boldsymbol{u})|^{1/2}
  \|\boldsymbol{v}_h\|_{1,\Omega[\boldsymbol{X}_H]} .
\end{split}
\]
To estimate the remaining term, we invoke the standard interpolation properties of finite element functions. In particular,
\[
\begin{split}
  |g_h^{\mathrm{u}}(\boldsymbol{u}_h^* - \boldsymbol{u}, \boldsymbol{u}_h^* - \boldsymbol{u})|
  &= \sum_{F\in \mathcal{F}_h} \sum_{j=1}^{k}
  h^{2j-1} \int_F
  \big|\jump{\partial^j_{\boldsymbol{n}_F} (\boldsymbol{u}_h^* - \boldsymbol{u})}\big|^2 \, \mathrm{d}s \\
  &\lesssim
  \sum_{K\in \mathcal{T}_h} \sum_{j=1}^{k}
  h^{2j-1}\!\left(
  h^{-1}\|\grad^{j}(\boldsymbol{u}_h^* - \boldsymbol{u})\|^2_{L^2(K)}
  + h\|\grad^{j+1}(\boldsymbol{u}_h^* - \boldsymbol{u})\|^2_{L^2(K)}
  \right).
\end{split}
\]
Then, applying standard interpolation estimates on each element and summing over the mesh, we get
\[
|g_h^{\mathrm{u}}(\boldsymbol{u}_h^* - \boldsymbol{u}, \boldsymbol{u}_h^* - \boldsymbol{u})|
\lesssim
h^{2k}\|\boldsymbol{u}\|^2_{H^{k+1}(\Omega)}
\lesssim h^{2k}.
\]
Combining the two bounds yields
\[
g_h^{\mathrm{u}}(\boldsymbol{u}_h^*, \boldsymbol{v}_h)
\lesssim
h^{k}\,\|\boldsymbol{v}_h\|_{1,\Omega[\boldsymbol{X}_H]}.
\]
Similarly, 
\begin{align*}
\left|
\epsilon_hg_h^{\rm u}(\partial_t{\bs u}_h^*,{\bs v}_h)
\right|
=
\epsilon_h
\left|
g_h^{\rm u}(\partial_t{\bs u}_h^*-\partial_t{\bs u},{\bs v}_h)
\right|
\lesssim
\epsilon_h h^k
\|{\bs v}_h\|_{1,\Omega[X_H]}.
\end{align*}
Since $\epsilon_h\lesssim h$, this is of higher order than $h^k\|v_h\|_{1,\Omega[X_H]}$. The other ghost penalty term can be estimated similarly, i.e., 
$$
g_h^{\rm p}(p^*_h,q_h)\lesssim h^k\|q_h\|_{0,\Omega[{\bs X}_H]}.
$$ 
This shows that the ghost-penalty contribution remains of the desired order and consistent with the finite element approximation properties. 
\end{proof}

\section{Error estimates} \label{sect:error}

\subsection{Estimates of \texorpdfstring{${\bf P}_{\Gamma[{\bs X}_H]}{\bs e}_{\bs u} -\partial_t{\bs e}_\dep\circ{\bs X}_H^{-1}$}{x} on \texorpdfstring{$\Gamma[{\bs X}_H]$}{x}}

In this section, we derive estimates for ${\bf P}_{\Gamma[{\bs X}_H]}{\bs e}_{\bs u} -\partial_t{\bs e}_\dep\circ{\bs X}_H^{-1}$ in ${L^p(\Gamma[{\bs X}_H])}$ and ${H^{\frac12}(\Gamma[{\bs X}_H])}$ norms, where ${\bf P}_{\Gamma[\bs X_H]} $ stands for the $L^2$ projection onto the finite element space ${\bs\Lambda}_H(\Gamma[\bs X_H])$ on the interface $\Gamma[\bs X_H]$, defined by 
\begin{align}\label{FE-space-GammaXH}
{\bs\Lambda}_H(\Gamma[\bs X_H])\eqd\{{\bs \lambda}_H \circ {\bs X}_H^{-1} : {\bs \lambda}_H \in \widehat{\bs \Lambda}_H \} . 
\end{align}
For simplicity of notation, we use the following abbreviations:  
\begin{align}\label{notation-lambda}
\begin{aligned}
 {\bs\lambda}_H^\theta &= {\bs\lambda}_H \circ ({\bs X}_H^\theta)^{-1} , 
\quad
{\bs e}_{\bs d}^\theta = {\bs e}_{\bs d} \circ ({\bs X}_H^\theta)^{-1}  , 
\quad
{\bs\dot{\bs e}}_{\bs d}^\theta = \partial_t{\bs e}_{\bs d} \circ ({\bs X}_H^\theta)^{-1} 
&&\mbox{on}\,\,\,\Gamma[{\bs X}_H^{\theta}] \\
{\bs d}_H^{*|\theta} &= {\bs d}_H^* \circ ({\bs X}_H^\theta)^{-1} , 
\quad
{\bs\dot{\bs d}}_H^{*|\theta} =  \partial_t{\bs d}_H^* \circ ({\bs X}_H^\theta)^{-1} 
&&\mbox{on}\,\,\,\Gamma[{\bs X}_H^{\theta}] \\
{\bs\lambda}_H^{*,\theta} &= {\bs\lambda}_H \circ \widehat{\bs\Phi}_H^{-1} \circ ({\bs X}_H^{*,\theta})^{-1} 
\quad\mbox{and}\quad
{\bs e}_{\bs X}^{*,\theta} = {\bs e}_{\bs X}^* \circ ({\bs X}_H^{*,\theta})^{-1}
&&\mbox{on}\,\,\,\Gamma[{\bs X}_H^{*,\theta}] \\
{\bs d}^{*,\theta} &= {\bs d} \circ ({\bs X}_H^{*,\theta})^{-1} , 
\quad
{\bs\dot{\bs d}}^{*,\theta} = \partial_t{\bs d}\circ ({\bs X}_H^{*,\theta})^{-1} 
&&\mbox{on}\,\,\,\Gamma[{\bs X}_H^{*,\theta}] \\
{\bs\dot{\bs d}}^{*,\theta}_H&=\partial_t{\bs d}^*_H\circ
\widehat{\bs\Phi}_H^{-1}\circ({\bs X}^{*,\theta}_H)^{-1}
&&\mbox{on}\,\,\,\Gamma[{\bs X}_H^{*,\theta}] 
\end{aligned}
\end{align}
where ${\bs e}_{\bs d}$ and ${\bs e}_{\bs X}^*$ are defined in \eqref{def-eu-ed} and \eqref{def-eXs}. In particular, 
$$
{\bs\lambda}_H^0={\bs\lambda}_H^{*,1} ,\quad 
\vel_H^{*|0}=\vel_H^{*,1}
\,\,\,\mbox{on}\,\,\, \Gamma[{\bs X}_H^0]=\Gamma[{\bs X}_H^{*,1}] .
$$

By taking ${\bs v}_h=\bs 0$, $q_h=0$, and ${\bs w}_H = \bs 0$ in \eqref{error-eq}, we obtain \eqref{error-eq-m}$+$\eqref{error-eq-n}$=$\eqref{error-eq-p}, which can be rewritten as follows, using the notations introduced above and the definition of $ R_{h,H}^*({\bs 0},0,{\bs 0}, {\bs\lambda}_H)$ in \eqref{def-RhH}:
\begin{subequations}\label{dual-eu-ed}
\begin{align}
&\int_{\Gamma[\bs X_H]} \bs \lambda_H^1 \cdot ({\bf P}_{\Gamma[{\bs X}_H]} {\bs e}_{\bs u} - {\bs\dot{\bs e}}_\dep^1 ) \notag \\
& = - \int_{\Gamma[\bs X_H]} \bs \lambda_H^1  \cdot \big( \bu_h^*  - \vel_H^{*|1}  \big) + \int_{\Gamma[\bs X_H^*]} \bs \lambda_H^0 \cdot \big( \bu_h^* - \vel_H^{*|0} ) \notag\\
&\quad\,  - \int_{\Gamma[\bs X_H^*]} \bs \lambda_H^{*,1} \cdot \big( \bu - \vel^{*,1} \big) 
+ \int_{\Gamma(t)} \bs \lambda_H^{*,0} \cdot \big( \bu - \vel^{*,0} \big) \notag\\
&\quad\,  - \int_{\Gamma[\bs X_H^*]} \bs \lambda_H^{*,1}  \cdot \big( \bu_h^* - \bu + \vel^{*,1} - \vel_H^{*,1} \big)  \notag\\
& = - \int_0^1 \int_{\Gamma[\bs X_H^\theta]} {\bs\lambda}_H^\theta \cdot \Big( 
{\bs e}_\dep^\theta \cdot\grad \bu_h^* 
+ \big( \bu_h^* - \vel_H^{*|\theta} \big) \grad_{\Gamma[\bs X_H^\theta]}\cdot {\bs e}_\dep^\theta \, \Big) \d\theta 
\qquad\mbox{(here \eqref{dt-int-Gamma-1} is used)} \notag\\
&\quad\, - \int_0^1 \int_{\Gamma[\bs X_H^{*,\theta}]} {\bs\lambda}_H^{*,\theta} \cdot \Big( 
{\bs e}_{\bs X}^{*,\theta}\cdot\grad \bu 
+ \big( \bu  - \vel^{*,\theta} \big) \grad_{\Gamma[\bs X_H^{*,\theta}]}\cdot {\bs e}_{\bs X}^{*,\theta} \, \Big) \d\theta 
\quad\mbox{(here \eqref{dt-int-Gamma-1} is used)} \notag\\
&\quad\,  - \int_{\Gamma[\bs X_H^*]} \bs \lambda_H^{*,1}  \cdot \big( \bu_h^* - \bu + \vel^{*,1} - \vel_H^{*,1} \big) \notag\\
& = - \int_0^1 \int_{\Gamma[\bs X_H^\theta]} {\bs\lambda}_H^\theta  \cdot ({\bs e}_\dep^\theta\cdot\grad {\bs u}) \, \d\theta
\label{dual-eu-ed-1} \\
&\quad\, - \int_0^1 \int_{\Gamma[\bs X_H^\theta]} {\bs\lambda}_H^\theta \cdot \Big( 
{\bs e}_\dep^\theta\cdot\grad ({\bs u}_h^*- {\bs u}) 
+ \big( \bu_h^*  - \vel_H^{*|\theta} \big) \grad_{\Gamma[\bs X_H^\theta]}\cdot {\bs e}_\dep^\theta \, \Big) \d\theta \label{dual-eu-ed-2} \\
&\quad\, - \int_0^1 \int_{\Gamma[\bs X_H^{*,\theta}]} {\bs\lambda}_H^{*,\theta} \cdot \Big( 
{\bs e}_{\bs X}^{*,\theta}\cdot\grad \bu 
+ \big( \bu  - \vel^{*,\theta} \big) \grad_{\Gamma[\bs X_H^{*,\theta}]}\cdot {\bs e}_{\bs X}^{*,\theta} \, \Big) \d\theta  \label{dual-eu-ed-3} \\
&\quad\,  - \int_{\Gamma[\bs X_H^*]} \bs \lambda_H^{*,1}  \cdot \big( \bu_h^* - \bu + \vel^{*,1} - \vel_H^{*,1} \big) . \label{dual-eu-ed-4} 
\end{align} 
\end{subequations}
By using the transport formula in \eqref{dt-int-Gamma-1}, we have 
\begin{align*}
\eqref{dual-eu-ed-1} 
&= - \int_{\Gamma[\bs X_H^*]} {\bs\lambda}_H^{0} \cdot ({\bs e}_\dep^{0}\cdot\grad {\bs u}) 
- \int_0^1 \int_0^\theta \frac{\d}{\d\alpha}\int_{\Gamma[\bs X_H^\alpha]} {\bs\lambda}_H^{\alpha} \cdot ({\bs e}_\dep^{\alpha}\cdot\grad {\bs u}) \, \d\alpha \d\theta \notag\\ 
&= - \int_{\Gamma[\bs X_H^*]} {\bs\lambda}_H^0 \cdot ({\bs e}_\dep^0\cdot\grad {\bs u} )
- \int_0^1 \int_0^\theta \int_{\Gamma[\bs X_H^\alpha]} {\bs\lambda}_H^{\alpha} \cdot [{\bs e}_\dep^{\alpha}\cdot({\bs e}_\dep^{\alpha}\cdot\grad^2) {\bs u}] \d\alpha \d\theta \notag\\
&\quad\, - \int_0^1 \int_0^\theta \int_{\Gamma[\bs X_H^\alpha]} {\bs\lambda}_H^{\alpha} \cdot ({\bs e}_\dep^{\alpha}\cdot\grad {\bs u})(\grad_{\Gamma[\bs X_H^\alpha]}\cdot {\bs e}_\dep^{\alpha})\, \d\alpha \d\theta .
\end{align*}
This leads to the following two different types of estimates, by using the norm equivalence relations in \eqref{norm-equiv-2} and the continuation hypothesis \eqref{ind-1}:
\begin{subequations}
\begin{align}\label{dual-eu-ed-1-1} 
\eqref{dual-eu-ed-1} 
&\le C\|{\bs\lambda}_H^0\|_{L^{p'}(\Gamma[\bs X_H^*])} \|{\bs e}_\dep^0\|_{L^p(\Gamma[\bs X_H^*])} 
+ C\|{\bs\lambda}_H^0\|_{L^{p'}(\Gamma[\bs X_H^*])}\|{\bs e}_\dep^0\|_{L^p(\Gamma[\bs X_H^*])} \|{\bs e}_\dep^0\|_{W^{1,\infty}(\Gamma[\bs X_H^*])} \notag\\
&\le C\|{\bs\lambda}_H^0\|_{L^{p'}(\Gamma[\bs X_H^*])} \|{\bs e}_\dep^0\|_{L^p(\Gamma[\bs X_H^*])} 
\end{align}
and 
\begin{align}\label{dual-eu-ed-1-2} 
\eqref{dual-eu-ed-1} 
&\le C\|{\bs\lambda}_H^0\|_{H^{-\frac12}(\Gamma[\bs X_H^*])} \|{\bs e}_\dep^0\|_{H^{\frac12}(\Gamma[\bs X_H^*])} 
+ C\|{\bs\lambda}_H^0\|_{L^2(\Gamma[\bs X_H^*])}\|{\bs e}_\dep^0\|_{L^\infty(\Gamma[\bs X_H^*])} \|{\bs e}_\dep^0\|_{H^1(\Gamma[\bs X_H^*])} \notag\\
&\le C\|{\bs\lambda}_H^0\|_{H^{-\frac12}(\Gamma[\bs X_H^*])} \|{\bs e}_\dep^0\|_{H^{\frac12}(\Gamma[\bs X_H^*])} 
+ CH\|{\bs\lambda}_H^0\|_{L^2(\Gamma[\bs X_H^*])} \|{\bs e}_\dep^0\|_{H^1(\Gamma[\bs X_H^*])} \notag\\
&\le C\|{\bs\lambda}_H^0\|_{H^{-\frac12}(\Gamma[\bs X_H^*])} \|{\bs e}_\dep^0\|_{H^{\frac12}(\Gamma[\bs X_H^*])} ,
\end{align}
\end{subequations}
where we have used the finite element inverse estimate in the fractional order norms; see Appendix \ref{appendix:inverse}.  

In order to estimate \eqref{dual-eu-ed-2}, we compare ${\bs u}_h^*  - \vel_H^{*|\theta} = {\bs u}_h^*  - \vel_H^*\circ ({\bs X}_H^\theta)^{-1}$ with ${\bs u}_h^*  - \vel_H^*\circ ({\bs X}_H^*)^{-1}$ as follows: 
\begin{multline*} 
({\bs u}_h^*  - \vel_H^*\circ ({\bs X}_H^\theta)^{-1})\circ{\bs X}_H^\theta
- ({\bs u}_h^*  - \vel_H^*\circ ({\bs X}_H^*)^{-1})\circ {\bs X}_H^* \\
= \int_0^\theta \frac{\d}{\d\alpha} \big( {\bs u}_h^* \circ {\bs X}_H^\alpha\big) \d\alpha 
= \int_0^\theta ({\bs e}_{\bs d}^\alpha\cdot\grad {\bs u}_h^*)\circ {\bs X}_H^\alpha \d\alpha \quad\mbox{on}\,\,\,\widehat\Gamma_H . 
\end{multline*} 
Therefore, by using \eqref{ind-1-1} and the norm equivalence $\|\grad_{\Gamma[\bs X_H^\theta]} {\bs e}_\dep^\theta\|_{L^p(\Gamma[\bs X_H^\theta])}\simeq \|\grad_{\widehat\Gamma_H} {\bs e}_\dep\|_{L^p(\widehat\Gamma_H)}$, together with the following result (using relation ${\bs u}\circ {\bs X} = \vel$ on $\widehat\Gamma$): 
\begin{align*} 
&\| {\bs u}_h^*  - \vel_H^{*|\theta} \|_{L^\infty(\Gamma[{\bs X}_H^\theta])}
= \|{\bs u}_h^*  - \vel_H^*\circ ({\bs X}_H^\theta)^{-1}\|_{L^\infty(\Gamma[{\bs X}_H^\theta])} \\
&\le \|{\bs u}_h^*  - \vel_H^*\circ ({\bs X}_H^*)^{-1}\|_{L^\infty(\Gamma[{\bs X}_H^*])} 
+ \max_{\alpha\in[0,\theta]} \| {\bs e}_\dep^\alpha \cdot(\grad{\bs u}_h^*)\circ {\bs X}_H^\alpha\|_{L^\infty(\widehat\Gamma_H)} \notag\\
&\le \|{\bs u}_h^* - {\bs u} \|_{L^\infty(\Gamma[{\bs X}_H^*])}  
+ \| {\bs u} - {\bs u}\circ {\bs X} \circ  \widehat{\bs\Phi}_H\circ ({\bs X}_H^*)^{-1} \|_{L^\infty(\Gamma[{\bs X}_H^*])} \notag\\
&\quad\, 
+ \| (\vel \circ \widehat{\bs\Phi}_H - \vel_H^*)\circ ({\bs X}_H^*)^{-1}\|_{L^\infty(\Gamma[{\bs X}_H^*])} 
+ \max_{\alpha\in[0,\theta]} \| {\bs e}_\dep^\alpha \cdot(\grad{\bs u}_h^*)\circ {\bs X}_H^\alpha\|_{L^\infty(\widehat\Gamma_H)} \notag\\
& \lesssim h^{k+1} + H^{k+1} + H 
\quad\mbox{(here $\vel_H^*={\bs I}_H^{\rm s}(\vel\circ\widehat{\bs\Phi}_H)$ and interpolation error estimates are used)} , 
\end{align*}
we have 
\begin{align*}
\eqref{dual-eu-ed-2} &\le C\|{\bs\lambda}_H^0\|_{L^{p'}(\Gamma[\bs X_H^*])} \|{\bs e}_\dep^0\|_{L^p(\Gamma[\bs X_H^*])} h^k 
+ CH\|{\bs\lambda}_H^0\|_{L^{p'}(\Gamma[\bs X_H^*])}\|\grad_{\Gamma[\bs X_H^*]} {\bs e}_\dep^0\|_{L^p(\Gamma[\bs X_H^*])} \\
&\le CH \|{\bs\lambda}_H^0\|_{L^{p'}(\Gamma[\bs X_H^*])} \|{\bs e}_\dep^0\|_{W^{1,p}(\Gamma[\bs X_H^*])} 
\quad\mbox{(under the condition $h^k\lesssim H$).}
\end{align*}
Again, this leads to two different types of estimates (using the finite element inverse estimate):
$$
\eqref{dual-eu-ed-2} 
\le C \|{\bs\lambda}_H^0\|_{L^{p'}(\Gamma[\bs X_H^*])} \|{\bs e}_\dep^0\|_{L^p(\Gamma[\bs X_H^*])} 
$$
and (firstly choosing $p=2$ and then using the finite element inverse estimate)
$$
\eqref{dual-eu-ed-2} 
\le C \|{\bs\lambda}_H^0\|_{H^{-\frac12}(\Gamma[\bs X_H^*])} \|{\bs e}_\dep^0\|_{H^{\frac12}(\Gamma[\bs X_H^*])} .
$$
Moreover, it is not difficult to see that (using norm equivalence among intermediate surfaces)  
\begin{align*}
|\eqref{dual-eu-ed-3}+\eqref{dual-eu-ed-4}|
&\le C\|{\bs\lambda}_H^0\|_{L^{p'}(\Gamma[\bs X_H^*])} (h^{k+1}+H^{k+1})  ,\\
|\eqref{dual-eu-ed-3}+\eqref{dual-eu-ed-4}|
&\le C\|{\bs\lambda}_H^0\|_{H^{-\frac12}(\Gamma[\bs X_H^*])} H^{-\frac12}(h^{k+1}+H^{k+1}) , 
\end{align*}
where the last inequality is obtained by firstly choosing $p=2$ and then using the finite element inverse estimate. 

The above estimates of \eqref{dual-eu-ed-1}--\eqref{dual-eu-ed-4} imply that 
\begin{subequations}
\begin{align}
\int_{\Gamma[\bs X_H]} \bs \lambda_H^1 \cdot ( {\bf P}_{\Gamma[\bs X_H]} {\bs e}_{\bs u} - {\bs\dot{\bs e}}_\dep^1 ) 
&\le C \|{\bs\lambda}_H^0\|_{L^{p'}(\Gamma[\bs X_H^*])} (\|{\bs e}_\dep^0\|_{L^p(\Gamma[\bs X_H^*])} + h^{k+1}+H^{k+1}),
\label{dual-eu-ed-Lp}\\
\int_{\Gamma[\bs X_H]} \bs \lambda_H^1 \cdot ( {\bf P}_{\Gamma[\bs X_H]} {\bs e}_{\bs u} - {\bs\dot{\bs e}}_\dep^1 ) 
&\le C \|{\bs\lambda}_H^0\|_{H^{-\frac12}(\Gamma[\bs X_H^*])} (\|{\bs e}_\dep^0\|_{H^{\frac12}(\Gamma[\bs X_H^*])} + H^{-\frac12}(h^{k+1}+H^{k+1})) ,
\label{dual-eu-ed-H}
\end{align} 
\end{subequations}
where \eqref{dual-eu-ed-Lp}--\eqref{dual-eu-ed-H} imply, via a duality argument (see Lemma \ref{Lemma-A-duality} in Appendix \ref{appendix:duality}) and the norm equivalence in \eqref{norm-equiv-interpl-1},  
\begin{subequations}\label{eu-dot-ed-Lp-H}
\begin{align}
\|{\bf P}_{\Gamma[\bs X_H]} {\bs e}_{\bs u} - {\bs\dot{\bs e}}_\dep^1\|_{L^p(\Gamma[\bs X_H])}
&\le C\|{\bs e}_\dep\|_{L^p(\widehat\Gamma_H)} + C(h^{k+1}+H^{k+1}) \quad\mbox{for}\,\,\, 1\le p\le\infty, \\
\|{\bf P}_{\Gamma[\bs X_H]} {\bs e}_{\bs u} - {\bs\dot{\bs e}}_\dep^1\|_{H^{\frac12}(\Gamma[\bs X_H])}
&\le C\|{\bs e}_\dep\|_{H^{\frac12}(\widehat\Gamma_H)} + CH^{-\frac12}(h^{k+1}+H^{k+1})  .
\end{align}
By the norm equivalence among the intermediate interfaces, we have 
\begin{align}\label{eu-dot-ed-Lp-HH}
\|({\bf P}_{\Gamma[\bs X_H]} {\bs e}_{\bs u})^{\#,\theta} - {\bs\dot{\bs e}}_\dep^{\#,\theta}\|_{H^{\frac12}(\Gamma[\bs X_H^{\#,\theta}])}
& \le C\|{\bs e}_\dep\|_{H^{\frac12}(\widehat\Gamma_H)} 
+ CH^{-\frac12}(h^{k+1}+H^{k+1}) ,
\end{align}
\end{subequations}
where $({\bf P}_{\Gamma[\bs X_H]} {\bs e}_{\bs u})^{\#,\theta}$ denotes the push-forward of ${\bf P}_{\Gamma[\bs X_H]} {\bs e}_{\bs u}$ to the surface $\Gamma[\bs X_H^{\#,\theta}]$ defined in Section \ref{section:norm-equiv-2}. 

The estimates in \eqref{eu-dot-ed-Lp-H} indicate that we can estimate ${\bs\dot{\bs e}}_\dep^1$ by decomposing it into two parts, i.e., 
$$
{\bs\dot{\bs e}}_\dep^1 = ({\bs\dot{\bs e}}_\dep^1 - {\bf P}_{\Gamma[\bs X_H]} {\bs e}_{\bs u}) + {\bf P}_{\Gamma[\bs X_H]} {\bs e}_{\bs u} \,\,\,\mbox{on}\,\,\,\Gamma[{\bs X}_H] .
$$
The estimate of the second part is presented in the next section, where we denote by $({\bf P}_{\Gamma[\bs X_H]} {\bs e}_{\bs u})^{\#,0}$ the push forward of ${\bf P}_{\Gamma[\bs X_H]} {\bs e}_{\bs u}$ from $\Gamma[\bs X_H]$ to $\Gamma[\bs X]$. 

\subsection{Estimates of \texorpdfstring{$\|({\bf P}_{\Gamma[\bs X_H]} {\bs e}_{\bs u})^{\#,0}\|_{H^{\frac12}(\Gamma[\bs X])}$}{x}}
\label{section:PHeu}

Let ${\bs X}_H^{\#,\theta} \eqd (1-\theta){\bs X} + \theta {\bs X}_H \circ \widehat{\bs\Phi}_H^{-1} : \widehat\Gamma \rightarrow \mathbb{R}^d$, as defined in Section \ref
{section:norm-equiv-2}. When $\theta$ varies from $0$ to $1$, the intermediate interface $\Gamma[{\bs X}_H^{\#,\theta}]$ moves from $\Gamma(t)$ to $\Gamma[{\bs X}_H]$ with velocity ${\bs e}_{\bs X}^{\#,\theta}$, whose explicit expression is given in \eqref{def-ej-theta}. 

We denote by $({\bf P}_{\Gamma[\bs X_H]} {\bs e}_{\bs u})^{\#,\theta}$ the push forward of ${\bf P}_{\Gamma[\bs X_H]} {\bs e}_{\bs u}$ from $\Gamma[\bs X_H]$ to $\Gamma[{\bs X}_H^{\#,\theta}]$. In particular, $({\bf P}_{\Gamma[\bs X_H]} {\bs e}_{\bs u})^{\#,1} = {\bf P}_{\Gamma[\bs X_H]} {\bs e}_{\bs u}$. Let ${\bs\lambda}_H^{\#,\theta} = {\bs\lambda}_H \circ \widehat{\bs\Phi}_H^{-1} \circ ({\bs X}_H^{\#,\theta})^{-1}$ be the push forward of ${\bs\lambda}_H$ from $\widehat\Gamma_H$ onto $\Gamma[{\bs X}_H^{\#,\theta}]$. Then 
\begin{align}\label{lambdaH1-lambdaH0}
&\Big| \int_{\Gamma[\bs X_H]} \bs \lambda_H^{\#,1} \cdot {\bs e}_{\bs u} 
- \int_{\Gamma[\bs X]} \bs \lambda_H^{\#,0} \cdot {\bs e}_{\bs u} \Big|  \notag \\
& = \Big| \int_0^1 \frac{\d}{\d\theta}\int_{\Gamma[{\bs X}_H^{\#,\theta}]} \bs \lambda_H^{\#,\theta} \cdot {\bs e}_{\bs u}  \d\theta  \Big|  \notag \\
& = \Big| \int_0^1 \int_{\Gamma[{\bs X}_H^{\#,\theta}]} \bs \lambda_H^{\#,\theta} \cdot 
\Big[{\bs e}_{\bs X}^{\#,\theta} \cdot \grad {\bs e}_{\bs u}+  {\bs e}_{\bs u}  \grad_{\Gamma[{\bs X}_H^{\#,\theta}]}\cdot {\bs e}_{\bs X}^{\#,\theta} \Big] \, \d\theta  \Big|  \notag \\
& \le C \| {\bs\lambda}_H^{\#,\theta} \|_{L^2(\Gamma[{\bs X}_H^{\#,\theta}])} \|{\bs e}_{\bs X}^{\#,\theta} \|_{L^\infty(\Gamma[{\bs X}_H^{\#,\theta}])} \| \grad {\bs e}_{\bs u} \|_{L^2(\Gamma[{\bs X}_H^{\#,\theta}])} \notag\\
&\quad\, 
+ \Big| \int_0^1 \int_{\Gamma[{\bs X}_H^{\#,\theta}]} \bs \lambda_H^{\#,\theta}\cdot 
\Big[ {\bs e}_{\bs u}  \grad_{\Gamma[{\bs X}_H^{\#,\theta}]}\cdot {\bs e}_{\bs X}^{\#,\theta}\Big] \, \d\theta  \Big|  \notag\\
& \le C h^{\frac12} H^{\frac12} \| {\bs\lambda}_H^{\#,\theta} \|_{L^2(\Gamma[{\bs X}_H^{\#,\theta}])} \| \grad {\bs e}_{\bs u} \|_{L^2(\Gamma[{\bs X}_H^{\#,\theta}])} \notag\\
&\quad\, 
+ \Big| \int_0^1 \int_{\Gamma[{\bs X}_H^{\#,\theta}]} \bs \lambda_H^{\#,\theta} \cdot 
\Big[ {\bs e}_{\bs u}  \grad_{\Gamma[{\bs X}_H^{\#,\theta}]}\cdot {\bs e}_{\bs X}^{\#,\theta} \Big] \, \d\theta  \Big|  \notag\\
& \le C \| {\bs\lambda}_H^{\#,0} \|_{H^{-\frac12}(\Gamma[\bs X])} \| \grad {\bs e}_{\bs u} \|_{L^2(\Omega^{\rm f}_{\rm ext}[\bs  X_H])} \notag\\
&\quad\, 
+ \Big| \int_0^1 \int_{\Gamma[{\bs X}_H^{\#,\theta}]} \bs \lambda_H^{\#,\theta} \cdot 
\Big[ {\bs e}_{\bs u}  \grad_{\Gamma[{\bs X}_H^{\#,\theta}]}\cdot {\bs e}_{\bs X}^{\#,\theta} \Big] \, \d\theta  \Big|, 
\end{align} 
where the last inequality follows from the finite element inverse estimate, and the last term in the right-hand side of \eqref{lambdaH1-lambdaH0} can be estimated as follows: 
\begin{align}\label{lambdaH-eu}
&\Big|\int_0^1 \int_{\Gamma[{\bs X}_H^{\#,\theta}]} {\bs \lambda}_H^{\#,\theta} \cdot 
\Big[ {\bs e}_{\bs u}  \grad_{\Gamma[{\bs X}_H^{\#,\theta}]}\cdot {\bs e}_{\bs X}^{\#,\theta} \Big] \, \d\theta \Big| \notag\\
&\le C \max_{\theta\in[0,1]} \|{\bs \lambda}_H^{\#,\theta}\|_{L^2(\Gamma[{\bs X}_H^{\#,\theta}])} 
\|{\bs e}_{\bs u}\|_{L^4(\Gamma[{\bs X}_H^{\#,\theta}])} 
\|\grad_{\Gamma[{\bs X}_H^{\#,\theta}]}\cdot {\bs e}_{\bs X}^{\#,\theta}\|_{L^4(\Gamma[{\bs X}_H^{\#,\theta}])} \notag\\
&\le C\max_{\theta\in[0,1]}  \|{\bs \lambda}_H^{\#,\theta}\|_{L^2(\Gamma[{\bs X}_H^{\#,\theta}])} 
\|{\bs e}_{\bs u}\|_{L^4(\Gamma[{\bs X}_H^{\#,\theta}])} 
(\|\grad_{\Gamma[{\bs X}_H^{\#,\theta}]}\cdot {\bs e}_{\bs d}^{\#,\theta}\|_{L^4(\Gamma[{\bs X}_H^{\#,\theta}])} + H^k) \notag\\
&\hspace{250pt}\mbox{(triangle inequality)}\notag\\
&\le C\|{\bs \lambda}_H^{\#,0}\|_{L^2(\Gamma[\bs X])} 
\|{\bs e}_{\bs u}\|_{H^1(\Omega^{\rm f}_{\rm ext}[{\bs X}_H])} 
(\|\grad {\bs e}_{\bs d}\|_{L^4(\widehat\Gamma_H)} + H^k) \notag\\
&\hspace{250pt}\mbox{(norm equivalence)}\notag\\
&\le CH^{-\frac12}\|{\bs \lambda}_H^{\#,0}\|_{H^{-\frac12}(\Gamma[\bs X])} 
\|{\bs e}_{\bs u}\|_{H^1(\Omega^{\rm f}_{\rm ext}[{\bs X}_H])} 
(H^{-1}\|\grad {\bs e}_{\bs d}\|_{L^2(\widehat{\Omega}_H^{\rm s})} + H^k) \notag\\
&\hspace{250pt}\mbox{(inverse inequality)}\notag\\
&\le CH^{-\frac12}\|{\bs \lambda}_H^{\#,0}\|_{H^{-\frac12}(\Gamma[\bs X])} 
\|{\bs e}_{\bs u}\|_{H^1(\Omega^{\rm f}_{\rm ext}[{\bs X}_H])} H^{\frac12} \notag\\
&\hspace{250pt}\mbox{(here \eqref{ind-1-3} is used)}\notag\\
&= C\|{\bs \lambda}_H^{\#,0}\|_{H^{-\frac12}(\Gamma[\bs X])} 
\|{\bs e}_{\bs u}\|_{H^1(\Omega^{\rm f}_{\rm ext}[{\bs X}_H])} .
\end{align}
Since $\int_{\Gamma[\bs X_H]} \bs \lambda_H^{\#,1} \cdot {\bs e}_{\bs u} = \int_{\Gamma[\bs X_H]} \bs \lambda_H^{\#,1} \cdot ({\bf P}_{\Gamma[\bs X_H]} {\bs e}_{\bs u} )^{\#,1}$, this proves that 
\begin{equation}\label{lambdaH1-01}
\Big| \int_{\Gamma[\bs X_H]} \bs \lambda_H^{\#,1} \cdot ({\bf P}_{\Gamma[\bs X_H]} {\bs e}_{\bs u} )^{\#,1}
- \int_{\Gamma[\bs X]} \bs \lambda_H^{\#,0} \cdot {\bs e}_{\bs u} \Big| 
\le
C\|{\bs \lambda}_H^{\#,0}\|_{H^{-\frac12}(\Gamma(t))} \| {\bs e}_{\bs u}\|_{H^1(\Omega^{\rm f}_{\rm ext}[\bs  X_H])} .
\end{equation} 
Using the fundamental theorem of calculus and the surface transport formula, we get 
\begin{align}\label{lambdaH1-02}
&\Big| \int_{\Gamma[{\bs X}_H]} {\bs\lambda}_H^{\#,1} \cdot ({\bf P}_{\Gamma[\bs X_H]}{\bs e}_{\bs u})^{\#,1} 
- \int_{\Gamma[\bs X]} \bs \lambda_H^{\#,0} \cdot ({\bf P}_{\Gamma[\bs X_H]}{\bs e}_{\bs u})^{\#,0} \Big|  \notag \\
&=
\Big| \int_0^1 \int_{\Gamma[{\bs X}_H^{\#,\theta}]}{\bs\lambda}_H^{\#,\theta} \cdot ({\bf P}_{\Gamma[\bs X_H]}{\bs e}_{\bs u})^{\#,\theta} \grad_{\Gamma[{\bs X}_H^{\#,\theta}]}\cdot {\bs e}_{\bs X}^{\#,\theta} \d\theta \Big| \notag\\
&\le
\max_{\theta\in[0,1]} \|{\bs\lambda}_H^{\#,\theta}\|_{L^2(\Gamma[{\bs X}_H^{\#,\theta}])}
\|({\bf P}_{\Gamma[\bs X_H]}{\bs e}_{\bs u})^{\#,\theta}\|_{L^4(\Gamma[{\bs X}_H^{\#,\theta}])} 
\|\grad_{\Gamma[{\bs X}_H^{\#,\theta}]}{\bs e}_{\bs X}^{\#,\theta}\|_{L^4(\Gamma[{\bs X}_H^{\#,\theta}])} \notag\\
&\le
C\|{\bs\lambda}_H^{\#,0}\|_{L^2(\Gamma(t))}
\|{\bf P}_{\Gamma[\bs X_H]}{\bs e}_{\bs u}\|_{L^4(\Gamma[{\bs X}_H])} H^{\frac12}
\quad\mbox{(similarly to \eqref{lambdaH-eu})}\notag\\
&\le
C\|{\bs\lambda}_H^{\#,0}\|_{H^{-\frac12}(\Gamma(t))}
\|{\bs e}_{\bs u}\|_{H^1(\Omega^{\rm f}_{\rm ext}[{\bs X}_H])} 
\quad\mbox{(finite element inverse estimate)},
\end{align} 
where we have used the following result (see Appendices \ref{appendix:trace_inequlaity} and \ref{appendix:L2-projection}):
\begin{align*}
&\|{\bf P}_{\Gamma[\bs X_H]}{\bs e}_{\bs u}\|_{L^4(\Gamma[{\bs X}_H])} \notag\\
&\le C\|{\bf P}_{\Gamma[\bs X_H]}{\bs e}_{\bs u}\|_{H^{\frac12}(\Gamma[{\bs X}_H])} &&\mbox{(Sobolev embedding)}\notag\\
&\le C\|{\bs e}_{\bs u}\|_{H^{\frac12}(\Gamma[{\bs X}_H])} &&\mbox{(boundedness of ${\bf P}_{\Gamma[\bs X_H]}$ in $H^{\frac12}(\Gamma[{\bs X}_H])$)}\notag\\
&\le C\|{\bs e}_{\bs u}\|_{H^1(\Omega^{\rm f}_{\rm ext}[{\bs X}_H])} &&\mbox{(trace inequality; see Appendix \ref{appendix:trace_inequlaity})}. 
\end{align*}
This proves that, combining the estimates in \eqref{lambdaH1-01}--\eqref{lambdaH1-02} and using the triangle inequality, 
\begin{align}
&\Big|\int_{\Gamma[\bs X]} {\bs\lambda}_H^{\#,0} \cdot ({\bf P}_{\Gamma[\bs X_H]}{\bs e}_{\bs u})^{\#,0}\Big| \notag\\
&\le \Big|\int_{\Gamma[\bs X]} \bs \lambda_H^{\#,0} \cdot {\bs e}_{\bs u} \Big| 
+ C\|{\bs \lambda}_H^{\#,0}\|_{H^{-\frac12}(\Gamma(t))} \| {\bs e}_{\bs u}\|_{H^1(\Omega^{\rm f}_{\rm ext}[\bs  X_H])} \notag \\
&\le 
C\|{\bs \lambda}_H^{\#,0}\|_{H^{-\frac12}(\Gamma(t))} \|{\bs e}_{\bs u}\|_{H^{\frac12}(\Gamma(t))} 
+ C\|{\bs \lambda}_H^{\#,0}\|_{H^{-\frac12}(\Gamma(t))} \| {\bs e}_{\bs u}\|_{H^1(\Omega^{\rm f}_{\rm ext}[\bs  X_H])} \notag\\
&\le 
C\|{\bs \lambda}_H^{\#,0}\|_{H^{-\frac12}(\Gamma(t))} \| {\bs e}_{\bs u}\|_{H^1(\Omega^{\rm f}_{\rm ext}[\bs  X_H])} .
\end{align} 
Therefore, based on the duality argument (see Lemma~\ref{Lemma-A-duality} in Appendix~\ref{appendix:duality}), 
\begin{align}\label{H12-PXeu}
\| ({\bf P}_{\Gamma[\bs X_H]}{\bs e}_{\bs u})^{\#,0}\|_{H^{\frac12}(\Gamma[\bs X])} 
&\le C\| {\bs e}_{\bs u}\|_{H^1(\Omega^{\rm f}_{\rm ext}[\bs  X_H])} . 
\end{align}

\subsection{Stability estimates for \texorpdfstring{$e_{p}$}{x}} \label{section:ep}

In this subsection, we establish stability estimates for $e_{p}$ by utilizing the following inf-sup conditions (see Appendix~\ref{appendix:inf-sup} for a proof): For  $e_p\in Q_h^{\rm f}$ there exists ${\bs v}_h\in  {\bs V}_h^{\rm f}$
such that the following two inequalities hold simultaneously: 
\begin{subequations}\label{inf-sup-ep-L2}
\begin{align}
\|{\bs v}_h\|_{1,\Omega[{\bs X}_H]} 
&\lesssim \| e_p\|_{L^2(\Omega^{\rm f}[{\bs X}_H])},\\
\| e_p\|_{0,\Omega[{\bs X}_H]}^2 
&\lesssim 
\int_{\Omega^{\rm f}[{\bs X}_H]} e_p \,\grad\cdot {\bs v}_h + g^{\rm p}_h(e_p,e_p) .
\end{align}
\end{subequations}
%

By choosing $q_h=-e_p$, ${\bs w}_H=\bs 0$ and ${\bs\lambda}_H=\bs 0$ in \eqref{error-eq}, 
we have 
\begin{subequations}\label{ep-divvh-eq}
\begin{align}
& \int_{\Omega^{\rm f}[\bs X_H]}  e_p \,\grad\cdot \bv_h
 + g_h^{\rm p} (e_p,e_p) \notag\\
= & \int_{\Omega^{\rm f}[\bs X_H]} \partial_t {\bs e}_{\bs u} \cdot {\bs v}_h \label{ep-divvh-a}\\
& + \Big( \int_{\Omega^{\rm f}[\bs X_H]} - \int_{\Omega^{\rm f}[\bs X_H^*]} \Big) \partial_t {\bs u}_h^* \cdot {\bs v}_h \label{ep-divvh-b}\\
& + \int_{\Omega^{\rm f}[\bs X_H]} {\bs e}_{\bs u} \cdot \grad \bu_h \cdot {\bs v}_h \label{ep-divvh-c}\\
& + \int_{\Omega^{\rm f}[\bs X_H]} {\bs u}_h^* \cdot \grad {\bs e}_{\bs u}  \cdot {\bs v}_h \label{ep-divvh-d}\\
& + \Big( \int_{\Omega^{\rm f}[\bs X_H]} - \int_{\Omega^{\rm f}[\bs X_H^*]} \Big) {\bs u}_h^* \cdot \grad {\bs u}_h^* \cdot {\bs v}_h \label{ep-divvh-e}\\
&
+ \int_{\Omega^{\rm f}[\bs X_H]} \bs D({\bs e}_{\bs u}): \bs D(\bv_h) \label{ep-divvh-f}\\ 
& + \int_{\Omega^{\rm f}[\bs X_H]} {\bs\sigma}(\bu_h^*,p_h^*): \grad \bv_h 
- \int_{\Omega^{\rm f}[\bs X_H^*]} {\bs\sigma}(\bu_h^*,p_h^*): \grad \bv_h \label{ep-divvh-g}\\
& - \int_{\Gamma[\bs X_H]} {\bs e}_{\bs\sigma}\circ{\bs X}_H^{-1} \cdot  \bv_h \label{ep-divvh-h}\\
& - \int_{\Gamma[\bs X_H]} {\widehat{\bs \sigma}}_H^*\circ{\bs X}_H^{-1} \cdot \bv_h  
+ \int_{\Gamma[\bs X_H^*]} {\widehat{\bs \sigma} }_H^*\circ({\bs X}_H^*)^{-1}  \cdot \bv_h \label{ep-divvh-i} \\
&- \int_{\Omega^{\rm f}[\bs X_H]} e_p \nabla\cdot  {\bs e}_{\bs u}\label{ep-1505}\\
& - \Big( \int_{\Omega^{\rm f}[\bs X_H]} - \int_{\Omega^{\rm f}[\bs X_H^*]} \Big) e_p \nabla\cdot   \bu_h^* \label{ep-divu-h}\\
& + \epsilon_h g_h^{\rm u} (\partial_t{\bs e}_{\bs u},\bv_h) + g_h^{\rm u} ({\bs e}_{\bs u},\bv_h) + R_{h,H}^*({\bs v}_h, -e_p, \bs 0, \bs 0) , \label{ep-divvh-j} 
\end{align}
\end{subequations}
where
\begin{subequations}\label{ep-divvh-abcdef}
\begin{align}
|\eqref{ep-divvh-a}|
&\le C\|\partial_t {\bs e}_{\bs u}\|_{L^2(\Omega^{\rm f}[\bs X_H])} \|{\bs v}_h\|_{L^2(\Omega^{\rm f}[\bs X_H])} \\
|\eqref{ep-divvh-b}|
&\le C\|{\bs e}_\dep\|_{H^1(\widehat{\Omega}^{\rm s}_H)} \|{\bs v}_h\|_{H^1(\Omega^{\rm f}_{\rm ext}[\bs X_H])} 
\quad\mbox{using \eqref{dt-int-Omega-2}, \eqref{L2-f-trace} and \eqref{norm-equiv-GH}}
\label{ep-divvh-b-est}\\
|\eqref{ep-divvh-c}|
&=
\int_{\Omega^{\rm f}[\bs X_H]} {\bs e}_{\bs u} \cdot \grad \bu_h^* \cdot {\bs v}_h
+ \int_{\Omega^{\rm f}[\bs X_H]} {\bs e}_{\bs u} \cdot \grad {\bs e}_{\bs u} \cdot {\bs v}_h \notag\\ 
&\le C\|{\bs e}_{\bs u}\|_{L^2(\Omega^{\rm f}[{\bs X}_H])} \|{\bs v}_h\|_{L^2(\Omega^{\rm f}[{\bs X}_H])}
+ C\|{\bs e}_{\bs u}\|_{H^1(\Omega^{\rm f}[{\bs X}_H])} \|{\bs v}_h\|_{L^2(\Omega^{\rm f}[{\bs X}_H])} \notag\\
&\hspace{20pt} 
\quad\mbox{where we have used $\|{\bs e}_{\bs u}\|_{L^\infty(\Omega^{\rm f}_{\rm ext}[\bs X_H])} \le 1$, which follows from \eqref{ind-1-2}} \notag\\
&\le C\|{\bs e}_{\bs u}\|_{H^1(\Omega^{\rm f}_{\rm ext}[{\bs X}_H])} \|{\bs v}_h\|_{H^1(\Omega^{\rm f}[{\bs X}_H])} \\
|\eqref{ep-divvh-d}|
&\le C\|{\bs e}_{\bs u}\|_{H^1(\Omega^{\rm f}_{\rm ext}[{\bs X}_H])} \|{\bs v}_h\|_{H^1(\Omega^{\rm f}_{\rm ext}[{\bs X}_H])} \\
|\eqref{ep-divvh-e}|
&\le C\|{\bs e}_\dep\|_{H^1(\widehat{\Omega}^{\rm s}_H)} \|{\bs v}_h\|_{H^1(\Omega^{\rm f}_{\rm ext}[\bs X_H])} 
\quad\mbox{(the same as \eqref{ep-divvh-b-est})}\\
|\eqref{ep-divvh-f}|
&\le C\|{\bs e}_{\bs u}\|_{H^1(\Omega^{\rm f}_{\rm ext}[{\bs X}_H])}\|{\bs v}_h\|_{H^1(\Omega^{\rm f}[\bs X_H])} 
\end{align}
\end{subequations}
and 
\begin{subequations}\label{ep-divvh-g1}
\begin{align}
\eqref{ep-divvh-g} 
&= \int_{\Omega^{\rm f}[\bs X_H]} {\bs\sigma}(\bu,p): \grad \bv_h 
- \int_{\Omega^{\rm f}[\bs X_H^*]} {\bs\sigma}(\bu,p): \grad \bv_h \notag\\
&\quad\,
+ \Big( \int_{\Omega^{\rm f}[\bs X_H]} - \int_{\Omega^{\rm f}[\bs X_H^*]} \Big) ({\bs\sigma}(\bu_h^*,p_h^*)-{\bs\sigma}(\bu,p)): \grad \bv_h \notag\\  
&= -\int_{\Omega^{\rm f}[\bs X_H]} \grad\cdot{\bs\sigma}(\bu,p) \cdot \bv_h 
+ \int_{\Omega^{\rm f}[\bs X_H^*]} \grad\cdot{\bs\sigma}(\bu,p) \cdot \bv_h \label{ep-divvh-g-a}\\
&\quad\, 
+ \int_{\Gamma[\bs X_H]} {\bs\sigma}(\bu,p){\bs n}_{\Gamma[\bs X_H]}\cdot\bv_h 
- \int_{\Gamma[\bs X_H^*]} {\bs\sigma}(\bu,p){\bs n}_{\Gamma[\bs X_H^*]}\cdot\bv_h \label{ep-divvh-g-b}\\
&\quad\, 
+ \Big( \int_{\Omega^{\rm f}[\bs X_H]} - \int_{\Omega^{\rm f}[\bs X_H^*]} \Big) ({\bs\sigma}(\bu_h^*,p_h^*) - {\bs\sigma}(\bu,p)): \grad \bv_h ,\label{ep-divvh-g-c} 
\end{align}
\end{subequations}
where, by using the transport formula in \eqref{dt-int-Omega-2}, the trace inequality in \eqref{L2-f-trace} and the norm equivalence in \eqref{norm-equiv-GH}, 
\begin{align}\label{ep-divvh-g-ac} 
| \eqref{ep-divvh-g-a} + \eqref{ep-divvh-g-c} | 
\le C(\|{\bs e}_\dep\|_{H^1(\widehat{\Omega}^{\rm s}_H)} + CH^k) \|{\bs v}_h\|_{H^1(\Omega^{\rm f}_{\rm ext}[\bs X_H])} 
\end{align}
and \eqref{ep-divvh-g-b} can be estimated with \eqref{ep-divvh-i} together, i.e., 
\begin{subequations}\label{ep-divvh-g-b-i12}
\begin{align}
\eqref{ep-divvh-g-b} + \eqref{ep-divvh-i} 
&= 
\int_{\Gamma[\bs X_H]} {\bs\sigma}(\bu,p){\bs n}_{\Gamma[\bs X_H]}\cdot\bv_h 
- \int_{\Gamma[\bs X_H]} {\widehat{\bs\sigma}}_H^*\circ{\bs X}_H^{-1} \cdot \bv_h \label{ep-divvh-g-b-i1}\\
&\quad\, 
+ \int_{\Gamma[\bs X_H^*]} {\widehat{\bs\sigma}}_H^* \circ({\bs X}_H^*)^{-1} \cdot \bv_h  
- \int_{\Gamma[\bs X_H^*]} {\bs\sigma}(\bu,p){\bs n}_{\Gamma[\bs X_H^*]}\cdot \bv_h .
\label{ep-divvh-g-b-i2}
\end{align}
\end{subequations}
We can estimate \eqref{ep-divvh-g-b-i2} by using the definition ${\widehat{\bs\sigma}}_H^* ={\bs I}_H^{\rm s}[({\bs\sigma}(\bu,p){\bs n})\circ{\bs X} \circ \widehat{\bs\Phi}_H ]$ and the interpolation error estimate in \eqref{aux865}. This gives the following result: 
\begin{align}\label{ep-divvh-g-b-i2e}
|\eqref{ep-divvh-g-b-i2}|
\le CH^k \|{\bs v}_h\|_{L^2(\Gamma[\bs X_H^*])} 
\le CH^k \|{\bs v}_h\|_{H^1(\Omega^{\rm f}_{\rm ext}[\bs X_H])} . 
\end{align}
Moreover, we can estimate \eqref{ep-divvh-g-b-i1} by using relation \eqref{dt-int-Gamma-1} as well as the expressions of $\widehat{\bs\sigma}$ and $\widehat{\bs\sigma}_H^*$ defined in  \eqref{def-hat-sigma} and \eqref{def-sigma-s}, i.e., 
\begin{subequations}\label{ep-divvh-g-b-ji3}
\begin{align}
\eqref{ep-divvh-g-b-i1} 
& = 
\int_{\Gamma[\bs X_H]} [{\bs\sigma}(\bu,p){\bs n}]|_{\Gamma[\bs X_H]}\cdot\bv_h 
- \int_{\Gamma[\bs X_H]} [{\bs\sigma}(\bu,p){\bs n}]\circ{\bs X}\circ\widehat{\bs\Phi}_H \circ{\bs X}_H^{-1} \cdot \bv_h \notag\\
&\quad\, + 
\int_{\Gamma[\bs X_H]} \big(\widehat{\bs\sigma}\circ\widehat{\bs\Phi}_H - {\widehat{\bs\sigma}}_H^* \big)\circ{\bs X}_H^{-1} \cdot\bv_h \notag\\
& = \int_{\Gamma[\bs X_H]} \bv_h\cdot \int_0^1\frac{\d}{\d\theta} \big[\big( {\bs\sigma}(\bu,p)|_{\Gamma[{\bs X}_H^{\#,\theta}]} {\bs n}_{\Gamma[{\bs X}_H^{\#,\theta}]}\big)\circ{\bs X}_H^{\#,\theta}\circ\widehat{\bs\Phi}_H \circ{\bs X}_H^{-1} \big]\d\theta \notag\\ 
&\quad\,\,\mbox{(see the notation in Section \ref{section:PHeu})} \notag\\
&\quad\, + 
\int_{\Gamma[\bs X_H]} \big(\widehat{\bs\sigma}\circ\widehat{\bs\Phi}_H - {\widehat{\bs\sigma}}_H^* \big)\circ{\bs X}_H^{-1} \cdot\bv_h  \notag\\
& = \int_{\Gamma[\bs X_H]} \bv_h\cdot\int_0^1\big[ {\bs e}_{\bs X}^{\#,\theta}\cdot\grad {\bs\sigma}(\bu,p) {\bs n}_{\Gamma[{\bs X}_H^{\#,\theta}]} - {\bs\sigma}(\bu,p) (\grad_{\Gamma[{\bs X}_H^{\#,\theta}]}{\bs e}_{\bs X}^{\#,\theta}){\bs n}_{\Gamma[{\bs X}_H^{\#,\theta}]} \big] \notag\\
&\hspace{85pt} \circ{\bs X}_H^{\#,\theta}\circ\widehat{\bs\Phi}_H \circ{\bs X}_H^{-1}  \,\, \d\theta \notag\\ 
&\quad\, + 
\int_{\Gamma[\bs X_H]} \big(\widehat{\bs\sigma}\circ\widehat{\bs\Phi}_H - {\bs I}_H^{\rm s}[\widehat{\bs\sigma}\circ\widehat{\bs\Phi}_H] \big)\circ{\bs X}_H^{-1} \cdot\bv_h \notag\\ 
&\le C( \|{\bs e}_{\bs X}^{\#,\theta}\circ{\bs X}_H^{\#,\theta}\circ\widehat{\bs\Phi}_H \circ{\bs X}_H^{-1}\|_{L^2(\Gamma[\bs X_H])} + H^k) \|{\bs v}_h\|_{L^2(\Gamma[\bs X_H])} \label{ep-divvh-g-b-j} \\
&\quad\, 
- \int_{\Gamma[\bs X_H]} \bv_h\cdot\int_0^1 \big[{\bs\sigma}(\bu,p) (\grad_{\Gamma[{\bs X}_H^{\#,\theta}]}{\bs e}_{\bs X}^{\#,\theta}){\bs n}_{\Gamma[{\bs X}_H^{\#,\theta}]} \big]\circ{\bs X}_H^{\#,\theta}\circ\widehat{\bs\Phi}_H \circ{\bs X}_H^{-1}  \,\, \d\theta . \label{ep-divvh-g-b-i3} 
\end{align}
\end{subequations}
The term $\|{\bs e}_{\bs X}^{\#,\theta}\circ{\bs X}_H^{\#,\theta}\circ\widehat{\bs\Phi}_H \circ{\bs X}_H^{-1}\|_{L^2(\Gamma[\bs X_H])}$ in \eqref{ep-divvh-g-b-j} can be estimated by using the expression of ${\bs e}_{\bs X}^{\#,\theta}$ in \eqref{def-ej-theta} and the triangle inequality, as well as the norm-equivalence relation in \eqref{norm-equiv-X-theta}, i.e., 
\begin{align*}
\|{\bs e}_{\bs X}^{\#,\theta}\circ{\bs X}_H^{\#,\theta}\circ\widehat{\bs\Phi}_H \circ{\bs X}_H^{-1}\|_{L^2(\Gamma[\bs X_H])}
&\le C\|{\bs e}_{\bs d}\|_{L^2(\widehat\Gamma_H)} 
 + C\|{\bs e}_{\bs X}^*\|_{L^2(\widehat\Gamma)} \\
&\le C\|{\bs e}_{\bs d}\|_{H^1(\widehat\Omega^{\rm s}_H)} + CH^k ,
\end{align*}
where the last inequality follows from the trace inequality from $\widehat\Gamma_H$ and the interpolation error estimate in \eqref{aux865}. Therefore, using the trace inequality from $\Gamma[\bs X_H]$ (see Appendix \ref{appendix:trace_inequlaity}), we have 
\begin{align}\label{ep-divvh-g-b-j1}
|\eqref{ep-divvh-g-b-j}|
\le C(\|{\bs e}_{\bs d}\|_{H^1(\widehat\Omega^{\rm s}_H)}  + H^k)\|{\bs v}_h\|_{H^1(\Omega^{\rm f}_{\rm ext}[\bs X_H])} . 
\end{align}

Since $\widehat{\bs\Phi}_H \circ{\bs X}_H^{-1}= ({\bs X}_H^{\#,\alpha})^{-1}$, for $\alpha=1$ we can estimate \eqref{ep-divvh-g-b-i3} as follows by using the fundamental theorem of calculus, with the notation ${\bs v}_h^{\#,\alpha}\eqd \bv_h\circ{\bs X}_H\circ\widehat{\bs\Phi}_H^{-1}\circ ({\bs X}_H^{\#,\alpha})^{-1}$ (in particular, ${\bs v}_h^{\#,1}=\bv_h$ and ${\bs v}_h^{\#,0}=\bv_h\circ{\bs X}_H\circ\widehat{\bs\Phi}_H^{-1}\circ {\bs X}^{-1}$):
\begin{subequations}
\begin{align}
\eqref{ep-divvh-g-b-i3} 
= & - \int_{\Gamma[\bs X]} {\bs v}_h^{\#,0} \cdot \big[{\bs\sigma}(\bu,p) (\grad_{\Gamma[\bs X]}{\bs e}_{\bs X}^{\#,0}){\bs n}_{\Gamma[\bs X]} \big] \notag\\
&\quad\, - \int_0^1 \frac{\d}{\d\alpha}\Big[\int_{\Gamma[{\bs X}_H^{\#,\alpha}]}\hspace{-5pt} {\bs v}_h^{\#,\alpha} \cdot \hspace{-5pt}\int_0^1\hspace{-5pt} \big[{\bs\sigma}(\bu,p) (\grad_{\Gamma[{\bs X}_H^{\#,\alpha\theta}]}{\bs e}_{\bs X}^{\#,\alpha\theta}){\bs n}_{\Gamma[{\bs X}_H^{\#,\alpha\theta}]} \big]\notag\\
&\hspace{65pt}\circ\hspace{-1pt}{\bs X}_H^{\#,\alpha\theta}\hspace{-1pt}\circ\hspace{-1pt}({\bs X}_H^{\#,\alpha})^{-1} \d\theta \Big]\d\alpha \notag\\
= & - \int_{\Gamma[\bs X]} {\bs v}_h^{\#,0} \cdot \big[{\bs\sigma}(\bu,p) (\grad_{\Gamma[\bs X]}{\bs e}_{\bs X}^{\#,0}){\bs n}_{\Gamma[\bs X]} \big] \label{sigma-e-n-1} \\ 
&\quad\, + \int_0^1\hspace{-5pt}\int_0^1\hspace{-5pt} \int_{\Gamma[{\bs X}_H^{\#,\alpha}]} |{\bs v}_h^{\#,\alpha}| \, O(|{\bs e}_{\bs X}^{\#,\alpha}|^2\hspace{-1pt} + |{\bs e}_{\bs X}^{\#,\alpha\theta}|^2) \, \d\theta\d\alpha \notag\\
&\quad\, + \int_0^1\hspace{-5pt}\int_0^1\hspace{-5pt} \int_{\Gamma[{\bs X}_H^{\#,\alpha}]} |{\bs v}_h^{\#,\alpha}| \, O(|\grad_{\Gamma[{\bs X}_H^{\#,\alpha}]} {\bs e}_{\bs X}^{\#,\alpha}|^2\hspace{-1pt} + |\grad_{\Gamma[{\bs X}_H^{\#,\alpha\theta}]} {\bs e}_{\bs X}^{\#,\alpha\theta}|^2) \, \d\theta\d\alpha , \label{sigma-e-n-2}
\end{align} 
\end{subequations}
where the expression in \eqref{sigma-e-n-2} can be obtained by using the transport formulas in \eqref{dt-int-Gamma-1} and \eqref{dt-int-Gamma-1c}. Note that \eqref{sigma-e-n-1} can be estimated by integration by parts on the smooth interface $\Gamma(t)$, i.e., 
\begin{align}
|\eqref{sigma-e-n-1}|
&\le C\|{\bs v}_h^{\#,0}\|_{H^{\frac12}(\Gamma(t))} 
\|{\bs e}_{\bs X}^{\#,0}\|_{H^{\frac12}(\Gamma[\bs X])} \le C\|{\bs v}_h\|_{H^{\frac12}(\Gamma[{\bs X}_H])} 
(\|{\bs e}_{\bs d}\|_{H^{\frac12}(\widehat\Gamma_H)} + H^k) \notag\\
&\le C\|{\bs v}_h\|_{H^1(\Omega^{\rm f}_{\rm ext}[{\bs X}_H])} 
(\|{\bs e}_{\bs d}\|_{H^1(\widehat\Omega^{\rm s}_H)} + H^k) .
\end{align}
Moreover, \eqref{sigma-e-n-2} can be estimated by using 
\begin{align}
|\eqref{sigma-e-n-2}|
&\le C\int_0^1\int_0^1\|{\bs v}_h^{\#,\alpha}\|_{L^4(\Gamma[{\bs X}_H^{\#,\alpha}])} 
\|{\bs e}_{\bs X}^{\#,\alpha}\|_{H^1(\Gamma[{\bs X}_H^{\#,\alpha}])} 
\|{\bs e}_{\bs X}^{\#,\alpha}\|_{W^{1,4}(\Gamma[{\bs X}_H^{\#,\alpha}])} \d\alpha\d\theta \notag\\
&\quad\, 
+ C\int_0^1\int_0^1\|{\bs v}_h^{\#,\alpha}\|_{L^4(\Gamma[{\bs X}_H^{\#,\alpha}])} \|{\bs e}_{\bs X}^{\#,\alpha\theta}\|_{H^1(\Gamma[{\bs X}_H^{\#,\alpha}])} 
\|{\bs e}_{\bs X}^{\#,\alpha\theta}\|_{W^{1,4}(\Gamma[{\bs X}_H^{\#,\alpha}])} \d\alpha\d\theta\notag\\
&\le C\|{\bs v}_h\|_{L^4(\Gamma[{\bs X}_H])} 
(\|{\bs e}_{\bs d}\|_{H^1(\widehat\Gamma_H)} + H^k)
(\|{\bs e}_{\bs d}\|_{W^{1,4}(\widehat\Gamma_H)} + H^k) \notag\\
&\le C\|{\bs v}_h\|_{H^{\frac12}(\Gamma[{\bs X}_H])} 
(H^{-\frac12}\|{\bs e}_{\bs d}\|_{H^1(\widehat\Omega^{\rm s}_H)} + H^k)
(H^{-1}\|{\bs e}_{\bs d}\|_{H^1(\widehat\Omega^{\rm s}_H)} + H^k) \notag\\
&\le C\|{\bs v}_h\|_{H^1(\Omega^{\rm f}_{\rm ext}[{\bs X}_H])} 
(\|{\bs e}_{\bs d}\|_{H^1(\widehat\Omega^{\rm s}_H)} + H^{k+\frac12}) ,
\end{align}
where the last inequality follows from the trace inequality from $\Gamma[{\bs X}_H]$ (see Appendix \ref{appendix:trace_inequlaity}) and \eqref{ind-1-3}. These estimates of \eqref{sigma-e-n-1} and \eqref{sigma-e-n-2} show that
\begin{align}\label{ep-divvh-g-b-j2}
\eqref{ep-divvh-g-b-i3} 
&\le C\|{\bs v}_h\|_{H^1(\Omega^{\rm f}_{\rm ext}[{\bs X}_H])} 
(\|{\bs e}_{\bs d}\|_{H^1(\widehat\Omega^{\rm s}_H)} + H^{k}) .
\end{align} 
Substituting \eqref{ep-divvh-g-b-j1} and \eqref{ep-divvh-g-b-j2} into \eqref{ep-divvh-g-b-ji3}, we obtain 
\begin{align}\label{ep-divvh-g-b-i1e}
|\eqref{ep-divvh-g-b-i1}|
\le C
(\|{\bs e}_{\bs d}\|_{H^1(\widehat\Omega^{\rm s}_H)} + H^{k}) \|{\bs v}_h\|_{H^1(\Omega^{\rm f}_{\rm ext}[{\bs X}_H])} .
\end{align}
Then, substituting \eqref{ep-divvh-g-b-i1e} and \eqref{ep-divvh-g-b-i2e} into \eqref{ep-divvh-g-b-i12}, we obtain 
\begin{align}
|\eqref{ep-divvh-g-b} + \eqref{ep-divvh-i} |
\le 
C(\|{\bs e}_{\bs d}\|_{H^1(\widehat\Omega^{\rm s}_H)} + H^{k}) \|{\bs v}_h\|_{H^1(\Omega^{\rm f}_{\rm ext}[{\bs X}_H])} .
\end{align}
Furthermore, substituting this and \eqref{ep-divvh-g-ac} into \eqref{ep-divvh-g1}, we obtain 
\begin{align}\label{ep-divvh-g-a-1}
|\eqref{ep-divvh-g} + \eqref{ep-divvh-i}|
\le 
C(\|{\bs e}_{\bs d}\|_{H^1(\widehat\Omega^{\rm s}_H)} + H^{k}) \|{\bs v}_h\|_{H^1(\Omega^{\rm f}_{\rm ext}[{\bs X}_H])} . 
\end{align}
Using the duality between $H^{-\frac12}(\Gamma[{\bs X}_H])$ and $H^{\frac12}(\Gamma[{\bs X}_H])$ and trace inequality from $\Gamma[{\bs X}_H]$ (see Appendix \ref{appendix:trace_inequlaity}), 
\begin{align}\label{ep-divvh-h-0.5}
|\eqref{ep-divvh-h}|
&\le C\|{\bs e}_{\bs\sigma}\circ{\bs X}_H^{-1}\|_{H^{-\frac12}(\Gamma[{\bs X}_H])} \|{\bs v}_h\|_{H^{\frac12}(\Gamma[{\bs X}_H])} \notag\\
&\le C\|{\bs e}_{\bs\sigma}\circ{\bs X}_H^{-1}\|_{H^{-\frac12}(\Gamma[{\bs X}_H])} \|{\bs v}_h\|_{H^1(\Omega^{\rm f}_{\rm ext}[{\bs X}_H])} . 
\end{align}
Using the Cauchy–Schwarz inequality, we have 
\begin{align}
|\eqref{ep-1505}| 
&\le \|e_p\|_{L^2(\Omega^{\rm f}_{\rm ext}[{\bs X}_H])}\|{\bs e}_{\bs u}\|_{H^1(\Omega^{\rm f}_{\rm ext}[\bs X_H])} . 
\end{align}
Using the triangle inequality, the interpolation error estimate in \eqref{interpl-u}, and the transport formula in \eqref{dt-int-Omega-2}, we have 
\begin{align*}
|\eqref{ep-divu-h}|
&\le \Big|\Big( \int_{\Omega^{\rm f}[\bs X_H]} - \int_{\Omega^{\rm f}[\bs X_H^*]} \Big) e_p \grad\cdot (\bu_h^*-\bu)\Big|
+ \Big|\Big( \int_{\Omega^{\rm f}[\bs X_H]} - \int_{\Omega^{\rm f}[\bs X_H^*]} \Big) e_p \grad\cdot \bu\Big| \\ 
&\le C H^k \|e_p\|_{L^2(\Omega^{\rm f}_{\rm ext}[{\bs X}_H])} + \max_{\theta\in[0,1]}\Big|\int_{\Gamma[{\bs X}_H^\theta]}e_p \, (\grad\cdot\bu) \, {\bs e}_{\bs d}\circ({\bs X}_H^\theta)^{-1}\cdot{\bs n}_{\Gamma[\bs X_H^\theta]} \Big| .
\end{align*}
On $\Gamma[{\bs X}_H^\theta]$ it holds that $$\grad\cdot\bu = \grad\cdot\bu - (\grad\cdot\bu)\circ {\bs X}\circ \widehat{\bs\Phi}_H\circ ({\bs X}_H^{\theta})^{-1} = O(|{\bs I}-{\bs X}\circ \widehat{\bs\Phi}_H\circ ({\bs X}_H^{\theta})^{-1}|) . $$ 
Since 
$$
\|{\bs I}-{\bs X}\circ \widehat{\bs\Phi}_H\circ ({\bs X}_H^{\theta})^{-1}\|_{L^\infty(\Gamma[{\bs X}_H^{\theta}])}
\le \|{\bs e}_{\bs d}\|_{L^\infty(\hat\Gamma_H)} + CH^k 
\le CH \quad\mbox{according to \eqref{ind-1-1}}, 
$$
it follows that 
\begin{align}\label{ep-divu-h-est}
|\eqref{ep-divu-h}| 
&\le C H^k \|e_p\|_{L^2(\Omega^{\rm f}_{\rm ext}[{\bs X}_H])} + CH\|e_p\|_{L^2(\Gamma[{\bs X}_H])}\|{\bs e}_{\bs d}\|_{L^2(\Gamma[{\bs X}_H])} \notag\\
&\le C(H^k + \|{\bs e}_{\bs d}\|_{H^1(\widehat{\Omega}^{\rm s}_H)})\|e_p\|_{L^2(\Omega^{\rm f}_{\rm ext}[{\bs X}_H])} , 
\end{align}
where the last inequality uses the inverse trace inequality.

The velocity ghost-penalty contribution in \eqref{ep-divvh-j} can be estimated by
\begin{subequations}\label{eq:pressure-ghost-estimate}
\begin{align}
\left|
g_h^{\rm u}({\bs e}_{\bs u},{\bs v}_h)
\right|
&\le
g_h^{\rm u}({\bs e}_{\bs u},{\bs e}_{\bs u})^{1/2}
g_h^{\rm u}({\bs v}_h,{\bs v}_h)^{1/2} 
\lesssim
\|{\bs e}_{\bs u}\|_{1,\Omega[{\bs X}_H]}
\|{\bs v}_h\|_{1,\Omega[{\bs X}_H]} ,
\label{eq:pressure-velocity-ghost-estimate} \\
\left|\epsilon_h
g_h^{\rm u}(\partial_t{\bs e}_{\bs u},{\bs v}_h)
\right|
&\lesssim
\epsilon_h
g_h^{\rm u}(\partial_t{\bs e}_{\bs u},\partial_t{\bs e}_{\bs u})^{1/2}
\|{\bs v}_h\|_{1,\Omega[{\bs X}_H]}. 
\label{eq:pressure-mass-ghost-estimate}
\end{align}
\end{subequations}
The remainder in \eqref{ep-divvh-j} can be estimated by using the estimate in Lemma \ref{Lemma:RhH}, i.e., 
\begin{align}\label{eq:defect-estimate}
|R_{h,H}^*({\bs v}_h, -e_p, 0, 0)| \le C(h^k+H^k)( \|{\bs v}_h\|_{1,\Omega[{\bs X}_H]} + \|e_p\|_{0,\Omega[{\bs X}_H]}). 
\end{align}
Combining the estimates in \eqref{ep-divvh-abcdef} and \eqref{ep-divvh-g-a-1}--\eqref{eq:defect-estimate} and choosing a ${\bs v}_h$ satisfying \eqref{inf-sup-ep-L2}, we obtain (for sufficiently small $h$ and $H$)
\begin{align}\label{ep-L2}
\| e_p \|_{0,\Omega[{\bs X}_H]}
&\le C(\|{\bs e}_{\bs u}\|_{1,\Omega[\bs X_H]} + \|{\bs e}_\dep\|_{H^1(\widehat{\Omega}^{\rm s}_H)} ) 
+ C\epsilon_h
g_h^{\rm u}(\partial_t{\bs e}_{\bs u},\partial_t{\bs e}_{\bs u})^{1/2}\notag\\
&\quad\, +C(\|\partial_t {\bs e}_{\bs u}\|_{L^2(\Omega^{\rm f}[\bs X_H])} + \|{\bs e}_{\bs\sigma}\circ{\bs X}_H^{-1}\|_{H^{-\frac12}(\Gamma[{\bs X}_H])}) + C(h^k+H^k).
\end{align}

\subsection{Stability estimates for \texorpdfstring{${\bs e}_{\bs\sigma}\circ{\bs X}_H^{-1}$}{x}} \label{section:esigma}

To abbreviate the notation, for the functions ${\bs e}_{\bs\sigma}$, ${\bs w}_H$ and ${\widehat{\bs\sigma}}_H^{*}$ which are originally defined on $\widehat\Gamma_H$, we define the following transported functions on $\Gamma[{\bs X}_H^{\#,\theta}]$ and $\Gamma[{\bs X}_H^{*,\theta}]$:
\begin{align}\label{notation-sigma}
\begin{aligned}
{\bs e}_{\bs\sigma}^{\#,\theta} &\eqd {\bs e}_{\bs\sigma} \circ \widehat{\bs\Phi}_H^{-1} \circ({\bs X}_H^{\#,\theta})^{-1}, 
&&{\bs w}_H^{\#,\theta} \eqd {\bs w}_H\circ \widehat{\bs\Phi}_H^{-1} \circ({\bs X}_H^{\#,\theta})^{-1} ,\\ 
{\bs e}_{\bs d}^{\#,\theta} &\eqd {\bs e}_{\bs d}\circ \widehat{\bs\Phi}_H^{-1} \circ({\bs X}_H^{\#,\theta})^{-1},
&&\,\,\,{\bs e}_{\bs d}^{*,\theta} \eqd {\bs e}_{\bs d}\circ \widehat{\bs\Phi}_H^{-1} \circ({\bs X}_H^{*,\theta})^{-1} , \\
{\widehat{\bs\sigma}}_H^{\#,\theta} &\eqd {\widehat{\bs\sigma}}_H^{*}\circ \widehat{\bs\Phi}_H^{-1} \circ({\bs X}_H^{\#,\theta})^{-1} ,
&&\,\,{\widehat{\bs\sigma}}_H^{*,\theta} \eqd {\widehat{\bs\sigma}}_H^{*}\circ \widehat{\bs\Phi}_H^{-1} \circ({\bs X}_H^{*,\theta})^{-1} , 
\end{aligned}
\end{align}
where
\begin{align*}
{\bs X}_H^{\#,\theta}
&\eqd (1-\theta){\bs X} + \theta {\bs X}_H \circ \widehat{\bs\Phi}_H^{-1} : \widehat\Gamma \rightarrow \mathbb{R}^d ,\\
{\bs X}_H^{*,\theta} 
&\eqd (1-\theta) {\bs X} + \theta {\bs X}_H^*\circ \widehat{\bs\Phi}_H^{-1}:\widehat\Gamma\rightarrow \mathbb{R}^d .
\end{align*}

With these abbreviations, we establish stability estimates for ${\bs e}_{\bs\sigma}^{\#,1}\eqd{\bs e}_{\bs\sigma}\circ{\bs X}_H^{-1}$ by utilizing the following inf-sup condition (see Lemma \ref{Lemma:H-half} in Appendix \ref{appendix:duality}):  
\begin{align}
\| {\bs e}_{\bs\sigma}^{\#,1} \|_{H^{-\frac12}(\Gamma[{\bs X}_H])} 
\le C\sup_{{\substack{{\bs w}_H\in  \widehat{\bs V}_H^{\rm s}  \\ \|{\bs w}_H\|_{H^1(\widehat{\Omega}^{\rm s}_H)} \le 1}}} 
\int_{\Gamma[{\bs X}_H]} {\bs e}_{\bs\sigma}^{\#,1} \cdot {\bs w}_H^{\#,1} . \label{inf-sup-esigma} 
\end{align}
In order to utilize \eqref{inf-sup-esigma}, we can choose ${\bs v}_h=0$, $q_h=0$ and ${\bs\lambda}_H=0$ in \eqref{error-eq}. This leads to the following relation:
\begin{subequations}\label{esigma-test}
\begin{align}
&\int_{\Gamma[\bs X_H]} {\bs e}_{\bs\sigma}^{\#,1} \cdot \bw_H^{\#,1} \notag\\  
=\, & -\int_{\widehat{\Omega}^{\rm s}_H} \partial_{tt}{\bs e}_{\dep} \cdot \bw_H 
- \int_{\widehat{\Omega}^{\rm s}_H} {\bs D}({\bs e}_\dep):{\bs D}(\bw_H) - \int_{\widehat{\Omega}^{\rm s}_H} (\grad\cdot{\bs e}_\dep) (\grad\cdot \bw_H) \label{esigma-test-a}\\ 
& - \int_{\Gamma[\bs X_H]} {\widehat{\bs\sigma}}_H^{\#,1}\cdot \bw_H^{\#,1} + \int_{\Gamma[\bs X]} {\widehat{\bs\sigma}}_H^{\#,0} \cdot \bw_H^{\#,0}  \label{esigma-test-b}\\[5pt]
& - \int_{\Gamma[\bs X]} {\widehat{\bs\sigma}}_H^{*,0} \cdot \bw_H^{*,0} + \int_{\Gamma[\bs X_H^*]} {\widehat{\bs\sigma}}_H^{*,1} \cdot \bw_H^{*,1}  \label{esigma-test-c}\\[5pt] 
& - R_{h,H}^*(\bs 0,  0, \bs w_H, \bs 0) , \label{esigma-test-d} 
\end{align}
\end{subequations}
which holds for all ${\bs w}_H\in \widehat{\bs V}^{\rm s}_H$ defined on $\widehat\Omega^{\rm s}_H$. 
Clearly, 
\begin{align}
|\eqref{esigma-test-a}|\lesssim (\|\partial_{tt}{\bs e}_{\dep}\|_{L^2(\widehat{\Omega}^{\rm s}_H)} + \|\grad {\bs  e}_\dep\|_{L^2(\widehat{\Omega}^{\rm s}_H)}) \|{\bs w}_H\|_{H^1(\widehat\Omega^{\rm s}_H)} , 
\end{align}
and
\begin{subequations}\label{esigma-test-b12}
\begin{align}
\eqref{esigma-test-b}
&= - \int_0^1\d\theta\frac{\d}{\d\theta}\int_{\Gamma[\bs X_H^{\#,\theta}]} {\widehat{\bs\sigma}}_H^{\#,\theta}\cdot \bw_H^{\#,\theta} \notag\\ 
&= - \int_0^1\d\theta\int_{\Gamma[\bs X_H^{\#,\theta}]} {\widehat{\bs\sigma}}_H^{\#,\theta}\cdot\bw_H^{\#,\theta}\grad_{\Gamma[\bs X_H^{\#,\theta}]}\cdot{\bs e}_{\bs X}^{\#,\theta} \notag\\
&= - \int_{\Gamma[\bs X]} {\widehat{\bs\sigma}}_H^{\#,0} \cdot {\bs w}_H^{\#,0} \grad_{\Gamma[\bs X]}\cdot{\bs e}_{\bs X}^{\#,0} \label{esigma-test-b1}\\
&\quad\, 
- \int_0^1\int_0^1\d\alpha\d\theta \frac{\d}{\d\alpha} \int_{\Gamma[\bs X_H^{\#,\alpha\theta}]} {\widehat{\bs\sigma}}_H^{\#,\alpha\theta}\cdot {\bs w}_H^{\#,\alpha\theta} \grad_{\Gamma[\bs X_H^{\#,\alpha\theta}]}\cdot{\bs e}_{\bs X}^{\#,\alpha\theta}
\label{esigma-test-b2}
\end{align}
\end{subequations} 
where \eqref{esigma-test-b1} can be estimated via integration by parts on the smooth interface $\Gamma[\bs X]$, i.e.,
\begin{align}
|\eqref{esigma-test-b1}|
&\lesssim \|{\bs w}_H^{\#,0}\|_{H^{\frac12}(\Gamma[\bs X])} 
\| {\bs e}_{\bs X}^{\#,0} \|_{H^{\frac12}(\Gamma[\bs X])} \notag\\
&\lesssim \|{\bs w}_H\|_{H^{\frac12}(\widehat\Gamma_H)} 
\big(\| {\bs e}_{\bs d}\|_{H^{\frac12}(\widehat\Gamma_H)} 
+ \| {\bs e}_{\bs X}^*\|_{H^{\frac12}(\widehat\Gamma)} \big)\quad\mbox{(here \eqref{def-ej-theta} and \eqref{norm-equiv-X-theta} are used)}\notag\\
&\lesssim \|{\bs w}_H\|_{H^1(\widehat\Omega^{\rm s}_H)} 
\big( \| {\bs e}_{\bs d}\|_{H^1(\widehat\Omega^{\rm s}_H)} + H^k \big) \quad\,\mbox{(trace inequality)},
\end{align}
and \eqref{esigma-test-b2} can be estimated as follows, after applying the formulas in \eqref{dt-int-Gamma-1} and \eqref{dt-int-Gamma-1c}:
\begin{align}
|\eqref{esigma-test-b2}|
&\lesssim \int_0^1\int_0^1\d\alpha\d\theta \int_{\Gamma[\bs X_H^{\#,\alpha\theta}]} |{\widehat{\bs\sigma}}_H^{\#,\alpha\theta}\cdot {\bs w}_H^{\#,\alpha\theta}| |\grad_{\Gamma[\bs X_H^{\#,\alpha\theta}]}{\bs e}_{\bs X}^{\#,\alpha\theta}|^2 \notag\\
&\lesssim \int_0^1\int_0^1\d\alpha\d\theta \|{\bs w}_H^{\#,\alpha\theta}\|_{L^4(\Gamma[\bs X_H^{\#,\alpha\theta}])} \|{\bs e}_{\bs X}^{\#,\alpha\theta}\|_{H^1(\Gamma[\bs X_H^{\#,\alpha\theta}])} 
\|{\bs e}_{\bs X}^{\#,\alpha\theta}\|_{W^{1,4}(\Gamma[\bs X_H^{\#,\alpha\theta}])} \notag\\
&\lesssim \|{\bs w}_H\|_{L^4(\widehat\Gamma_H)} 
(\|{\bs e}_{\bs d}\|_{H^1(\widehat\Gamma_H)} +\|{\bs e}_{\bs X}^*\|_{H^1(\widehat\Gamma_H)} )
(\|{\bs e}_{\bs d}\|_{W^{1,4}(\widehat\Gamma_H)} +\|{\bs e}_{\bs X}^*\|_{W^{1,4}(\widehat\Gamma_H)})\notag\\
&\qquad \mbox{(here \eqref{def-ej-theta} and \eqref{norm-equiv-X-theta} are used)}\notag\\
&\lesssim \|{\bs w}_H\|_{H^{\frac12}(\widehat\Gamma_H)} 
(H^{-\frac12}\|{\bs e}_{\bs d}\|_{H^1(\widehat\Omega^{\rm s}_H)} + H^k)
(H^{-1}\|{\bs e}_{\bs d}\|_{H^1(\widehat\Omega^{\rm s}_H)} +H^k)\notag\\
&\qquad\mbox{(here Sobolev embedding and inverse trace estimate are used)}\notag\\
&\lesssim \|{\bs w}_H\|_{H^1(\widehat\Omega^{\rm s}_H)} 
\big(\|{\bs e}_{\bs d}\|_{H^1(\widehat\Omega^{\rm s}_H)} + H^k \big)\notag\\
&\qquad\mbox{(here $\|{\bs e}_{\bs d}\|_{H^1(\widehat\Omega^{\rm s}_H)} \le H^{\frac32}$ in \eqref{ind-1-3} is used)} . 
\end{align}
Substituting these estimates into \eqref{esigma-test-b12}, we obtain 
\begin{align}
|\eqref{esigma-test-b}|
&\lesssim \big( \|{\bs e}_{\bs d}\|_{H^1(\widehat\Omega^{\rm s}_H)} + H^k \big)
\|{\bs w}_H\|_{H^1(\widehat\Omega^{\rm s}_H)} .
\end{align}
The term \eqref{esigma-test-c} can be estimated in the same way, i.e., 
\begin{align}
|\eqref{esigma-test-c}|
&\lesssim H^k \|{\bs w}_H\|_{H^1(\widehat\Omega^{\rm s}_H)} .
\end{align}
Then, substituting these estimates of \eqref{esigma-test-a}, \eqref{esigma-test-b} and \eqref{esigma-test-c} into \eqref{esigma-test}, and estimating \eqref{esigma-test-d} with the consistency estimate $|R_{h,H}^*(0, 0, \bs w_H, 0)|\lesssim (h^k+H^k)\|{\bs w}_H\|_{H^1(\widehat\Omega^{\rm s}_H)}$ from Lemma \ref{Lemma:RhH}, we obtain the following result: 
\begin{align}\label{esigma-wH-pair}
\Big|\int_{\Gamma[\bs X_H]} {\bs e}_{\bs\sigma}^{\#,1} \cdot \bw_H^{\#,1} \Big|
\lesssim 
\|\partial_{tt}{\bs e}_{\dep}\|_{L^2(\widehat{\Omega}^{\rm s}_H)} \|{\bs w}_H\|_{L^2(\widehat\Omega^{\rm s}_H)}
+ (\|{\bs e}_\dep\|_{H^1(\widehat{\Omega}^{\rm s}_H)} 
+ h^k+H^k) \|{\bs w}_H\|_{H^1(\widehat\Omega^{\rm s}_H)} . 
\end{align}
By applying the inf-sup conditions in \eqref{inf-sup-esigma}, we derive that 
\begin{align}\label{esigma-H-half}
\| {\bs e}_{\bs\sigma}^{\#,1}\|_{H^{-\frac12}(\Gamma[{\bs X}_H])} 
\lesssim 
\|\partial_{tt}{\bs e}_{\dep}\|_{L^2(\widehat{\Omega}^{\rm s}_H)} 
+ \|{\bs e}_\dep\|_{H^1(\widehat{\Omega}^{\rm s}_H)} 
+ (h^k+H^k) . 
\end{align} 
Then, by the norm equivalence in \eqref{norm-equivl-Hs}, we have
\begin{align}\label{esigma-H-half-hat}
\| {\bs e}_{\bs\sigma}\|_{H^{-\frac12}(\widehat\Gamma_H)} 
\lesssim 
\|\partial_{tt}{\bs e}_{\dep}\|_{L^2(\widehat{\Omega}^{\rm s}_H)} 
+ \|{\bs e}_\dep\|_{H^1(\widehat{\Omega}^{\rm s}_H)} 
+ (h^k+H^k) . 
\end{align}

Moreover, by extending ${\bs w}_H$ from $\widehat\Gamma_H$ to $\widehat{\Omega}^{\rm s}_H$ as a finite element function which is zero at the interior nodes of $\widehat{\Omega}^{\rm s}_H$, this extended finite element function satisfies the following estimates: 
\begin{align}
\| \bw_H \|_{L^2(\widehat{\Omega}^{\rm s}_H)} 
&\le CH^{\frac12} \| \bw_H \|_{L^2(\widehat\Gamma_H)}  
\le C \| \bw_H \|_{L^{\frac43}(\widehat\Gamma_H)} , \label{L2-int-bd}\\ 
\| \grad \bw_H \|_{L^2(\widehat{\Omega}^{\rm s}_H)} 
&\le CH^{-1} \| \bw_H \|_{L^2(\widehat{\Omega}^{\rm s}_H)} 
\le CH^{-1} \| \bw_H \|_{L^{\frac43}(\widehat\Gamma_H)} ,
\end{align} 
where the last inequality uses \eqref{L2-int-bd}. 
This proves that, in view of \eqref{esigma-wH-pair}, 
\begin{align*}
\Big|\int_{\Gamma[\bs X_H]} {\bs e}_{\bs\sigma}^{\#,1} \cdot \bw_H^{\#,1} \Big|
&\lesssim 
[\|\partial_{tt}{\bs e}_{\dep}\|_{L^2(\widehat{\Omega}^{\rm s}_H)} 
+ H^{-1}(\|{\bs e}_\dep\|_{H^1(\widehat{\Omega}^{\rm s}_H)} 
+ h^k+H^k)] \| \bw_H \|_{L^{\frac43}(\widehat\Gamma_H)} \notag \\
&\lesssim 
[\|\partial_{tt}{\bs e}_{\dep}\|_{L^2(\widehat{\Omega}^{\rm s}_H)} 
+ H^{-1}(\|{\bs e}_\dep\|_{H^1(\widehat{\Omega}^{\rm s}_H)} 
+ h^k+H^k)] \| \bw_H^{\#,1} \|_{L^{\frac43}(\Gamma[\bs X_H])} . 
\end{align*}
Since the $L^2$ projection ${\bf P}_{\Gamma[\bs X_H]}$ is bounded on $L^p(\Gamma[\bs X_H])$ for all $1\le p\le \infty$ (see \cite[Appendix B]{Li-Tang-2025}, where $p\ge 2$ is proved and $1\le p\le 2$ follows from duality), it follows that 
\begin{align}\label{esigma-L4-1} 
\Big|\int_{\Gamma[\bs X_H]} {\bs e}_{\bs\sigma}^{\#,1} \cdot \bw \Big|
&= \Big|\int_{\Gamma[\bs X_H]} {\bs e}_{\bs\sigma}^{\#,1} \cdot {\bf P}_{\Gamma[\bs X_H]}\bw \Big| \notag\\
&\lesssim 
[\|\partial_{tt}{\bs e}_{\dep}\|_{L^2(\widehat{\Omega}^{\rm s}_H)} 
+ H^{-1}(\|{\bs e}_\dep\|_{H^1(\widehat{\Omega}^{\rm s}_H)} 
+ h^k+H^k)] \| {\bf P}_{\Gamma[\bs X_H]}\bw \|_{L^{\frac43}(\Gamma[\bs X_H])} \notag\\
&\lesssim 
[\|\partial_{tt}{\bs e}_{\dep}\|_{L^2(\widehat{\Omega}^{\rm s}_H)} 
+ H^{-1}(\|{\bs e}_\dep\|_{H^1(\widehat{\Omega}^{\rm s}_H)} 
+ h^k+H^k)] \|\bw \|_{L^{\frac43}(\Gamma[\bs X_H])} ,
\end{align}
which holds for all $\bw\in L^{\frac43}(\Gamma[\bs X_H])$. Therefore, by the duality between $L^4(\Gamma[{\bs X}_H])$ and $L^{\frac43}(\Gamma[\bs X_H])$ and the norm equivalence relation $\|{\bs e}_{\bs\sigma}^{\#,1}\|_{L^4(\Gamma[{\bs X}_H])} \simeq \|{\bs e}_{\bs\sigma}\|_{L^4(\widehat\Gamma_H)}$ as shown in \eqref{norm-equiv-GH}, we derive that
\begin{align}\label{esigma-L4} 
\| {\bs e}_{{\bs\sigma}}\|_{L^4(\widehat\Gamma_H)} 
\lesssim  
\|\partial_{tt} {\bs e}_{\dep}\|_{L^2(\widehat{\Omega}^{\rm s}_H)} + H^{-1} \|{\bs e}_\dep\|_{H^1(\widehat{\Omega}^{\rm s}_H)} 
+ H^{-1} (h^k+H^k) . 
\end{align} 

\subsection{Stability estimates for \texorpdfstring{$ \partial_t {\bs e}_{\bs u}$}{x} and \texorpdfstring{$\partial_{tt}{\bs e}_{\dep}$}{x}} \label{section:dteu}

In this subsection, we prove estimates of $ \partial_t {\bs e}_{\bs u}$ and $\partial_{tt}{\bs e}_{\dep}$ for $t\in[0,t_*]$. To this end, we choose ${\bs v}_h = \partial_t {\bs e}_{\bs u}$, $q_h=0$, ${\bs w}_H=\partial_{tt}{\bs e}_\dep$ and ${\bs\lambda}_H=\bs 0$ in \eqref{error-eq}. This leads to the following equation:
\begin{subequations}\label{error-eq-dte}
\begin{align} 
& \int_{\Omega^{\rm f}[\bs X_H]} |\partial_t {\bs e}_{\bs u} |^2 
\label{err-a-dte}\\
&\quad\, + \Big( \int_{\Omega^{\rm f}[\bs X_H]} - \int_{\Omega^{\rm f}[\bs X_H^*]} \Big) \partial_t {\bs u}_h^* \cdot \partial_t {\bs e}_{\bs u}\label{err-b-dte}\\
&\quad\, + \int_{\Omega^{\rm f}[\bs X_H]} {\bs e}_{\bs u} \cdot \grad \bu_h \cdot \partial_t {\bs e}_{\bs u} \label{err-c-dte}\\
&\quad\, + \int_{\Omega^{\rm f}[\bs X_H]} {\bs u}_h^* \cdot \grad {\bs e}_{\bs u}  \cdot \partial_t {\bs e}_{\bs u} \label{err-d-dte}\\
&\quad\, + \Big( \int_{\Omega^{\rm f}[\bs X_H]} - \int_{\Omega^{\rm f}[\bs X_H^*]} \Big) {\bs u}_h^* \cdot \grad {\bs u}_h^* \cdot \partial_t {\bs e}_{\bs u} \label{err-e-dte}\\
&\quad\, + \int_{\Omega^{\rm f}[\bs X_H]} \bs D({\bs e}_{\bs u}): \bs D(\partial_t {\bs e}_{\bs u}) \label{err-f-dte}\\ 
&\quad\, - \int_{\Omega^{\rm f}[\bs X_H]}  e_p \,\grad\cdot \partial_t {\bs e}_{\bs u} \label{err-g-dte}\\ 
&\quad\, + \int_{\Omega^{\rm f}[\bs X_H]} {\bs\sigma}(\bu_h^*,p_h^*): \grad \partial_t {\bs e}_{\bs u} 
- \int_{\Omega^{\rm f}[\bs X_H^*]} {\bs\sigma} (\bu_h^*,p_h^*): \grad \partial_t {\bs e}_{\bs u}
\label{err-h-dte}\\ 
&\quad\, +  \int_{\widehat{\Omega}^{\rm s}_H} |\partial_{tt}{\bs e}_{\dep} |^2 
+ \int_{\widehat{\Omega}^{\rm s}_H} \big[ {\bs D}({\bs e}_\dep):{\bs D}(\partial_{tt}{\bs e}_\dep) + (\grad\cdot {\bs e}_\dep ) (\grad\cdot \partial_{tt}{\bs e}_\dep ) \big] \label{err-i-dte}\\ 
&\quad\, - \int_{\Gamma[\bs X_H]} {\bs e}_{{\bs \sigma} } \circ{\bs X}_H^{-1} \cdot ( \partial_t {\bs e}_{\bs u} - \partial_{tt}{\bs e}_\dep \circ{\bs X}_H^{-1} ) \label{err-j-dte}\\ 
&\quad\, - \int_{\Gamma[\bs X_H]} {\widehat{\bs \sigma} }_H^*\circ{\bs X}_H^{-1} \cdot (\partial_t {\bs e}_{\bs u} - \partial_{tt}{\bs e}_\dep \circ{\bs X}_H^{-1}  ) 
\label{err-k-dte}\\ 
&\quad\, + \int_{\Gamma[\bs X_H^*]} {\widehat{\bs \sigma} }_H^* \circ({\bs X}_H^*)^{-1} \cdot ( \partial_t {\bs e}_{\bs u} - \partial_{tt}{\bs e}_\dep \circ({\bs X}_H^*)^{-1}) 
\label{err-l-dte}\\ 
&\quad\, + \epsilon_h g_h^{\rm u} (\partial_t{\bs e}_{\bs u}, \partial_t{\bs e}_{\bs u})   + g_h^{\rm u} ({\bs e}_{\bs u}, \partial_t{\bs e}_{\bs u})  \label{err-m-dte} \\
& = -R_{h,H}^*(\partial_t {\bs e}_{\bs u}, 0, \partial_{tt}{\bs e}_\dep, 0) . \label{err-n-dte} 
\end{align}
\end{subequations}

\noindent{\bf Part 1: Estimates for (\ref{err-a-dte})+(\ref{err-f-dte})+(\ref{err-i-dte})+(\ref{err-m-dte}).}\\
By using the transport formula in \eqref{dt-int-Omega-1}, we have 
\begin{align}
&\int_0^s(\eqref{err-a-dte}+\eqref{err-f-dte}+\eqref{err-i-dte}+\eqref{err-m-dte})\d t \notag\\ 
&=
\int_0^s\int_{\Omega^{\rm f}[\bs X_H]} |\partial_t {\bs e}_{\bs u} |^2 \d t 
+ \int_0^s \epsilon_h g_h^{\rm u} (\partial_t{\bs e}_{\bs u}, \partial_t{\bs e}_{\bs u}) \d t 
+ \int_0^s\int_{\widehat{\Omega}^{\rm s}_H} |\partial_{tt} {\bs e}_{\dep} |^2 \d t \notag\\
&\quad\,
+ \frac12\int_{\Omega^{\rm f}[\bs X_H(\cdot,s)]} |{\bs D}({\bs e}_{\bs u}(s))|^2 -  \frac12\int_{\Omega^{\rm f}[\bs X_H(\cdot,0)]} |{\bs D}({\bs e}_{\bs u}(0))|^2 \notag\\
&\quad\, 
- \int_0^s\frac12\int_{\Gamma[{\bs X}_H]} |{\bs D}({\bs e}_{\bs u})|^2 [(\partial_t{\bs d}_H)\circ {\bs X}_H^{-1} ]\cdot {\bs n}_{\Gamma[{\bs X}_H]} \d t \notag\\
&\quad\, + \int_0^s \int_{\widehat{\Omega}^{\rm s}_H} \big[ {\bs D}({\bs e}_\dep):{\bs D}(\partial_{tt}{\bs e}_\dep) + (\grad\cdot {\bs e}_\dep ) (\grad\cdot \partial_{tt}{\bs e}_\dep ) \big] \d t \notag\\
&\quad\, 
+ \frac12  g_h^{\rm u} ({\bs e}_{\bs u}(s),{\bs e}_{\bs u}(s)) 
- \frac12  g_h^{\rm u} ({\bs e}_{\bs u}(0),{\bs e}_{\bs u}(0)) \notag\\ 
&\ge 
\int_0^s\int_{\Omega^{\rm f}[\bs X_H]} |\partial_t {\bs e}_{\bs u} |^2 \d t 
+ \int_0^s \epsilon_h g_h^{\rm u} (\partial_t{\bs e}_{\bs u}, \partial_t{\bs e}_{\bs u}) \d t 
+ \frac12\int_0^s\int_{\widehat{\Omega}^{\rm s}_H} |\partial_{tt} {\bs e}_{\dep} |^2 \d t \notag\\
&\quad\,
+ \frac12\int_{\Omega^{\rm f}[\bs X_H(\cdot,s)]} |{\bs D}({\bs e}_{\bs u}(s))|^2 \notag\\
&\quad\, 
- C\int_0^s \Big( h^{-1}\|{\bs e}_{\bs u}\|_{H^1(\Omega^{\rm f}_{\rm ext}[{\bs X}_H])}^2 + H^{-2}\|\grad{\bs e}_{\bs d}\|_{L^2(\widehat\Omega^{\rm s}_H)}^2 \Big) \d t \notag\\ 
&\quad\, 
+ \frac12 g_h^{\rm u} ({\bs e}_{\bs u}(s),{\bs e}_{\bs u}(s)) 
- Ch^{2k} , 
\label{err-afim-dte}
\end{align}
where we have used the inverse trace inequality and $\|\partial_t{\bs d}_H\|_{L^\infty(\widehat\Omega^{\rm s}_H)}\lesssim 1$, which follows from $\|\partial_t{\bs e}_\dep\|_{L^\infty(\widehat{\Omega}^{\rm s}_H)}\le H^{1/4}$ in \eqref{ind-1-2}, and 
\begin{align*}
\int_{\Omega^{\rm f}[\bs X_H(\cdot,0)]} |{\bs D}({\bs e}_{\bs u}(0))|^2
+ g_h^{\rm u} ({\bs e}_{\bs u}(0),{\bs e}_{\bs u}(0)) 
&\lesssim 
\|{\bs e}_{\bs u}(0)\|_{H^1(\Omega^{\rm f}[\bs X_H(\cdot,0)])}^2
+ g^{\rm u}_h({\bs e}_{\bs u}(0),{\bs e}_{\bs u}(0)) 
\lesssim h^{2k} 
\end{align*}
as shown in \eqref{uh0-error}.

\noindent{\bf Part 2: Estimates for (\ref{err-b-dte})+(\ref{err-c-dte})+(\ref{err-d-dte})+(\ref{err-e-dte}).}\\
By using the transport formula in \eqref{dt-int-Omega-2}, we can estimate \eqref{err-b-dte} as follows, using the formula in \eqref{dt-int-Gamma-1b}: 
\begin{align}
\eqref{err-b-dte}
&= \frac{\d}{\d t} \Big[ \Big( \int_{\Omega^{\rm f}[\bs X_H]} - \int_{\Omega^{\rm f}[\bs X_H^*]} \Big) \partial_{t} {\bs u}_h^* \cdot  {\bs e}_{\bs u} \Big] \notag\\
&\quad\, 
- \Big( \int_{\Omega^{\rm f}[\bs X_H]} - \int_{\Omega^{\rm f}[\bs X_H^*]} \Big) \partial_{tt} {\bs u}_h^* \cdot  {\bs e}_{\bs u} \notag\\
&\quad\, 
- \Big( \int_{\Gamma[\bs X_H]} {\bs\dot{\bs d}}_H^1 \cdot{\bs n}_{\Gamma[\bs X_H]} \partial_{t} {\bs u}_h^* \cdot  {\bs e}_{\bs u}
-\int_{\Gamma[\bs X_H^*]} {\bs\dot{\bs d}}_H^0 \cdot{\bs n}_{\Gamma[\bs X_H^*]} \partial_{t} {\bs u}_h^* \cdot  {\bs e}_{\bs u} \Big) \notag\\
&= \frac{\d}{\d t} \int_0^1\d\theta \int_{\Gamma[\bs X_H^\theta]} ({\bs e}_{\bs d}^\theta\cdot{\bs n}_{\Gamma[\bs X_H^\theta]}) (\partial_{t} {\bs u}_h^* \cdot  {\bs e}_{\bs u}) \notag\\
&\quad\, 
- \int_0^1\d\theta\int_{\Gamma[\bs X_H^\theta]} ({\bs e}_{\bs d}^\theta\cdot{\bs n}_{\Gamma[\bs X_H^\theta]}) (\partial_{tt} {\bs u}_h^* \cdot {\bs e}_{\bs u}) \notag\\
&\quad\, 
- \int_0^1\d\theta\int_{\Gamma[\bs X_H^\theta]} [{\bs\dot{\bs e}}_{\bs d}^\theta \cdot{\bs n}_{\Gamma[\bs X_H^\theta]} - ({\bs\dot{\bs d}}_H^\theta)^{\top} (\grad_{\Gamma[\bs X_H^\theta]} {\bs e}_{\bs d}^\theta) {\bs n}_{\Gamma[\bs X_H^\theta]} ] \partial_{t} {\bs u}_h^* \cdot  {\bs e}_{\bs u} \notag\\
&\quad\, 
- \int_0^1\d\theta\int_{\Gamma[\bs X_H^\theta]} ({\bs\dot{\bs d}}_H^\theta \cdot {\bs n}_{\Gamma[\bs X_H^\theta]})({\bs e}_{\bs d}^\theta\cdot\grad) (\partial_{t}{\bs u}_h^* \cdot {\bs e}_{\bs u}) \notag\\
&\quad\, 
- \int_0^1\d\theta\int_{\Gamma[\bs X_H^\theta]} ({\bs\dot{\bs d}}_H^\theta \cdot {\bs n}_{\Gamma[\bs X_H^\theta]})(\partial_{t}{\bs u}_h^* \cdot  {\bs e}_{\bs u}) \grad_{\Gamma[\bs X_H^\theta]}\cdot {\bs e}_{\bs d}^\theta.
\end{align}
Since ${\bs\dot{\bs d}}_H=\partial_t{\bs d}_H^* + \partial_t{\bs e}_{\bs d}$ and $\|\partial_t{\bs e}_{\bs d}\|_{L^\infty(\widehat\Omega^{\rm s}_H)}\lesssim 1$, as shown in \eqref{ind-1-2}, it follows that 
$$
\|{\bs\dot{\bs d}}_H^\theta\|_{L^\infty(\Gamma[{\bs X}_H^\theta])}\lesssim 1 . 
$$
Since ${\bs e}_{\bs d}(0)=\bs 0$, by using the trace inequalities (see Appendix \ref{appendix:trace_inequlaity})
\begin{align*}
&\|{\bs e}_{\bs d}\|_{L^2(\widehat\Gamma_H)}
\lesssim \|{\bs e}_{\bs d}\|_{H^1(\widehat\Omega^{\rm s}_H)} , 
\quad 
\|{\bs e}_{\bs u}\|_{L^2(\Gamma[\bs X_H^\theta])}
\lesssim \|{\bs e}_{\bs u}\|_{H^1(\Omega^{\rm f}_{\rm ext}[{\bs X}_H])} 
\end{align*} 
and the finite element inverse estimates 
\begin{align*}
\|{\bs\dot{\bs e}}_{\bs d}^\theta\|_{L^2(\Gamma[{\bs X}_H^\theta])}
&\lesssim \|\partial_t{\bs e}_{\bs d}^\theta\|_{L^2(\widehat\Gamma_H)} 
\lesssim H^{-\frac12}\|\partial_t{\bs e}_{\bs d}\|_{L^2(\widehat\Omega^{\rm s}_H)} , \\
\|{\bs e}_{\bs d}^\theta\|_{H^1(\Gamma[{\bs X}_H^\theta])}
&\lesssim \|{\bs e}_{\bs d}\|_{H^1(\widehat\Gamma_H)} 
\lesssim H^{-\frac12}\|{\bs e}_{\bs d}\|_{H^1(\widehat\Omega^{\rm s}_H)} , \\
\|{\bs e}_{\bs u}\|_{H^1(\Gamma[{\bs X}_H^\theta])}
&\lesssim h^{-\frac12}\|{\bs e}_{\bs u}\|_{H^1(\Omega^{\rm f}_{\rm ext}[{\bs X}_H])} ,  
\end{align*} 
we obtain 
\begin{align}\label{estimate-err-b-dte}
\Big|\int_0^s\eqref{err-b-dte}\d t\Big|
&\lesssim \int_0^1 \|{\bs e}_{\bs d}(s)\|_{L^2(\widehat\Gamma_H)}
\|{\bs e}_{\bs u}(s)\|_{L^2(\Gamma[{\bs X}_H^\theta(\cdot,s)])} \d\theta\notag\\
&\quad\,
+ \int_0^s \|{\bs e}_{\bs d}\|_{H^1(\widehat\Omega^{\rm s}_H)}
\|{\bs e}_{\bs u}\|_{H^1(\Omega^{\rm f}_{\rm ext}[{\bs X}_H])} \d t\notag\\
&\quad\,
+ \int_0^s H^{-\frac12} \|\partial_t{\bs e}_{\bs d}\|_{L^2(\widehat\Omega^{\rm s}_H)}
\|{\bs e}_{\bs u}\|_{H^1(\Omega^{\rm f}_{\rm ext}[{\bs X}_H])} \d t
\notag\\
&\quad\,
+ \int_0^s (h^{-\frac12}+H^{-\frac12}) \|{\bs e}_{\bs d}\|_{H^1(\widehat\Omega^{\rm s}_H)} 
\|{\bs e}_{\bs u}\|_{H^1(\Omega^{\rm f}_{\rm ext}[{\bs X}_H])} \d t
\notag\\
&\le \varepsilon
\big( \|{\bs e}_{\bs u}(s)\|_{H^1(\Omega^{\rm f}_{\rm ext}[{\bs X}_H(\cdot,s)])}^2 + \|\grad{\bs e}_{\bs d}(s)\|_{L^2(\widehat\Omega^{\rm s}_H)}^2 \big) \notag\\
&\quad\,
+ C_\varepsilon\int_0^s 
\|{\bs e}_{\bs u}\|_{H^1(\Omega^{\rm f}_{\rm ext}[{\bs X}_H])}^2 \d t\notag\\
&\quad\,
+ C_\varepsilon\int_0^s 
\big[ (h^{-1}+H^{-1})\|{\bs e}_{\bs d}\|_{H^1(\widehat\Omega^{\rm s}_H)}^2 
+ H^{-1}\|\partial_t{\bs e}_{\bs d}\|_{L^2(\widehat\Omega^{\rm s}_H)}^2 \big]\d t , 
\end{align}
where the last inequality follows from using the interpolation inequality
\eqref{L2-f-trace} and the corresponding result for $\widehat\Omega^{\rm s}_H$, as well as the following result:
\begin{align}
\|{\bs e}_{\bs d}(s)\|_{L^2(\widehat\Omega^{\rm s}_H)}^2
&= 2\int_0^s \int_{\widehat\Omega^{\rm s}_H} {\bs e}_{\bs d} \cdot \partial_t{\bs e}_{\bs d} \d t \notag\\ 
&\lesssim \int_0^s \|\partial_t{\bs e}_{\bs d}\|_{L^2(\widehat\Omega^{\rm s}_H)}^2 \d t 
+ \int_0^s \|{\bs e}_{\bs d}\|_{L^2(\widehat\Omega^{\rm s}_H)}^2 \d t . 
\end{align}
The term \eqref{err-e-dte} can be estimated in the same way, i.e., 
\begin{align}\label{estimate-err-e-dte}
|\eqref{err-e-dte}|
&\lesssim \varepsilon\|\partial_{t}{\bs e}_{\bs u}\|_{L^2(\Omega^{\rm f}[\bs X_H])}^2 
+C_{\varepsilon}h^{-1}\|{\bs e}_{\bs d}\|_{H^1(\widehat\Omega^{\rm s}_H)}^2 .
\end{align}

By using decomposition ${\bs u}_h={\bs u}_h^*+{\bs e}_{\bs u}$ in \eqref{err-c-dte}, as well as the estimates $\|{\bs u}_h^*\|_{W^{1,\infty}(\Omega^{\rm f}_{\rm ext}[{\bs X}_H])}\lesssim 1$ and $\|{\bs e}_{\bs u}\|_{L^\infty(\Omega^{\rm f}_{\rm ext}[{\bs X}_H])}\lesssim 1$ from \eqref{ind-1-2}, we have 
\begin{align}\label{dte-estimates-cdf}
| \eqref{err-c-dte} + \eqref{err-d-dte} | 
&\le \varepsilon \int_{\Omega^{\rm f}[\bs X_H]} |\partial_t {\bs e}_{\bs u} |^2 + C_{\varepsilon} \|{\bs e}_{\bs u}\|_{H^1(\Omega^{\rm f}_{\rm ext}[{\bs X}_H])}^2 .
\end{align}

\noindent{\bf Part 3: Estimates for (\ref{err-h-dte})+(\ref{err-k-dte})+(\ref{err-l-dte}).}\\
By decomposing ${\bs\sigma}(\bu_h^*,p_h^*)$ into ${\bs\sigma}(\bu_h^*,p_h^*)- {\bs\sigma}(\bu,p) + {\bs\sigma}(\bu,p)$ and applying integration by parts, we have 
\begin{subequations}\label{err-h-dte-1} 
\begin{align}
\eqref{err-h-dte} 
&= \int_{\Omega^{\rm f}[\bs X_H]} ({\bs\sigma}(\bu_h^*,p_h^*) - {\bs\sigma}(\bu,p)): \grad \partial_t {\bs e}_{\bs u} \label{err-h-dte-a} \\
&\quad\, - \int_{\Omega^{\rm f}[\bs X_H^*]} ({\bs\sigma}(\bu_h^*,p_h^*) - {\bs\sigma}(\bu,p)): \grad \partial_t {\bs e}_{\bs u} \label{err-h-dte-b}\\ 
&\quad\, + \int_{\Omega^{\rm f}[\bs X_H^*]} \hspace{-5pt}\grad\cdot{\bs\sigma}(\bu,p)\cdot \partial_t {\bs e}_{\bs u} - \int_{\Omega^{\rm f}[\bs X_H]}\hspace{-5pt} \grad\cdot {\bs\sigma}(\bu,p)\cdot \partial_t {\bs e}_{\bs u} \label{err-h-dte-c}\\
&\quad\, + \int_{\Gamma[\bs X_H]} {\bs\sigma}(\bu,p){\bs n}\cdot \partial_t {\bs e}_{\bs u} - \int_{\Gamma[\bs X_H^*]} {\bs\sigma}(\bu,p){\bs n}\cdot \partial_t {\bs e}_{\bs u}\label{err-h-dte-d} , 
\end{align}
\end{subequations}
where $\eqref{err-h-dte-a}$ and $\eqref{err-h-dte-b}$ can be estimated by using the transport formula in \eqref{dt-int-Omega-1}, the interpolation error estimates in \eqref{interpl-u} and the finite element inverse estimate. For example, 
\begin{align}
\Big|\int_0^s \eqref{err-h-dte-a} \d t \Big|
&= \int_{\Omega^{\rm f}[\bs X_H]} ({\bs\sigma}(\bu_h^*, p_h^*) - {\bs\sigma}(\bu,p)): \grad {\bs e}_{\bs u} \Big|_{t=0}^{t=s} \notag\\
&\quad\, 
-\int_0^s \int_{\Omega^{\rm f}[\bs X_H]} ({\bs\sigma}(\partial_t \bu_h^*,\partial_t p_h^*) - {\bs\sigma}(\partial_t \bu,\partial_t p)): \grad {\bs e}_{\bs u} \d t\notag\\
&\quad\,
- \int_0^s \int_{\Gamma[\bs X_H]} ({\bs\sigma}(\bu_h^*, p_h^*) - {\bs\sigma}(\bu,p)): \grad {\bs e}_{\bs u} ({\bs\dot{\bs d}}_H \cdot {\bs n}_{\Gamma[\bs X_H]}) \d t \notag\\
&\lesssim h^k\|{\bs e}_{\bs u}(s)\|_{H^1(\Omega^{\rm f}_{\rm ext}[\bs X_H])}  
+ h^k\|{\bs e}_{\bs u}(0)\|_{H^1(\Omega^{\rm f}_{\rm ext}[\bs X_H])}  \notag\\
&\quad\, 
+ \int_0^s h^k (\|{\bs e}_{\bs u}\|_{H^1(\Omega^{\rm f}_{\rm ext}[\bs X_H])} + \| \grad {\bs e}_{\bs u}\|_{L^2(\Gamma[\bs X_H])}) \d t \notag\\
&\le \varepsilon
 \|{\bs e}_{\bs u}(s)\|_{H^1(\Omega^{\rm f}_{\rm ext}[{\bs X}_H(\cdot,s)])}^2 
+ C_{\varepsilon}\int_0^s 
\|{\bs e}_{\bs u}\|_{H^1(\Omega^{\rm f}_{\rm ext}[{\bs X}_H])}^2 \d t 
+ C_{\varepsilon}h^{2k-1}, 
\end{align}
where we have used the result $\|{\bs e}_{\bs u}(0)\|_{H^1(\Omega^{\rm f}_{\rm ext}[\bs X_H])} \lesssim h^k$. The estimate for $\int_0^s \eqref{err-h-dte-b} \d t$ is similar (details are omitted), i.e., 
\begin{align}
\Big|\int_0^s \eqref{err-h-dte-b} \d t \Big|
\le \varepsilon
 \|{\bs e}_{\bs u}(s)\|_{H^1(\Omega^{\rm f}_{\rm ext}[{\bs X}_H(\cdot,s)])}^2 
+ C_{\varepsilon}\int_0^s 
\|{\bs e}_{\bs u}\|_{H^1(\Omega^{\rm f}_{\rm ext}[{\bs X}_H])}^2 \d t 
+ C_{\varepsilon}h^{2k-1}. 
\end{align}
Moreover, \eqref{err-h-dte-c} can be estimated in the same way as \eqref{err-b-dte}, i.e.,
\begin{align}
\Big|\int_0^s \eqref{err-h-dte-c} \d t\Big|
&\le \varepsilon
\big( \|{\bs e}_{\bs u}(s)\|_{H^1(\Omega^{\rm f}_{\rm ext}[{\bs X}_H(\cdot,s)])}^2 + \|\grad{\bs e}_{\bs d}(s)\|_{L^2(\widehat\Omega^{\rm s}_H)}^2 \big) \notag\\
&\quad\,
+ C_\varepsilon\int_0^s 
\|{\bs e}_{\bs u}\|_{H^1(\Omega^{\rm f}_{\rm ext}[{\bs X}_H])}^2 \d t\notag\\
&\quad\,
+ C_\varepsilon\int_0^s 
\big[ (h^{-1}+H^{-1})\|{\bs e}_{\bs d}\|_{H^1(\widehat\Omega^{\rm s}_H)}^2 
+ H^{-1}\|\partial_t{\bs e}_{\bs d}\|_{L^2(\widehat\Omega^{\rm s}_H)}^2 \big]\d t .
\end{align}
Furthermore, \eqref{err-h-dte-d} can be estimated in combination with \eqref{err-k-dte} and \eqref{err-l-dte}, i.e., 
\begin{subequations}\label{err-hjk-dte}
\begin{align}
&\int_0^s [ \eqref{err-h-dte-d} + \eqref{err-k-dte} + \eqref{err-l-dte} ] \d t \notag\\
&= \int_0^s  \int_{\Gamma[\bs X_H]} ({\bs\sigma}(\bu,p){\bs n} - {\widehat{\bs\sigma}}_H^*\circ{\bs X}_H^{-1}) \cdot \partial_t {\bs e}_{\bs u} \d t\label{err-hjk-dte-a}\\
&\quad\, 
- \int_0^s \int_{\Gamma[\bs X_H^*]} ({\bs\sigma}(\bu,p){\bs n} - {\widehat{\bs \sigma} }_H^* \circ({\bs X}_H^*)^{-1}) \cdot \partial_t {\bs e}_{\bs u} \d t\label{err-hjk-dte-b}\\
&\quad\,  
+ \int_0^s \Big( \int_{\Gamma[\bs X_H]} {\widehat{\bs\sigma}}_H^*\circ{\bs X}_H^{-1} \cdot \partial_{tt}{\bs e}_\dep \circ{\bs X}_H^{-1} - \int_{\Gamma[\bs X_H^*]} {\widehat{\bs \sigma} }_H^* \circ({\bs X}_H^*)^{-1} \cdot \partial_{tt}{\bs e}_\dep \circ({\bs X}_H^*)^{-1} \Big)\d t .\label{err-hjk-dte-c}
\end{align}
\end{subequations}
Since ${\widehat{\bs\sigma}}_H^*= {\bs I}_H^{\rm s}[ ({\bs\sigma}(\bu,p){\bs n})|_{\Gamma(t)} \circ{\bs X}\circ\widehat{\bs\Phi}_H] $, it follows that 
\begin{align*} 
&[({\bs\sigma}(\bu,p){\bs n})|_{\Gamma[{\bs X}_H]} - {\widehat{\bs\sigma}}_H^*\circ{\bs X}_H^{-1} ] \circ{\bs X}_H\circ \widehat{\bs\Phi}_H^{-1} \\ 
&= ({\bs\sigma}(\bu,p){\bs n})|_{\Gamma[{\bs X}_H]}\circ{\bs X}_H\circ \widehat{\bs\Phi}_H^{-1} - ({\bs\sigma}(\bu,p){\bs n})|_{\Gamma(t)} \circ{\bs X} + O(H^{k+1})  \\
&= \int_0^1 \frac{\d}{\d\theta}\Big[({\bs\sigma}(\bu,p){\bs n})|_{\Gamma[{\bs X}_H^{\#,\theta}]}\circ{\bs X}_H^{\#,\theta} \Big] \d\theta + O(H^{k+1}) \\ 
&= \int_0^1 \Big[ ({\bs e}_{\bs X}^{\#,\theta}\cdot\grad{\bs\sigma}(\bu,p)){\bs n}|_{\Gamma[{\bs X}_H^{\#,\theta}]}
- {\bs\sigma}(\bu,p)(\grad_{\Gamma[{\bs X}_H^{\#,\theta}]}{\bs e}_{\bs X}^{\#,\theta}) {\bs n}_{\Gamma[{\bs X}_H^{\#,\theta}]}\Big]\circ{\bs X}_H^{\#,\theta} \d\theta + O(H^{k+1}) \\
&= \int_0^1 \Big[ ({\bs e}_{\bs d}^{\#,\theta}\cdot\grad{\bs\sigma}(\bu,p)){\bs n}|_{\Gamma[{\bs X}_H^{\#,\theta}]}
- {\bs\sigma}(\bu,p)(\grad_{\Gamma[{\bs X}_H^{\#,\theta}]}{\bs e}_{\bs d}^{\#,\theta}) {\bs n}_{\Gamma[{\bs X}_H^{\#,\theta}]} \Big]\circ{\bs X}_H^{\#,\theta} \d\theta + O(H^{k}) ,
\end{align*} 
and therefore, using the norm equivalence relation $\|{\bs e}_{\bs d}^{\#,\theta}\|_{H^1(\Gamma[{\bs X}_H^{\#,\theta}])}\simeq \|{\bs e}_{\bs d}\|_{H^1(\widehat\Gamma_H)}$, 
\begin{align*}
|\eqref{err-hjk-dte-a}|
&\lesssim \int_0^s (\|{\bs e}_{\bs d}\|_{H^1(\widehat\Gamma_H)} + H^{k})\|\partial_t{\bs e}_{\bs u}\|_{L^2(\Gamma[{\bs X}_H])} \d t\notag\\ 
&\le \int_0^s \big( H^{-\frac12}\|{\bs e}_{\bs d}\|_{H^1(\widehat\Omega^{\rm s}_H)} + H^{k} \big) h^{-\frac12} \|\partial_t{\bs e}_{\bs u}\|_{L^2(\Omega^{\rm f}[{\bs X}_H])} \d t
\quad\mbox{(inverse estimate)}\notag\\
&\le 
\varepsilon \int_0^s\|\partial_t{\bs e}_{\bs u}\|_{L^2(\Omega^{\rm f}[{\bs X}_H])}^2 \d t
+ C_{\varepsilon} \int_0^sh^{-1}H^{-1}\|{\bs e}_{\bs d}\|_{H^1(\widehat\Omega^{\rm s}_H)}^2 \d t + C_{\varepsilon}  h^{-1} H^{2k} .
\end{align*}
The estimate for \eqref{err-hjk-dte-b} can be obtained via integration by parts in time after using decomposition $\partial_t {\bs e}_{\bs u}=\partial_t^\bullet {\bs e}_{\bs u}-(\vel_H^*\circ({\bs X}_H^*)^{-1})\cdot\grad {\bs e}_{\bs u}$, with $\partial_t^\bullet {\bs e}_{\bs u}$ denoting the material derivative with respect to the velocity of surface $\Gamma[{\bs X}_H^*]$, i.e.,  
\begin{align*}
|\eqref{err-hjk-dte-b}| 
&= \bigg| - \int_{\Gamma[\bs X_H^*]} ({\bs\sigma}(\bu,p){\bs n} - {\widehat{\bs \sigma} }_H^* \circ({\bs X}_H^*)^{-1}) \cdot {\bs e}_{\bs u} \Big|_{t=0}^{t=s} \notag\\
&\quad\, 
+ \int_0^s \int_{\Gamma[\bs X_H^*]} \partial_t^\bullet({\bs\sigma}(\bu,p){\bs n} - {\widehat{\bs \sigma} }_H^* \circ({\bs X}_H^*)^{-1}) \cdot {\bs e}_{\bs u}  \d t \notag\\
&\quad\, 
+ \int_0^s \int_{\Gamma[\bs X_H^*]} ({\bs\sigma}(\bu,p){\bs n} - {\widehat{\bs \sigma} }_H^* \circ({\bs X}_H^*)^{-1}) \cdot [(\vel_H^*\circ({\bs X}_H^*)^{-1})\cdot\grad {\bs e}_{\bs u}] \d t \bigg| \notag\\
&\lesssim H^{k+1}(\|{\bs e}_{\bs u}(s)\|_{L^2(\Gamma[\bs X_H^*(\cdot,s)])} 
+ \|{\bs e}_{\bs u}(0)\|_{L^2(\Gamma[\bs X_H^*(\cdot,0)])}) + H^{k+1}\int_0^s\|{\bs e}_{\bs u}\|_{H^1(\Gamma[\bs X_H^*])} \d t \notag\\
&\lesssim H^{k+1}(\|{\bs e}_{\bs u}(s)\|_{H^1(\Omega^{\rm f}_{\rm ext}[\bs X_H(\cdot,s)])} 
+ \|{\bs e}_{\bs u}(0)\|_{H^1(\Omega^{\rm f}_{\rm ext}[\bs X_H(\cdot,0)])}) \notag\\
&\quad\, + h^{-\frac12}H^{k+1}\int_0^s\|{\bs e}_{\bs u}\|_{H^1(\Omega^{\rm f}_{\rm ext}[\bs X_H])} \d t \notag\\
&\le \varepsilon \|{\bs e}_{\bs u}(s)\|_{H^1(\Omega^{\rm f}_{\rm ext}[\bs X_H(\cdot,s)])}^2 
+ C_\varepsilon \int_0^s\|{\bs e}_{\bs u}\|_{H^1(\Omega^{\rm f}_{\rm ext}[\bs X_H])}^2 \d t 
+ C_\varepsilon (h^{-1}H^{2k+2} + H^{2k+1}).
\end{align*}
Moreover, \eqref{err-hjk-dte-c} can be estimated by using the transport formula in \eqref{dt-int-Gamma-1} and the norm equivalence relation which converts norms on $\Gamma[{\bs X}_H^\theta]$ to norms on $\widehat\Gamma_H$, i.e.,
\begin{align}\label{err-hjk-dte-cc}
|\eqref{err-hjk-dte-c}|
&\lesssim \int_0^s \|{\bs e}_{\bs d}\|_{H^1(\widehat\Gamma_H)}\|\partial_{tt} {\bs e}_\dep\|_{L^2(\widehat\Gamma_H)} \d t \notag\\
&\lesssim \int_0^s H^{-1}\|{\bs e}_{\bs d}\|_{H^1(\widehat\Omega^{\rm s}_H)}\|\partial_{tt} {\bs e}_\dep\|_{L^2(\widehat\Omega^{\rm s}_H)} \d t 
\quad\mbox{(inverse estimate is used)}\notag\\
&\le \varepsilon\int_0^s\|\partial_{tt} {\bs e}_\dep\|_{L^2(\widehat\Omega_H^{\rm s})}^2 \d t
+ C_\varepsilon \int_0^s H^{-2}\|{\bs e}_{\bs d}\|_{H^1(\widehat\Omega^{\rm s}_H)}^2 \d t.
\end{align}
By substituting \eqref{err-hjk-dte}--\eqref{err-hjk-dte-cc} into \eqref{err-h-dte-1}, we obtain 
\begin{align}
&\Big| \int_0^s  [\eqref{err-h-dte} + \eqref{err-k-dte} + \eqref{err-l-dte}] \d t \Big| \notag\\
&\le \varepsilon
\big( \|{\bs e}_{\bs u}(s)\|_{H^1(\Omega^{\rm f}_{\rm ext}[{\bs X}_H(\cdot,s)])}^2 + \|\grad{\bs e}_{\bs d}(s)\|_{L^2(\widehat\Omega^{\rm s}_H)}^2 \big) \notag\\
&\quad\, 
+ \varepsilon \int_0^s \big(  \|\partial_t{\bs e}_{\bs u}\|_{L^2(\Omega^{\rm f}[{\bs X}_H])}^2 
+ \|\partial_{tt} {\bs e}_\dep\|_{L^2(\widehat\Omega^{\rm s}_H)}^2 \big) \d t \notag\\
&\quad\, 
+ C_\varepsilon \int_0^s \big((h^{-1}H^{-1}+H^{-2}) \|{\bs e}_{\bs d}\|_{H^1(\widehat\Omega^{\rm s}_H)}^2  
+ H^{-1} \|\partial_t{\bs e}_{\bs d}\|_{L^2(\widehat\Omega^{\rm s}_H)}^2 + \|{\bs e}_{\bs u}\|_{H^1(\Omega^{\rm f}_{\rm ext}[{\bs X}_H])}^2  \big) \d t \notag\\
&\quad\, + C_\varepsilon (h^{2k-1} + h^{-1}H^{2k}) . 
\end{align}

\noindent{\bf Part 4: Estimates for (\ref{err-g-dte}).}\\
We estimate (\ref{err-g-dte}) by choosing ${\bs v}_h = 0$, ${\bs w}_H= 0$ and ${\bs\lambda}_H=0$ in \eqref{error-eq}. This leads to
\begin{align}\label{eq-div-eu}
&\int_{\Omega^{\rm f}[\bs X_H]}  q_h \grad\cdot {\bs e}_{\bs u}  
+ \Big( \int_{\Omega^{\rm f}[\bs X_H]} - \int_{\Omega^{\rm f}[\bs X_H^*]} \Big) q_h \grad\cdot  \bu_h^* 
+ g_h^{\rm p} (e_p, q_h) \notag\\
&= 
- R_{h,H}^*(0, q_h, 0, 0) \notag\\
&\lesssim (h^k+H^k) \|q_h\|_{0,\Omega[{\bs X}_H]} \quad\mbox{(here Lemma \ref{Lemma:RhH} is used)}. 
\end{align}
Taking time derivative of the first equality in \eqref{eq-div-eu} yields the following relation: 
\begin{subequations}\label{dteq-div-eu}
\begin{align}
&\int_{\Omega^{\rm f}[\bs X_H]} q_h \grad\cdot \partial_t{\bs e}_{\bs u}  \notag\\
&\quad\, 
+ \int_{\Gamma[\bs X_H]} q_h (\grad\cdot {\bs e}_{\bs u}) (\partial_t\dep_H\circ \bs X_H^{-1})\cdot{\bs n}_{\Gamma[\bs X_H]} \label{dteq-div-eu-a}\\
&\quad\, 
+ \int_{\Omega^{\rm f}[\bs X_H]} q_h \grad\cdot \partial_t\bu_h^* 
- \int_{\Omega^{\rm f}[\bs X_H^*]} q_h \grad\cdot \partial_t\bu_h^* \label{dteq-div-eu-b}\\
&\quad\, 
+ \int_{\Gamma[\bs X_H]} q_h (\grad\cdot \bu_h^*) (\partial_t\dep_H\circ \bs X_H^{-1})\cdot{\bs n}_{\Gamma[\bs X_H]} \notag\\
&\quad\, 
- \int_{\Gamma[\bs X_H^*]} q_h (\grad\cdot \bu_h^*) (\partial_t\dep_H^*\circ ({\bs X}_H^*)^{-1})\cdot{\bs n}_{\Gamma[\bs X_H^*]} \label{dteq-div-eu-c}\\
&\quad\, 
+ g_h^{\rm p} (\partial_t e_p, q_h) = - \Big(\frac{\d}{\d t} R_{h,H}^*\Big) (0, q_h, 0, 0) . 
\end{align}
\end{subequations}
Since $|\grad\cdot{\bs u}_h^*| \le |\grad\cdot({\bs u}_h^* - {\bs u} ) | + |\grad\cdot {\bs u}| =  O(h^k)$ on $\Gamma[{\bs X}_H]$, as shown in \eqref{interpl-u} and \eqref{div-u-small}, and similarly, $\grad\cdot\partial_t{\bs u}_h^* =  O(h^k)$ on $\Gamma[{\bs X}_H]$, it follows that (by using $\|\partial_t{\bs d}_H\|_{L^\infty(\widehat\Omega^{\rm s}_H)}\lesssim 1$, as shown in \eqref{ind-1-2}, and the inverse trace inequality) 
\begin{align*}
|\eqref{dteq-div-eu-a}| 
&\le Ch^{-1} \|q_h\|_{L^2(\Omega^{\rm f}_{\rm ext}[{\bs X}_H])} \|\grad{\bs e}_{\bs u}\|_{L^2(\Omega^{\rm f}_{\rm ext}[{\bs X}_H])} , \\
|\eqref{dteq-div-eu-b}| + |\eqref{dteq-div-eu-c}| 
&\le Ch^{k-1} \|q_h\|_{L^2(\Omega^{\rm f}_{\rm ext}[{\bs X}_H])} . 
\end{align*}
Moreover, by differentiating the expression of $R_{h,H}^*(\bs 0, q_h, \bs 0,\bs 0)$ in \eqref{def-RhH}, we have 
\begin{align}
\Big(\frac{\d}{\d t} R_{h,H}^*\Big) (\bs 0, q_h, \bs 0, \bs 0) 
= 
\,& \frac{\d}{\d t}\int_{\Omega^{\rm f}[\bs X_H^*]}  q_h \grad\cdot \bu_h^* -  \frac{\d}{\d t}\int_{\Omega^{\rm f}(t)}  q_h \grad\cdot \bu \notag\\
& + g_h^{\rm p} (\partial_t p_h^*,q_h ) \notag\\
= 
\,& \int_{\Omega^{\rm f}[\bs X_H^*]}  q_h \grad\cdot \partial_t\bu_h^* -  \int_{\Omega^{\rm f}(t)}  q_h \grad\cdot \partial_t\bu  \notag\\
\,& +\int_{\Gamma[\bs X_H^*]}  q_h (\grad\cdot \bu_h^*) (\vel_H^*\circ ({\bs X}_H^*)^{-1}) \cdot {\bs n}_{\Gamma[\bs X_H^*]} \notag\\
& - \int_{\Gamma(t)} q_h (\grad\cdot \bu) (\vel\circ {\bs X}^{-1}) \cdot {\bs n}_{\Gamma(t)} \notag\\
& + g_h^{\rm p} (\partial_t p_h^*,q_h ) . \label{dt-R-qh-c}
\end{align}
By noting that $\grad\cdot{\bs u}_h^* = \grad\cdot({\bs u}_h^*-{\bs u}) + \grad\cdot {\bs u}$ on $\Omega^{\rm f}[{\bs X}_H]$, with $|\grad\cdot({\bs u}_h^*-{\bs u})|=O(h^k)$ and $|\grad\cdot{\bs u}|=O(h^m)$ on $\Omega^{\rm f}[{\bs X}_H]$ for arbitrarily large $m$, as shown in \eqref{div-u-small-2}, it is straightforward to show the following result:   
\begin{align}
\Big| \Big(\frac{\d}{\d t} R_{h,H}^*\Big) (\bs 0, q_h, \bs 0, \bs 0) \Big|
\lesssim h^{k-1}\|q_h\|_{L^2(\Omega^{\rm f}_{\rm ext}[{\bs X}_H])} 
+ h^kg^{\rm p}_h(q_h,q_h)^{\frac12} . 
\end{align}
This proves that, by choosing $q_h=e_p$ in these estimates, 
\begin{align}\label{dteq-div-eu-estimates}
\eqref{err-g-dte}
\ge g_h^{\rm p} (\partial_t e_p, e_p) 
- \varepsilon \|e_p\|_{0,\Omega[{\bs X}_H]}^2 
- C_{\varepsilon}h^{2k-2} 
- C_{\varepsilon}h^{-2}\|\grad{\bs e}_{\bs u}\|_{L^2(\Omega^{\rm f}_{\rm ext}[{\bs X}_H])}^2 . 
\end{align}

\noindent{\bf Part 5: Estimates for (\ref{err-j-dte}).}\\
Similarly, we estimate \eqref{err-j-dte} by choosing  ${\bs v}_h = \bs 0$, $q_h=0$, ${\bs w}_H= \bs 0$ in \eqref{error-eq}, and use the abbreviations ${\bs\lambda}_H^\theta={\bs\lambda}_H\circ({\bs X}_H^\theta)^{-1}$, $\vel_H^{\,|\theta}=\partial_{t}{\bs d}_H\circ({\bs X}_H^\theta)^{-1}$, $\vel_H^{*|\theta}=\partial_{t}{\bs d}_H^*\circ({\bs X}_H^\theta)^{-1}$ and ${\bs\ddot{\bs d}}_H^{*|\theta}=\partial_{tt}{\bs d}_H^*\circ({\bs X}_H^\theta)^{-1}$. This leads to, using the notations in \eqref{notation-lambda},
\begin{align}\label{eq-eu-ded}
&\int_{\Gamma[\bs X_H]} \bs \lambda_H^1 \cdot ( {\bs e}_{\bs u} - {\bs\dot{\bs e}}_\dep^1 ) 
+ \int_{\Gamma[\bs X_H]} \bs \lambda_H^1  \cdot \big( \bu_h^*  - \vel_H^{*|1} \big) - \int_{\Gamma[\bs X_H^*]} \bs \lambda_H^0 \cdot \big( \bu_h^* - \vel_H^{*|0} \big) \notag\\
&= - R_{h,H}^*(0, 0, 0, {\bs\lambda}_H) .
\end{align}
Taking time derivative of \eqref{eq-eu-ded} yields the following relation: 
\begin{subequations}\label{dteq-eu-ded}
\begin{align}
&\int_{\Gamma[\bs X_H]} {\bs \lambda}_H^1 \cdot (\partial_t{\bs e}_{\bs u} -  {\bs\ddot{\bs e}}_\dep^1) \notag\\
&\quad\, 
+ \int_{\Gamma[\bs X_H]} {\bs \lambda}_H^1 \cdot [\vel_H^{\,|1}\cdot\grad {\bs e}_{\bs u} + ( {\bs e}_{\bs u} - {\bs\dot{\bs e}}_\dep^1 ) \grad_{\Gamma[\bs X_H]}\cdot \vel_H^{\,|1} ] \label{dteq-eu-ded-a}\\ 
&\quad\,  + \int_{\Gamma[\bs X_H]} {\bs \lambda}_H^1 \cdot 
[ {\bs\dot{\bs e}}_{\bs d}^{1}\cdot\grad {\bs u}_h^* + ({\bs u}_h^*  - \vel_H^{*|1} ) \grad_{\Gamma[\bs X_H]}\cdot {\bs\dot{\bs e}}_{\bs d}^{1}] \label{dteq-eu-ded-b}\\
&\quad\,  + \int_{\Gamma[\bs X_H]} {\bs \lambda}_H^1 \cdot 
[ (\partial_t{\bs u}_h^* + \vel_H^{*|1}\cdot\grad {\bs u}_h^* - {\bs\ddot\dep}_H^{*|1}) + ({\bs u}_h^*  - \vel_H^{*|1} ) \grad_{\Gamma[\bs X_H]}\cdot \vel_H^{*|1}] \label{dteq-eu-ded-c}\\ &\quad\, 
- \int_{\Gamma[\bs X_H^*]} {\bs \lambda}_H^0 \cdot [ (\partial_t{\bs u}_h^* + \vel_H^{*|0}\cdot\grad {\bs u}_h^* - {\bs\ddot\dep}_H^{*|0} ) + ({\bs u}_h^*  - \vel_H^{*|0}) \grad_{\Gamma[\bs X_H^*]}\cdot \vel_H^{*|0}] \label{dteq-eu-ded-d} \\
&= - \Big(\frac{\d}{\d t}\Big) R_{h,H}^*(\bs 0, 0, \bs 0, {\bs\lambda}_H) . \notag
\end{align}
\end{subequations}
Thus \eqref{err-j-dte}=$- (\frac{\d}{\d t} R_{h,H}^*)(\bs 0, 0, \bs 0, {\bs e}_{{\bs \sigma} }) $, which we estimate below by differentiating the expression of $R_{h,H}^*(\bs 0, 0, \bs 0, {\bs\lambda}_H)$ in \eqref{def-RhH}: 
\begin{subequations}\label{dt-RhH-lambda}
\begin{align}
&\Big(\frac{\d}{\d t} R_{h,H}^*\Big)(\bs 0, 0, \bs 0, {\bs\lambda}_H) \notag\\
= 
& \int_{\Gamma[\bs X_H^*]} {\bs\lambda}_H \circ ({\bs X}_H^*)^{-1} \cdot \big[ \partial_t\big(\bu_h^* \circ {\bs X}_H^*) \circ ({\bs X}_H^*)^{-1}- {\bs\ddot{\bs d}}_H^*  \circ ({\bs X}_H^*)^{-1} \big] \label{dt-RhH-lambda-a}\\
& + \int_{\Gamma[\bs X_H^*]} {\bs\lambda}_H \circ ({\bs X}_H^*)^{-1} \cdot \big( \bu_h^* - {\bs\dot{\bs d}}_H^* \circ ({\bs X}_H^*)^{-1} \big)\grad_{\Gamma[\bs X_H^*]}\cdot ( \vel_H^*\circ ({\bs X}_H^*)^{-1} )
\label{dt-RhH-lambda-b}\\
&- \int_{\Gamma(t)} \bs \lambda_H \circ \widehat{\bs\Phi}_H^{-1}  \circ {\bs X}^{-1}  \cdot \big[ \partial_t( \bu\circ{\bs X})\circ{\bs X}^{-1} -{\bs\ddot{\bs d}}\circ {\bs X}^{-1} \big) \label{dt-RhH-lambda-c}\\
&- \int_{\Gamma(t)} \bs \lambda_H \circ \widehat{\bs\Phi}_H^{-1}  \circ {\bs X}^{-1}  \cdot \big( \bu - \vel  \circ {\bs X}^{-1} \big)\grad_{\Gamma(t)}\cdot ( \vel\circ {\bs X}^{-1} ) , \label{dt-RhH-lambda-d}
\end{align}
\end{subequations} 
where \eqref{dt-RhH-lambda-d}=0 because $\bu - \vel  \circ {\bs X}^{-1}=0$ on $\Gamma(t)$. Since $\bu_h^* - {\bs\dot{\bs d}}_H^* \circ ({\bs X}_H^*)^{-1} $ differs from $\bu - {\bs\dot{\bs d}}\circ\widehat{\bs\Phi}_H \circ ({\bs X}_H^*)^{-1} $ by $O(h^{k+1}+H^{k+1})$ on $\Gamma[{\bs X}_H^*]$, and $\bu - {\bs\dot{\bs d}}\circ\widehat{\bs\Phi}_H \circ ({\bs X}_H^*)^{-1} $ differs from $\bu \circ{\bs X}\circ\widehat{\bs\Phi}_H \circ ({\bs X}_H^*)^{-1}- {\bs\dot{\bs d}}\circ\widehat{\bs\Phi}_H \circ ({\bs X}_H^*)^{-1} \equiv 0$ by $O(H^{k+1})$, it follows that $|\bu_h^* - {\bs\dot{\bs d}}_H^* \circ ({\bs X}_H^*)^{-1} | = O(h^{k+1}+H^{k+1})$ on $\Gamma[{\bs X}_H^*]$ and therefore 
\begin{align}
|\eqref{dt-RhH-lambda-b} |
&\lesssim \|{\bs\lambda}_H\|_{L^2(\widehat\Gamma_H)} \|\bu_h^* - {\bs\dot{\bs d}}_H^* \circ ({\bs X}_H^*)^{-1} \|_{L^2(\widehat\Gamma_H)}\notag\\
&\lesssim \|{\bs\lambda}_H\|_{L^2(\widehat\Gamma_H)} (h^{k+1}+H^{k+1}) . 
\end{align}
\eqref{dt-RhH-lambda-a} + \eqref{dt-RhH-lambda-c} can be estimated as follows:
\begin{subequations}
\begin{align}
& \eqref{dt-RhH-lambda-a} + \eqref{dt-RhH-lambda-c} \notag\\
= 
& \int_{\Gamma[\bs X_H^*]} {\bs\lambda}_H \circ ({\bs X}_H^*)^{-1} \cdot \big[ \partial_t\bu_h^* + \vel_H^*\circ({\bs X}_H^*)^{-1} \cdot \grad\bu_h^*  -  \partial_t\bu -  \vel \circ\widehat{\bs\Phi}_H\circ({\bs X}_H^*)^{-1} \cdot \grad\bu  \big] \label{dt-RhH-ac-a}\\
& - \int_{\Gamma[\bs X_H^*]} {\bs\lambda}_H \circ ({\bs X}_H^*)^{-1} \cdot \big[ {\bs\ddot{\bs d}}_H^*  \circ ({\bs X}_H^*)^{-1} -{\bs\ddot{\bs d}} \circ\widehat{\bs\Phi}_H \circ ({\bs X}_H^*)^{-1} \big] \label{dt-RhH-ac-b}\\
& + \int_{\Gamma[\bs X_H^*]} {\bs\lambda}_H \circ ({\bs X}_H^*)^{-1} \cdot \big[ \partial_t\bu + \vel\circ\widehat{\bs\Phi}_H\circ({\bs X}_H^*)^{-1} \cdot \grad\bu - {\bs\ddot{\bs d}} \circ\widehat{\bs\Phi}_H \circ ({\bs X}_H^*)^{-1}  \big] \label{dt-RhH-ac-c}\\
& - \int_{\Gamma(t)} \bs \lambda_H \circ \widehat{\bs\Phi}_H^{-1}  \circ {\bs X}^{-1}  \cdot \big[ \partial_t \bu + ({\bs\dot{\bs d}}\circ{\bs X}^{-1})\cdot\grad\bu - {\bs\ddot{\bs d}}\circ {\bs X}^{-1} \big] ,\label{dt-RhH-ac-d}
\end{align}
\end{subequations}
where 
\begin{align}
|\eqref{dt-RhH-ac-a}+\eqref{dt-RhH-ac-b}| &\lesssim \|{\bs\lambda}_H\|_{L^2(\widehat\Gamma_H)}(h^k + H^{k+1} )&&\mbox{(interpolation error)} \notag\\
|\eqref{dt-RhH-ac-c}+\eqref{dt-RhH-ac-d}| &\lesssim \|{\bs\lambda}_H\|_{L^2(\widehat\Gamma_H)}H^k  &&\mbox{(using formula \eqref{dt-int-Gamma-2})} . 
\end{align}
This shows that 
\begin{align}
|\eqref{dt-RhH-lambda-a} + \eqref{dt-RhH-lambda-c}| &\lesssim \|{\bs\lambda}_H\|_{L^2(\widehat\Gamma_H)}(h^k + H^{k} )
\end{align}
and therefore 
$$
\Big|\Big(\frac{\d}{\d t} R_{h,H}^*\Big)(\bs 0, 0, \bs 0, {\bs\lambda}_H)\Big|
\lesssim (h^k+H^k) \|{\bs\lambda}_H\|_{L^2(\widehat\Gamma_H)} 
\lesssim (h^k+H^k)H^{-\frac12} \|{\bs\lambda}_H\|_{H^{-\frac12}(\widehat\Gamma_H)}. 
$$ 
Since 
\begin{align*} 
\|\vel_H^{\,|1}\|_{L^\infty(\Gamma[\bs X_H])} 
&\lesssim \|\vel_H\|_{L^\infty(\widehat\Gamma_H)} \notag\\ 
&\lesssim \|\vel_H^*\|_{L^\infty(\widehat\Gamma_H)} 
+ \|{\bs\dot{\bs e}}_{\bs d}\|_{L^\infty(\widehat\Gamma_H)} &&\mbox{(norm equivalence in \eqref{norm-equiv-GH})}\\ 
&\lesssim 1 
&&\mbox{(here \eqref{ind-1-2} is used)} ,\\ 
\|\grad_{\Gamma[\bs X_H]} \vel_H^{\,|1}\|_{L^2(\Gamma[\bs X_H])} 
&\lesssim \|\vel_H\|_{H^1(\widehat\Gamma_H)} \notag\\ 
&\lesssim \|\vel_H^*\|_{H^1(\widehat\Gamma_H)} 
+ \|{\bs\dot{\bs e}}_{\bs d}\|_{H^1(\widehat\Gamma_H)} 
&&\mbox{(norm equivalence in \eqref{norm-equiv-GH})}\\ 
&\lesssim 1 
+ H^{-\frac32}\|{\bs\dot{\bs e}}_{\bs d}\|_{L^2(\widehat\Omega_H^{\rm s})} &&\mbox{(finite element inverse estimate)} \\ 
&\lesssim 1 &&\mbox{(here \eqref{ind-1-3} is used)} , 
\end{align*} 
by using \eqref{eu-dot-ed-Lp-H} with $p=\infty$, as well as the finite element inverse estimate $\|{\bs \lambda}_H\|_{L^2(\widehat\Gamma_H)}\lesssim H^{-\frac12}\|{\bs \lambda}_H\|_{H^{-\frac12}(\widehat\Gamma_H)}$ (see Lemma \ref{Lemma:inverse} in Appendix \ref{appendix:inverse}), we obtain
\begin{align} 
|\eqref{dteq-eu-ded-a}|
&\le C\|{\bs \lambda}_H\|_{L^2(\widehat\Gamma_H)} \|\grad {\bs e}_{\bs u} \|_{L^2(\Gamma[\bs X_H])} 
+ C\|{\bs \lambda}_H\|_{L^2(\widehat\Gamma_H)} \|{\bs e}_{\bs u} - {\bs\dot{\bs e}}_\dep  \|_{L^\infty(\Gamma[\bs X_H])} \notag\\
&\le CH^{-\frac12}\|{\bs \lambda}_H\|_{H^{-\frac12}(\widehat\Gamma_H)} 
(h^{-\frac12}\|{\bs e}_{\bs u} \|_{H^1(\Omega^{\rm f}_{\rm ext}[\bs X_H])} + \|{\bs e}_\dep  \|_{L^\infty(\widehat{\Omega}^{\rm s}_H)} +h^{k+1}+H^{k+1} )  \notag\\
&\quad\,+ CH^{-\frac12}\|{\bs \lambda}_H\|_{H^{-\frac12}(\widehat\Gamma_H)} \|{\bs e}_{\bs u} - {\bf P}_{\Gamma[\bs X_H]}{\bs e}_{\bs u} \|_{L^\infty(\Gamma[\bs X_H])}  \notag\\
&\le CH^{-\frac12}\|{\bs \lambda}_H\|_{H^{-\frac12}(\widehat\Gamma_H)} 
(h^{-\frac12}\|{\bs e}_{\bs u} \|_{H^1(\Omega^{\rm f}_{\rm ext}[\bs X_H])} + H^{-\frac12}\|{\bs e}_\dep  \|_{H^1(\widehat{\Omega}^{\rm s}_H)} +h^{k+1}+H^{k+1} ) \notag\\
&\le \varepsilon \|{\bs \lambda}_H\|_{H^{-\frac12}(\widehat\Gamma_H)}^2
+ C_{\varepsilon}( h^{-1}H^{-1} \| {\bs e}_{\bs u}  \|_{H^1(\Omega^{\rm f}_{\rm ext}[{\bs X}_H])}^2 + H^{-2}\|{\bs e}_\dep  \|_{H^1(\widehat{\Omega}^{\rm s}_H)}^2) \notag\\
&\quad\, + C_{\varepsilon}H^{-1}(h^{2k+2}+H^{2k+2}) ,
\end{align}
where the second-to-last inequality uses the $L^\infty$ stability of the $L^2$ projection on $\Gamma[\bs X_H]$, i.e., $\|{\bf P}_{\Gamma[\bs X_H]}{\bs e}_{\bs u} \|_{L^\infty(\Gamma[\bs X_H])}\lesssim \|{\bs e}_{\bs u} \|_{L^\infty(\Gamma[\bs X_H])}$ (see \cite[Appendix B]{Li-Tang-2025}), and the finite element inverse estimates in dimension $d=2,3$: 
\begin{align*} 
\| {\bs e}_\dep  \|_{L^\infty(\widehat{\Omega}^{\rm s}_H)}
&\lesssim H^{-\frac12}\|{\bs e}_\dep  \|_{H^1(\widehat{\Omega}^{\rm s}_H)} \\
\| {\bs e}_{\bs u} \|_{L^\infty(\Gamma[\bs X_H])} 
&\lesssim 
\| {\bs e}_{\bs u} \|_{L^\infty(\Omega^{\rm f}_{\rm ext}[{\bs X}_H])}
\lesssim 
h^{-\frac12}\| {\bs e}_{\bs u} \|_{H^1(\Omega^{\rm f}_{\rm ext}[{\bs X}_H])} . 
\end{align*} 

Since ${\bs u}\circ{\bs X}={\bs\dot{\bs d}}$ on $\widehat\Gamma$ and $\vel_H^{*|1}=\partial_t{\bs d}_H^*\circ{\bs X}_H^{-1}$ on $\Gamma[{\bs X}_H]$, it follows that, for the function ${\bs u}_h^*  - \vel_H^{*|1}$ on $\Gamma[{\bs X}_H]$,
\begin{align}\label{u-d-on-Gamma1}
|{\bs u}_h^*  - \vel_H^{*|1}|
&\le |{\bs u}_h^*  - {\bs u}| + |{\bs u} - {\bs u}\circ{\bs X}\circ\widehat{\bs\Phi}_H\circ{\bs X}_H^{-1}| + |\partial_t{\bs d}\circ\widehat{\bs\Phi}_H\circ{\bs X}_H^{-1} - \partial_t{\bs d}_H^*\circ{\bs X}_H^{-1}| \notag\\
&\lesssim h^{k+1} + |{\bs e}_{\bs d}^1| + H^{k+1} 
\end{align}
and therefore
\begin{align}
|\eqref{dteq-eu-ded-b}|
&\le \|{\bs \lambda}_H^1\|_{L^2(\Gamma[{\bs X}_H])} 
( \|{\bs\dot{\bs e}}_{\bs d}^1\|_{L^2(\Gamma[{\bs X}_H])} 
+ (h^{k+1}+H^{k+1}+\|{\bs e}_{\bs d}^1\|_{L^\infty(\Gamma[{\bs X}_H])}) \|{\bs\dot{\bs e}}_{\bs d}^1\|_{H^1(\Gamma[{\bs X}_H])} ) \notag\\
&\le \|{\bs \lambda}_H\|_{L^2(\widehat\Gamma_H)} ( \|{\bs\dot{\bs e}}_{\bs d}\|_{L^2(\widehat\Gamma_H)} 
+ H \|{\bs\dot{\bs e}}_{\bs d}\|_{H^1(\widehat\Gamma_H)} ) 
\quad\mbox{(here \eqref{ind-1-1} and $h^{k+1}\lesssim H$ are used)} \notag\\
&\le H^{-1} \|{\bs \lambda}_H\|_{H^{-\frac12}(\widehat\Gamma_H)} \|{\bs\dot{\bs e}}_{\bs d}\|_{L^2(\widehat\Omega_H^{\rm s })} \quad\mbox{(finite element inverse estimate)} \notag\\
&\le \varepsilon \|{\bs \lambda}_H\|_{H^{-\frac12}(\widehat\Gamma_H)}^2 
+ C_\varepsilon H^{-2} \|{\bs\dot{\bs e}}_{\bs d}\|_{L^2(\widehat\Omega_H^{\rm s })}^2 . 
\end{align}
Moreover, 
\begin{subequations}\label{dteq-eu-abc}
\begin{align}
&|\eqref{dteq-eu-ded-c} + \eqref{dteq-eu-ded-d}| \notag\\
&\le \Big| \int_{\Gamma[\bs X_H]} {\bs \lambda}_H^1 \cdot 
 (\partial_t{\bs u} + \vel_H^{*|1}\cdot\grad {\bs u} - {\bs\ddot\dep}_H^{*|1}) - \int_{\Gamma[\bs X_H^*]} {\bs \lambda}_H^0 \cdot  (\partial_t{\bs u} + \vel_H^{*|0}\cdot\grad {\bs u} - {\bs\ddot\dep}_H^{*|0} ) \Big| \label{dteq-eu-bc-a}\\
&\quad\, 
+ \Big| \int_{\Gamma[\bs X_H]}  {\bs \lambda}_H^1 \cdot [ ({\bs u}_h^*  - \vel_H^{*|1} ) \grad_{\Gamma[\bs X_H]}\cdot \vel_H^{*|1}  ]
-\int_{\Gamma[\bs X_H^*]} {\bs\lambda}_H^0 \cdot [({\bs u}_h^*  - \vel_H^{*|0} ) \grad_{\Gamma[\bs X_H^*]}\cdot \vel_H^{*|0}] \Big| \label{dteq-eu-bc-b}\\
&\quad\,
+ \Big| \int_{\Gamma[\bs X_H]} {\bs \lambda}_H^1 \cdot 
[ (\partial_t{\bs u}_h^*- \partial_t{\bs u}) + \vel_H^{*|1}\cdot\grad ({\bs u}_h^*-{\bs u})] \Big| \label{dteq-eu-bc-c}\\
&\quad\,
+ \Big| \int_{\Gamma[\bs X_H^*]} {\bs \lambda}_H^0 \cdot 
[ (\partial_t{\bs u}_h^*- \partial_t{\bs u}) + \vel_H^{*|0}\cdot\grad ({\bs u}_h^*-{\bs u})] \Big| \label{dteq-eu-bc-d},
\end{align}
\end{subequations}
where 
\begin{align}
|\eqref{dteq-eu-bc-a}|
&=\Big|\int_0^1\d\theta\int_{\Gamma[\bs X_H^\theta]} {\bs \lambda}_H^\theta \cdot 
\big[{\bs e}_{\bs d}^\theta\cdot\grad\partial_t{\bs u} + (\vel_H^{*|\theta})^{\top}(\grad^2 {\bs u}){\bs e}_{\bs d}^\theta \big] \notag\\
&\quad\,+\int_0^1\d\theta\int_{\Gamma[\bs X_H^\theta]} {\bs \lambda}_H^\theta \cdot \big(\partial_t{\bs u} + \vel_H^{*|\theta}\cdot\grad{\bs u} - {\bs\ddot\dep}_H^{*|\theta} \big) \grad_{\Gamma[\bs X_H^\theta]}\cdot{\bs e}_{\bs d}^\theta \Big|\notag\\
&\lesssim \int_0^1 \|{\bs \lambda}_H^\theta\|_{L^2(\widehat\Gamma_H)} \|{\bs e}_{\bs d}^\theta\|_{H^1(\widehat\Gamma_H)} \d\theta \notag\\
&\lesssim H^{-1}\|{\bs \lambda}_H\|_{H^{-\frac12}(\widehat\Gamma_H)} \|{\bs e}_{\bs d}\|_{H^1(\widehat\Omega_H^{\rm s})} \notag\\
&\le \varepsilon\|{\bs \lambda}_H\|_{H^{-\frac12}(\widehat\Gamma_H)}^2 + C_\varepsilon H^{-2}\|{\bs e}_{\bs d}\|_{H^1(\widehat\Omega_H^{\rm s})}^2 ,\\[5pt] 
|\eqref{dteq-eu-bc-b}|
&\lesssim \|{\bs \lambda}_H\|_{L^2(\widehat\Gamma_H)} 
( h^{k+1} + H^{k+1} + \|{\bs e}_{\bs d}\|_{L^2(\widehat\Gamma_H)} ) \notag\\
&\lesssim H^{-\frac12}\|{\bs \lambda}_H\|_{H^{-\frac12}(\widehat\Gamma_H)} 
( h^{k+1} + H^{k+1} + \|{\bs e}_{\bs d}\|_{H^1(\widehat\Gamma_H)} ) \notag\\
&\le \varepsilon\|{\bs\lambda}_H\|_{H^{-\frac12}(\widehat\Gamma_H)}^2 + C_\varepsilon H^{-1}\|{\bs e}_{\bs d}\|_{H^1(\widehat\Omega_H^{\rm s})}^2 
+ C_\varepsilon ( H^{-1} h^{2k+2} + H^{2k} ) ,  
\end{align}
and 
\begin{align}
|\eqref{dteq-eu-bc-c}+\eqref{dteq-eu-bc-d}|
&\lesssim \|{\bs \lambda}_H\|_{L^2(\widehat\Gamma_H)} h^k
\le \varepsilon \|{\bs \lambda}_H\|_{H^{-\frac12}(\widehat\Gamma_H)}^2 
+ C_{\varepsilon}H^{-1}h^{2k} .
\end{align}
Since  \eqref{err-j-dte}=$- (\frac{\d}{\d t} R_{h,H}^*)(0, 0, 0, {\bs e}_{{\bs \sigma} }) $ (see the text below \eqref{dteq-eu-ded}), choosing ${\bs \lambda}_H = {\bs e}_{{\bs \sigma} }$ in the above estimates yields the following result: 
\begin{align}\label{dteq-div-eu-estimates2}
\eqref{err-j-dte}
&\ge -\varepsilon \|{\bs e}_{{\bs \sigma} }\|_{H^{-\frac12}(\widehat\Gamma_H)}^2
- C_{\varepsilon}h^{-1}H^{-1}\| {\bs e}_{\bs u}  \|_{H^1(\Omega^{\rm f}_{\rm ext}[{\bs X}_H])}^2\notag\\
&\quad\, 
- C_{\varepsilon}H^{-2} (\|{\bs\dot{\bs e}}_{\bs d}\|_{L^2(\widehat\Omega_H^{\rm s })}^2 + \|{\bs e}_\dep\|_{H^1(\widehat{\Omega}^{\rm s}_H)}^2 )
- C_{\varepsilon} (H^{-1}h^{2k} + H^{2k-1} ). 
\end{align}

\noindent{\bf Part 6: Estimates for (\ref{err-n-dte}).}\\
From the definition of $R_{h,H}^*({\bs v}_h, q_h, \bs w_H, \bs \lambda_H) $ in \eqref{def-RhH} we see that 
\begin{subequations}
\begin{align}
R_{h,H}^*(\partial_t{\bs e}_{\bs u}, 0, \partial_{tt}{\bs e}_{\bs d}, \bs 0) 
= & 
\int_{\Omega^{\rm f}[\bs X_H^*]} \partial_t {\bs u}_h^* \cdot \partial_t{\bs e}_{\bs u}
- \int_{\Omega^{\rm f}(t)} \partial_t \bs u \cdot \partial_t{\bs e}_{\bs u} \label{RhHdt-a}\\
&+ \int_{\Omega^{\rm f}[\bs X_H^*]}   (\bu_h^* \cdot \grad) \bu_h^* \cdot \partial_t{\bs e}_{\bs u}
- \int_{\Omega^{\rm f}(t)}   (\bu \cdot \grad) \bu \cdot \partial_t{\bs e}_{\bs u} \label{RhHdt-b}\\
& +  \int_{\Omega^{\rm f}[\bs X_H^*]} \bs D(\bu_h^*): \bs D(\partial_t{\bs e}_{\bs u})  -  \int_{\Omega^{\rm f}(t)} \bs D(\bu): \bs D(\partial_t{\bs e}_{\bs u}) \label{RhHdt-c}\\ 
&  - \int_{\Omega^{\rm f}[\bs X_H^*]}  p_h^*\, \grad\cdot \partial_t{\bs e}_{\bs u} +  \int_{\Omega^{\rm f}(t)}  p \, \grad\cdot \partial_t{\bs e}_{\bs u} \label{RhHdt-d}\\
& +  \int_{\widehat{\Omega}^{\rm s}_H} \partial_{t}\vel_H^* \cdot \partial_{tt}{\bs e}_{\bs d} -  \int_{\widehat{\Omega}^{\rm s}} \partial_{t}\vel \cdot \partial_{tt}{\bs e}_{\bs d} \circ \widehat{\bs\Phi}_H^{-1} \label{RhHdt-e}\\
& +\int_{\widehat{\Omega}^{\rm s}_H} {\bs D}(\dep_H^*): {\bs D}(\partial_{tt}{\bs e}_{\bs d}) - \int_{\widehat{\Omega}^{\rm s}} {\bs D}(\dep) : {\bs D}(\partial_{tt}{\bs e}_{\bs d}\circ \widehat{\bs\Phi}_H^{-1})  \label{RhHdt-f}\\
& + \int_{\widehat{\Omega}^{\rm s}_H} (\grad\cdot \dep_H^*) (\grad\cdot \partial_{tt}{\bs e}_{\bs d}) - \int_{\widehat{\Omega}^{\rm s}} (\grad\cdot \dep)  \grad\cdot (\partial_{tt}{\bs e}_{\bs d}\circ \widehat{\bs\Phi}_H^{-1})  \label{RhHdt-f2}\\
 & - \int_{\Gamma[\bs X_H^*]} \widehat{\bs \sigma} _H^* \circ ({\bs X}_H^*)^{-1}  \cdot \big( \partial_{t}{\bs e}_{\bs u} - \partial_{tt}{\bs e}_{\bs d}  \circ ({\bs X}_H^*)^{-1} \big) \label{RhHdt-g}\\
& + \int_{\Gamma(t)} \widehat{\bs \sigma} \circ {\bs X}^{-1}  \cdot \big( \partial_t{\bs e}_{\bs u} - \partial_{tt}{\bs e}_{\bs d} \circ \widehat{\bs\Phi}_H^{-1} \circ {\bs X}^{-1} \big) \label{RhHdt-h}\\
& + \epsilon_h g_h^{\rm u} (\partial_t\bu_h^*,\partial_t{\bs e}_{\bs u} ) + g_h^{\rm u} (\bu_h^*,\partial_t{\bs e}_{\bs u} ) . \label{RhHdt-i}
\end{align}
\end{subequations} 
Note that 
\begin{subequations}
\begin{align}
\int_0^s \eqref{RhHdt-a} \d t 
&= \Big(\int_{\Omega^{\rm f}[\bs X_H^*]} \partial_t \bs u_h^* \cdot {\bs e}_{\bs u}
- \int_{\Omega^{\rm f}(t)} \partial_t \bs u \cdot {\bs e}_{\bs u} \Big)\Big|_{t=0}^{t=s} \label{RhHdt-a-a} \\
&\quad\, 
- \int_0^s \Big(\int_{\Omega^{\rm f}[\bs X_H^*]} \partial_{tt} \bs u_h^* \cdot {\bs e}_{\bs u}
- \int_{\Omega^{\rm f}(t)} \partial_{tt} \bs u \cdot {\bs e}_{\bs u} \Big) \d t \label{RhHdt-a-b} \\
&\quad\, 
- \int_0^s \int_{\Gamma[{\bs X}_H^*]} (\partial_{t} \bs u_h^* \cdot {\bs e}_{\bs u} ) \vel_H^*\circ ({\bs X}_H^*)^{-1}\cdot{\bs n}_{\Gamma[{\bs X}_H^*]} \d t \label{RhHdt-a-c} \\
&\quad\, + \int_0^s \int_{\Gamma(t)} (\partial_{t} \bs u \cdot {\bs e}_{\bs u})(\vel\circ {\bs X}^{-1})\cdot{\bs n}_{\Gamma(t)} \d t \label{RhHdt-a-d} , 
\end{align}
\end{subequations}
where \eqref{RhHdt-a-a} is the same as \eqref{RhH-a} and therefore has already been estimated in Lemma \ref{Lemma:RhH}, with 
\begin{align}
|\eqref{RhHdt-a-a}|
&\lesssim (h^k+H^k) (\|{\bs e}_{\bs u}(s)\|_{H^1(\Omega^{\rm f}_{\rm ext}[{\bs X}_H(\cdot,s)])} + \|{\bs e}_{\bs u}(0)\|_{H^1(\Omega^{\rm f}_{\rm ext}[{\bs X}_H(\cdot,s)])})\notag\\
&\le \varepsilon\|{\bs e}_{\bs u}(s)\|_{H^1(\Omega^{\rm f}_{\rm ext}[{\bs X}_H(\cdot,s)])}^2 + C_\varepsilon (h^{2k}+H^{2k}), 
\end{align}
where the last inequality follows from Young's inequality and ${\bs e}_{\bs u}|_{t=0}=({\bs u}_{0,h}-{\bs I}^{\rm f}_h{\bs u})|_{t=0}=O(h^k)$ in the $H^1$ norm as shown in \eqref{uh0-error}. The term \eqref{RhHdt-a-b} can be estimated similarly, i.e.,
\begin{align}
|\eqref{RhHdt-a-b}|
&\lesssim \int_0^s \big( \|{\bs e}_{\bs u}(t)\|_{H^1(\Omega^{\rm f}_{\rm ext}[{\bs X}_H(\cdot,t)])}^2 + h^{2k}+H^{2k} \big) \d t . 
\end{align}
Furthermore, by denoting 
$$
{\bs V}_{\Gamma[{\bs X}_H^{*,\theta}]}
\eqd[(1-\theta)\vel + \theta\vel_H^*\circ\widehat{\bs\Phi}_H^{-1}]\circ({\bs X}_H^{*,\theta})^{-1}
\quad\mbox{(velocity of interface $\Gamma[{\bs X}_H^{*,\theta}]$)} , 
$$
with ${\bs V}_{\Gamma[{\bs X}_H^{*,1}]}=\vel_H^*\circ ({\bs X}_H^*)^{-1}$ being the velocity of interface $\Gamma[{\bs X}_H^*]$, we have $\partial_\theta^\bullet {\bs V}_{\Gamma[{\bs X}_H^{*,\theta}]}={\bs\dot{\bs e}}_{\bs X}^{*,\theta}=O(H^{k+1})$ and therefore can estimate \eqref{RhHdt-a-c}+\eqref{RhHdt-a-d} as follows by using \eqref{dt-int-Gamma-1}: 
\begin{align}
\eqref{RhHdt-a-c}+\eqref{RhHdt-a-d}
&= - \int_0^s \int_0^1 \frac{\d}{\d\theta}\int_{\Gamma[{\bs X}_H^{*,\theta}]} (\partial_{t} \bs u \cdot {\bs e}_{\bs u} ) {\bs V}_{\Gamma[{\bs X}_H^{*,\theta}]}\cdot{\bs n}_{\Gamma[{\bs X}_H^{*,\theta}]} \d\theta\d t \notag\\
&\quad\,
-\int_0^s \int_{\Gamma[{\bs X}_H^*]} (\partial_{t}({\bs u}_h^*-{\bs u}) \cdot {\bs e}_{\bs u} ) {\bs V}_{\Gamma[{\bs X}_H^{*,1}]}\cdot{\bs n}_{\Gamma[{\bs X}_H^*]} \d t \notag\\
&\lesssim \int_0^s \int_0^1 \|{\bs e}_{\bs u}\|_{L^2(\Gamma[{\bs X}_H^{*,\theta}])} ( \|{\bs\dot{\bs e}}_{\bs X}^{*,\theta}\|_{L^2(\Gamma[{\bs X}_H^{*,\theta}])} 
+\|{\bs e}_{\bs X}^{*,\theta}\|_{H^1(\Gamma[{\bs X}_H^{*,\theta}])}) \d\theta\d t \notag\\
&\quad\,
+ \int_0^s \int_0^1 \|\grad{\bs e}_{\bs u}\|_{L^2(\Gamma[{\bs X}_H^{*,\theta}])} \|{\bs\dot{\bs e}}_{\bs X}^{*,\theta}\|_{L^2(\Gamma[{\bs X}_H^{*,\theta}])} \d\theta \d t \notag\\
&\quad\, + \int_0^sh^{k+1}\|{\bs e}_{\bs u}\|_{L^2(\Gamma[{\bs X}_H^{*,\theta}])} \d t \notag\\
&\lesssim \int_0^s\|{\bs e}_{\bs u}\|_{H^1(\Omega^{\rm f}_{\rm ext}[{\bs X}_H])}^2\d t + h^{2k} + H^{2k} + h^{-1}H^{2k+2} ,
\end{align}
where the last inequality uses the inverse trace estimate 
$$\|\grad{\bs e}_{\bs u}\|_{L^2(\Gamma[{\bs X}_H^{*,\theta}])}\lesssim h^{-\frac12}\|{\bs e}_{\bs u}\|_{H^1(\Omega^{\rm f}_{\rm ext}[{\bs X}_H])}$$ 
as well as 
$\|{\bs\dot{\bs e}}_{\bs X}^{*,\theta}\|_{L^2(\Gamma[{\bs X}_H^{*,\theta}])}=O(H^{k+1})$ and $\|{\bs e}_{\bs X}^{*,\theta}\|_{H^1(\Gamma[{\bs X}_H^{*,\theta}])}=O(H^{k})$. 
This proves that 
\begin{align}
\int_0^s \eqref{RhHdt-a} \d t 
&\lesssim 
\varepsilon \|{\bs e}_{\bs u}(s)\|_{H^1(\Omega^{\rm f}_{\rm ext}[{\bs X}_H(\cdot,s)])}^2 
+ C_\varepsilon\int_0^s\|{\bs e}_{\bs u}\|_{H^1(\Omega^{\rm f}_{\rm ext}[{\bs X}_H])}^2\d t \notag\\
&\quad\, + C_\varepsilon (h^{2k} + H^{2k}) \qquad\mbox{(under condition $H^2\lesssim h$)}.
\end{align}
All other terms \eqref{RhHdt-b}-\eqref{RhHdt-h} can be estimated similarly via integration by parts, with 
\begin{align}
&\Big| \int_0^s [\eqref{RhHdt-b} + \cdots + \eqref{RhHdt-h}]\d t \Big|  \notag\\ 
&\lesssim 
\varepsilon 
(\|{\bs e}_{\bs u}(s)\|_{H^1(\Omega^{\rm f}_{\rm ext}[{\bs X}_H(\cdot,s)])}^2 
+\|\partial_t{\bs e}_{\bs d}(s)\|_{L^2(\widehat\Omega^{\rm s}_H)}^2) \notag\\
&\quad\, 
+ C_\varepsilon\int_0^s
(\|{\bs e}_{\bs u}\|_{H^1(\Omega^{\rm f}_{\rm ext}[{\bs X}_H])}^2
+H^{-1}\|\partial_t{\bs e}_{\bs d}\|_{L^2(\widehat\Omega^{\rm s}_H)}^2)\d t \notag\\
&\quad\, + C_\varepsilon (h^{2k-1} + h^{-1}H^{2k} + H^{2k-1}) .
\end{align}
Since the ghost penalty vanishes on smooth functions,
\begin{align*}
\epsilon_h
\left|
g_h^{\rm u}(\partial_t{\bs u}_h^*,\partial_t{\bs e}_{\bs u})
\right|
&=
\epsilon_h
\left|
g_h^{\rm u}(\partial_t{\bs u}_h^*-\partial_t{\bs u},
             \partial_t{\bs e}_{\bs u})
\right|
\\
&\leq 
\varepsilon \epsilon_h^2 
g_h^{\rm u}(\partial_t{\bs e}_{\bs u},\partial_t{\bs e}_{\bs u})
+ C_\varepsilon  g_h^{\rm u}(\partial_t{\bs u}_h^*-\partial_t{\bs u},\partial_t{\bs u}_h^*-\partial_t{\bs u}) \\
&\leq \varepsilon \epsilon_h^2 g_h^{\rm u}(\partial_t{\bs e}_{\bs u},\partial_t{\bs e}_{\bs u})
+ C_\varepsilon h^{2k} ,
\end{align*}
it follows that
\begin{align}
&\Big| \int_0^s \eqref{RhHdt-i}\d t \Big| \notag\\
&\leq 
\varepsilon\int_0^s \epsilon_h^2
g_h^{\rm u}(\partial_t{\bs e}_{\bs u},\partial_t{\bs e}_{\bs u}) \d t  +
\varepsilon 
\|{\bs e}_{\bs u}(s)\|_{1,\Omega[{\bs X}_H(\cdot,s)]}^2 
+ C_\varepsilon\int_0^s
\|{\bs e}_{\bs u}\|_{1,\Omega[{\bs X}_H(\cdot,t)]}^2 \d t
+ C_\varepsilon h^{2k} .
\end{align}
Therefore, 
\begin{align}\label{dte-estimates-most}
\Big| \int_0^s \eqref{err-n-dte} \d t \Big| 
&= \Big| \int_0^s R_{h,H}^*(\partial_t{\bs e}_{\bs u}, 0, \partial_{tt}{\bs e}_{\bs d}, \bs 0) \d t \Big| \notag\\
&\leq
\varepsilon \int_0^s \epsilon_h^2
g_h^{\rm u}(\partial_t{\bs e}_{\bs u},\partial_t{\bs e}_{\bs u}) \d t 
+ \varepsilon 
(\|{\bs e}_{\bs u}(s)\|_{1,\Omega[{\bs X}_H(\cdot,s)]}^2 
+\|\partial_t{\bs e}_{\bs d}(s)\|_{L^2(\widehat\Omega^{\rm s}_H)}^2) \notag\\
&\quad\, 
+ C_\varepsilon\int_0^s
(\|{\bs e}_{\bs u}\|_{1,\Omega[{\bs X}_H]}^2 
+H^{-1}\|\partial_t{\bs e}_{\bs d}\|_{L^2(\widehat\Omega^{\rm s}_H)}^2)\d t \notag\\
&\quad\, + C_\varepsilon (h^{2k-1} + h^{-1}H^{2k} + H^{2k-1}) . 
\end{align}

Now, substituting the estimates of Parts 1--6 to \eqref{error-eq-dte}, and using Korn's inequality in \eqref{Korn-inequlaity}, we obtain the following estimate for all $s\in(0,t_*]$ after absorbing $\int_0^s \epsilon_h^2
g_h^{\rm u}(\partial_t{\bs e}_{\bs u},\partial_t{\bs e}_{\bs u}) \d t  $ into the left-hand side:
\begin{align} \label{dte-estimates}
& \int_0^s \int_{\Omega^{\rm f}[\bs X_H]} |\partial_t {\bs e}_{\bs u} |^2 \d t
+ \int_0^s \epsilon_h g_h^{\rm u} (\partial_t{\bs e}_{\bs u}, \partial_t{\bs e}_{\bs u}) \d t 
+ \int_0^s\int_{\widehat{\Omega}^{\rm s}_H} |\partial_{tt} {\bs e}_{\dep} |^2 \d t \notag\\
&\quad\, 
+\|{\bs e}_{\bs u}(s)\|_{1,\Omega[{\bs X}_H]}^2 
+ g^p_h(e_p(s),e_p(s)) \notag\\
&
\le 
\varepsilon
\big( \|\grad{\bs e}_{\bs d}(s)\|_{L^2(\widehat\Omega^{\rm s}_H)}^2 + \|\partial_t{\bs e}_{\bs d}(s)\|_{L^2(\widehat\Omega^{\rm s}_H)}^2 \big)\notag\\
&\quad\, 
+ \varepsilon \int_0^s ( \|e_p\|_{0,\Omega[{\bs X}_H]}^2 +\| {\bs e}_{{\bs \sigma} }\|_{H^{-\frac12}(\widehat\Gamma_H)}^2 ) \d t 
\notag\\
&\quad\, 
+ C_\varepsilon \int_0^s (h^{-2}+h^{-1}H^{-1})\| {\bs e}_{\bs u}  \|_{1,\Omega[{\bs X}_H]}^2  \d t \notag\\
&\quad\, 
+ C_\varepsilon \int_0^s ((h^{-1}H^{-1}+H^{-2})\|{\bs e}_\dep\|_{H^1(\widehat{\Omega}^{\rm s}_H)}^2 +H^{-1}\|\partial_t{\bs e}_\dep\|_{L^2(\widehat{\Omega}^{\rm s}_H)}^2) \d t \notag\\
&\quad\, 
+ C_\varepsilon (h^{2k-2} + H^{-1}h^{2k} + h^{-1}H^{2k} + H^{2k-1}) 
+ Cg^p_h(e_p(0),e_p(0)) ,
\end{align}
where $g^p_h(e_p(0),e_p(0))\lesssim h^{2k}$ is estimated by using \eqref{ghost-ep-0} (this is the reason we want to choose the initial value of ${\bs u}_h$ to be the Ritz projection). Therefore, substituting \eqref{ep-L2}, \eqref{esigma-H-half-hat} and \eqref{esigma-L4}  into \eqref{dte-estimates}, and noting that the term $\epsilon_h
g_h^{\rm u}(\partial_t{\bs e}_{\bs u},\partial_t{\bs e}_{\bs u})^{1/2}$ in \eqref{ep-L2} can be absorbed by the left-hand side of \eqref{dte-estimates}, we obtain 
\begin{subequations}\label{ep-esigma-estimates}
\begin{align} 
& \int_0^s \int_{\Omega^{\rm f}[\bs X_H]} |\partial_t {\bs e}_{\bs u} |^2 \d t
+ \int_0^s \epsilon_h g_h^{\rm u} (\partial_t{\bs e}_{\bs u}, \partial_t{\bs e}_{\bs u}) \d t 
+ \int_0^s\int_{\widehat{\Omega}^{\rm s}_H} |\partial_{tt} {\bs e}_{\dep} |^2 \d t \notag\\
&\quad\,
+\|{\bs e}_{\bs u}(s)\|_{1,\Omega[{\bs X}_H]}^2 
+ g^p_h(e_p(s),e_p(s))\notag\\
&
\le 
\varepsilon
\big( \|\grad{\bs e}_{\bs d}(s)\|_{L^2(\widehat\Omega^{\rm s}_H)}^2 + \|\partial_t{\bs e}_{\bs d}(s)\|_{L^2(\widehat\Omega^{\rm s}_H)}^2 \big)\notag\\
&\quad\, 
+ C_\varepsilon \int_0^s (h^{-2}+h^{-1}H^{-1})\| {\bs e}_{\bs u}  \|_{1,\Omega[{\bs X}_H]}^2  \d t \notag\\
&\quad\, 
+ C_\varepsilon \int_0^s ((h^{-1}H^{-1}+H^{-2})\|{\bs e}_\dep\|_{H^1(\widehat{\Omega}^{\rm s}_H)}^2 +H^{-1}\|\partial_t{\bs e}_\dep\|_{L^2(\widehat{\Omega}^{\rm s}_H)}^2) \d t \notag\\
&\quad\, 
+ C_\varepsilon (h^{2k-2} + H^{-1}h^{2k} + h^{-1}H^{2k} + H^{2k-1}) 
\end{align}
and
\begin{align} \label{ep-esigma-estimates-L4}
& \int_0^s \| e_p \|_{0,\Omega[{\bs X}_H]}^2 \d t
+ \int_0^s \big( \| {\bs e}_{{\bs \sigma} } \|_{H^{-\frac12}(\widehat\Gamma_H)}^2 + \| {\bs e}_{\bs\sigma} \|_{L^4(\widehat\Gamma_H)}^2 \big) \d t  + g^p_h(e_p(s),e_p(s)) \notag\\
&
\le 
\varepsilon
\big( \|\grad{\bs e}_{\bs d}(s)\|_{L^2(\widehat\Omega^{\rm s}_H)}^2 + \|\partial_t{\bs e}_{\bs d}(s)\|_{L^2(\widehat\Omega^{\rm s}_H)}^2 \big)\notag\\
&\quad\, 
+ C_\varepsilon \int_0^s (h^{-2}+h^{-1}H^{-1})\| {\bs e}_{\bs u}  \|_{1,\Omega[{\bs X}_H]}^2  \d t \notag\\
&\quad\, 
+ C_\varepsilon \int_0^s ((h^{-1}H^{-1}+H^{-2})\|{\bs e}_\dep\|_{H^1(\widehat{\Omega}^{\rm s}_H)}^2 +H^{-1}\|\partial_t{\bs e}_\dep\|_{L^2(\widehat{\Omega}^{\rm s}_H)}^2) \d t \notag\\
&\quad\, 
+ C_\varepsilon (h^{2k-2} + H^{-1}h^{2k} + h^{-1}H^{2k} + H^{2k-1}) .
\end{align}
\end{subequations}

\subsection{Stability estimates for \texorpdfstring{${\bs e}_{\bs u}$}{x}}
In this subsection, we present stability estimates for ${\bs e}_{\bs u}$ by choosing ${\bs v}_h={\bs e}_{\bs u}$,\,\, $q_h=e_p$,\,\, ${\bs w}_H = \partial_t{\bs e}_\dep$\,\, and\,\, ${\bs \lambda}_H ={\bs e}_{{\bs \sigma} }$\,\, in\, \eqref{error-eq}. We then estimate each term in \eqref{error-eq} accordingly as separated items in the following. 

\begin{itemize}[leftmargin=20pt]
\item
\eqref{error-eq-a} can be estimated as follows, using \eqref{dt-int-Omega-1} and the notation $\vel_H^1=\partial_{t}{\bs d}_H\circ {\bs X}_H^{-1}$:  
\begin{equation}\label{eq:estima1}
\begin{aligned}
\mbox{\eqref{error-eq-a}} 
&= \int_{\Omega^{\rm f}[\bs X_H]} \partial_t {\bs e}_{\bs u} \cdot {\bs e}_{\bs u} 
\\
&= \frac12 \int_{\Omega^{\rm f}[\bs X_H]} \partial_t  | {\bs e}_{\bs u} |^2 \\
&=  \frac{\d}{\d t} \int_{\Omega^{\rm f}[\bs X_H]} \frac12 | {\bs e}_{\bs u} |^2 
    - \int_{\Gamma[\bs X_H]} \frac12| {\bs e}_{\bs u} |^2 \, \vel_H^1\cdot {\bs n}_{\Gamma[\bs X_H]} 
\quad\mbox{(here \eqref{dt-int-Omega-1} is used)} \\
&\ge  \frac{\d}{\d t} \int_{\Omega^{\rm f}[\bs X_H]} \frac12 | {\bs e}_{\bs u} |^2 
    - C \| {\bs e}_{\bs u} \|_{L^2(\Gamma[\bs X_H])}^2 
    \quad\mbox{($\|\partial_{t}{\bs d}_H\|_{L^\infty(\widehat\Omega^{\rm s}_H)} \lesssim 1$, as a result of \eqref{ind-1-2})}\\
&\ge  \frac{\d}{\d t} \int_{\Omega^{\rm f}[\bs X_H]} \frac12 | {\bs e}_{\bs u} |^2 
    - C \| {\bs e}_{\bs u} \|_{L^2(\Omega^{\rm f}_{\rm ext}[\bs X_H])}\| {\bs e}_{\bs u} \|_{H^1(\Omega^{\rm f}_{\rm ext}[\bs X_H])} 
    \quad\mbox{(here \eqref{L2-f-trace} is used)} \\
&\ge  \frac{\d}{\d t} \int_{\Omega^{\rm f}[\bs X_H]} \frac12 | {\bs e}_{\bs u} |^2 
    - C_{\varepsilon}\| {\bs e}_{\bs u} \|_{L^2(\Omega^{\rm f}_{\rm ext}[\bs X_H])}^2
    -\varepsilon \| {\bs e}_{\bs u} \|_{H^1(\Omega^{\rm f}_{\rm ext}[\bs X_H])}^2 .
\end{aligned}
\end{equation} 

\item \eqref{error-eq-b} can be estimated as follows, using \eqref{dt-int-Omega-2} and the notation ${\bs e}_{\bs d}^\theta={\bs e}_{\bs d}\circ({\bs X}_H^\theta)^{-1}$:  
\begin{align}
\eqref{error-eq-b} &=\Big( \int_{\Omega^{\rm f}[\bs X_H]} - \int_{\Omega^{\rm f}[\bs X_H^*]} \Big) \partial_t {\bs u}_h^* \cdot {\bs e}_{\bs u} \notag\\
&= \int_0^{1} \Big( \frac{\d}{\d\theta}\int_{\Omega^{\rm f}[\bs X_H^\theta]}\partial_t {\bs u}_h^* \cdot{\bs e}_{\bs u} \Big) \d\theta \quad\mbox{(the following equality uses \eqref{dt-int-Omega-2})}\notag\\
&= \int_0^{1} \int_{\Omega^{\rm f}[\bs X_H^\theta]} \partial_\theta( \partial_t {\bs u}_h^* \cdot {\bs e}_{\bs u} ) \d\theta
+ \int_0^{1} \int_{\Gamma[\bs X_H^\theta]} (\partial_t {\bs u}_h^* \cdot {\bs e}_{\bs u} ) {\bs e}_\dep^\theta \cdot {\bs n}_{\Gamma[\bs X_H^\theta]} \d\theta \notag\\
&= \int_0^{1} \int_{\Gamma[\bs X_H^\theta]} (\partial_t {\bs u}_h^* \cdot {\bs e}_{\bs u} ) {\bs e}_\dep^\theta \cdot {\bs n}_{\Gamma[\bs X_H^\theta]} \d\theta \quad\mbox{(here $\partial_\theta( \partial_t {\bs u}_h^* \cdot {\bs e}_{\bs u} )=0$ is used)} \notag \\
&\ge - C\int_0^{1}  \|{\bs e}_\dep^\theta\|_{L^2(\Gamma[\bs X_H^\theta])} \|{\bs e}_{\bs u}\|_{L^2(\Gamma[\bs X_H^\theta])} \d\theta \notag\\
&\ge - C\int_0^{1}\big( \|{\bs e}_\dep^\theta\|_{L^2(\Gamma[\bs X_H^\theta])}^2 + \|{\bs e}_{\bs u}\|_{L^2(\Gamma[\bs X_H^\theta])}^2 \big) \d\theta \notag\\
&\ge - C \|{\bs e}_\dep\|_{L^2(\widehat{\Omega}^{\rm s}_H)} \|{\bs e}_\dep\|_{H^1(\widehat{\Omega}^{\rm s}_H)}  
- C \|{\bs e}_{\bs u}\|_{L^2(\Omega^{\rm f}_{\rm ext}[\bs X_H])} \|{\bs e}_{\bs u}\|_{H^1(\Omega^{\rm f}_{\rm ext}[\bs X_H])} \notag \\
&\ge - \varepsilon  \big( \|\grad {\bs e}_\dep\|_{L^2(\widehat{\Omega}^{\rm s}_H)}^2 +  \|\grad {\bs e}_{\bs u}\|_{L^2(\Omega^{\rm f}_{\rm ext}[\bs X_H])}^2 \big) 
- C_{\varepsilon} \big(\|{\bs e}_\dep\|_{L^2(\widehat{\Omega}^{\rm s}_H)}^2 + \|{\bs e}_{\bs u}\|_{L^2(\Omega^{\rm f}_{\rm ext}[\bs X_H])}^2 \big) , \label{eq:estim2}
\end{align}
where the second-to-last inequality follows from the norm equivalence $\|{\bs e}_\dep^\theta\|_{L^2(\Gamma[\bs X_H^\theta])}\simeq \|{\bs e}_\dep\|_{L^2(\widehat\Gamma_H)}$ and the trace inequality from $\widehat\Gamma_H=\partial\widehat{\Omega}^{\rm s}_H$ to $\widehat{\Omega}^{\rm s}_H$, as well as \eqref{L2-f-trace}. 

\item \eqref{error-eq-c} and \eqref{error-eq-d} can be estimated as follows, by decomposing $\grad{\bs u}_h$ into $\grad{\bs u}_h^* + \grad{\bs e}_{\bs u}$ and using the boundedness of $\|\grad{\bs u}_h^*\|_{L^\infty(\Omega^{\rm f}[\bs X_H])}$ and $\|{\bs e}_{\bs u}\|_{L^\infty(\Omega^{\rm f}[\bs X_H])}$ (the latter is given by \eqref{ind-1-2}):
\begin{align}
| \eqref{error-eq-c} + \eqref{error-eq-d} |
&\le C\|{\bs e}_{\bs u}\|_{L^2(\Omega^{\rm f}[\bs X_H])} \|{\bs e}_{\bs u}\|_{H^1(\Omega^{\rm f}[\bs X_H])} \notag\\
&\le \varepsilon \|\grad {\bs e}_{\bs u}\|_{L^2(\Omega^{\rm f}[\bs X_H])}^2 
+ C_{\varepsilon} \|{\bs e}_{\bs u}\|_{L^2(\Omega^{\rm f}[\bs X_H])}^2 . 
\end{align}

\item \eqref{error-eq-e} can be estimated in the same way as \eqref{error-eq-b}, with the same estimate: 
\begin{equation}\label{eq:estima3}
\begin{aligned}
\eqref{error-eq-e} & = \Big( \int_{\Omega^{\rm f}[\bs X_H]} - \int_{\Omega^{\rm f}[\bs X_H^*]} \Big) {\bs u}_h^* \cdot \grad {\bs u}_h^* \cdot {\bs e}_{\bs u} \\
&\ge - \varepsilon  \big( \|\grad {\bs e}_\dep\|_{L^2(\widehat{\Omega}^{\rm s}_H)}^2 +  \|\grad {\bs e}_{\bs u}\|_{L^2(\Omega^{\rm f}_{\rm ext}[\bs X_H])}^2 \big) 
- C_{\varepsilon} \big(\|{\bs e}_\dep\|_{L^2(\widehat{\Omega}^{\rm s}_H)}^2 + \|{\bs e}_{\bs u}\|_{L^2(\Omega^{\rm f}_{\rm ext}[\bs X_H])}^2 \big) . 
\end{aligned}
\end{equation}

\item For the term \eqref{error-eq-f}, we have 
$$
\eqref{error-eq-f} = \int_{\Omega^{\rm f}[\bs X_H]} | {\bs D}({\bs e}_{\bs u}) |^2 .
$$

\item For the term \eqref{error-eq-g} we proceed as follows: 
\begin{subequations}
\begin{align}
\eqref{error-eq-g} 
& = \int_{\Omega^{\rm f}[\bs X_H]} {\bs\sigma}(\bu_h^*,p_h^*): \grad {\bs e}_{\bs u}
- \int_{\Omega^{\rm f}[\bs X_H^*]} {\bs\sigma}(\bu_h^*,p_h^*): \grad {\bs e}_{\bs u} \notag\\
& = \int_{\Omega^{\rm f}[\bs X_H]} {\bs\sigma}(\bu,p): \grad {\bs e}_{\bs u}
- \int_{\Omega^{\rm f}[\bs X_H^*]} {\bs\sigma}(\bu,p): \grad {\bs e}_{\bs u} \notag\\
&\quad\, + \int_{\Omega^{\rm f}[\bs X_H]} [{\bs\sigma}(\bu_h^*,p_h^*) - {\bs\sigma}(\bu,p)] : \grad {\bs e}_{\bs u}
- \int_{\Omega^{\rm f}[\bs X_H^*]} [{\bs\sigma}(\bu_h^*,p_h^*)- {\bs\sigma}(\bu,p)]: \grad {\bs e}_{\bs u} \notag\\
& = - \int_{\Omega^{\rm f}[\bs X_H]} \grad\cdot {\bs\sigma}(\bu,p) \cdot {\bs e}_{\bs u}
+ \int_{\Omega^{\rm f}[\bs X_H^*]} \grad\cdot {\bs\sigma}(\bu,p) \cdot {\bs e}_{\bs u}
\label{sigma-gradv-a} \\
&\quad\, + \int_{\Gamma[\bs X_H]} {\bs\sigma}(\bu,p){\bs n}\cdot {\bs e}_{\bs u}
- \int_{\Gamma[\bs X_H^*]} {\bs\sigma}(\bu,p){\bs n}\cdot {\bs e}_{\bs u}
\label{sigma-gradv-b} \\
&\quad\, + \int_{\Omega^{\rm f}[\bs X_H]} [{\bs\sigma}(\bu_h^*,p_h^*) - {\bs\sigma}(\bu,p)] : \grad {\bs e}_{\bs u}
- \int_{\Omega^{\rm f}[\bs X_H^*]} [{\bs\sigma}(\bu_h^*,p_h^*)- {\bs\sigma}(\bu,p)]: \grad {\bs e}_{\bs u} ,
\label{sigma-gradv-c} 
\end{align}
\end{subequations}
where \eqref{sigma-gradv-a} can be estimated in the same way as \eqref{error-eq-b}, with the same estimate, i.e., 
\begin{align}\label{sigma-gradv-a-est}
|\eqref{sigma-gradv-a}| 
&\le \varepsilon \big( \|\grad {\bs e}_\dep\|_{L^2(\widehat{\Omega}^{\rm s}_H)}^2 +  \|\grad {\bs e}_{\bs u}\|_{L^2(\Omega^{\rm f}_{\rm ext}[\bs X_H])}^2 \big) \notag\\ 
&\quad\, + C_{\varepsilon} \big(\|{\bs e}_\dep\|_{L^2(\widehat{\Omega}^{\rm s}_H)}^2 + \|{\bs e}_{\bs u}\|_{L^2(\Omega^{\rm f}_{\rm ext}[\bs X_H])}^2 \big), 
\end{align}
and \eqref{sigma-gradv-c} can be estimated by using the estimates of interpolation error: 
\begin{align}\label{sigma-gradv-c-est}
|\eqref{sigma-gradv-c}| 
&\le Ch^k \|\grad{\bs e}_{\bs u}\|_{L^2(\Omega^{\rm f}_{\rm ext}[\bs X_H])} .
\end{align}
By using the notations ${\widehat{\bs\sigma}}_H^{*|\theta}={\widehat{\bs\sigma}}_H^*\circ({\bs X}_H^\theta)^{-1}$ and ${\bs\dot{\bs e}}_\dep^\theta = {\bs\dot{\bs e}}_\dep \circ({\bs X}_H^\theta)^{-1}$, we can estimate \eqref{sigma-gradv-b} in combination with \eqref{error-eq-k} and \eqref{error-eq-l}, i.e., 
\begin{subequations}\label{sigma-n-eu}
\begin{align}
\eqref{sigma-gradv-b} + \eqref{error-eq-k} + \eqref{error-eq-l} 
&= \int_{\Gamma[\bs X_H]} {\bs\sigma}(\bu,p){\bs n}\cdot {\bs e}_{\bs u} 
- \int_{\Gamma[\bs X_H]} {\widehat{\bs\sigma}}_H^{*|1}\cdot ( {\bs e}_{\bs u}  - {\bs\dot{\bs e}}_\dep^1 ) \notag\\
&\quad\, - \int_{\Gamma[\bs X_H^*]} {\bs\sigma}(\bu,p){\bs n}\cdot {\bs e}_{\bs u} 
+ \int_{\Gamma[\bs X_H^*]} {\widehat{\bs\sigma}}_H^{*|0} \cdot ( {\bs e}_{\bs u}  - {\bs\dot{\bs e}}_\dep^0 ) \notag\\
&= \int_{\Gamma[\bs X_H]} {\bs\sigma}(\bu,p){\bs n}\cdot {\bs e}_{\bs u} 
- \int_{\Gamma[\bs X_H]} {\widehat{\bs\sigma}}_H^{*|1}\cdot {\bs e}_{\bs u} 
\label{sigma-n-eu-1}\\
&\quad\, - \int_{\Gamma[\bs X_H^*]} {\bs\sigma}(\bu,p){\bs n}\cdot {\bs e}_{\bs u} 
+ \int_{\Gamma[\bs X_H^*]} {\widehat{\bs\sigma}}_H^{*|0} \cdot  {\bs e}_{\bs u} 
\label{sigma-n-eu-2}\\
&\quad\, + \int_{\Gamma[\bs X_H]} {\widehat{\bs\sigma}}_H^{*|1}\cdot {\bs\dot{\bs e}}_\dep^1  
- \int_{\Gamma[\bs X_H^*]} {\widehat{\bs\sigma}}_H^{*|0} \cdot {\bs\dot{\bs e}}_\dep^0 .
\label{sigma-n-eu-3}
\end{align}
\end{subequations}
We estimate \eqref{sigma-n-eu-1}--\eqref{sigma-n-eu-3} separately below. To this end, we consider the family of interfaces $\Gamma[{\bs X}_H^{\#,\theta}]$, $\theta\in[0,1]$, intermediate between $\Gamma(t)$ and $\Gamma[{\bs X}_H]$, as the image of the map ${\bs X}_H^{\#,\theta}:\widehat\Gamma \rightarrow \mathbb{R}^d$ defined in Section \ref{section:norm-equiv-2}. When $\theta$ moves from $0$ to $1$, the interface $\Gamma[{\bs X}_H^{\#,\theta}]$ moves from $\Gamma(t)$ to $\Gamma[{\bs X}_H]$ with velocity ${\bs e}_{\bs X}^{\#,\theta}$, which is given in \eqref{def-ej-theta}. 

Let ${\bs Y}_H^{\#,\theta} = {\bs X}_H^{\#,\theta} \circ \widehat{\bs\Phi}_H \circ {\bs X}_H^{-1}$ be the map from $\Gamma[{\bs X}_H]$ to $\Gamma[{\bs X}_H^{\#,\theta}]$, and consider the function
$$ 
{\bs\sigma}(\bu,p)^\theta = {\bs\sigma}(\bu,p)|_{\Gamma[{\bs X}_H^{\#,\theta}]} \circ {\bs Y}_H^{\#,\theta} , 
\quad\mbox{and}\quad
{\bs n}^\theta = {\bs n}_{\Gamma[{\bs X}_H^{\#,\theta}]} \circ {\bs Y}_H^{\#,\theta} ,
 $$ 
which are functions pulled back to $\Gamma[{\bs X}_H]$, satisfying the following relations:
\begin{align*}
\partial_\theta^\bullet {\bs\sigma}(\bu,p)^\theta = [ {\bs e}_{\bs X}^{\#,\theta} \cdot  \grad {\bs\sigma}(\bu,p)] \circ {\bs Y}_H^{\#,\theta}
\quad\mbox{and}\quad
\partial_\theta^\bullet {\bs n}^\theta = - [(\grad_{\Gamma[{\bs X}_H^{\#,\theta}]} {\bs e}_{\bs X}^{\#,\theta}) {\bs n}_{\Gamma[{\bs X}_H^{\#,\theta}]} ] \circ {\bs Y}_H^{\#,\theta} .
\end{align*}
Then \eqref{sigma-n-eu-1} can be estimated as follows, with the notations ${\widehat{\bs\sigma}} = [{\bs\sigma}(\bu,p){\bs n}]\circ {\bs X}$,  ${\widehat{\bs\sigma}}_H^*={\bs I}_H^{\rm s}({\widehat{\bs \sigma} } \circ \widehat{\bs\Phi}_H)$, and ${\bs\sigma}(\bu,p)^0{\bs n}^0 = [{\bs\sigma}(\bu,p){\bs n}]\circ {\bs X} \circ \widehat{\bs\Phi}_H \circ {\bs X}_H^{-1} = {\widehat{\bs\sigma}}\circ \widehat{\bs\Phi}_H \circ {\bs X}_H^{-1} $: 
\begin{subequations}\label{sigma-n-eu-1-123}
\begin{align}
\eqref{sigma-n-eu-1}  &= \int_{\Gamma[\bs X_H]} [{\bs\sigma}(\bu,p)^1 {\bs n}^1 - {\bs\sigma}(\bu,p)^0 {\bs n}^0] \cdot {\bs e}_{\bs u} \notag\\
&\quad\,+ \int_{\Gamma[\bs X_H]} \big[ ({\widehat{\bs\sigma}} \circ \widehat{\bs\Phi}_H  - {\widehat{\bs\sigma} }_H^* )\circ {\bs X}_H^{-1} \big] \cdot {\bs e}_{\bs u} \notag\\
&= \int_0^1 \int_{\Gamma[\bs X_H]} \partial_\theta^\bullet[{\bs\sigma}(\bu,p)^\theta {\bs n}^\theta] \cdot {\bs e}_{\bs u} \d\theta \notag\\
&\quad\,+ \int_{\Gamma[\bs X_H]} \big[ ({\widehat{\bs\sigma}}\circ \widehat{\bs\Phi}_H  - {\bs I}_H^s({\widehat{\bs\sigma}}\circ \widehat{\bs\Phi}_H) )\circ {\bs X}_H^{-1} \big] \cdot {\bs e}_{\bs u} \notag\\
&= \int_0^1 \int_{\Gamma[\bs X_H]} \Big( [({\bs e}_{\bs X}^{\#,\theta} \cdot \grad) {\bs\sigma}(\bu,p)] {\bs n}_{\Gamma[{\bs X}_H^{\#,\theta}]} \Big) \circ {\bs Y}_H^{\#,\theta} \cdot {\bs e}_{\bs u} \d\theta 
\label{sigma-n-eu-1-1} \\
&\quad\, - \int_0^1 \int_{\Gamma[\bs X_H]} {\bs\sigma}(\bu,p)^\theta [(\grad_{\Gamma[{\bs X}_H^{\#,\theta}]} {\bs e}_{\bs X}^{\#,\theta}) {\bs n}_{\Gamma[{\bs X}_H^{\#,\theta}]}]\circ {\bs Y}_H^{\#,\theta} \, \cdot {\bs e}_{\bs u} \d\theta 
\label{sigma-n-eu-1-2} \\ 
&\quad\,+ \int_{\Gamma[\bs X_H]} \big[ ({\widehat{\bs \sigma} } \circ \widehat{\bs\Phi}_H  - {\bs I}_H^s({\widehat{\bs\sigma}}\circ \widehat{\bs\Phi}_H) )\circ {\bs X}_H^{-1} \big] \cdot {\bs e}_{\bs u} .
\label{sigma-n-eu-1-3}
\end{align}
\end{subequations}
According to the trace inequality in \eqref{L2-f-trace} and the norm equivalence 
\begin{align*}
\| {\bs e}_{\bs X}^{\#,\theta} \circ {\bs Y}_H^{\#,\theta} \|_{L^2(\Gamma[\bs X_H])} \simeq \| {\bs e}_{\bs X}^{\#,\theta} \circ {\bs Y}_H^{\#,\theta} \circ {\bs X}_H \|_{L^2(\widehat\Gamma_H)} 
&\le C\| {\bs e}_{\bs d} \|_{L^2(\widehat\Gamma_H)} +  C\| {\bs e}_{\bs X}^* \|_{L^2(\widehat\Gamma)} \\
&\le C\| {\bs e}_{\bs d} \|_{L^2(\widehat\Gamma_H)} + CH^{k+1} ,
\end{align*}
we have 
\begin{align}\label{sigma-n-eu-1-1-est}
|\eqref{sigma-n-eu-1-1}|
&\le C\| {\bs e}_{\bs X}^{\#,\theta} \circ {\bs Y}_H^{\#,\theta} \|_{L^2(\Gamma[\bs X_H])} \| {\bs e}_{\bs u} \|_{L^2(\Gamma[\bs X_H])} \notag\\
&\le C(\| {\bs e}_{\bs d} \|_{L^2(\widehat\Gamma_H)} + H^{k+1} ) \| {\bs e}_{\bs u} \|_{L^2(\Gamma[\bs X_H])} \notag\\
&\le C\| {\bs e}_{\bs d} \|_{L^2(\widehat\Gamma_H)}^2  + CH^{2k+2}+ C\| {\bs e}_{\bs u} \|_{L^2(\Gamma[\bs X_H])}^2 \notag\\
&\le \varepsilon (\|\grad {\bs e}_{\bs u}\|_{L^2(\Omega^{\rm f}_{\rm ext}[\bs X_H])}^2 
+ \|\grad {\bs e}_{\bs d}\|_{L^2(\widehat{\Omega}^{\rm s}_H)}^2 ) \notag\\
&\quad\, + C_{\varepsilon}(\|{\bs e}_{\bs u}\|_{L^2(\Omega^{\rm f}_{\rm ext}[\bs X_H])}^2 + \| {\bs e}_{\bs d}\|_{L^2(\widehat{\Omega}^{\rm s}_H)}^2 ) + CH^{2k+2} . 
\end{align}
Similarly, 
\begin{align}\label{sigma-n-eu-1-3-est}
|\eqref{sigma-n-eu-1-3}|
&\le \varepsilon \|\grad {\bs e}_{\bs u}\|_{L^2(\Omega^{\rm f}_{\rm ext}[\bs X_H])}^2 
+ C_{\varepsilon} \|{\bs e}_{\bs u}\|_{L^2(\Omega^{\rm f}_{\rm ext}[\bs X_H])}^2 
+ CH^{2k+2} . 
\end{align}
Moreover, \eqref{sigma-n-eu-1-2} can be estimated as follows, by utilizing the fundamental theorem of calculus and the map ${\bs Z}^\theta = {\bs X} \circ ({\bs X}_H^{\#,\theta})^{-1}: \Gamma[{\bs X}_H^{\#,\theta}]\rightarrow \Gamma(t)$ which satisfies ${\bs Z}^1=  {\bs Y}_H^{\#,0}$: 
\begin{subequations}\label{sigma-n-eu-1-2-123}
\begin{align}
\eqref{sigma-n-eu-1-2} 
&= -  \int_{\Gamma[\bs X_H]} [{\bs\sigma}(\bu,p)(\grad_{\Gamma(t)} {\bs e}_{\bs X}^{\#,0})  {\bs n} ]\circ {\bs Z}^1 \, \cdot {\bs e}_{\bs u} \notag\\
&\quad\, - \int_0^1 \int_0^1 \frac{\d}{\d\alpha} \int_{\Gamma[\bs X_H]} [{\bs\sigma}(\bu,p)(\grad_{\Gamma[{\bs X}_H^{\#,\alpha\theta}]} {\bs e}_{\bs X}^{\#,\alpha\theta})  {\bs n}_{\Gamma[{\bs X}_H^{\#,\alpha\theta}]} ] \circ {\bs Y}_H^{\#,\alpha\theta}\, \cdot {\bs e}_{\bs u}\d\alpha \d\theta \notag\\
&= - \int_{\Gamma[\bs X]} [{\bs\sigma}(\bu,p)(\grad_{\Gamma(t)} {\bs e}_{\bs X}^{\#,0})  {\bs n} ]\, \cdot {\bs e}_{\bs u} \label{sigma-n-eu-1-2-1} \\
&\quad\, 
- \int_0^1\frac{\d}{\d\alpha}\int_{\Gamma[{\bs X}_H^{\#,\alpha}]} [{\bs\sigma}(\bu,p)(\grad_{\Gamma(t)} {\bs e}_{\bs X}^{\#,0})  {\bs n} ]\circ {\bs Z}^\alpha \, \cdot {\bs e}_{\bs u} \d\alpha 
\label{sigma-n-eu-1-2-2}\\
&\quad\, - \int_0^1 \int_0^1 \frac{\d}{\d\alpha} \int_{\Gamma[\bs X_H]} [{\bs\sigma}(\bu,p)(\grad_{\Gamma[{\bs X}_H^{\#,\alpha\theta}]} {\bs e}_{\bs X}^{\#,\alpha\theta})  {\bs n}_{\Gamma[{\bs X}_H^{\#,\alpha\theta}]} ] \circ {\bs Y}_H^{\#,\alpha\theta}\, \cdot {\bs e}_{\bs u}\d\alpha \d\theta 
\label{sigma-n-eu-1-2-3}
\end{align}
\end{subequations}
where \eqref{sigma-n-eu-1-2-1} can be estimated using integration by parts: 
\begin{align}\label{sigma-n-eu-1-2-1e1}
|\eqref{sigma-n-eu-1-2-1}|
&\le C\|{\bs e}_{\bs X}^{\#,0}\|_{H^{\frac12}(\Gamma(t))}\|{\bs e}_{\bs u}\|_{H^{\frac12}(\Gamma(t))} \notag\\
&\le C\|{\bs e}_{\bs X}^{\#,0} \circ {\bs X}\|_{H^{\frac12}(\widehat\Gamma)}\|{\bs e}_{\bs u}\|_{H^{\frac12}(\Gamma(t))} \notag\\
&\le C(\|{\bs e}_{\bs d}\circ\widehat{\bs\Phi}_H^{-1}\|_{H^{\frac12}(\widehat\Gamma)} + H^k)\|{\bs e}_{\bs u}\|_{H^{\frac12}(\Gamma(t))} 
\quad\mbox{(here \eqref{def-ej-theta} is used)} \notag\\
&\le C(\|{\bs e}_{\bs d}\circ\widehat{\bs\Phi}_H^{-1}\|_{H^1(\widehat\Omega^{\rm s})} + H^k)
    \|{\bs e}_{\bs u}\|_{H^1(\Omega^{\rm f}_{\rm ext}[{\bs X}_H])} 
\quad\mbox{(trace inequality)} \notag\\
&\le \varepsilon\|{\bs e}_{\bs u}\|_{H^1(\Omega^{\rm f}_{\rm ext}[{\bs X}_H])}^2 
     + C_{\varepsilon}\|{\bs e}_{\bs d}\|_{H^1(\widehat{\Omega}^{\rm s}_H)}^2 + C_{\varepsilon} H^{2k} ,
\end{align} 
where the second-to-last inequality uses the trace inequality in Appendix \ref{appendix:trace_inequlaity}. Moreover, \eqref{sigma-n-eu-1-2-2} can be estimated as follows after applying \eqref{dt-int-Gamma-1}--\eqref{dt-int-Gamma-1c}:
\begin{align}\label{sigma-n-eu-1-2-1e2}
|\eqref{sigma-n-eu-1-2-2}|
&\le C\int_0^1 \|\grad_{\Gamma(t)} {\bs e}_{\bs X}^{\#,0}\|_{L^2(\Gamma(t))} \|\grad_{\Gamma[{\bs X}_H^{\#,\alpha}]} {\bs e}_{\bs X}^{\#,\alpha}\|_{L^4(\Gamma[{\bs X}_H^{\#,\alpha}])} \|{\bs e}_{\bs u}\|_{L^4(\Gamma[{\bs X}_H^{\#,\alpha}])} \d\alpha\notag\\
&\quad\, 
+ C\int_0^1\|\grad_{\Gamma(t)} {\bs e}_{\bs X}^{\#,0}\|_{L^2(\Gamma(t))} \| {\bs e}_{\bs X}^{\#,\alpha}\|_{L^\infty(\Gamma[{\bs X}_H^{\#,\alpha}])} \|\grad {\bs e}_{\bs u}\|_{L^2(\Gamma[{\bs X}_H^{\#,\alpha}])} \d\alpha\notag\\
&\le 
\varepsilon\|\grad {\bs e}_{\bs u}\|_{L^2(\Omega^{\rm f}_{\rm ext}[{\bs X}_H])}^2
+ C_{\varepsilon} \|\grad {\bs e}_\dep \|_{L^2(\widehat{\Omega}^{\rm s}_H)}^2  + C_{\varepsilon} H^{2k} ,
\end{align}
where the last inequality uses 
\begin{align*}
&\|{\bs e}_{\bs u}\|_{L^4(\Gamma[{\bs X}_H^{\#,\alpha}])} 
\leq C\|{\bs e}_{\bs u}\|_{H^{\frac12}(\Gamma[{\bs X}_H^{\#,\alpha}])} 
\leq C\|{\bs e}_{\bs u}\|_{H^1(\Omega^{\rm f}_{\rm ext}[{\bs X}_H])} \notag\\ 
&\| \grad_{\Gamma[{\bs X}_H^{\#,\alpha}]}  {\bs e}_{\bs X}^{\#,\alpha}\|_{L^4(\Gamma[{\bs X}_H^{\#,\alpha}])} 
\le C\|\grad {\bs e}_{\bs d}\|_{L^4(\widehat\Gamma_H)} + CH^k \notag\\
&\hspace{125pt}\le CH^{-1}\|\grad {\bs e}_{\bs d}\|_{L^2(\widehat{\Omega}^{\rm s}_H)} + CH^k 
\le CH^{\frac12} \quad\mbox{(here \eqref{ind-1-3} is used)}\\
&\| \grad_{\Gamma[{\bs X}_H^{\#,\alpha}]}  {\bs e}_{\bs X}^{\#,\alpha}\|_{L^2(\Gamma[{\bs X}_H^{\#,\alpha}])} 
\le C\|\grad {\bs e}_{\bs d}\|_{L^2(\widehat\Gamma_H)} + CH^k 
\le CH^{-\frac12}\|\grad {\bs e}_{\bs d}\|_{L^2(\widehat{\Omega}^{\rm s}_H)} + CH^k \\
&\| {\bs e}_{\bs X}^{\#,\alpha}\|_{L^\infty(\Gamma[{\bs X}_H^{\#,\alpha}])} \le C\| {\bs e}_{\bs d}\|_{L^\infty(\widehat\Gamma_H)} + CH^k \le Ch^{\frac12}H^{\frac12} \quad\mbox{(here \eqref{ind-1-1} is used)}\\
&\|\grad {\bs e}_{\bs u} \|_{L^2(\Gamma[{\bs X}_H^{\#,\alpha}])} 
\le Ch^{-\frac12}\|\grad {\bs e}_{\bs u} \|_{L^2(\Omega^{\rm f}_{\rm ext}[{\bs X}_H])} \quad\mbox{(inverse trace estimate)} . 
\end{align*} 
The term \eqref{sigma-n-eu-1-2-3} can be estimated similarly after applying \eqref{dt-int-Gamma-1}--\eqref{dt-int-Gamma-1c}. This yields the following result: 
\begin{align}\label{sigma-n-eu-1-2-1e3}
|\eqref{sigma-n-eu-1-2-3}|
&\le 
\varepsilon\|\grad {\bs e}_{\bs u}\|_{L^2(\Omega^{\rm f}_{\rm ext}[{\bs X}_H])}^2 
+ C_{\varepsilon}  \|\grad {\bs e}_\dep \|_{L^2(\widehat{\Omega}^{\rm s}_H)}^2 + C_{\varepsilon} H^{2k} .
\end{align}
By substituting \eqref{sigma-n-eu-1-2-1e1}--\eqref{sigma-n-eu-1-2-1e3} into \eqref{sigma-n-eu-1-2-123}, we obtain 
\begin{align}\label{sigma-n-eu-1-2-est}
|\eqref{sigma-n-eu-1-2}|
&\le \varepsilon\|{\bs e}_{\bs u}\|_{H^1(\Omega^{\rm f}_{\rm ext}[{\bs X}_H])}^2 
+ C_{\varepsilon}  \|{\bs e}_\dep \|_{H^1(\widehat{\Omega}^{\rm s}_H)}^2 + C_{\varepsilon} H^{2k} .
\end{align}
Then, substituting \eqref{sigma-n-eu-1-1-est}--\eqref{sigma-n-eu-1-3-est} and \eqref{sigma-n-eu-1-2-est} into \eqref{sigma-n-eu-1-123}, we obtain 
\begin{align}\label{sigma-n-eu-1-f}
|\eqref{sigma-n-eu-1}| 
\le 
\varepsilon\|\grad {\bs e}_{\bs u}\|_{L^2(\Omega^{\rm f}_{\rm ext}[{\bs X}_H])}^2 
+ C_{\varepsilon} (\| {\bs e}_\dep \|_{H^1(\widehat{\Omega}^{\rm s}_H)}^2
+ \|{\bs e}_{\bs u} \|_{L^2(\Omega^{\rm f}_{\rm ext}[{\bs X}_H])}^2 ) + C_{\varepsilon} H^{2k} .
\end{align}
Note that \eqref{sigma-n-eu-2} can be estimated similarly as \eqref{sigma-n-eu-1}, with ${\bs e}_{\bs X}^{\#,0}= {\bs X}_H\circ \widehat{\bs\Phi}_H^{-1} - {\bs X}$ replaced by ${\bs e}_{\bs X}^*= {\bs X}_H^*\circ \widehat{\bs\Phi}_H^{-1} - {\bs X} $ in the estimate. Since $\|{\bs e}_{\bs X}^*\|_{H^1(\widehat{\Omega}^{\rm s}_H)}\le CH^k$, this leads to the following result: 
\begin{align}\label{sigma-n-eu-2-f}
|\eqref{sigma-n-eu-2}|
\le 
\varepsilon\|\grad {\bs e}_{\bs u}\|_{L^2(\Omega^{\rm f}_{\rm ext}[{\bs X}_H])}^2 
+ C_{\varepsilon} \|{\bs e}_{\bs u} \|_{L^2(\Omega^{\rm f}_{\rm ext}[{\bs X}_H])}^2  + C_{\varepsilon} H^{2k} .
\end{align}
Moreover, by denoting $({\bf P}_{\Gamma[\bs X_H]}{\bs e}_{\bs u})^\theta$, $({\bf P}_{\Gamma[\bs X_H]}{\bs e}_{\bs u})^{*,\theta}$ and $({\bf P}_{\Gamma[\bs X_H]}{\bs e}_{\bs u})^{\#,\theta}$ the push-forward of ${\bf P}_{\Gamma[\bs X_H]}{\bs e}_{\bs u}$ to the surfaces $\Gamma[\bs X_H^\theta]$ and $\Gamma[\bs X_H^{*,\theta}]$ and $\Gamma[\bs X_H^{\#,\theta}]$, respectively, so that 
\begin{subequations}\label{relation-0-1}
\begin{align}
({\bf P}_{\Gamma[\bs X_H]}{\bs e}_{\bs u})^{0}
&=({\bf P}_{\Gamma[\bs X_H]}{\bs e}_{\bs u})^{*,1}
\quad\mbox{and}\quad 
{\widehat{\bs\sigma}}_H^{*|0}={\widehat{\bs\sigma}}_H^{*,1}
&&\mbox{on}\,\,\,\Gamma[{\bs X}_H^*] ,\\
({\bf P}_{\Gamma[\bs X_H]}{\bs e}_{\bs u})^{*,0}
&=({\bf P}_{\Gamma[\bs X_H]}{\bs e}_{\bs u})^{\#,0} &&\mbox{on}\,\,\,\Gamma(t) , 
\end{align}
\end{subequations}
we can estimate \eqref{sigma-n-eu-3} as follows: 
\begin{subequations}\label{sigma-n-eu-3-to} 
\begin{align} 
\eqref{sigma-n-eu-3} 
&= \int_{\Gamma[\bs X_H]} {\widehat{\bs\sigma}}_H^{*|1}\cdot {\bs\dot{\bs e}}_\dep^1  
- \int_{\Gamma[\bs X_H^*]} {\widehat{\bs\sigma}}_H^{*|0} \cdot {\bs\dot{\bs e}}_\dep^0 \notag\\
&= \int_0^1 \frac{\d}{\d\theta} \int_{\Gamma[\bs X_H^\theta]} {\widehat{\bs\sigma}}_H^{*|\theta}\cdot {\bs\dot{\bs e}}_\dep^\theta \,\d\theta \notag\\
&= \int_0^1 \int_{\Gamma[\bs X_H^\theta]} {\widehat{\bs\sigma}}_H^{*|\theta}\cdot {\bs\dot{\bs e}}_\dep^\theta \,\grad_{\Gamma[\bs X_H^\theta]}\cdot {\bs e}_\dep^\theta\, \d\theta \notag\\
&= \int_0^1 \int_{\Gamma[\bs X_H^\theta]} {\widehat{\bs\sigma} }_H^{*|\theta}\cdot ({\bs\dot{\bs e}}_\dep^\theta - ({\bf P}_{\Gamma[\bs X_H]}{\bs e}_{\bs u})^\theta )\, \grad_{\Gamma[\bs X_H^\theta]}\cdot {\bs e}_\dep^\theta\, \d\theta \notag \\
&\quad\, 
+ \int_0^1 \int_{\Gamma[\bs X_H^\theta]} {\widehat{\bs\sigma} }_H^{*|\theta} \cdot ({\bf P}_{\Gamma[\bs X_H]}{\bs e}_{\bs u})^\theta\, \grad_{\Gamma[\bs X_H^\theta]}\cdot {\bs e}_\dep^\theta\, \d\theta \notag\\
&= \int_{\Gamma[\bs X_H^*]} {\widehat{\bs\sigma}}_H^{*|0} \cdot ({\bs\dot{\bs e}}_\dep^0 - ({\bf P}_{\Gamma[\bs X_H]}{\bs e}_{\bs u})^0 )\, \grad_{\Gamma[\bs X_H^*]}\cdot {\bs e}_\dep^0  \label{sigma-n-eu-3-t1} \\
&\quad\, + \int_0^1 \int_0^\theta \int_{\Gamma[\bs X_H^{\alpha}]} {\widehat{\bs \sigma} }_H^{*|\alpha} \cdot ({\bs\dot{\bs e}}_\dep^\alpha - ({\bf P}_{\Gamma[\bs X_H]}{\bs e}_{\bs u})^\alpha ) 
O(|\grad_{\Gamma[\bs X_H^{\alpha}]}\cdot {\bs e}_\dep^\alpha |^2) \, \d\alpha \d\theta  \label{sigma-n-eu-3-t2}  \\ 
&\quad\, 
+ \int_{\Gamma[\bs X_H^*]} {\widehat{\bs\sigma} }_H^{*|0} \cdot ({\bf P}_{\Gamma[\bs X_H]}{\bs e}_{\bs u})^0\, \grad_{\Gamma[\bs X_H^*]}\cdot {\bs e}_\dep^0  \label{sigma-n-eu-3-t3}  \\
&\quad\, 
+ \int_0^1\int_0^\theta \int_{\Gamma[\bs X_H^{\alpha}]} {\widehat{\bs\sigma}}_H^{*|\alpha} \cdot ({\bf P}_{\Gamma[\bs X_H]}{\bs e}_{\bs u})^\alpha \, O(|\grad_{\Gamma[\bs X_H^\alpha]}\cdot {\bs e}_\dep^\alpha |^2) \, \d\alpha \d\theta  \label{sigma-n-eu-3-t4} .
\end{align} 
\end{subequations} 
By using the notation in \eqref{notation-sigma} and the relation in \eqref{relation-0-1}, 
\begin{align}
\eqref{sigma-n-eu-3-t1} 
&= \int_{\Gamma[\bs X_H^{*,1}]} {\widehat{\bs\sigma}}_H^{*,1} \cdot ({\bs\dot{\bs e}}_\dep^{*,1} - ({\bf P}_{\Gamma[\bs X_H]}{\bs e}_{\bs u})^{*,1})\, \grad_{\Gamma[\bs X_H^*]}\cdot {\bs e}_\dep^{*,1} \notag\\
&= \int_{\Gamma[\bs X]} {\widehat{\bs\sigma}}_H^{*,0} \cdot [{\bs\dot{\bs e}}_\dep^{*,0} - ({\bf P}_{\Gamma[\bs X_H]}{\bs e}_{\bs u})^{*,0}] \grad_{\Gamma[\bs X]}\cdot {\bs e}_\dep^{*,0} 
\quad\mbox{(fundamental theorem of calculus)} \notag\\
&\quad\, + \int_0^1 \int_{\Gamma[\bs X_H^{*,\theta}]} {\widehat{\bs \sigma} }_H^{*,\theta} \cdot [{\bs\dot{\bs e}}_\dep^{*,\theta} - ({\bf P}_{\Gamma[\bs X_H]}{\bs e}_{\bs u})^{*,\theta} ] (\grad_{\Gamma[\bs X_H^{*,\theta}]}\cdot {\bs e}_\dep^{*,\theta})
O(|\grad_{\Gamma[\bs X_H^{*,\theta}]} {\bs e}_{\bs X}^*|) \, \d\theta  \notag\\
&\hspace{200pt} \mbox{(the formulas in \eqref{dt-int-Gamma-1}--\eqref{dt-int-Gamma-1c} are used)} \notag\\
&\lesssim \|{\bs\dot{\bs e}}_\dep^{*,0} - ({\bf P}_{\Gamma[\bs X_H]}{\bs e}_{\bs u})^{*,0}\|_{H^{\frac12}(\Gamma[\bs X])} \| {\bs e}_\dep^{*,0} \|_{H^{\frac12}(\Gamma[\bs X])}  \notag\\
&\quad\, 
+ \int_0^1 \|{\bs\dot{\bs e}}_\dep^{*,\theta} - ({\bf P}_{\Gamma[\bs X_H]}{\bs e}_{\bs u})^{*,\theta}\|_{L^2(\Gamma[\bs X_H^{*,\theta}])} \| {\bs e}_\dep^{*,\theta} \|_{H^1(\Gamma[\bs X_H^{*,\theta}])} H^k \d\theta \notag\\
&\le C[\| {\bs e}_\dep \|_{H^{\frac12}(\widehat\Gamma_H)}
+ H^{-\frac12}(h^{k+1}+H^{k+1})]\| {\bs e}_\dep \|_{H^{\frac12}(\widehat\Gamma_H)} \notag\\
&\quad\, 
+ C (\| {\bs e}_\dep \|_{L^2(\widehat\Gamma_H)} + h^{k+1}+H^{k+1}) \| {\bs e}_\dep \|_{H^1(\widehat\Gamma_H)} 
\quad\mbox{(here \eqref{eu-dot-ed-Lp-H} is used)} \notag\\
&\le C\| {\bs e}_\dep \|_{H^1(\widehat{\Omega}^{\rm s}_H)}^2 + C(h^{2k}+H^{2k})\quad\mbox{(under the condition $h^2\le H$)}, \\[10pt]
\eqref{sigma-n-eu-3-t2} 
&\le 
C\int_0^1 \|{\bs\dot{\bs e}}_\dep^\alpha - ({\bf P}_{\Gamma[\bs X_H]}{\bs e}_{\bs u})^\alpha\|_{L^\infty(\Gamma[\bs X_H^{\alpha}])} \| {\bs e}_\dep^\alpha \|_{H^1(\Gamma[\bs X_H^{\alpha}])} ^2 \d\alpha \notag\\
&\le C \| {\bs e}_\dep \|_{L^\infty(\widehat\Gamma_H)} \| {\bs e}_\dep \|_{H^1(\widehat\Gamma_H)}^2 
\quad\mbox{(here \eqref{eu-dot-ed-Lp-H} is used)} \notag\\
&\le CH \| {\bs e}_\dep \|_{H^1(\widehat\Gamma_H)}^2 \quad\mbox{(here \eqref{ind-1-1} is used)} \notag\\
&\le C\| {\bs e}_\dep \|_{H^1(\widehat{\Omega}^{\rm s}_H)}^2 \quad\mbox{(inverse trace estimate)},\\[10pt]
\eqref{sigma-n-eu-3-t3} 
&= \int_{\Gamma[\bs X_H^*]} {\widehat{\bs\sigma} }_H^{*,1} \cdot ({\bf P}_{\Gamma[\bs X_H]}{\bs e}_{\bs u})^{*,1}\, \grad_{\Gamma[\bs X_H^*]}\cdot {\bs e}_\dep^{*,1} 
\quad\mbox{(here relation \eqref{relation-0-1} is used)}\notag\\
&= \int_{\Gamma[\bs X]} {\widehat{\bs\sigma}}_H^{*,0} \cdot ({\bf P}_{\Gamma[\bs X_H]}{\bs e}_{\bs u})^{*,0} \grad_{\Gamma[\bs X]}\cdot {\bs e}_\dep^{*,0} 
\quad \mbox{(fundamental theorem of calculus)} \notag\\
&\quad\, + \int_0^1 \int_{\Gamma[\bs X_H^{*,\theta}]} {\widehat{\bs \sigma} }_H^{*,\theta} \cdot ({\bf P}_{\Gamma[\bs X_H]}{\bs e}_{\bs u})^{*,\theta} (\grad_{\Gamma[\bs X_H^{*,\theta}]}\cdot {\bs e}_\dep^{*,\theta})
O(|\grad_{\Gamma[\bs X_H^{*,\theta}]} {\bs e}_{\bs X}^*|) \, \d\theta  \notag\\
&\hspace{165pt} \mbox{(the formulas in \eqref{dt-int-Gamma-1}--\eqref{dt-int-Gamma-1c} are used)} \notag\\
&\le C\|({\bf P}_{\Gamma[\bs X_H]}{\bs e}_{\bs u})^{*,0}\|_{H^{\frac12}(\Gamma[\bs X])} \| {\bs e}_\dep^{*,0} \|_{H^{\frac12}(\Gamma[\bs X])} 
\quad\mbox{(integration by parts on $\Gamma(t)$)} \notag\\
&\quad\, 
+ C\int_0^1 \|({\bf P}_{\Gamma[\bs X_H]}{\bs e}_{\bs u})^{*,\theta}\|_{L^2(\Gamma[\bs X_H^{*,\theta}])} 
\| {\bs e}_\dep^{*,\theta} \|_{H^1(\Gamma[\bs X_H^{*,\theta}])} H^k \d\theta \notag\\
&\le C\|({\bf P}_{\Gamma[\bs X_H]}{\bs e}_{\bs u})^{*,0}\|_{H^{\frac12}(\Gamma[\bs X])} \| {\bs e}_\dep^{*,0} \|_{H^{\frac12}(\Gamma[\bs X])} 
\quad\mbox{(norm equiv + inverse estimate)} \notag\\
&\le C\| {\bs e}_{\bs u}\|_{H^1(\Omega^{\rm f}_{\rm ext}[\bs  X_H])} \| {\bs e}_\dep\|_{H^1(\widehat{\Omega}^{\rm s}_H)} 
\quad\mbox{(here \eqref{H12-PXeu} is used)}\notag\\
&\le \varepsilon\| {\bs e}_{\bs u}\|_{H^1(\Omega^{\rm f}_{\rm ext}[\bs  X_H])}^2
+ C_{\varepsilon} \| {\bs e}_\dep\|_{H^1(\widehat{\Omega}^{\rm s}_H)}^2 , \\
\eqref{sigma-n-eu-3-t4} 
&\le 
C\int_0^1 \|({\bf P}_{\Gamma[\bs X_H]}{\bs e}_{\bs u})^\alpha\|_{L^4(\Gamma[\bs X_H^{\alpha}])} \| {\bs e}_\dep^\alpha \|_{H^1(\Gamma[\bs X_H^{\alpha}])} \| {\bs e}_\dep^\alpha \|_{W^{1,4}(\Gamma[\bs X_H^{\alpha}])}  \d\alpha \notag\\
&\le C\|({\bf P}_{\Gamma[\bs X_H]}{\bs e}_{\bs u})^{\#,0}\|_{L^4(\Gamma[\bs X])}\| {\bs e}_\dep \|_{H^1(\widehat\Gamma_H)} H^{\frac12}
\quad\mbox{(use \eqref{ind-1-3} and inverse estimate)}\notag\\
&\le C\|({\bf P}_{\Gamma[\bs X_H]}{\bs e}_{\bs u})^{\#,0}\|_{H^{\frac12}(\Gamma[\bs X])}\| {\bs e}_\dep \|_{H^{\frac12}(\widehat\Gamma_H)}  \quad\mbox{(Sobolev embedding)}\notag\\
&\le \varepsilon\| {\bs e}_{\bs u}\|_{H^1(\Omega^{\rm f}_{\rm ext}[\bs  X_H])}^2
+ C_{\varepsilon} \| {\bs e}_\dep\|_{H^1(\widehat{\Omega}^{\rm s}_H)}^2 \quad\mbox{(here \eqref{H12-PXeu} is used)} .
\end{align}
Substituting these estimates into \eqref{sigma-n-eu-3-to}, and then substituting the obtained estimate of \eqref{sigma-n-eu-3} as well as \eqref{sigma-n-eu-1-f}--\eqref{sigma-n-eu-2-f} into \eqref{sigma-n-eu}, we obtain
\begin{align*} 
&|\eqref{sigma-gradv-b} + \eqref{error-eq-k} + \eqref{error-eq-l} | \notag\\ 
&\le 
\varepsilon\|\grad {\bs e}_{\bs u}\|_{L^2(\Omega^{\rm f}_{\rm ext}[{\bs X}_H])}^2 
+ C_{\varepsilon} (\|{\bs e}_\dep\|_{H^1(\widehat{\Omega}^{\rm s}_H)}^2+ \|{\bs e}_{\bs u} \|_{L^2(\Omega^{\rm f}_{\rm ext}[{\bs X}_H])}^2 ) + C_{\varepsilon} (h^{2k}+H^{2k}) .
\end{align*}
This, together with the estimates of \eqref{sigma-gradv-a} and \eqref{sigma-gradv-c} in \eqref{sigma-gradv-a-est}--\eqref{sigma-gradv-c-est}, yields the following result: 
\begin{align*}
& | \eqref{error-eq-g} + \eqref{error-eq-k} + \eqref{error-eq-l} | \notag\\ 
&\le 
\varepsilon\|\grad {\bs e}_{\bs u}\|_{L^2(\Omega^{\rm f}_{\rm ext}[{\bs X}_H])}^2 
+ C_{\varepsilon} (\|{\bs e}_\dep\|_{H^1(\widehat{\Omega}^{\rm s}_H)}^2+ \|{\bs e}_{\bs u} \|_{L^2(\Omega^{\rm f}_{\rm ext}[{\bs X}_H])}^2 ) 
+ C_{\varepsilon} (h^{2k}+H^{2k}) .
\end{align*}

\item
\begin{align}
| \eqref{error-eq-h} |
&= \Big| \Big( \int_{\Omega^{\rm f}[\bs X_H]} - \int_{\Omega^{\rm f}[\bs X_H^*]} \Big) e_p \grad\cdot {\bs u}_h^* \Big| \notag\\
&= \Big|\int_0^1 \int_{\Gamma[\bs X_H^\theta]} e_p  \grad\cdot {\bs u}_h^*\, ({\bs e}_\dep^\theta\cdot {\bs n}_{\Gamma[{\bs X}_H^\theta]}) \, \d\theta \Big| \notag\\
&= \Big|\int_0^1 \int_{\Gamma[\bs X_H^\theta]} e_p  [\grad\cdot ({\bs u}_h^*-{\bs u}) + \grad\cdot{\bs u} ] ({\bs e}_\dep^\theta\cdot {\bs n}_{\Gamma[{\bs X}_H^\theta]}) \,  \d\theta \Big| \notag\\ 
&\le \int_0^1 Ch^k \| e_p \|_{L^2(\Gamma[\bs X_H^\theta])} \|{\bs e}_\dep ^\theta \|_{L^2(\Gamma[\bs X_H^\theta])} \d\theta \notag\\
&\le C h^{k-\frac12} \| e_p \|_{L^2(\Omega^{\rm f}_{\rm ext}[{\bs X}_H])} \| {\bs e}_\dep\|_{H^1(\widehat{\Omega}^{\rm s}_H)} 
\quad\mbox{(norm equivalence)} \notag\\
&\le  \varepsilon h^{2k-1}\| e_p \|_{L^2(\Omega^{\rm f}_{\rm ext}[{\bs X}_H])}^2  
+ C_{\varepsilon} \| {\bs e}_\dep\|_{H^1(\widehat{\Omega}^{\rm s}_H)}^2 ,
\end{align}
where the second-to-last inequality follows from the finite element inverse trace inequality in the fluid region, and the trace inequality in the solid region. Therefore, by using the estimate of $\| e_p \|_{0,\Omega[{\bs X}_H]}$ in \eqref{ep-esigma-estimates} (which is stronger than $\| e_p \|_{L^2(\Omega^{\rm f}_{\rm ext}[{\bs X}_H])}$ here), we have 
\begin{align}
\int_0^s \eqref{error-eq-h} \d t 
&\le  
\varepsilon h^{2k-1}
\big( \|\grad{\bs e}_{\bs d}(s)\|_{L^2(\widehat\Omega^{\rm s}_H)}^2 + \|\partial_t{\bs e}_{\bs d}(s)\|_{L^2(\widehat\Omega^{\rm s}_H)}^2 \big)\notag\\
&\quad\, 
+ C_\varepsilon \int_0^s (h^{2k-3}+h^{2k-2}H^{-1})\| {\bs e}_{\bs u}  \|_{1,\Omega[{\bs X}_H]}^2  \d t \notag\\
&\quad\, 
+ C_\varepsilon \int_0^s (h^{2k-2}H^{-1}+h^{2k-1}H^{-2}+1)\|{\bs e}_\dep\|_{H^1(\widehat{\Omega}^{\rm s}_H)}^2 \d t \notag\\
&\quad\, 
+ C_\varepsilon \int_0^s h^{2k-1}H^{-1}\|\partial_t{\bs e}_\dep\|_{L^2(\widehat{\Omega}^{\rm s}_H)}^2 \d t \notag\\
&\quad\, 
+ C_\varepsilon (h^{2k} + H^{-1}h^{4k-1} + H^{2k}) . 
&\end{align}

\item
\begin{align}
\eqref{error-eq-i} 
& =  \int_{\widehat{\Omega}^{\rm s}_H} \partial_{tt} {\bs e}_{\dep} \cdot \partial_t{\bs e}_{\dep}
+ \int_{\widehat{\Omega}^{\rm s}_H} [{\bs D}({\bs e}_\dep) : {\bs D}(\partial_t{\bs e}_{\dep}) + (\grad\cdot {\bs e}_\dep)(\grad\cdot\partial_t{\bs e}_{\dep}) ] \notag\\
& = \frac{\d}{\d t}\Big(\frac12 \|\partial_t{\bs e}_{\dep}\|_{L^2(\widehat{\Omega}^{\rm s}_H)}^2 
+ \frac12 \|{\bs D}({\bs e}_\dep)\|_{L^2(\widehat{\Omega}^{\rm s}_H)}^2 + \frac12 \|\grad\cdot{\bs e}_\dep\|_{L^2(\widehat{\Omega}^{\rm s}_H)}^2 \Big) .
\end{align}

\item
\eqref{error-eq-j} is cancelled by \eqref{error-eq-m}, i.e., \eqref{error-eq-j}+\eqref{error-eq-m} = 0.

\item 
Let ${\bs e}_{{\bs\sigma}}^\theta={\bs e}_{{\bs\sigma}}\circ({\bs X}_H^\theta)^{-1}$ and $\vel_H^{*|\theta}=\vel_H^{*}\circ({\bs X}_H^\theta)^{-1}$. Then 
\begin{subequations}\label{error-eq-n-abcd}
\begin{align}
\eqref{error-eq-n}
& = \int_{\Gamma[\bs X_H]} {\bs e}_{{\bs\sigma}}^1 \cdot \big( \bu_h^*  - \vel_H^{*|1} \big) - \int_{\Gamma[\bs X_H^*]} {\bs e}_{{\bs\sigma}}^0 \cdot \big( \bu_h^* - \vel_H^{*|0} \big)  \notag \\
& = \int_0^1 \int_{\Gamma[\bs X_H^\theta]} \Big( 
{\bs e}_{{\bs\sigma}}^\theta \cdot \big({\bs e}_\dep^\theta\cdot\grad \bu_h^*\big) 
+ {\bs e}_{{\bs\sigma}}^\theta \cdot \big( \bu_h^*  - \vel_H^{*|\theta} \big) \grad_{\Gamma[\bs X_H^\theta]}\cdot {\bs e}_\dep^\theta \, \Big) \d\theta  \notag\\
& = \int_0^1 \int_{\Gamma[\bs X_H^\theta]} 
{\bs e}_{{\bs\sigma}}^\theta \cdot \big( {\bs e}_\dep^\theta\cdot\grad {\bs u} \big) \d\theta 
+ \int_0^1 \int_{\Gamma[\bs X_H^\theta]} 
{\bs e}_{{\bs\sigma}}^\theta \cdot \big( {\bs e}_\dep^\theta\cdot\grad ({\bs u}_h^* - {\bs u}) \big) \d\theta \notag\\
&\quad\, 
+ \int_0^1 \int_{\Gamma[\bs X_H^\theta]} {\bs e}_{{\bs\sigma}}^\theta \cdot \big( \bu_h^*  - \vel_H^{*|\theta} \big) \grad_{\Gamma[\bs X_H^\theta]}\cdot {\bs e}_\dep^\theta \,  \d\theta  \notag\\
& = \int_{\Gamma[\bs X_H]} {\bs e}_{{\bs\sigma}}^1 \cdot \big( {\bs e}_\dep^1\cdot\grad {\bs u} \big) 
\label{error-eq-n-a}\\
&\quad\, -  \int_0^1 \int_{\theta}^1\int_{\Gamma[\bs X_H^\alpha]} 
{\bs e}_{{\bs\sigma}}^\alpha \cdot \big[{\bs e}_\dep^\alpha\cdot ( {\bs e}_\dep^\alpha\cdot\grad^2 {\bs u} ) 
+( {\bs e}_\dep^\alpha\cdot\grad {\bs u} ) (\grad_{\Gamma[\bs X_H^\alpha]}\cdot {\bs e}_\dep^\alpha)\big] \d\alpha \d\theta
\label{error-eq-n-b}\\ 
&\quad\, 
+ \int_0^1 \int_{\Gamma[\bs X_H^\theta]} 
{\bs e}_{{\bs\sigma}}^\theta \cdot \big( {\bs e}_\dep^\theta\cdot\grad ({\bs u}_h^* - {\bs u}) \big) \d\theta 
\label{error-eq-n-c}\\
&\quad\, 
+ \int_0^1 \int_{\Gamma[\bs X_H^\theta]}  {\bs e}_{{\bs\sigma}}^\theta \cdot \big( \bu_h^*  - \vel_H^{*|\theta} \big) \grad_{\Gamma[\bs X_H^\theta]}\cdot {\bs e}_\dep^\theta \,  \d\theta .
\label{error-eq-n-d} 
\end{align}
\end{subequations}

Before estimating \eqref{error-eq-n-a}, we introduce the following notations: 
\begin{align*}
{\bs w}&={\bs e}_\dep\cdot[(\grad {\bs u})\circ{\bs X}_H],\qquad {\bs w}^*={\bs e}_\dep\cdot[(\grad {\bs u})\circ{\bs X}_H^*] \\
{\bs w}_H&={\bs I}_H^{\rm s} {\bs w}
\qquad\mbox{and}\qquad {\bs w}_H^*={\bs I}_H^{\rm s} {\bs w}^* \,\,\,\mbox{on}\,\,\,\widehat\Omega^{\rm s}_H\supset\widehat\Gamma_H.
\end{align*}
Then 
\begin{align}\label{w-wH-L2}
\|{\bs w}-{\bs w}_H\|_{L^2(\widehat\Gamma_H)}
&\le
\|{\bs w}-{\bs w}^*\|_{L^2(\widehat\Gamma_H)}
+\|{\bs w}^*-{\bs w}_H^*\|_{L^2(\widehat\Gamma_H)}
+\|{\bs w}_H^* - {\bs w}_H\|_{L^2(\widehat\Gamma_H)} \notag\\ 
&\lesssim \|{\bs e}_\dep\|_{L^4(\widehat\Gamma_H)}^2 
+ H^{2} \|{\bs e}_\dep\|_{H^1(\widehat\Gamma_H)} 
+ \|{\bs e}_\dep\|_{L^4(\widehat\Gamma_H)}^2  \notag\\
&\lesssim \|{\bs e}_\dep\|_{H^1(\widehat{\Omega}^{\rm s}_H)}^2 + H^{\frac32} \|{\bs e}_\dep\|_{H^1(\widehat{\Omega}^{\rm s}_H)} + \|{\bs e}_\dep\|_{H^1(\widehat{\Omega}^{\rm s}_H)}^2 \notag\\
&\lesssim H^{\frac32} \|{\bs e}_\dep\|_{H^1(\widehat{\Omega}^{\rm s}_H)} 
\quad\mbox{(here \eqref{ind-1-3} is used)}, 
\end{align}
where the second-to-last inequality uses the following super-approximation result with ${\bs\phi}=(\grad {\bs u})\circ{\bs X}_H^*$ in estimating $\|{\bs w}^*-{\bs w}_H^*\|_{L^2(\widehat\Gamma_H)}$:
\begin{align}\label{super-approx}
\|{\bs e}_\dep \cdot {\bs\phi} - {\bs I}_H^{\rm s} \big( {\bs e}_\dep \cdot {\bs\phi}\big)\|_{L^2(\widehat\Gamma_H)} 
\le CH^2\|{\bs e}_\dep \|_{H^1(\widehat\Gamma_H)} \|{\bs\phi}\|_{W^{2,\infty}_{\rm piecewise}(\widehat\Gamma_H)} 
\end{align}
where $\|\cdot\|_{W^{2,\infty}_{\rm piecewise}(\widehat\Gamma_H)}$ denotes the piecewise $W^{2,\infty}$ norm on each curved-triangular element of $\widehat\Gamma_H$. This super-approximation result can be proved as follows, by considering the estimate on a curved-triangular element $K\subset\widehat\Gamma_H$ (where $c_K$ and ${\bs\phi}_K$ denote the average value of ${\bs e}_{\bs d}$ and ${\bs\phi}$ on $K$, respectively):
\begin{align*}
&\|{\bs e}_\dep \cdot {\bs\phi} - {\bs I}_H^{\rm s} \big( {\bs e}_\dep \cdot {\bs\phi}\big)\|_{L^2(K)} \\
&\le \|({\bs e}_\dep-c_K) \cdot {\bs\phi} - {\bs I}_H^{\rm s} \big( ({\bs e}_\dep-c_K) \cdot {\bs\phi}\big)\|_{L^2(K)} 
+ \|c_K \cdot \big({\bs\phi} - {\bs I}_H^{\rm s} {\bs\phi}\big)\|_{L^2(K)} \\
&= \|({\bs e}_\dep-c_K) \cdot ({\bs\phi} - {\bs\phi}_K) - {\bs I}_H^{\rm s} \big( ({\bs e}_\dep-c_K) \cdot ({\bs\phi} - {\bs\phi}_K)\big)\|_{L^2(K)}  \\
&\quad\,
+ \|c_K \cdot \big({\bs\phi} - {\bs I}_H^{\rm s} {\bs\phi}\big)\|_{L^2(K)} \\
&\le CH(\|{\bs e}_\dep-c_K \|_{H^1(K)} \|{\bs\phi}-{\bs\phi}_K\|_{L^{\infty}(K)}
+ \|{\bs e}_\dep-c_K \|_{L^2(K)} \|{\bs\phi}-{\bs\phi}_K\|_{W^{1,\infty}(K)}) \\
&\quad\, + CH^2|c_K|\|{\bs\phi}\|_{W^{2,\infty}(K)}\\
&\le CH^2(\|{\bs e}_\dep \|_{H^1(K)} \|{\bs\phi}\|_{W^{1,\infty}(K)} 
+\|{\bs e}_\dep \|_{L^2(K)} \|{\bs\phi}\|_{W^{2,\infty}(K)}). 
\end{align*}
This proves the super-approximation result in \eqref{super-approx}. 

Moreover, since the nodal values of ${\bs w}_H={\bs I}_H^{\rm s} {\bs w}={\bs I}_H^{\rm s}\big({\bs e}_\dep\cdot[(\grad {\bs u})\circ{\bs X}_H]\big)$ are bounded by the nodal values of ${\bs e}_\dep$, it follows that 
\begin{subequations}\label{wH-L2-dtwH-L2}
\begin{align}
\|{\bs w}_H\|_{L^2(\widehat\Omega_H^{\rm s})} 
\lesssim 
\|{\bs e}_\dep\|_{L^2(\widehat\Omega_H^{\rm s})} .
\end{align}
Similarly,
\begin{align}
\|\partial_t{\bs w}_H\|_{L^2(\widehat\Omega_H^{\rm s})} 
\lesssim 
\|\partial_t{\bs e}_\dep\|_{L^2(\widehat\Omega_H^{\rm s})} 
+ \|{\bs e}_\dep\|_{L^2(\widehat\Omega_H^{\rm s})} .
\end{align}
\end{subequations}

In order to use the estimate in \eqref{w-wH-L2}, we rewrite \eqref{error-eq-n-a} into the following form by using relation \eqref{esigma-test}: 
\begin{subequations}
\begin{align}
\int_0^s \eqref{error-eq-n-a} \d t
=\, & \int_0^s \int_{\Gamma[\bs X_H]} {\bs e}_{{\bs\sigma}}^1 \cdot {\bs w}_H^1 \d t 
+ \int_0^s \int_{\Gamma[\bs X_H]} {\bs e}_{{\bs\sigma}}^1 \cdot ({\bs w}^1 - {\bs w}_H^1) \d t  \notag\\
=\, & \int_0^s \int_{\Gamma[\bs X_H]} {\bs e}_{{\bs\sigma}}^{\#,1} \cdot {\bs w}_H^{\#,1} \d t 
+ \int_0^s \int_{\Gamma[\bs X_H]} {\bs e}_{{\bs\sigma}}^1 \cdot ({\bs w}^1 - {\bs w}_H^1) \d t  \notag\\
=\, & -\int_0^s \int_{\widehat{\Omega}^{\rm s}_H} \partial_{tt} {\bs e}_{\dep} \cdot {\bs w}_H \d t \label{error-eq-n-a-a}\\
& 
- \int_0^s \int_{\widehat{\Omega}^{\rm s}_H} {\bs D}({\bs e}_\dep):{\bs D}(\bw_H) \d t- \int_0^s\int_{\widehat{\Omega}^{\rm s}_H} (\grad\cdot{\bs e}_\dep) (\grad\cdot \bw_H) \d t\label{error-eq-n-a-b}\\ 
& - \int_0^s\int_{\Gamma[\bs X_H]} {\widehat{\bs\sigma}}_H^{*|1}\cdot \bw_H^1 + \int_{\Gamma[\bs X_H^*]} {\widehat{\bs\sigma}}_H^{*|0} \cdot {\bs w}_H^0 \d t \label{error-eq-n-a-c} \\
& - \int_0^s R_{h,H}^*(0, 0, {\bs w}_H , 0) \d t \label{error-eq-n-a-d} \\
& + \int_0^s \int_{\Gamma[\bs X_H]} {\bs e}_{{\bs\sigma}}^1\cdot ({\bs w}^1 - {\bs w}_H^1) \d t 
\label{error-eq-n-a-e} , 
\end{align}
\end{subequations}
where \eqref{error-eq-n-a-a} can be estimated as follows by using \eqref{wH-L2-dtwH-L2}:
\begin{align}
|\eqref{error-eq-n-a-a}|
&= \Big| - \int_0^s \frac{\d}{\d t} \int_{\widehat{\Omega}^{\rm s}_H}\partial_t{\bs e}_{\dep} \cdot {\bs w}_H\d t + \int_0^s \int_{\widehat{\Omega}^{\rm s}_H} \partial_t{\bs e}_{\dep} \cdot \partial_t{\bs w}_H \d t \Big| \notag \\
&\le \|\partial_t{\bs e}_{\dep}(s)\|_{L^2(\widehat{\Omega}^{\rm s}_H)} \|{\bs w}_H(s)\|_{L^2(\widehat{\Omega}^{\rm s}_H)} 
+ \int_0^s \|\partial_t{\bs e}_{\dep}(t)\|_{L^2(\widehat{\Omega}^{\rm s}_H)} \| \partial_t{\bs w}_H(t)\|_{L^2(\widehat{\Omega}^{\rm s}_H)} \d t \notag \\
&\le \|\partial_t{\bs e}_{\dep}(s)\|_{L^2(\widehat{\Omega}^{\rm s}_H)}  \|{\bs e}_{\dep}(s)\|_{L^2(\widehat{\Omega}^{\rm s}_H)}
+ C\int_0^s (\|\partial_t{\bs e}_{\dep}(t)\|_{L^2(\widehat{\Omega}^{\rm s}_H)}^2 + \|{\bs e}_{\dep}(t)\|_{L^2(\widehat{\Omega}^{\rm s}_H)}^2) \d t 
\end{align}
and
\begin{align}
&|\eqref{error-eq-n-a-b}+\eqref{error-eq-n-a-c}+\eqref{error-eq-n-a-d}+\eqref{error-eq-n-a-e}| \notag\\ 
&\le  \int_0^s (\varepsilon H^2\|{\bs e}_{\bs\sigma}\|_{H^{-\frac12}(\Gamma[{\bs X}_H])}^2
+ C_{\varepsilon}\|{\bs e}_{\dep}\|_{H^1(\widehat{\Omega}^{\rm s}_H)}^2 ) \d t + C_{\varepsilon}(h^{2k}+H^{2k}), 
\end{align}
where the estimate of \eqref{error-eq-n-a-e} is obtained by using \eqref{w-wH-L2}. 
This proves that 
\begin{subequations} \label{error-eq-n-abcd-estimate}
\begin{align}\label{error-eq-n-a-estimate}
\int_0^s \eqref{error-eq-n-a} \d t
&\le \int_0^s (\varepsilon H^2\|{\bs e}_{\bs\sigma}\|_{H^{-\frac12}(\Gamma[{\bs X}_H])}^2
+ C_{\varepsilon}\|{\bs e}_{\dep}\|_{H^1(\widehat{\Omega}^{\rm s}_H)}^2) \d t \notag\\
&\quad\, 
+C\|\partial_t{\bs e}_{\dep}(s)\|_{L^2(\widehat{\Omega}^{\rm s}_H)}  \|{\bs e}_{\dep}(s)\|_{L^2(\widehat{\Omega}^{\rm s}_H)} \notag\\
&\quad\, 
+ C\int_0^s (\|\partial_t{\bs e}_{\dep}\|_{L^2(\widehat{\Omega}^{\rm s}_H)}^2 + \|{\bs e}_{\dep}\|_{L^2(\widehat{\Omega}^{\rm s}_H)}^2) \d t \notag\\
&\quad\, + C(h^{2k}+H^{2k}) \notag\\ 
&\le 
\varepsilon
\big( \|\grad{\bs e}_{\bs d}(s)\|_{L^2(\widehat\Omega^{\rm s}_H)}^2 + \|\partial_t{\bs e}_{\bs d}(s)\|_{L^2(\widehat\Omega^{\rm s}_H)}^2 \big) \notag\\
&\quad\, + \varepsilon \int_0^s ( \|{\bs e}_{\bs u}\|_{H^1(\Omega^{\rm f}_{\rm ext}[{\bs X}_H])}^2 + \|{\bs e}_\dep\|_{H^1(\widehat{\Omega}^{\rm s}_H)}^2 ) \d t \notag\\
&\hspace{50pt}\mbox{(where we use \eqref{ep-esigma-estimates} under condition $H\lesssim h$)} \notag\\
&\quad\, 
+ C_\varepsilon \int_0^s (\|\partial_t{\bs e}_{\dep}\|_{L^2(\widehat{\Omega}^{\rm s}_H)}^2 + \|{\bs e}_{\dep}\|_{L^2(\widehat{\Omega}^{\rm s}_H)}^2) \d t \notag\\
&\quad\, + C_\varepsilon (h^{2k}+H^{2k}) .
\end{align}
Moreover, since $\|{\bs e}_{\dep}\|_{H^1(\widehat\Gamma_H)}\lesssim H^{-\frac12}\|{\bs e}_{\dep}\|_{H^1(\widehat\Omega^{\rm s}_H)}\lesssim H$ as shown in \eqref{ind-1-3}, it follows that, by applying the estimate of $\|{\bs e}_{\bs\sigma}^1\|_{L^4(\Gamma[{\bs X}_H])} $ in \eqref{esigma-L4} and the equivalence of norms $\|{\bs e}_{\bs\sigma}^\alpha\|_{L^4(\Gamma[{\bs X}_H^\alpha])} $ for $\alpha\in[0,1]$, 
\begin{align}
\int_0^s \eqref{error-eq-n-b} \d t
&\lesssim \int_0^s \|{\bs e}_{\bs\sigma}^1\|_{L^4(\Gamma[{\bs X}_H])} \|{\bs e}_{\dep}\|_{L^4(\widehat\Gamma_H)}  \|{\bs e}_{\dep}\|_{H^1(\widehat\Gamma_H)} \d t \notag\\
&\lesssim \int_0^s \|{\bs e}_{\bs\sigma}^1\|_{L^4(\Gamma[{\bs X}_H])} \|{\bs e}_{\dep}\|_{H^{\frac12}(\widehat\Gamma_H)} H \d t \notag\\
&\le \int_0^s  \varepsilon H^2\|\partial_{tt}{\bs e}_{\dep}\|_{L^2(\widehat{\Omega}^{\rm s}_H)}^2 \d t
+ C_\varepsilon\int_0^s \|{\bs e}_{\dep}\|_{H^1(\widehat{\Omega}^{\rm s}_H)}^2 \d t 
+ C_\varepsilon (h^{2k}+H^{2k}) \notag\\
&\le 
\varepsilon
\big( \|\partial_t{\bs e}_{\bs d}(s)\|_{L^2(\widehat\Omega^{\rm s}_H)}^2 + \|\grad{\bs e}_{\bs d}(s)\|_{L^2(\widehat\Omega^{\rm s}_H)}^2 \big) + \int_0^s \varepsilon \|{\bs e}_{{\bs u}}\|_{1,\Omega[{\bs X}_H]}^2 \d t  \notag\\
&\quad\, 
+ C_\varepsilon \int_0^s (\|{\bs e}_{\dep}\|_{H^1(\widehat{\Omega}^{\rm s}_H)}^2 +\|\partial_t{\bs e}_{\dep}\|_{L^2(\widehat{\Omega}^{\rm s}_H)}^2)\d t 
+ C_\varepsilon (h^{2k}+H^{2k}) ,  \label{error-eq-n-b-estimate}
\end{align}
where the last inequality uses \eqref{ep-esigma-estimates}. Similarly, 
\begin{align}
\int_0^s \eqref{error-eq-n-c} \d t
&\lesssim \int_0^s h^k \|{\bs e}_{\bs\sigma}^1\|_{L^2(\Gamma[{\bs X}_H])} \|{\bs e}_{\dep}\|_{L^2(\widehat\Gamma_H)} \d t\notag\\
&\lesssim \int_0^s H \|{\bs e}_{\bs\sigma}^1\|_{L^4(\Gamma[{\bs X}_H])} \|{\bs e}_{\dep}\|_{H^1(\widehat{\Omega}^{\rm s}_H)} \d t
\quad\mbox{(under the condition $h^k\le H$)} \notag\\
&\le 
\varepsilon
\big( \|\partial_t{\bs e}_{\bs d}(s)\|_{L^2(\widehat\Omega^{\rm s}_H)}^2 + \|\grad{\bs e}_{\bs d}(s)\|_{L^2(\widehat\Omega^{\rm s}_H)}^2 \big) + \int_0^s \varepsilon \|{\bs e}_{{\bs u}}\|_{1,\Omega[{\bs X}_H]}^2 \d t  \notag\\
&\quad\, 
+ C_\varepsilon \int_0^s (\|{\bs e}_{\dep}\|_{H^1(\widehat{\Omega}^{\rm s}_H)}^2 +\|\partial_t{\bs e}_{\dep}\|_{L^2(\widehat{\Omega}^{\rm s}_H)}^2)\d t 
+ C_\varepsilon (h^{2k}+H^{2k}) ,  
\end{align}
where the last inequality is the same as \eqref{error-eq-n-b-estimate}. Similarly to \eqref{u-d-on-Gamma1}, we have 
\begin{align}\label{u-d-on-Gammatheta}
|{\bs u}_h^* - \vel_H^{*|\theta}| 
&\lesssim h^{k+1} + |{\bs e}_{\bs d}^\theta| + H^{k+1} 
\quad\mbox{on}\,\,\,\Gamma[{\bs X}_H^\theta] 
\end{align}
and therefore 
\begin{align*}
\|{\bs u}_h^*  - \vel_H^{*|\theta}\|_{L^4(\Gamma[{\bs X}_H^\theta])} 
&\le C\| {\bs e}_\dep\|_{L^4(\widehat\Gamma_H)} + C(h^{k+1}+H^{k+1}) .
\end{align*}
By using this result, we can estimate \eqref{error-eq-n-d} similarly as \eqref{error-eq-n-b} and \eqref{error-eq-n-c}. This would lead to the following result:  
\begin{align}
\int_0^s \eqref{error-eq-n-d} \d t
&\le 
\varepsilon
\big( \|\partial_t{\bs e}_{\bs d}(s)\|_{L^2(\widehat\Omega^{\rm s}_H)}^2 + \|\grad{\bs e}_{\bs d}(s)\|_{L^2(\widehat\Omega^{\rm s}_H)}^2 \big) + \int_0^s \varepsilon \|{\bs e}_{{\bs u}}\|_{1,\Omega[{\bs X}_H]}^2 \d t  \notag\\
&\quad\, 
+ C_\varepsilon \int_0^s (\|{\bs e}_{\dep}\|_{H^1(\widehat{\Omega}^{\rm s}_H)}^2 +\|\partial_t{\bs e}_{\dep}\|_{L^2(\widehat{\Omega}^{\rm s}_H)}^2)\d t 
+ C_\varepsilon (h^{2k}+H^{2k}) .  
\end{align}
\end{subequations}
Therefore, by substituting the estimates in \eqref{error-eq-n-abcd-estimate} into \eqref{error-eq-n-abcd}, we obtain 
\begin{align}
\int_0^s\eqref{error-eq-n} \d t
&\le 
\varepsilon
\big( \|\partial_t{\bs e}_{\bs d}(s)\|_{L^2(\widehat\Omega^{\rm s}_H)}^2 + \|\grad{\bs e}_{\bs d}(s)\|_{L^2(\widehat\Omega^{\rm s}_H)}^2 \big) + \int_0^s \varepsilon \|{\bs e}_{{\bs u}}\|_{1,\Omega[{\bs X}_H]}^2 \d t  \notag\\
&\quad\, 
+ C_\varepsilon \int_0^s (\|{\bs e}_{\dep}\|_{H^1(\widehat{\Omega}^{\rm s}_H)}^2 +\|\partial_t{\bs e}_{\dep}\|_{L^2(\widehat{\Omega}^{\rm s}_H)}^2)\d t 
+ C_\varepsilon (h^{2k}+H^{2k})  . 
\end{align}

We have estimated all the terms in \eqref{error-eq-a}--\eqref{error-eq-n}. By integrating \eqref{error-eq} over time interval $[0,s]$ and using the estimates for the terms in \eqref{error-eq-a}--\eqref{error-eq-n}, as well as the  consistency estimate in Lemma \ref{Lemma:RhH-2} for estimating \eqref{error-eq-p}, we have 
\begin{align} \label{aux2422}
&\| {\bs e}_{\bs u}(s) \|_{L^2(\Omega^{\rm f}[\bs X_H(s)])}^2 
+ \int_0^s \| {\bs D}({\bs e}_{\bs u}) \|_{L^2(\Omega^{\rm f}[\bs X_H])}^2 \d t \notag\\
&\quad\, 
+ \|\partial_t{\bs e}_\dep(s)\|_{L^2(\widehat{\Omega}^{\rm s}_H)}^2 
+ \|\grad {\bs e}_\dep(s)\|_{L^2(\widehat{\Omega}^{\rm s}_H)}^2 \notag\\
&\quad\, 
+ \epsilon_h g_h^{\rm u} ({\bs e}_{\bs u}(s), {\bs e}_{\bs u}(s)) 
+ \int_0^s [g_h^{\rm u} ({\bs e}_{\bs u}, {\bs e}_{\bs u} ) + g_h^{\rm p} (e_p, e_p )] \d t\notag\\ 
&\le \varepsilon \big(\|{\bs e}_{\bs u}(s)\|_{L^2(\Omega^{\rm f}[\bs X_H(s)])}^2 + \|\partial_t{\bs e}_\dep(s)\|_{L^2(\widehat{\Omega}^{\rm s}_H)}^2 + \|\grad{\bs e}_\dep(s)\|_{L^2(\widehat{\Omega}^{\rm s}_H)}^2 \big) \notag\\
&\quad\, 
+ \varepsilon \epsilon_h^2 g^{\rm u}_h({\bs e}_{\bs u}(s),{\bs e}_{\bs u}(s)) + \varepsilon \int_0^s \|{\bs e}_{\bs u}\|_{1, \Omega[{\bs X}_H]}^2 \d t \notag\\
&\quad\,
+ C_\varepsilon \int_0^s \Big( {\|{\bs e}_{\bs u}\|_{L^2(\Omega^{\rm f}[{\bs X}_H])}}^2 +  \|\partial_t{\bs e}_{\dep}\|_{L^2(\widehat{\Omega}^{\rm s}_H)}^2 + \|{\bs e}_\dep\|_{H^1(\widehat{\Omega}^{\rm s}_H)}^2  + h^2 g_h^{\rm u}({\bs e}_{\bs u},{\bs e}_{\bs u}) \Big) \d t \notag\\
&\quad\, 
+ C_\varepsilon(h^{2k}+H^{2k}) ,
\end{align}
where we have converted the extended-domain terms in \eqref{eq:estima1}, \eqref{eq:estima3} and \eqref{sigma-gradv-a-est}
into physical-domain quantities by using \eqref{0-0-norm}, i.e., 
$$
\begin{aligned}
\|{\bs e}_{\bs u}\|_{H^1(\Omega^{\rm f}_{\rm ext}[{\bs X}_H])}^2
\lesssim & \|{\bs e}_{\bs u}\|_{1,\Omega[{\bs X}_H]}^2 , \\
\|{\bs e}_{\bs u}\|_{L^2(\Omega^{\rm f}_{\rm ext}[{\bs X}_H])}^2
\le &  \|{\bs e}_{\bs u}\|_{L^2(\Omega^{\rm f}[{\bs X}_H])}^2
+ h^2\, g_h^{\rm u}({\bs e}_{\bs u},{\bs e}_{\bs u}) .
\end{aligned}
$$
For sufficiently small $\varepsilon$ and $h$, the terms $\varepsilon \epsilon_h^2 g^{\rm u}_h({\bs e}_{\bs u}(s),{\bs e}_{\bs u}(s))$, $\varepsilon\int_0^s\|{\bs e}_{\bs u}\|_{1,\Omega[{\bs X}_H]}^2\,\d t$ and $C_\varepsilon h^2\int_0^s g_h^{\rm u}({\bs e}_{\bs u},{\bs e}_{\bs u})\,\d t$ can be absorbed into the left-hand side of \eqref{aux2422}, while the term 
$C_\varepsilon\int_0^s\|{\bs e}_{\bs u}\|_{L^2(\Omega^{\rm f}[{\bs X}_H])}^2\,\d t$
can be retained on the right-hand side and handled by Gronwall's inequality below. Since
\begin{align*}
&\int_0^s \|{\bs e}_{\bs d}\|_{L^2(\widehat{\Omega}^{\rm s}_H)}^2 \d t \notag\\
&= s\|{\bs e}_{\bs d}(0)\|_{L^2(\widehat{\Omega}^{\rm s}_H)}^2
+ \int_0^s \int_0^t\frac{\d}{\d t'}\|{\bs e}_{\bs d}(t')\|_{L^2(\widehat{\Omega}^{\rm s}_H)}^2 \d t'\d t  \notag\\
&\le  s\|{\bs e}_{\bs d}(0)\|_{L^2(\widehat{\Omega}^{\rm s}_H)}^2
+ \int_0^s \int_0^t2\|{\bs e}_{\bs d}(t')\|_{L^2(\widehat{\Omega}^{\rm s}_H)}
\|\partial_t{\bs e}_{\bs d}(t')\|_{L^2(\widehat{\Omega}^{\rm s}_H)}  \d t'\d t\notag\\
&\le  s\|{\bs e}_{\bs d}(0)\|_{L^2(\widehat{\Omega}^{\rm s}_H)}^2
+ \int_0^s \left(\frac12\|{\bs e}_{\bs d}(t')\|_{L^2(\widehat{\Omega}^{\rm s}_H)}^2+
2(s-t')^2\|\partial_t{\bs e}_{\bs d}(t')\|_{L^2(\widehat{\Omega}^{\rm s}_H)}^2\right)\d t' , 
\end{align*}
it follows that, after absorbing the term $\int_0^s\frac12\|{\bs e}_{\bs d}(t')\|_{L^2(\widehat{\Omega}^{\rm s}_H)}^2\d t'$ into the left-hand side, 
\begin{align*}
&\int_0^s \|{\bs e}_{\bs d}\|_{L^2(\widehat{\Omega}^{\rm s}_H)}^2 \d t 
\le 2s\|{\bs e}_{\bs d}(0)\|_{L^2(\widehat{\Omega}^{\rm s}_H)}^2
+ 4s^2\int_0^s \|\partial_t{\bs e}_{\bs d}\|_{L^2(\widehat{\Omega}^{\rm s}_H)}^2 \d t 
\end{align*}
and therefore (since ${\bs e}_{\bs d}(0)=0$) 
\begin{align*}
&\int_0^s \|{\bs e}_{\bs d}\|_{H^1(\widehat{\Omega}^{\rm s}_H)}^2 \d t 
\le 4s^2\int_0^s \|\partial_t{\bs e}_{\bs d}\|_{L^2(\widehat{\Omega}^{\rm s}_H)}^2 \d t
+ \int_0^s \|\grad{\bs e}_{\bs d}\|_{L^2(\widehat{\Omega}^{\rm s}_H)}^2 \d t .
\end{align*}
By substituting this result into \eqref{aux2422}, choosing a sufficiently small $\varepsilon$, and using Gronwall's inequality, we obtain 
\begin{align}\label{Bound1}
& \max_{t\in[0,t_*]} \Big(\| {\bs e}_{\bs u} \|_{L^2(\Omega^{\rm f}[\bs X_H])}^2 + \|\partial_t{\bs e}_\dep\|_{L^2(\widehat{\Omega}^{\rm s}_H)}^2 
+ \|\grad {\bs e}_\dep\|_{L^2(\widehat{\Omega}^{\rm s}_H)}^2 
+ \epsilon_h g_h^{\rm u} ({\bs e}_{\bs u}, {\bs e}_{\bs u})  \Big) \notag\\
&\quad\,
+ \int_0^{t_*} \| {\bs e}_{\bs u}\|_{1, \Omega[\bs X_H]}^2 \d t \notag\\
&\le C_1(h^{2k}+H^{2k}), 
\end{align}
with $t_*$ defined in \eqref{ind-1}, and $C_1$ is some constant which is independent of $t_*$ (but may depend on $T$, as explained in the beginning of Section \ref{sect:interp}). The estimates for $\|{\bs e}_{\bs u}(s)\|_{1,\Omega[{\bs X}_H]}$, $\| e_p \|_{0,\Omega[{\bs X}_H]}$ and $\| {\bs e}_{{\bs \sigma} } \|_{H^{-\frac12}(\widehat\Gamma_H)}$ follow from substituting \eqref{Bound1} into \eqref{ep-esigma-estimates}, i.e., 
\begin{align}\label{Bound-ep-esigma}
H^2\max_{t\in[0,t_*]}\|{\bs e}_{\bs u}(t)\|_{1,\Omega[{\bs X}_H]}^2
+H^2\int_0^{t_*} \big( \| e_p \|_{0,\Omega[{\bs X}_H]}^2 
+  \| {\bs e}_{{\bs \sigma} } \|_{H^{-\frac12}(\widehat\Gamma_H)}^2 \big) \d t 
\le C_2(h^{2k}+H^{2k}) ,
\end{align}
\end{itemize}
where $C_2$ is some constant which is independent of $t_*$ (but may depend on $T$, as explained in the beginning of Section \ref{sect:interp}).

In view of the discussions in Section~\ref{section:outline}, this proves $t_*=T$ and therefore the estimates in \eqref{Bound1}--\eqref{Bound-ep-esigma} hold up to time $T$. After incorporating the interpolation error estimates, e.g., the results in \eqref{interpl-error-d}--\eqref{interpl-u}, we obtain the error estimate in Theorem \ref{THM} and complete the proof. 

It remains to prove the consistency estimate in Lemma \ref{Lemma:RhH-2}, which is used in deriving \eqref{aux2422}; see the text above \eqref{aux2422}.

\section{Consistency estimates: Part II}\label{section:consistency-2}

Let $({\bs u}_h^\#,p_h^\#)$ be the Ritz projection of $({\bs u},p)$ onto the finite element space $ {\bs V}_h^{\rm f} \times Q_h^{\rm f}$, determined by the following weak formulation (under the zero boundary condition ${\bs u}_h^\#=0$ on $\Gamma^{\rm f}$): Find $({\bs u}_h^\#,p_h^\#)\in {\bs V}_h^{\rm f} \times Q_h^{\rm f}$ such that
\begin{align}\label{def-Ritz}
& \int_{\Omega^{\rm f}[\bs X_H^*]} \bs D({\bs u}-{\bs u}_h^\#): \bs D(\bv_h)  - \int_{\Omega^{\rm f}[\bs X_H^*]} ( p - p_h^\# ) \grad\cdot\bv_h - \int_{\Omega^{\rm f}[\bs X_H^*]}  q_h \grad\cdot {\bs u}_h^\# \notag\\
& + g_h^{\rm u} ({\bs u}-{\bs u}_h^\#,\bv_h ) 
+ g_h^{\rm p} (p - p_h^\#,q_h ) = 0 ,
\end{align}
for all test functions $({\bs v}_h,q_h)\in {\bs V}_h^{\rm f} \times Q_h^{\rm f}$ (satisfying ${\bs v}_h=0$ on $\Gamma^{\rm f}$).  

\begin{lemma}\label{Lemma:Ritz}
The following error estimates hold for $t\in[0,T]$: 
\begin{subequations}
\begin{align}
\max_{t\in[0,T]} \big( \|{\bs u}-{\bs u}_h^\#\|_{1,\Omega[\bs X_H]} 
+ \|p-p_h^\#\|_{0,\Omega[\bs X_H]} \big)
&\le Ch^k , \label{Ritz-H1}\\
\max_{t\in[0,T]} \big( \|\partial_t{\bs u}-\partial_t{\bs u}_h^\#\|_{1,\Omega[\bs X_H]} 
+ \|\partial_tp-\partial_tp_h^\#\|_{0,\Omega[\bs X_H]} \big)
&\le Ch^{k-1} ,\label{Ritz-dt-H1}\\
\max_{\theta\in[0,1]} \|\partial_t{\bs u}-\partial_t{\bs u}_h^\#\|_{L^2(\Omega^{\rm f}[{\bs X}_H^{\#,\theta}])} 
\lesssim \|{\bs e}_{\bs d}\|_{H^1(\widehat\Omega^{\rm s}_H)}
+ h^k + H^k .  \label{Ritz-L2}
\end{align}
\end{subequations}
\end{lemma}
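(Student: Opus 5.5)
The plan is to treat the Ritz projection \eqref{def-Ritz} as a stabilized Stokes problem on the fixed-in-space (for each $t$) cut domain $\Omega^{\rm f}[\bs X_H^*]$. We would prove \eqref{Ritz-H1} by a Céa/Strang argument, obtain \eqref{Ritz-dt-H1} by differentiating \eqref{def-Ritz} in time, and obtain \eqref{Ritz-L2} by an Aubin--Nitsche duality argument followed by a transfer from $\Omega^{\rm f}[\bs X_H^*]$ to the intermediate domains $\Omega^{\rm f}[\bs X_H^{\#,\theta}]$.

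\textbf{Step 1: estimate \eqref{Ritz-H1}.}
\begin{itemize}
\item Split ${\bs u}-{\bs u}_h^\#=({\bs u}-{\bs u}_h^*)+({\bs u}_h^*-{\bs u}_h^\#)$ and $p-p_h^\#=(p-p_h^*)+(p_h^*-p_h^\#)$. The first parts are bounded by \eqref{interpl-u} on all of $\Omega$, and by the ghost-penalty interpolation bound $g_h^{\rm u}({\bs u}-{\bs u}_h^*,{\bs u}-{\bs u}_h^*)\lesssim h^{2k}$ from the proof of Lemma~\ref{Lemma:RhH}.
\item For the discrete parts, Galerkin orthogonality \eqref{def-Ritz} gives a stabilized saddle-point system whose right-hand side involves only interpolation errors.
\item Testing with $({\bs u}_h^*-{\bs u}_h^\#,\,p_h^*-p_h^\#)$ and using Korn \eqref{Korn-inequlaity} controls the velocity in $\|\cdot\|_{1,\Omega[\bs X_H^*]}$. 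The inf-sup condition \eqref{inf-sup-ep-L2}, stated on $\Omega^{\rm f}[\bs X_H^*]$ (its proof in the appendix is uniform in the cut position), then controls the pressure.
\item To pass from $\Omega^{\rm f}[\bs X_H^*]$ to the norms on $\Omega[\bs X_H]$, use that both domains lie in $\Omega^{\rm f}_{\rm ext}[\bs X_H]$ by \eqref{ind-1-1}. The ghost-penalty extension estimates \eqref{H1ext}--\eqref{0-0-norm} then give equivalence of the discrete norms.
\end{itemize}

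\textbf{Step 2: estimate \eqref{Ritz-dt-H1}.}
\begin{itemize}
\item Differentiate \eqref{def-Ritz} in $t$ with fixed test functions, using the Reynolds formula \eqref{dt-int-Omega-1} for $\Omega^{\rm f}[\bs X_H^*]$, whose boundary moves with velocity $\vel_H^*\circ(\bs X_H^*)^{-1}$.
\item $(\partial_t{\bs u}_h^\#,\partial_t p_h^\#)$ then satisfies the same stabilized system with data $(\partial_t{\bs u},\partial_t p)$, plus extra interface terms of the form
\[
\int_{\Gamma[\bs X_H^*]}\big[\bs D({\bs u}-{\bs u}_h^\#):\bs D(\bv_h)-(p-p_h^\#)\grad\cdot\bv_h-q_h\grad\cdot{\bs u}_h^\#\big]\,\vel_H^*\circ(\bs X_H^*)^{-1}\cdot{\bs n}.
\]
\item Bound these with the interface trace inequality and the inverse trace estimate: $\|\grad\bv_h\|_{L^2(\Gamma)}\lesssim h^{-1/2}\|\bv_h\|_{1,\Omega[\bs X_H]}$ and $\|q_h\|_{L^2(\Gamma)}\lesssim h^{-1/2}\|q_h\|_{0,\Omega[\bs X_H]}$.
\item On the factors involving ${\bs u}-{\bs u}_h^\#$ and $p-p_h^\#$, we aim for $\|\grad({\bs u}-{\bs u}_h^\#)\|_{L^2(\Gamma)}+\|p-p_h^\#\|_{L^2(\Gamma)}\lesssim h^{k-1/2}$. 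This follows from Step~1 together with interpolation and the inverse trace on the discrete part.
\item For $\grad\cdot{\bs u}_h^\#$ on $\Gamma$, use $\grad\cdot{\bs u}_h^\#=\grad\cdot({\bs u}_h^\#-{\bs u})+\grad\cdot{\bs u}$ and \eqref{div-u-small}.
\item The products are then $O(h^{k-1})$, and the same stability argument as in Step~1 yields \eqref{Ritz-dt-H1}.
\end{itemize}

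\textbf{Step 3: estimate \eqref{Ritz-L2}.}
\begin{itemize}
\item First prove $\|\partial_t({\bs u}-{\bs u}_h^\#)\|_{L^2(\Omega^{\rm f}[\bs X_H^*])}\lesssim h^k+H^k$ by duality. Pose the Stokes problem \eqref{def-v-q-0} with right-hand side $\partial_t({\bs u}-{\bs u}_h^\#)$ on $\Omega^{\rm f}(t)$, transported to $\Omega^{\rm f}[\bs X_H^*]$ at cost $O(H^k)$ via $\bs\Phi_H$.
\item Use the $H^2\times H^1$ regularity \eqref{H2-H1-estimate-Stokes} from (C1), and gain one power of $h$ by testing the time-differentiated Ritz identity with the interpolant of the dual solution.
\item The interface terms from Step~2 are handled by integrating the dual solution's trace against them, which has the extra smoothness needed.
\item Then transfer to $\Omega^{\rm f}[\bs X_H^{\#,\theta}]$ using \eqref{dt-int-Omega-2} along a path from $\bs X_H^*$ to $\bs X_H^{\#,\theta}$. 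The domains differ by a layer of width $\lesssim|{\bs e}_{\bs d}|+H^{k+1}$, so the difference of squared $L^2$ norms is an interface integral of $|f|^2$ (with $f=\partial_t({\bs u}-{\bs u}_h^\#)$) times the normal velocity.
\item Bound that integral by $\|f\|_{L^4(\Gamma)}^2\big(\|{\bs e}_{\bs d}\|_{L^2(\widehat\Gamma_H)}+H^{k+1}\big)$ and use the trace inequality \eqref{L2-f-trace}.
\end{itemize}

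\textbf{Main obstacle.} We expect the hardest part to be the duality step for the time derivative on the moving cut domain. The interface terms created by differentiating the domain carry only $h^{k-1/2}$ accuracy in trace norms, and recovering the full order $h^k$ in $L^2$ requires the dual solution's regularity to absorb the $h^{-1/2}$ loss. If this fails, the fallback is to accept the weaker $L^2$ bound $h^{k-1}$ for that piece and check whether the lower order is tolerable where \eqref{Ritz-L2} is used in Lemma~\ref{Lemma:RhH-2}.
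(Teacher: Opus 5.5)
Your Steps 1 and 2 follow the paper. In Step 1 you test with the interpolant-minus-Ritz differences and use Korn and the cut inf--sup condition. In Step 2 you differentiate \eqref{def-Ritz} in time and absorb the interface terms with an $h^{-1}$ loss from inverse traces. Your final transfer to $\Omega^{\rm f}[{\bs X}_H^{\#,\theta}]$ is the paper's transport argument. The paper starts from $\Omega^{\rm f}(t)$ rather than $\Omega^{\rm f}[{\bs X}_H^*]$, which makes no difference. Note that the $L^4(\Gamma)$ bound there needs the $H^{1/2}$ trace of Appendix~\ref{appendix:trace_inequlaity} plus Sobolev embedding, not the $L^2$ trace \eqref{L2-f-trace}.

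\textbf{The gap is in the duality step}, exactly where you anticipate trouble. After differentiation, the interface terms on $\Gamma[{\bs X}_H^*]$ contain $\grad({\bs u}-{\bs u}_h^\#)$ and $p-p_h^\#$. Their traces are only $O(h^{k-1/2})$ in $L^2(\Gamma[{\bs X}_H^*])$. Pairing them with ${\bs D}({\bs v})$, $q$, $\grad\cdot{\bs v}$ of an $H^2\times H^1$ dual solution gives only $h^{k-1/2}\|\partial_t({\bs u}-{\bs u}_h^\#)\|_{L^2}$.

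Smoothness of $({\bs v},q)$ cannot repair this. The tangential gradient of an $H^1$ function is controlled in $H^{-1/2}(\Gamma)$ uniformly in $h$. The normal derivative $\partial_{\bs n}({\bs u}-{\bs u}_h^\#)$ and the trace of $p-p_h^\#$ are not. A discrete function with $O(h^{-1})$ normal slope across one layer of elements shows the $h^{-1/2}$ loss is sharp.

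So ``integrating the dual solution's trace against them'' does not give $h^k$. Your fallback would not prove \eqref{Ritz-L2}. It would also cost half an order in Lemma~\ref{Lemma:RhH-2}, where \eqref{Ritz-L2} enters through \eqref{ep-eq-a} and \eqref{estimate-dtw-L2}.

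\textbf{The missing idea} is to use the boundary conditions of the dual problem \eqref{def-v-q-0} you already chose, not its regularity.
\begin{itemize}
\item Combine the velocity interface term \eqref{dt-Ritz-c} with the $q$-term \eqref{dt-Ritz-k}. Since ${\bs D}({\bs w}):{\bs D}({\bs v})+q\,\grad\cdot{\bs w}={\rm tr}[(\grad{\bs w})^{\top}({\bs D}({\bs v})+q{\bs I})]$, their sum is the interface integral of ${\rm tr}[(\grad({\bs u}-{\bs u}_h^\#))^{\top}({\bs D}({\bs v})+q{\bs I})]\,{\bs V}_{\Gamma[{\bs X}_H^*]}\cdot{\bs n}$. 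What remains is a $q\,\grad\cdot{\bs u}$ term, negligible by \eqref{div-u-small}.
\item Split $\grad({\bs u}-{\bs u}_h^\#)$ into tangential and normal parts.
\item Integrate the tangential part by parts on the smooth surface $\Gamma(t)$, after transferring from $\Gamma[{\bs X}_H^*]$. This gives $\|{\bs u}-{\bs u}_h^\#\|_{H^{1/2}}(\|{\bs v}\|_{H^2}+\|q\|_{H^1})\lesssim h^k\|\partial_t({\bs u}-{\bs u}_h^\#)\|_{L^2}$.
\item Pair the normal part with the dual traction $({\bs D}({\bs v})+q{\bs I}){\bs n}$. It vanishes on $\Gamma(t)$, so its $L^2(\Gamma[{\bs X}_H^*])$ norm is $\lesssim\|{\bs e}_{\bs X}^*\|_{L^\infty}^{1/2}(\|{\bs v}\|_{H^2}+\|q\|_{H^1})$. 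This smallness beats the $h^{-1/2}$ trace loss.
\item Handle the pressure interface term \eqref{dt-Ritz-g} the same way, using $\grad\cdot{\bs v}=0$ in $\Omega^{\rm f}(t)$. Then $\grad\cdot{\bs v}$ is small on the nearby surface $\Gamma[{\bs X}_H^*]$.
\end{itemize}
If you instead transport $({\bs v},q)$ to $\Omega^{\rm f}[{\bs X}_H^*]$ via $\bs\Phi_H$ as you propose, the transported traction and divergence on $\Gamma[{\bs X}_H^*]$ are $O(H^k)$ rather than zero, and the same argument goes through. In either version, this structural use of the traction-free and divergence-free conditions is what must replace the appeal to smoothness.
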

\begin{proof}
The $H^1$ error estimate in \eqref{Ritz-H1} can be obtained by choosing ${\bs v}_h={\bs u}_h^*-{\bs u}_h^\#$ and $q_h=p_h^*-p_h^\#$, using the results in \eqref{H1ext}--\eqref{Korn-inequlaity}, and applying the inf-sup condition to estimate $p_h^*-p_h^\#$. The details are omitted here.

The $H^1$ error estimate in \eqref{Ritz-dt-H1} can be obtained by taking time derivative of \eqref{def-Ritz}, which yields  
\begin{align}\label{dt-Ritz}
& \int_{\Omega^{\rm f}[\bs X_H^*]} \bs D(\partial_t{\bs u}-\partial_t{\bs u}_h^\#): \bs D(\bv_h) 
+ \int_{\Gamma[\bs X_H^*]} \bs D({\bs u}-{\bs u}_h^\#): \bs D(\bv_h) {\bs V}_{\Gamma[\bs X_H^*]}\cdot{\bs n}_{\Gamma[\bs X_H^*]} \notag\\
&\quad\, - \int_{\Omega^{\rm f}[\bs X_H^*]} ( \partial_tp - \partial_tp_h^\# ) (\grad\cdot\bv_h) 
- \int_{\Gamma[\bs X_H^*]} ( p - p_h^\# ) (\grad\cdot\bv_h) {\bs V}_{\Gamma[\bs X_H^*]}\cdot{\bs n}_{\Gamma[\bs X_H^*]} \notag\\
&\quad\, - \int_{\Omega^{\rm f}[\bs X_H^*]}  q_h \grad\cdot \partial_t{\bs u}_h^\# 
- \int_{\Gamma[\bs X_H^*]}  q_h \grad\cdot {\bs u}_h^\# \, {\bs V}_{\Gamma[\bs X_H^*]}\cdot{\bs n}_{\Gamma[\bs X_H^*]} \notag\\
&\quad\, + g_h^{\rm u} (\partial_t{\bs u}-\partial_t{\bs u}_h^\#,\bv_h ) + g_h^{\rm p} (\partial_tp - \partial_tp_h^\#,q_h ) = 0 ,
\end{align}
where ${\bs V}_{\Gamma[\bs X_H^*]}=\partial_t{\bs d}_H^*\circ({\bs X}_H^*)^{-1}$ is the velocity of the surface $\Gamma[\bs X_H^*]$. 
Choosing ${\bs v}_h=\partial_t{\bs u}_h^*-\partial_t{\bs u}_h^\#$ and $q_h=\partial_tp_h^*-\partial_tp_h^\#$, and applying the finite element inverse trace estimate for the integrals on $\Gamma[{\bs X}_H^*]$ (this produces a factor of $h^{-1}$) and \eqref{Ritz-H1}, we obtain  \eqref{Ritz-dt-H1}. The details are omitted. 

The $L^2$ error estimate in \eqref{Ritz-L2} follows via a duality argument. To this end, we define ${\bs v}$ to be the solution of the following continuous problem: 
\begin{align}\label{def-v-q}
\left\{
\begin{aligned}
-\grad\cdot({\bs D}(\bv) + q{\bs I}) &= \partial_t{\bs u}-\partial_t{\bs u}_h^\# &&\mbox{in}\,\,\,\Omega^{\rm f}(t) \\
\grad\cdot{\bs v}&=0 &&\mbox{in}\,\,\,\Omega^{\rm f}(t) \\
({\bs D}(\bv) + q{\bs I}){\bs n} &= \bs 0 &&\mbox{on}\,\,\Gamma(t) \\
{\bs v}&= \bs 0 &&\mbox{on}\,\,\Gamma^{\rm f} \\
\end{aligned}
\right.
\end{align}
which has the $H^2(\Omega^{\rm f}(t))^d\times H^1(\Omega^{\rm f}(t))$ regularity result as assumed in \eqref{def-v-q-0}--\eqref{H2-H1-estimate-Stokes}. We extend $({\bs v},q)$ from $H^2(\Omega^{\rm f}(t))^d\times H^1(\Omega^{\rm f}(t))$ to $H^2(\Omega)^d\times H^1(\Omega)$ boundedly. 
Testing the first equation of \eqref{def-v-q} with $\partial_t{\bs u}-\partial_t{\bs u}_h^\#$, we have
\begin{align}\label{dt-u-Ritz-L22}
\int_{\Omega^{\rm f}(t)} \grad(\partial_t{\bs u}-\partial_t{\bs u}_h^\#) : ({\bs D}(\bv) + q{\bs I}) 
= \|\partial_t{\bs u}-\partial_t{\bs u}_h^\#\|_{L^2(\Omega^{\rm f}(t))}^2 .
\end{align}
By using this relation, we can reformulate \eqref{dt-Ritz} as follows: 
\begin{subequations}\label{dt-Ritz-rf}
\begin{align}
& \int_{\Omega^{\rm f}[\bs X_H^*]} {\bs D}(\partial_t{\bs u}-\partial_t{\bs u}_h^\#): {\bs D}({\bs v}) \label{dt-Ritz-a}\\
&\quad\, 
+ \int_{\Omega^{\rm f}[\bs X_H^*]} {\bs D}(\partial_t{\bs u}-\partial_t{\bs u}_h^\#): {\bs D}({\bs v}_h-{\bs v}) \label{dt-Ritz-b}\\
&\quad\, 
+ \int_{\Gamma[\bs X_H^*]} {\bs D}({\bs u}-{\bs u}_h^\#): {\bs D}({\bs v}) {\bs V}_{\Gamma[\bs X_H^*]}\cdot{\bs n}_{\Gamma[\bs X_H^*]} \label{dt-Ritz-c}\\
&\quad\, 
+ \int_{\Gamma[\bs X_H^*]} {\bs D}({\bs u}-{\bs u}_h^\#): {\bs D}({\bs v}_h-{\bs v}) {\bs V}_{\Gamma[\bs X_H^*]}\cdot{\bs n}_{\Gamma[\bs X_H^*]} \label{dt-Ritz-d}\\
&\quad\, 
- \int_{\Omega^{\rm f}[\bs X_H^*]} ( \partial_tp - \partial_tp_h^\# ) \grad\cdot{\bs v} \label{dt-Ritz-e}\\
&\quad\, 
- \int_{\Omega^{\rm f}[\bs X_H^*]} ( \partial_tp - \partial_tp_h^\# ) \grad\cdot({\bs v}_h-{\bs v}) \label{dt-Ritz-f}\\
&\quad\, 
- \int_{\Gamma[\bs X_H^*]} ( p - p_h^\# ) \grad\cdot{\bs v}\, {\bs V}_{\Gamma[\bs X_H^*]}\cdot{\bs n}_{\Gamma[\bs X_H^*]} \label{dt-Ritz-g}\\ 
&\quad\, 
- \int_{\Gamma[\bs X_H^*]} ( p - p_h^\# ) \grad\cdot({\bs v}_h-{\bs v})\, {\bs V}_{\Gamma[\bs X_H^*]}\cdot{\bs n}_{\Gamma[\bs X_H^*]} \label{dt-Ritz-h}\\
&\quad\, 
- \int_{\Omega^{\rm f}[\bs X_H^*]}  q \grad\cdot \partial_t{\bs u}_h^\# \label{dt-Ritz-i}\\
&\quad\, 
- \int_{\Omega^{\rm f}[\bs X_H^*]}  (q_h-q) \grad\cdot \partial_t{\bs u}_h^\# \label{dt-Ritz-j}\\
&\quad\, 
- \int_{\Gamma[\bs X_H^*]} q \grad\cdot{\bs u}_h^\# \, {\bs V}_{\Gamma[\bs X_H^*]}\cdot{\bs n}_{\Gamma[\bs X_H^*]} \label{dt-Ritz-k}\\
&\quad\, 
- \int_{\Gamma[\bs X_H^*]} (q_h-q) \grad\cdot {\bs u}_h^\# \, {\bs V}_{\Gamma[\bs X_H^*]}\cdot{\bs n}_{\Gamma[\bs X_H^*]} \label{dt-Ritz-l}\\
&\quad\, + g_h^{\rm u} (\partial_t{\bs u}-\partial_t{\bs u}_h^\#,\bv_h ) + g_h^{\rm p} (\partial_tp - \partial_tp_h^\#,q_h ) \label{dt-Ritz-m} \\
&= 0 . \notag
\end{align}
\end{subequations}
Equation \eqref{dt-u-Ritz-L22} implies that 
\begin{subequations}
\begin{align}
&\quad\,\eqref{dt-Ritz-a} + \eqref{dt-Ritz-i} \notag\\
&= \|\partial_t{\bs u}-\partial_t{\bs u}_h^\#\|_{L^2(\Omega^{\rm f}(t))}^2 \label{dt-Ritz-a-i-a}\\
&\quad\, + \int_{\Omega^{\rm f}[\bs X_H^*]} {\bs D}(\partial_t{\bs u}-\partial_t{\bs u}_h^\#): {\bs D}({\bs v}) - \int_{\Omega^{\rm f}(t)} {\bs D}(\partial_t{\bs u}-\partial_t{\bs u}_h^\#): {\bs D}({\bs v}) \label{dt-Ritz-a-i-b}\\
&\quad\, + \int_{\Omega^{\rm f}[\bs X_H^*]}  q \grad\cdot(\partial_t{\bs u}-\partial_t{\bs u}_h^\#) - \int_{\Omega^{\rm f}(t)}  q \grad\cdot(\partial_t{\bs u}-\partial_t{\bs u}_h^\#) \label{dt-Ritz-a-i-c}\\
&\quad\, - \int_{\Omega^{\rm f}[\bs X_H^*]}  q \grad\cdot \partial_t{\bs u} . \label{dt-Ritz-a-i-d}
\end{align}
\end{subequations}
Moreover, \eqref{dt-Ritz-a-i-b} and \eqref{dt-Ritz-a-i-c} can be estimated as follows by using \eqref{dt-int-Omega-2} and the interpolation error estimate $\|{\bs X}_H^*-{\bs X} \circ\widehat{\bs\Phi}_H\|_{L^\infty(\widehat\Omega_H^{\rm s})}\lesssim H^{k+1}$: 
\begin{align*}
&\quad\, |\eqref{dt-Ritz-a-i-b} + \eqref{dt-Ritz-a-i-c}| \\
&\le CH^{k+1}\int_0^1 \|\grad(\partial_t{\bs u}-\partial_t{\bs u}_h^\#)\|_{L^2(\Gamma[\bs X_H^{*,\theta}])}
(\|\grad {\bs v}\|_{L^2(\Gamma[\bs X_H^{*,\theta}])}
+ \|q\|_{L^2(\Gamma[\bs X_H^{*,\theta}])}) \d\theta \\
&\le Ch^{-\frac12}H^{k+1} \|\partial_t{\bs u}-\partial_t{\bs u}_h^\#\|_{H^1(\Omega^{\rm f}_{\rm ext}[\bs X_H])}
(\|{\bs v}\|_{H^2(\Omega)}
+ \|q\|_{H^1(\Omega)}) \\
&\le Ch^{k-\frac32}H^{k+1} \|\partial_t{\bs u}-\partial_t{\bs u}_h^\#\|_{L^2(\Omega^{\rm f}(t))} ,
\end{align*}
where the second-to-last inequality uses the finite element inverse estimate and trace inequality in the extended fluid region, and the last inequality uses the result in \eqref{Ritz-dt-H1}. Moreover, \eqref{dt-Ritz-a-i-d} can be estimated similarly by using that $\grad\cdot \partial_t{\bs u} $ is arbitrarily high-order small on $\Omega^{\rm f}[\bs X_H^*]$. 
This proves that 
\begin{align}
\eqref{dt-Ritz-a} + \eqref{dt-Ritz-i} 
\ge \|\partial_t{\bs u}-\partial_t{\bs u}_h^\#\|_{L^2(\Omega^{\rm f}(t))}^2 - Ch^{k-\frac32}H^{k+1} \|\partial_t{\bs u}-\partial_t{\bs u}_h^\#\|_{L^2(\Omega^{\rm f}(t))} .
\end{align}

By choosing ${\bs v}_h$ and $q_h$ to be the $L^2$ projections of ${\bs v}$ and $q$ onto ${\bs V}_h^{\rm f}\subset H^1_0(\Omega)^d$ and $Q_h^{\rm f}$, respectively, and using \eqref{Ritz-dt-H1}, as well as that $\grad\cdot \partial_t{\bs u} $ is arbitrarily high-order small on $\Omega^{\rm f}[\bs X_H^*]$, we have
\begin{align*}
&| \eqref{dt-Ritz-b} + \eqref{dt-Ritz-f} + \eqref{dt-Ritz-j} | \\
&\le Ch(\|\partial_t{\bs u}-\partial_t{\bs u}_h^\#\|_{H^1(\Omega^{\rm f}_{\rm ext}[\bs X_H])} +\|\partial_tp - \partial_tp_h^\#\|_{L^2(\Omega^{\rm f}_{\rm ext}[\bs X_H])} + h^k) 
( \|{\bs v}\|_{H^2(\Omega)} + \|q\|_{H^1(\Omega)} )\\ 
&\le Ch^k \|\partial_t{\bs u}-\partial_t{\bs u}_h^\#\|_{L^2(\Omega^{\rm f}(t))} ,
\end{align*}
where the last inequality uses the result in \eqref{Ritz-dt-H1}.

Using \eqref{Ritz-H1} and the decompositions 
\begin{align*}
{\bs u}-{\bs u}_h^\# 
&= ({\bs u}-{\bs u}_h^*) + ({\bs u}_h^*-{\bs u}_h^\#) 
\quad\mbox{and}\quad 
p-p_h^\# = (p-p_h^*) + (p_h^*-p_h^\#) ,
\end{align*}
as well as the interpolation and projection error estimates 
(note that ${\bs v}_h$ and $q_h$ are the $L^2$ projections of ${\bs v}$ and $q$)
\begin{align*}
\|{\bs u}-{\bs u}_h^*\|_{W^{1,\infty}(\Omega^{\rm f}_{\rm ext}[{\bs X}_H])}
+ \|p-p_h^*\|_{L^{\infty}(\Omega^{\rm f}_{\rm ext}[{\bs X}_H])} &\lesssim h^k , \\
\|{\bs v}-{\bs v}_h\|_{H^1(\Gamma[{\bs X}_H^*])} +\|q-q_h\|_{L^2(\Gamma[{\bs X}_H^*])} 
&\lesssim h^{\frac12}(\|{\bs v}\|_{H^2(\Omega)}+\|q\|_{H^1(\Omega)}) , 
\end{align*}
we have 
\begin{align*}
&| \eqref{dt-Ritz-d} + \eqref{dt-Ritz-h} + \eqref{dt-Ritz-l} | \notag\\
&\le C(\|{\bs u}-{\bs u}_h^*\|_{H^1(\Gamma[{\bs X}_H^*])}+\|p-p_h^*\|_{L^2(\Gamma[{\bs X}_H^*])})h^{\frac12}(\|{\bs v}\|_{H^2(\Omega)}+\|q\|_{H^1(\Omega)})  \notag\\
&\quad\, 
+ Ch^{-\frac12} (\|{\bs u}_h^*-{\bs u}_h^\#\|_{H^1(\Omega^{\rm f}_{\rm ext}[{\bs X}_H])}+\|p_h^*-p_h^\#\|_{L^2(\Omega^{\rm f}_{\rm ext}[{\bs X}_H])})h^{\frac12}(\|{\bs v}\|_{H^2(\Omega)}+\|q\|_{H^1(\Omega)}) \notag\\
&\quad\, 
+ Ch^k(\|{\bs v}\|_{H^2(\Omega)}+\|q\|_{H^1(\Omega)}) \quad\mbox{(this term comes from $\grad\cdot \partial_t{\bs u} =O(h^k)$ on $\Gamma[\bs X_H^*]$)} \notag\\
&\le Ch^{k} (\|{\bs v}\|_{H^2(\Omega^{\rm f}(t))} +\|q\|_{H^1(\Omega^{\rm f}(t))} ) \notag\\
&\le Ch^{k} \|\partial_t{\bs u}-\partial_t{\bs u}_h^\#\|_{L^2(\Omega^{\rm f}(t))} .
\end{align*}
Since $\grad\cdot{\bs v}=0$ in $\Omega^{\rm f}(t)$, it follows that 
\begin{align*}
|\eqref{dt-Ritz-e}|
&=\Big|\int_{\Omega^{\rm f}[\bs X_H^*]} ( \partial_tp - \partial_tp_h^\# ) \grad\cdot{\bs v} - \int_{\Omega^{\rm f}[{\bs X}]} ( \partial_tp - \partial_tp_h^\# ) \grad\cdot{\bs v} \Big|\\
&= \Big|\int_0^1 \frac{\d}{\d\theta} \int_{\Omega^{\rm f}[\bs X_H^{*,\theta}]} ( \partial_tp - \partial_tp_h^\# ) \grad\cdot{\bs v}\Big| \\
&= \Big|\int_0^1 \int_{\Gamma[\bs X_H^{*,\theta}]} ( \partial_tp - \partial_tp_h^\# ) \grad\cdot{\bs v} \,\, {\bs e}_{\bs X}^{*,\theta}\cdot{\bs n}_{\Gamma[\bs X_H^{*,\theta}]} \Big|\\
&\le C\int_0^1 \|\partial_tp - \partial_tp_h^\# \|_{L^2(\Gamma[\bs X_H^{*,\theta}])} H^{k+1} \|\grad{\bs v}\|_{L^2(\Gamma[\bs X_H^{*,\theta}])} \d\theta \\
&\le C(\|\partial_tp - \partial_tp_h^* \|_{L^2(\Gamma[\bs X_H^{*,\theta}])} +h^{-\frac12} \|\partial_tp_h^* - \partial_tp_h^\# \|_{L^2(\Omega^{\rm f}_{\rm ext}[\bs X_H^{*,\theta}])} ) H^{k+1} \|{\bs v}\|_{H^2(\Omega)} \\
&\le Ch^{k-\frac32} H^{k+1} \|{\bs v}\|_{H^2(\Omega)} \\
&\le CH^k \|\partial_t{\bs u}-\partial_t{\bs u}_h^\#\|_{L^2(\Omega^{\rm f}(t))} .
\end{align*}
Similarly, since $\grad\cdot{\bs v}=0$ on $\Gamma(t)$, we can estimate $\|\grad\cdot{\bs v}\|_{L^2(\Gamma[\bs X_H^*])}$ by $\|{\bs e}_{\bs X}^*\|_{L^\infty(\widehat\Gamma)}^{\frac12}\|{\bs v}\|_{H^2(\Omega)}$. Therefore, 
\begin{align*}
|\eqref{dt-Ritz-g}|
&=\Big|\int_{\Gamma[{\bs X}_H^*]} ( p - p_h^\# ) \grad\cdot{\bs v}\, {\bs V}_{\Gamma[\bs X_H^*]}\cdot{\bs n}_{\Gamma[\bs X_H^*]} 
\Big| \\
&\lesssim \|p - p_h^\# \|_{L^2(\Gamma[\bs X_H^*])} \|\grad\cdot{\bs v}\|_{L^2(\Gamma[\bs X_H^{*}])} \\ 
&\lesssim \|p - p_h^\# \|_{L^2(\Gamma[\bs X_H^*])} \|{\bs e}_{\bs X}^*\|_{L^\infty(\widehat\Gamma)}^{\frac12} \|{\bs v}\|_{H^2(\Omega)}  \\ 
&\lesssim h^{k-\frac12} H^{\frac{k}{2}}\|{\bs v}\|_{H^2(\Omega)} \\
&\lesssim (h^k+H^k) \|{\bs v}\|_{H^2(\Omega)}
\quad\mbox{(when $k\ge 2$)} \notag\\
&\lesssim (h^k + H^k)  \|\partial_t{\bs u}-\partial_t{\bs u}_h^\#\|_{L^2(\Omega^{\rm f}(t))} . 
\end{align*}

Note that 
\begin{subequations}
\begin{align}
\eqref{dt-Ritz-c} + \eqref{dt-Ritz-k} 
& = \int_{\Gamma[\bs X_H^*]} {\rm tr}\big[ (\grad({\bs u}-{\bs u}_h^\#))^{\top} ({\bs D}({\bs v}) +q{\bs I} ) \big]{\bs V}_{\Gamma[\bs X_H^*]}\cdot{\bs n}_{\Gamma[\bs X_H^*]} \notag\\
&\quad\, 
- \int_{\Gamma[\bs X_H^*]} q \grad\cdot{\bs u} \, {\bs V}_{\Gamma[\bs X_H^*]}\cdot{\bs n}_{\Gamma[\bs X_H^*]} \notag\\ 
& = \int_{\Gamma[\bs X_H^*]} {\rm tr}\big[ (\grad_{\Gamma[\bs X_H^*]}({\bs u}-{\bs u}_h^\#))^{\top} ({\bs D}({\bs v}) +q{\bs I} ) \big]{\bs V}_{\Gamma[\bs X_H^*]}\cdot{\bs n}_{\Gamma[\bs X_H^*]} 
\label{dt-Ritz-c-k-a}\\ 
&\quad\, + \int_{\Gamma[\bs X_H^*]}  \partial_{\bs n}({\bs u}-{\bs u}_h^\#) \cdot ({\bs D}({\bs v}) +q{\bs I} ){\bs n} \, {\bs V}_{\Gamma[\bs X_H^*]}\cdot{\bs n}_{\Gamma[\bs X_H^*]} 
\label{dt-Ritz-c-k-b}\\
&\quad\, 
- \int_{\Gamma[\bs X_H^*]} q \grad\cdot{\bs u} \, {\bs V}_{\Gamma[\bs X_H^*]}\cdot{\bs n}_{\Gamma[\bs X_H^*]} \label{dt-Ritz-c-k-c} ,
\end{align} 
\end{subequations}
where the last equality follows from decomposing $\grad({\bs u}-{\bs u}_h^\#)$ into its tangential and normal components, i.e., 
$$
\grad({\bs u}-{\bs u}_h^\#)
= \grad_{\Gamma[\bs X_H^*]}({\bs u}-{\bs u}_h^\#) 
 + [\partial_{\bs n}({\bs u}-{\bs u}_h^\#)]{\bs n} . 
$$
We can estimate \eqref{dt-Ritz-c-k-a} by using integration by parts, after converting it to an integral on $\Gamma(t)$ with an error which can be estimated similarly as \eqref{dt-Ritz-g}, and estimate \eqref{dt-Ritz-c-k-b} by using the boundary condition $({\bs D}({\bs v}) +q{\bs I} ){\bs n}=0 $ on $\Gamma(t)$ similarly as we estimate \eqref{dt-Ritz-g}. Then we obtain 
\begin{align*}
|\eqref{dt-Ritz-c} + \eqref{dt-Ritz-k} |
\lesssim (h^k + H^k)  \|\partial_t{\bs u}-\partial_t{\bs u}_h^\#\|_{L^2(\Omega^{\rm f}(t))}.  
\end{align*} 

The following estimate follows from \eqref{Ritz-dt-H1} and interpolation property of $\bv_h$: 
\begin{align*}
\eqref{dt-Ritz-m}
&= 
g_h^{\rm u} (\partial_t{\bs u}-\partial_t{\bs u}_h^\#, {\bs v}_h ) + g_h^{\rm p} (\partial_tp - \partial_tp_h^\#, q_h ) \\
&= 
g_h^{\rm u} (\partial_t{\bs u}-\partial_t{\bs u}_h^\#, {\bs v}_h - {\bs v} ) + g_h^{\rm p} (\partial_tp - \partial_tp_h^\#, q_h -q ) \\
&\le Ch^k (\|{\bs v}\|_{H^2(\Omega)} +\|q\|_{H^1(\Omega)} ) 
\le Ch^k \|\partial_t{\bs u}-\partial_t{\bs u}_h^\#\|_{L^2(\Omega^{\rm f}(t))}.
\end{align*}

By collecting the estimates of \eqref{dt-Ritz-a}--\eqref{dt-Ritz-m}, we obtain 
$$
\|\partial_t{\bs u}-\partial_t{\bs u}_h^\#\|_{L^2(\Omega^{\rm f}(t))}\le C(h^k + H^k). 
$$

Finally, we consider
\begin{align*}
&\Big| \int_{\Omega^{\rm f}[{\bs X}_H^{\#,\theta}]} |\partial_t{\bs u}-\partial_t{\bs u}_h^\#|^2 
- \int_{\Omega^{\rm f}[{\bs X}]} |\partial_t{\bs u}-\partial_t{\bs u}_h^\#|^2 \Big| \\
&= \Big| \int_0^\theta\frac{\d}{\d\alpha}\int_{\Omega^{\rm f}[{\bs X}_H^{\#,\alpha}]} |\partial_t{\bs u}-\partial_t{\bs u}_h^\#|^2 \d\alpha \Big| \\
&= \Big| \int_0^\theta\int_{\Gamma[{\bs X}_H^{\#,\alpha}]} |\partial_t{\bs u}-\partial_t{\bs u}_h^\#|^2 {\bs e}_{\bs X}^{\#,\alpha} \cdot {\bs n}_{\Gamma[{\bs X}_H^{\#,\alpha}]} \d\alpha \Big| \\
&\lesssim \max_{\alpha\in[0,1]} \|\partial_t{\bs u}-\partial_t{\bs u}_h^\#\|_{L^4(\Gamma[{\bs X}_H^{\#,\alpha}])}^2 \|{\bs e}_{\bs X}^{\#,\alpha}\|_{L^2(\Gamma[{\bs X}_H^{\#,\alpha}])} \\
&\lesssim \|\partial_t{\bs u}-\partial_t{\bs u}_h^\#\|_{H^1(\Omega^{\rm f}_{\rm ext}[{\bs X}_H])}^2 
( \|{\bs e}_{\bs d}\|_{L^2(\widehat\Gamma_H)} + H^{k+1}) \\
&\lesssim h^{2k-2} 
( \|{\bs e}_{\bs d}\|_{H^1(\widehat\Omega^{\rm s}_H)} + H^{k+1}) \\
&\lesssim h^k\|{\bs e}_{\bs d}\|_{H^1(\widehat\Omega^{\rm s}_H)} + h^kH^{k+1} \\
&\lesssim \|{\bs e}_{\bs d}\|_{H^1(\widehat\Omega^{\rm s}_H)}^2 + h^{2k}+H^{2k} .
\end{align*}
This proves \eqref{Ritz-L2} via the triangle inequality. 
\end{proof}

\begin{lemma}\label{Lemma:RhH-2}
Under the assumptions in Theorem \ref{THM} and \eqref{ind-1}, the remainder $R_{h,H}^*(\cdot,\cdot,\cdot,\cdot)$ defined in \eqref{def-RhH} satisfies the following estimate: 
\begin{align}
&\Big|\int_0^s R_{h,H}^*({\bs e}_{\bs u}, e_p, \partial_t{\bs e}_{\bs d}, {\bs e}_{\bs\sigma}) \d t\Big| \notag\\
&\le \varepsilon
\big(\|{\bs e}_{\bs u}(s)\|_{L^2(\Omega^{\rm f}[\bs X_H])}^2 + \|\partial_t{\bs e}_\dep(s)\|_{L^2(\widehat{\Omega}^{\rm s}_H)}^2 + \|\grad{\bs e}_\dep(s)\|_{L^2(\widehat{\Omega}^{\rm s}_H)}^2 \big) \notag\\
&\quad\, 
+ \varepsilon \epsilon_h^2 g^{\rm u}_h({\bs e}_{\bs u}(s),{\bs e}_{\bs u}(s))  +  \varepsilon\int_0^s \| {\bs e}_{\bs u}  \|_{1,\Omega[{\bs X}_H]}^2  \d t \notag\\
&\quad\,
+ C_\varepsilon \int_0^s \Big( \|\partial_t{\bs e}_\dep\|_{L^2(\widehat{\Omega}^{\rm s}_H)}^2 + \|{\bs e}_\dep\|_{H^1(\widehat{\Omega}^{\rm s}_H)}^2 \Big)  \d t 
+ C_\varepsilon(h^{2k} + H^{2k}) .  
\end{align}
\end{lemma}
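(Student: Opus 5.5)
We will go through the definition \eqref{def-RhH} term by term with the test functions $({\bs v}_h,q_h,{\bs w}_H,{\bs\lambda}_H)=({\bs e}_{\bs u},e_p,\partial_t{\bs e}_{\bs d},{\bs e}_{\bs\sigma})$. The interpolant pair $({\bs u}_h^*,p_h^*)$ is not good enough for the pressure-related terms. Lemma \ref{Lemma:RhH} would give $(h^k+H^k)\|e_p\|_{0}$, and we control $e_p$ only with a weight $H^2$. Our first step is therefore to split
\[
{\bs u}_h^*-{\bs u}=({\bs u}_h^*-{\bs u}_h^\#)+({\bs u}_h^\#-{\bs u}),\qquad p_h^*-p=(p_h^*-p_h^\#)+(p_h^\#-p),
\]
and to handle the bulk terms \eqref{RhH-c}, \eqref{RhH-d}, \eqref{RhH-e} and the ghost penalties \eqref{RhH-l} together. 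After the domain change $\Omega^{\rm f}[{\bs X}_H^*]\to\Omega^{\rm f}(t)$ is treated by \eqref{dt-int-Omega-2}, which produces $O(H^{k+1}h^{-1/2})$ boundary contributions, the Ritz parts cancel exactly by \eqref{def-Ritz}. This removes any direct pairing of $e_p$ with an $O(h^k)$ defect. What remains is the discrete pair $({\bs u}_h^*-{\bs u}_h^\#,p_h^*-p_h^\#)$, which is of size $h^k$ by Lemma \ref{Lemma:Ritz}. We would absorb it by noting that the error equation is invariant under shifting the reference solution: redefine ${\bs e}_{\bs u},e_p$ relative to the Ritz projection, or equivalently move these discrete terms to the left side as already-estimated error contributions.

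Next come the time-derivative terms \eqref{RhH-a} and \eqref{RhH-f}. For the solid, \eqref{RhH-f} is paired with $\partial_t{\bs e}_{\bs d}$ and is bounded directly by $H^{k}\|\partial_t{\bs e}_{\bs d}\|_{L^2}$ via interpolation and \eqref{GeomApprox}; Young's inequality then gives the term $C_\varepsilon\int_0^s\|\partial_t{\bs e}_{\bs d}\|^2$. Similarly, the elasticity term \eqref{RhH-g} paired with $\partial_t{\bs e}_{\bs d}$ is not bounded by $\|\partial_t{\bs e}_{\bs d}\|_{H^1}$, which is uncontrolled. We integrate by parts in time: it becomes a boundary term at $t=s$, bounded by $\varepsilon\|\nabla{\bs e}_{\bs d}(s)\|^2+C_\varepsilon H^{2k}$, plus $\int_0^s H^k\|{\bs e}_{\bs d}\|_{H^1}$, using ${\bs e}_{\bs d}(0)=0$. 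For \eqref{RhH-a} we use $\partial_t{\bs u}_h^*-\partial_t{\bs u}$ and the $L^2$ estimate \eqref{Ritz-L2} of $\partial_t({\bs u}-{\bs u}_h^\#)$. Pairing this with ${\bs e}_{\bs u}$ yields $\|{\bs e}_{\bs d}\|_{H^1}+h^k+H^k$ times $\|{\bs e}_{\bs u}\|_{L^2}$. The domain-mismatch part goes through \eqref{dt-int-Omega-2} and the trace inequality \eqref{L2-f-trace}. The convection term \eqref{RhH-b} is handled like Lemma \ref{Lemma:RhH}, with $\|{\bs e}_{\bs u}\|_1$ weighted by $\varepsilon$.

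The interface terms form the delicate part. The pair \eqref{RhH-h}+\eqref{RhH-i} with ${\bs w}_H=\partial_t{\bs e}_{\bs d}$ is handled exactly as in Lemma \ref{Lemma:RhH}. There the ${\bs w}_H$ part only needs the $L^2(\widehat\Gamma_H)$ norm of $\partial_t{\bs e}_{\bs d}$ multiplied by $H^{k}$, and that $H^{k}$ factor is lost; we therefore again integrate by parts in time, moving $\partial_t$ onto the smooth factors so that only $\|{\bs e}_{\bs d}\|_{H^1}$ appears. The pair \eqref{RhH-j}+\eqref{RhH-k} with ${\bs\lambda}_H={\bs e}_{\bs\sigma}$ has the structure of \eqref{dual-eu-ed-3}+\eqref{dual-eu-ed-4}. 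It is bounded by $(h^{k+1}+H^{k+1})\|{\bs e}_{\bs\sigma}\|_{L^2}\lesssim H^{-1/2}(h^{k+1}+H^{k+1})\|{\bs e}_{\bs\sigma}\|_{H^{-1/2}}$. We then use the integrated bound \eqref{ep-esigma-estimates} (weighted by $H^2$) to absorb it, which is fine because $H^{-1}(h^{2k+2}+H^{2k+2})/H^2$-type powers still give $h^{2k}+H^{2k}$.

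We expect the pressure consistency, specifically \eqref{RhH-d}–\eqref{RhH-e} with $q_h=e_p$, to be the main obstacle. Even after the Ritz cancellation, a residual of size $H^{k+1}h^{-1/2}\|e_p\|_{L^2(\Gamma)}$ remains. We plan to bound it with the inverse trace inequality and \eqref{ep-esigma-estimates}, which requires the mesh coupling (C4).
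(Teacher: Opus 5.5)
Your opening move matches the paper's: after the Ritz splitting, the $q_h=e_p$ contributions reduce to $\int_{\Omega^{\rm f}[\bs X_H^*]} e_p\,\grad\cdot(\bs u_h^*-\bs u_h^\#)+g_h^{\rm p}(p_h^*-p_h^\#,e_p)$. The gap is the next step, where you ``absorb'' this discrete remainder.

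\textbf{The main gap.} The divergence pairing is only $O(h^k)\|e_p\|_{L^2}$. But $e_p$ is controlled only with weight $H^2$ in \eqref{ep-esigma-estimates-L4}, so Young's inequality leaves $O(h^{2k-2})$. The energy identity \eqref{aux2422} has no $\|e_p\|_{L^2}^2$ on its left side to absorb it. (The ghost-penalty part alone could be absorbed against $\int_0^s g_h^{\rm p}(e_p,e_p)$ there, at the price of changing the lemma's right-hand side; the divergence part cannot.)

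Your two remedies are not equivalent, and neither closes the argument.
\begin{itemize}
\item Re-basing the errors on $(\bs u_h^\#,p_h^\#)$ means redoing Sections~\ref{section:consistency}--\ref{sect:error} with the Ritz projection. Those sections use pointwise facts such as $|\bs u_h^*-\bs u|\lesssim h^{k+1}$ and $|\grad\cdot\bs u_h^*|=O(h^k)$ on the moving interfaces, and $\|\bs u_h^*\|_{W^{1,\infty}}\lesssim 1$. No such bounds are available for the cut-domain Ritz projection; this is exactly the missing $W^{1,p}$ estimate of Remark~\ref{remark0_THM}.
\item ``Moving the terms to the left side'' has no content unless you say which identity controls them.
\end{itemize}

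The missing idea is to test the error equation \eqref{error-eq} itself with $(\bs v_h,q_h,\bs w_H,\bs\lambda_H)=(\bs u_h^*-\bs u_h^\#,\,p_h^\#-p_h^*,\,\bs 0,\,\bs 0)$. This trades the bad pairing for terms in which $e_p$ no longer meets an $O(h^k)$ factor, except for the domain-mismatch term \eqref{ep-eq-n}. Those terms need ingredients absent from your plan:
\begin{itemize}
\item $\int_0^s\!\int\partial_t\bs e_{\bs u}\cdot(\bs u_h^*-\bs u_h^\#)$ needs integration by parts in time and the duality bound \eqref{Ritz-L2} on $\partial_t(\bs u-\bs u_h^\#)$. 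This is where the Stokes regularity in (C1) enters.
\item The interface term \eqref{ep-eq-i}, $-\int_{\Gamma[\bs X_H]}\bs e_{\bs\sigma}\circ\bs X_H^{-1}\cdot(\bs u_h^*-\bs u_h^\#)$, cannot be bounded through $\|\bs e_{\bs\sigma}\|_{H^{-1/2}}$. It must be converted via the solid equation \eqref{esigma-test}, using a solid test function whose trace is $\mathbf{P}_{\Gamma[\bs X_H]}(\bs u_h^*-\bs u_h^\#)$ and which satisfies \eqref{def-w-wH}, followed by another time integration by parts.
\item \eqref{ep-eq-n} is closed by inverse estimates that use $k\ge 3$.
\end{itemize}

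\textbf{Two secondary problems.}
\begin{itemize}
\item The obstacle you single out at the end is not one. In \eqref{RhH-e} the exact term $\int_{\Omega^{\rm f}(t)}e_p\,\grad\cdot\bs u$ vanishes, and $\grad\cdot\bs u$ is of arbitrarily high order near $\Gamma(t)$ by \eqref{div-u-small}, so the domain change costs nothing. Conversely, your proposed bound $H^{k+1}h^{-1/2}\|e_p\|_{L^2(\Gamma)}\lesssim H^{k+1}h^{-1}\|e_p\|_{L^2}$, combined with the $H^2$-weighted pressure estimate, gives only $O(h^{2k-2})$ under (C4). So it would fail if it were needed.
\item Your power count for \eqref{RhH-j}+\eqref{RhH-k} is wrong. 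From $H^{-1/2}(h^{k+1}+H^{k+1})\|\bs e_{\bs\sigma}\|_{H^{-1/2}}\le\varepsilon H^2\|\bs e_{\bs\sigma}\|_{H^{-1/2}}^2+C_\varepsilon H^{-3}(h^{2k+2}+H^{2k+2})$ you get $H^{2k-1}$, not $H^{2k}$. Avoid the inverse estimate: bound $\|\bs e_{\bs\sigma}\|_{L^2(\widehat\Gamma_H)}\lesssim\|\bs e_{\bs\sigma}\|_{L^4(\widehat\Gamma_H)}$ and use the $H^2$-weighted $L^4$ control in \eqref{ep-esigma-estimates-L4} (from \eqref{esigma-L4}). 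This yields $C_\varepsilon(h^{2k}+H^{2k})$.
\end{itemize}

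Your treatment of \eqref{RhH-f}, \eqref{RhH-g} and \eqref{RhH-h}+\eqref{RhH-i} (time integration by parts) is fine and matches the paper. For \eqref{RhH-a}, plain interpolation as in Lemma~\ref{Lemma:RhH} suffices; \eqref{Ritz-L2} is not needed there.
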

\begin{proof}
Choosing ${\bs v}_h={\bs e}_{\bs u}$, $q_h=e_p$, ${\bs w}_H=\partial_t{\bs e}_{\bs d}$ and ${\bs\lambda}_H={\bs e}_{{\bs \sigma} }$ in \eqref{def-RhH}, we obtain 
\begin{subequations}\label{RhH-eu-ep-ed-esigma}
\begin{align}
R_{h,H}^*({\bs e}_{\bs u}, e_p, \partial_t{\bs e}_{\bs d}, {\bs e}_{{\bs \sigma} }) = & 
\int_{\Omega^{\rm f}[\bs X_H^*]} \partial_t \bs u_h^* \cdot {\bs e}_{\bs u}  
- \int_{\Omega^{\rm f}(t)} \partial_t \bs u \cdot {\bs e}_{\bs u} \label{RhH3-a}\\
&+ \int_{\Omega^{\rm f}[\bs X_H^*]}   (\bu_h^* \cdot \grad) \bu_h^* \cdot {\bs e}_{\bs u} 
- \int_{\Omega^{\rm f}(t)}   (\bu \cdot \grad) \bu \cdot {\bs e}_{\bs u} \label{RhH3-b}\\
& +  \int_{\Omega^{\rm f}[\bs X_H^*]} \bs D(\bu_h^*): \bs D({\bs e}_{\bs u})  -  \int_{\Omega^{\rm f}(t)} \bs D(\bu): \bs D({\bs e}_{\bs u}) \label{RhH3-c}\\ 
&  - \int_{\Omega^{\rm f}[\bs X_H^*]}  p_h^*\, \nabla\cdot  {\bs e}_{\bs u} +  \int_{\Omega^{\rm f}(t)}  p \,\nabla\cdot  {\bs e}_{\bs u} \label{RhH3-d}\\
& + \int_{\Omega^{\rm f}[\bs X_H^*]}  e_p \nabla\cdot  \bu_h^* -   \int_{\Omega^{\rm f}(t)}  e_p\, \nabla\cdot  \bu \label{RhH3-e}\\
& +  \int_{\widehat{\Omega}^{\rm s}_H} \partial_t\vel_H^* \cdot \partial_t{\bs e}_{\bs d}- \int_{\widehat{\Omega}^{\rm s}} \partial_t\vel \cdot \partial_t{\bs e}_{\bs d}\circ \widehat{\bs\Phi}_H^{-1} \label{RhH3-f}\\
& + \int_{\widehat{\Omega}^{\rm s}_H} {\bs D}(\dep_H^*) : {\bs D}(\partial_t{\bs e}_{\bs d}) - \int_{\widehat{\Omega}^{\rm s}} {\bs D}(\dep) : {\bs D}(\partial_t{\bs e}_{\bs d} \circ \widehat{\bs\Phi}_H^{-1}) \notag\\
& + \int_{\widehat{\Omega}^{\rm s}_H} (\grad\cdot \dep_H^* ) (\grad\cdot \partial_t{\bs e}_{\bs d}) - \int_{\widehat{\Omega}^{\rm s}} (\grad\cdot \dep) (\grad\cdot \partial_t{\bs e}_{\bs d} \circ \widehat{\bs\Phi}_H^{-1}) \label{RhH3-g}\\
 & - \int_{\Gamma[\bs X_H^*]} \widehat{\bs \sigma} _H^* \circ ({\bs X}_H^*)^{-1}  \cdot \big( {\bs e}_{\bs u} - \partial_t{\bs e}_{\bs d} \circ ({\bs X}_H^*)^{-1} \big) \label{RhH3-h}\\
& + \int_{\Gamma(t)} \widehat{\bs \sigma} \circ {\bs X}^{-1}  \cdot \big( {\bs e}_{\bs u} - \partial_t{\bs e}_{\bs d} \circ \widehat{\bs\Phi}_H^{-1} \circ  {\bs X}^{-1} \big) \label{RhH3-i}\\
&+ \int_{\Gamma[\bs X_H^*]} {\bs e}_{{\bs \sigma} } \circ ({\bs X}_H^*)^{-1} \cdot \big( \bu_h^* - \vel_H^*  \circ ({\bs X}_H^*)^{-1} \big) \label{RhH3-j}\\
&- \int_{\Gamma(t)} {\bs e}_{{\bs \sigma} } \circ \widehat{\bs\Phi}_H^{-1}  \circ {\bs X}^{-1}  \cdot \big( \bu - \vel  \circ {\bs X}^{-1} \big) \label{RhH3-k}\\
& + \epsilon_h g_h^{\rm u} (\partial_t\bu_h^*,{\bs e}_{\bs u} ) 
+ g_h^{\rm u} (\bu_h^*,{\bs e}_{\bs u} ) + g_h^{\rm p} (p_h^*,e_p ) . \label{RhH3-l}
\end{align}
\end{subequations}

\noindent {\bf Part 1:} Estimate of \eqref{RhH3-a}--\eqref{RhH3-d}: 
In Lemma \ref{Lemma:RhH}, we have shown that 
$$
|\eqref{RhH3-a}+\eqref{RhH3-b}+\eqref{RhH3-c}+\eqref{RhH3-d}|
\le Ch^k \|{\bs e}_{\bs u}\|_{H^1(\Omega^{\rm f}_{\rm ext}[\bs X_H])} .
$$

\noindent {\bf Part 2:} Estimate of \eqref{RhH3-e} + \eqref{RhH3-l}: We use the following notations for $\theta\in[0,1] $ as before:  
\begin{align*}
& {\bs X}_H^{*,\theta} = (1-\theta) {\bs X} + \theta {\bs X}_H^*\circ \widehat{\bs\Phi}_H^{-1}: \widehat\Gamma\rightarrow \mathbb{R}^d \\
& {\bs e}_{\bs X}^{*,\theta} = ({\bs X}_H^*\circ \widehat{\bs\Phi}_H^{-1} - {\bs X})\circ ({\bs X}_H^{*,\theta})^{-1} . 
\end{align*}
Then \eqref{RhH3-e} can be written as, using the relation $\grad\cdot{\bs u}=0$ on $\Omega^{\rm f}(t)$, 
\begin{align*}
\eqref{RhH3-e}  
=& \int_{\Omega^{\rm f}[\bs X_H^*]}  e_p \grad\cdot({\bs u}_h^*-{\bs u}) + \int_{\Omega^{\rm f}[\bs X_H^*]}  e_p \grad\cdot{\bs u} .
\end{align*}
Choosing ${\bs v}_h=0$ and $q_h=e_p$ in \eqref{def-Ritz}, we have 
\begin{align}
-\int_{\Omega^{\rm f}[\bs X_H^*]} e_p \grad\cdot{\bs u}_h^\#
+ g_h^{\rm p} (p - p_h^\#, e_p ) = 0 .
\end{align}
Substituting this relation into \eqref{RhH3-e} yields 
\begin{subequations}\label{RhH3-e-l-abcd}
\begin{align}
&\eqref{RhH3-e} + \eqref{RhH3-l} \notag\\
=& \int_{\Omega^{\rm f}[\bs X_H^*]}  e_p \grad\cdot({\bs u}_h^*-{\bs u}_h^\#) \label{RhH3-e-l-a}\\
& + g_h^{\rm u} (\bu_h^*-{\bs u},{\bs e}_{\bs u} ) \label{RhH3-e-l-c}\\
& + g_h^{\rm p} (p_h^*- p_h^\#,e_p ) \label{RhH3-e-l-d} \\
& + \epsilon_h g_h^{\rm u} (\partial_t\bu_h^*-\partial_t{\bs u},{\bs e}_{\bs u} ) , \label{RhH3-e-l-e}
\end{align}
\end{subequations}
where 
\begin{align}\label{RhH3-e-l-bc}
|\eqref{RhH3-e-l-c}|
\le Ch^k\|{\bs e}_{\bs u}\|_{1,\Omega[\bs X_H]}  
\end{align}
and
\begin{align*}
|\eqref{RhH3-e-l-e}|
\lesssim
\epsilon_h h^k
g_h^{\rm u}({\bs e}_{\bs u},{\bs e}_{\bs u})^{1/2}
\le
\varepsilon\|{\bs e}_{\bs u}\|_{1,\Omega[X_H]}^2
+C_\varepsilon\epsilon_h^2h^{2k}.
\end{align*}
Choosing ${\bs v}_h={\bs u}_h^*-{\bs u}_h^\#$, $q_h=p_h^\#- p_h^*$, ${\bs w}_H=0$ and ${\bs\lambda}_H=0$ in the error equation \eqref{error-eq}, we have 
\begin{subequations}\label{ep-eq-a-m}
\begin{align}
\eqref{RhH3-e-l-a}+\eqref{RhH3-e-l-d} 
= & \int_{\Omega^{\rm f}[\bs X_H]} \partial_t {\bs e}_{\bs u} \cdot ({\bs u}_h^*-{\bs u}_h^\#) \label{ep-eq-a}\\
& + \Big( \int_{\Omega^{\rm f}[\bs X_H]} - \int_{\Omega^{\rm f}[\bs X_H^*]} \Big) \partial_t {\bs u}_h^* \cdot ({\bs u}_h^*-{\bs u}_h^\#) \label{ep-eq-b}\\
& + \int_{\Omega^{\rm f}[\bs X_H]} {\bs e}_{\bs u} \cdot \grad \bu_h \cdot ({\bs u}_h^*-{\bs u}_h^\#) \label{ep-eq-c}\\
& + \int_{\Omega^{\rm f}[\bs X_H]} {\bs u}_h^* \cdot \grad {\bs e}_{\bs u} \cdot ({\bs u}_h^*-{\bs u}_h^\#) \label{ep-eq-d}\\
& + \Big( \int_{\Omega^{\rm f}[\bs X_H]} - \int_{\Omega^{\rm f}[\bs X_H^*]} \Big) {\bs u}_h^* \cdot \grad {\bs u}_h^* \cdot ({\bs u}_h^*-{\bs u}_h^\#)\label{ep-eq-e}\\
&
+ \int_{\Omega^{\rm f}[\bs X_H]} \bs D({\bs e}_{\bs u}): \bs D({\bs u}_h^*-{\bs u}_h^\#) 
+ \int_{\Omega^{\rm f}[\bs X_H]} (p_h^\#- p_h^*) \nabla\cdot  {\bs e}_{\bs u}  \label{ep-eq-f}\\ 
& + \Big(\int_{\Omega^{\rm f}[\bs X_H]} -\int_{\Omega^{\rm f}[\bs X_H^*]} \Big) {\bs \sigma}(\bu_h^*,p_h^*): \grad ({\bs u}_h^*-{\bs u}_h^\#) 
\label{ep-eq-g} \\
& + \Big( \int_{\Omega^{\rm f}[\bs X_H]} - \int_{\Omega^{\rm f}[\bs X_H^*]} \Big) (p_h^\#- p_h^*) \grad\cdot \bu_h^* \label{ep-eq-h}\\
& - \int_{\Gamma[\bs X_H]} {\bs e}_{{\bs\sigma}}\circ {\bs X}_H^{-1} \cdot ({\bs u}_h^*-{\bs u}_h^\#)  \label{ep-eq-i}\\
& - \int_{\Gamma[\bs X_H]} {\widehat{\bs\sigma}}_H^*\circ {\bs X}_H^{-1}\cdot ({\bs u}_h^*-{\bs u}_h^\#)
\label{ep-eq-j}\\
& + \int_{\Gamma[\bs X_H^*]} {\widehat{\bs\sigma}}_H^*\circ({\bs X}_H^*)^{-1} \cdot ({\bs u}_h^*-{\bs u}_h^\#)  
\label{ep-eq-k}\\
& + \epsilon_h g_h^{\rm u} (\partial_t{\bs e}_{\bs u},{\bs u}_h^*-{\bs u}_h^\# )  + g_h^{\rm u} ({\bs e}_{\bs u},{\bs u}_h^*-{\bs u}_h^\# ) \label{ep-eq-l}\\
& + R_{h,H}^*( {\bs u}_h^*-{\bs u}_h^\# , p_h^\#- p_h^*, 0, 0)  \label{ep-eq-m}\\
& - \Big( \int_{\Omega^{\rm f}[\bs X_H]} - \int_{\Omega^{\rm f}[\bs X_H^*]} \Big) e_p \grad\cdot({\bs u}_h^*-{\bs u}_h^\#) . 
\label{ep-eq-n}
\end{align}
\end{subequations}
Let ${\bs V}_{\Gamma[\bs X_H]}=\partial_t{\bs d}_H\circ({\bs X}_H^{-1})$ denote the velocity of the interface $\Gamma[\bs X_H]$. Then 
\begin{align*}
\int_0^s \eqref{ep-eq-a} \d t
&= \int_0^s \int_{\Omega^{\rm f}[\bs X_H]} \partial_t {\bs e}_{\bs u} \cdot ({\bs u}_h^*-{\bs u}_h^\#) \d t \\
&= \int_0^s \frac{\d}{\d t}\int_{\Omega^{\rm f}[\bs X_H]}{\bs e}_{\bs u} \cdot ({\bs u}_h^*-{\bs u}_h^\#) \d t 
- \int_0^s \int_{\Omega^{\rm f}[\bs X_H]}{\bs e}_{\bs u} \cdot \partial_t({\bs u}_h^*-{\bs u}_h^\#) \d t \notag\\
&\quad\, 
- \int_0^s \int_{\Gamma[\bs X_H]}{\bs e}_{\bs u} \cdot ({\bs u}_h^*-{\bs u}_h^\#) {\bs V}_{\Gamma[\bs X_H]}\cdot {\bs n}_{\Gamma[\bs X_H]} \, \d t \\
&\le 
C\|{\bs e}_{\bs u}(s)\|_{L^2(\Omega^{\rm f}[\bs X_H(s)])} \| {\bs u}_h^*(s)-{\bs u}_h^\#(s)\|_{L^2(\Omega^{\rm f}[\bs X_H(s)])} \\
&\quad\, + C\|{\bs e}_{\bs u}(0)\|_{L^2(\Omega^{\rm f}[\bs X_H(0)])} \| {\bs u}_h^*(0)-{\bs u}_h^\#(0)\|_{L^2(\Omega^{\rm f}[\bs X_H(0)])} \\
&\quad\,
+ \int_0^s \|{\bs e}_{\bs u}\|_{H^1(\Omega^{\rm f}[\bs X_H])}
\|\partial_t({\bs u}_h^*-{\bs u}_h^\#)\|_{L^2(\Omega^{\rm f}[\bs X_H])} \d t \\
&\quad\,
+ \int_0^s \|{\bs e}_{\bs u}\|_{H^1(\Omega^{\rm f}[\bs X_H])}
\|{\bs u}_h^*-{\bs u}_h^\#\|_{H^1(\Omega^{\rm f}_{\rm ext}[\bs X_H])} \d t \\
&\le 
Ch^k\|{\bs e}_{\bs u}(s)\|_{L^2(\Omega^{\rm f}[\bs X_H(s)])} 
+ Ch^{2k}
+ \int_0^s \|{\bs e}_{\bs u}\|_{H^1(\Omega^{\rm f}[\bs X_H])}
(\|{\bs e}_{\bs d}\|_{H^1(\widehat\Omega^{\rm s}_H)} + h^k + H^k) \d t ,
\end{align*}
where the last inequality uses the results in Lemma \ref{Lemma:Ritz}. Moreover, the following results can be obtained by using the formula in \eqref{dt-int-Omega-2}: 
\begin{align*}
|\eqref{ep-eq-b}+\eqref{ep-eq-e}|
&\le Ch^k\|{\bs e}_{\bs d}\|_{H^1(\widehat{\Omega}_H^{\rm s})} ,\\
|\eqref{ep-eq-c}+\eqref{ep-eq-d}+\eqref{ep-eq-f}|
&\le Ch^k\|{\bs e}_{\bs u}\|_{H^1(\Omega^{\rm f}_{\rm ext}[\bs X_H])} .
\end{align*}
\eqref{ep-eq-g}+\eqref{ep-eq-j}+\eqref{ep-eq-k} can be estimated together as follows: 
\begin{subequations}\label{ep-eq-gjk}
\begin{align}
  & \eqref{ep-eq-g}+\eqref{ep-eq-j}+\eqref{ep-eq-k} \notag\\
=\,& \Big(\int_{\Omega^{\rm f}[\bs X_H]} -\int_{\Omega^{\rm f}[\bs X_H^*]} \Big) {\bs \sigma}(\bu_h^*-{\bs u},p_h^*-p): \grad ({\bs u}_h^*-{\bs u}_h^\#) \notag\\
  & - \Big(\int_{\Omega^{\rm f}[\bs X_H]} -\int_{\Omega^{\rm f}[\bs X_H^*]} \Big) \grad\cdot{\bs \sigma}({\bs u},p) \cdot ({\bs u}_h^*-{\bs u}_h^\#) \notag\\
  & + \Big(\int_{\Gamma[\bs X_H]} -\int_{\Gamma[\bs X_H^*]} \Big) {\bs \sigma}({\bs u},p){\bs n} \cdot ({\bs u}_h^*-{\bs u}_h^\#) \notag\\
  & - \int_{\Gamma[\bs X_H]} ({\widehat{\bs \sigma} }_H^*\circ {\bs X}_H^{-1}) \cdot ({\bs u}_h^*-{\bs u}_h^\#) + \int_{\Gamma[\bs X_H^*]} ({\widehat{\bs \sigma} }_H^*\circ ({\bs X}_H^*)^{-1}) \cdot ({\bs u}_h^*-{\bs u}_h^\#) \notag\\
=\,& \Big(\int_{\Omega^{\rm f}[\bs X_H]} -\int_{\Omega^{\rm f}[\bs X_H^*]} \Big) {\bs \sigma}(\bu_h^*-{\bs u},p_h^*-p): \grad ({\bs u}_h^*-{\bs u}_h^\#) \label{ep-eq-gjk-a}\\
  & - \Big(\int_{\Omega^{\rm f}[\bs X_H]} -\int_{\Omega^{\rm f}[\bs X_H^*]} \Big) \grad\cdot{\bs \sigma}({\bs u},p) \cdot ({\bs u}_h^*-{\bs u}_h^\#) \label{ep-eq-gjk-b}\\
  & + \int_0^1\int_{\Gamma[\bs X_H^\theta]} [{\bs e}_{\bs d}^\theta\cdot\grad{\bs \sigma}({\bs u},p)]{\bs n}_{\Gamma[{\bs X}_H^\theta]}\cdot ({\bs u}_h^*-{\bs u}_h^\#) \d\theta \label{ep-eq-gjk-c}\\
  & - \int_0^1\int_{\Gamma[\bs X_H^\theta]} {\bs \sigma}({\bs u},p)(\grad_{\Gamma[\bs X_H^\theta]}{\bs e}_{\bs d}^\theta){\bs n}_{\Gamma[{\bs X}_H^\theta]} \cdot ({\bs u}_h^*-{\bs u}_h^\#) \d\theta \label{ep-eq-gjk-d}
  \quad\mbox{(here \eqref{dt-int-Gamma-1b} is used)}\\
  & + \int_0^1\int_{\Gamma[\bs X_H^\theta]} {\bs \sigma}({\bs u},p){\bs n}_{\Gamma[{\bs X}_H^\theta]} \cdot [{\bs e}_{\bs d}^\theta\cdot\grad({\bs u}_h^*-{\bs u}_h^\#)] \d\theta \label{ep-eq-gjk-e}\\
  & + \int_0^1\int_{\Gamma[\bs X_H^\theta]} {\bs \sigma}({\bs u},p){\bs n}_{\Gamma[{\bs X}_H^\theta]} \cdot ({\bs u}_h^*-{\bs u}_h^\#) \grad_{\Gamma[\bs X_H^\theta]}\cdot {\bs e}_{\bs d}^\theta \d\theta \label{ep-eq-gjk-f}\\
  & - \int_0^1\int_{\Gamma[{\bs X}_H^\theta]} {\widehat{\bs\sigma}}_H^{*|\theta} \cdot [{\bs e}_{\bs d}^\theta\cdot\grad({\bs u}_h^*-{\bs u}_h^\#)] \d\theta \label{ep-eq-gjk-g}\\
  & - \int_0^1\int_{\Gamma[{\bs X}_H^\theta]} {\widehat{\bs\sigma}}_H^{*|\theta} \cdot ({\bs u}_h^*-{\bs u}_h^\#) \grad_{\Gamma[\bs X_H^\theta]}\cdot {\bs e}_{\bs d}^\theta \d\theta , \label{ep-eq-gjk-h} 
\end{align}
\end{subequations}
where 
\begin{align*}
|\eqref{ep-eq-gjk-a}| &\le Ch^{2k} \quad\mbox{(interpolation error and Lemma \ref{Lemma:Ritz})},\\ 
|\eqref{ep-eq-gjk-b}+\eqref{ep-eq-gjk-c}| &\le Ch^{k} \|{\bs e}_{\bs d}\|_{H^1(\widehat{\Omega}_H^{\rm s})} ,
\end{align*}

\begin{subequations}
\begin{align}
\eqref{ep-eq-gjk-d} 
&= - \int_{\Gamma[{\bs X}_H^*]} {\bs\sigma}({\bs u},p)( \grad_{\Gamma[{\bs X}_H^*]}{\bs e}_{\bs d}^0 ){\bs n}_{\Gamma[{\bs X}_H^*]} \cdot ({\bs u}_h^*-{\bs u}_h^\#) \notag\\
&\quad\, - \int_0^1 \int_0^\theta \frac{\d}{\d\alpha}\int_{\Gamma[{\bs X}_H^\alpha]} {\bs\sigma}({\bs u},p)( \grad_{\Gamma[{\bs X}_H^\alpha]}{\bs e}_{\bs d}^\alpha ){\bs n}_{\Gamma[{\bs X}_H^\alpha]} \cdot ({\bs u}_h^*-{\bs u}_h^\#) \d\alpha \d\theta \notag\\
&= - \int_0^1\int_{\Gamma(t)} {\bs\sigma}({\bs u},p)( \grad_{\Gamma(t)}{\bs e}_{\bs d}^{*,0} ){\bs n}_{\Gamma(t)} \cdot ({\bs u}_h^*-{\bs u}_h^\#) \d\theta \label{ep-eq-gjk-d-a} \\
&\quad\, - \int_0^1 \frac{\d}{\d\theta}\int_{\Gamma[{\bs X}_H^{*,\theta}]} {\bs\sigma}({\bs u},p)( \grad_{\Gamma[{\bs X}_H^{*,\theta}]}{\bs e}_{\bs d}^{*,\theta}){\bs n}_{\Gamma[{\bs X}_H^{*,\theta}]} \cdot ({\bs u}_h^*-{\bs u}_h^\#) \d\theta \label{ep-eq-gjk-d-b} \\
&\quad\, - \int_0^1 \int_0^\theta \frac{\d}{\d\alpha}\int_{\Gamma[{\bs X}_H^\alpha]} {\bs\sigma}({\bs u},p)( \grad_{\Gamma[{\bs X}_H^\alpha]}{\bs e}_{\bs d}^\alpha ){\bs n}_{\Gamma[{\bs X}_H^\alpha]} \cdot ({\bs u}_h^*-{\bs u}_h^\#) \d\alpha \d\theta , \label{ep-eq-gjk-d-c} 
\end{align}
\end{subequations}
where \eqref{ep-eq-gjk-d-a} can be estimated via integration by parts, i.e., 
\begin{align*}
|\eqref{ep-eq-gjk-d-a}| 
&\lesssim \|{\bs e}_{\bs d}^{*,0}\|_{H^{\frac12}(\Gamma[\bs X])}\|{\bs u}_h^*-{\bs u}_h^\#\|_{H^{\frac12}(\Gamma[\bs X])} \\
&\lesssim \|{\bs e}_{\bs d}\|_{H^1(\widehat\Gamma_H)}\|{\bs u}_h^*-{\bs u}_h^\#\|_{H^1(\Omega^{\rm f}[\bs X])} \\
&\lesssim\|{\bs e}_{\bs d}\|_{H^1(\widehat{\Omega}_H^{\rm s})} 
(\|{\bs e}_{\bs d}\|_{H^1(\widehat{\Omega}_H^{\rm s})} + h^k + H^k) ,
\end{align*}
and \eqref{ep-eq-gjk-d-b} can be estimated by using the formulas in \eqref{dt-int-Gamma-1}--\eqref{dt-int-Gamma-1c}, i.e., 
\begin{align*}
|\eqref{ep-eq-gjk-d-b}| 
&\lesssim \|{\bs e}_{\bs d}^{*,\theta}\|_{H^1(\Gamma[{\bs X}_H^{*,\theta}])}
\|{\bs e}_{\bs d}^{*,\theta}\|_{W^{1,4}(\Gamma[{\bs X}_H^{*,\theta}])}
\|{\bs u}_h^*-{\bs u}_h^\#\|_{L^4(\Gamma[{\bs X}_H^{*,\theta}])} \\
&\quad\, 
+ \|{\bs e}_{\bs d}^{*,\theta}\|_{H^1(\Gamma[{\bs X}_H^{*,\theta}])}
\|{\bs e}_{\bs d}^{*,\theta}\|_{L^4(\Gamma[{\bs X}_H^{*,\theta}])}
\|{\bs u}_h^*-{\bs u}_h^\#\|_{W^{1,4}(\Gamma[{\bs X}_H^{*,\theta}])} \\
&\lesssim H^{-\frac32}\|{\bs e}_{\bs d}\|_{H^1(\widehat\Omega^{\rm s}_H)}^2\|{\bs u}_h^*-{\bs u}_h^\#\|_{H^1(\Omega^{\rm f}_{\rm ext}[{\bs X}_H])} \notag\\
&\quad\,
+ H^{-\frac12}h^{-1}\|{\bs e}_{\bs d}\|_{H^1(\widehat\Omega^{\rm s}_H)}^2\|{\bs u}_h^*-{\bs u}_h^\#\|_{H^1(\Omega^{\rm f}_{\rm ext}[{\bs X}_H])} \quad\mbox{(inverse estimate)} \notag\\
&\lesssim (h^k + Hh^{k-1}) \|{\bs e}_{\bs d}\|_{H^1(\widehat\Omega^{\rm s}_H)} 
\quad\mbox{(Lemma \ref{Lemma:Ritz} and \eqref{ind-1-3} are used)} \notag\\
&\lesssim (h^k + H^k) \|{\bs e}_{\bs d}\|_{H^1(\widehat\Omega^{\rm s}_H)}.
\end{align*}
Note that \eqref{ep-eq-gjk-d-c} can be estimated in the same way as \eqref{ep-eq-gjk-d-b}. This gives an estimate of \eqref{ep-eq-gjk-d}. Since \eqref{ep-eq-gjk-f} and \eqref{ep-eq-gjk-h} can be estimated in the same way as \eqref{ep-eq-gjk-d} (details are omitted), we present their estimates together:  
\begin{align}
|\eqref{ep-eq-gjk-d}+\eqref{ep-eq-gjk-f}+\eqref{ep-eq-gjk-h}| 
&\lesssim h^{2k}+H^{2k} + \|{\bs e}_{\bs d}\|_{H^1(\widehat{\Omega}_H^{\rm s})}^2  . \end{align}

Note that \eqref{ep-eq-gjk-e}$+$\eqref{ep-eq-gjk-g} contains ${\bs\sigma}({\bs u},p){\bs n}|_{\Gamma[{\bs X}_H^\theta]} 
- \widehat{\bs\sigma}_H^{*|\theta} $, which has the following decomposition: 
\begin{align*}
{\bs\sigma}({\bs u},p){\bs n}|_{\Gamma[{\bs X}_H^\theta]} 
- \widehat{\bs\sigma}_H^{*|\theta} 
&= {\bs\sigma}({\bs u},p){\bs n}|_{\Gamma[{\bs X}_H^\theta]} 
- {\bs\sigma}({\bs u},p){\bs n}|_{\Gamma(t)} \circ {\bs X} \circ \widehat{\bs\Phi}_H \circ ({\bs X}_H^\theta)^{-1} \notag\\
&\quad\, 
+ ({\bs\sigma}({\bs u},p){\bs n}|_{\Gamma(t)} \circ {\bs X} \circ \widehat{\bs\Phi}_H
- \widehat{\bs\sigma}_H^*) \circ ({\bs X}_H^\theta)^{-1} . 
\end{align*}
Since $\widehat{\bs\sigma}_H^* = {\bs I}_H^{\rm s}[{\bs\sigma}({\bs u},p){\bs n}|_{\Gamma(t)} \circ {\bs X} \circ \widehat{\bs\Phi}_H]$, it follows that 
\begin{align*}
\| {\bs\sigma}({\bs u},p){\bs n}|_{\Gamma[{\bs X}_H^\theta]} 
- \widehat{\bs\sigma}_H^{*|\theta} \|_{L^\infty(\Gamma[{\bs X}_H^\theta])}
\lesssim \| {\bs e}_{\bs d} \|_{L^\infty(\widehat\Gamma_H)} + H^{k+1} 
\lesssim h \quad\mbox{(under condition $H^{k+1}\lesssim h$)} . 
\end{align*}
This can be used to estimate $\eqref{ep-eq-gjk-e}+\eqref{ep-eq-gjk-g}$ as follows: 
\begin{align}
|\eqref{ep-eq-gjk-e}+\eqref{ep-eq-gjk-g}| 
&\lesssim h \| {\bs e}_{\bs d} \|_{L^2(\widehat\Gamma_H)}
\| \grad ({\bs u}_h^*-{\bs u}_h^\#) \|_{L^2(\Gamma[{\bs X}_H^\theta])} \notag\\
&\lesssim h^{\frac12} \| {\bs e}_{\bs d} \|_{H^1(\widehat\Omega^{\rm s}_H)} \| \grad ({\bs u}_h^*-{\bs u}_h^\#) \|_{L^2(\Omega^{\rm f}_{\rm ext}[{\bs X}_H])} \notag\\
&\lesssim h^{k} \|{\bs e}_{\bs d}\|_{H^1(\widehat{\Omega}_H^{\rm s})} \quad\mbox{(Lemma \ref{Lemma:Ritz} is used)} .
\end{align}

Substituting the estimates of \eqref{ep-eq-gjk-a}--\eqref{ep-eq-gjk-h} into \eqref{ep-eq-gjk}, we obtain 
\begin{align}
|\eqref{ep-eq-g}+\eqref{ep-eq-j}+\eqref{ep-eq-k}|
\lesssim h^{2k}+H^{2k} + \|{\bs e}_{\bs d}\|_{H^1(\widehat{\Omega}_H^{\rm s})}^2 . 
\end{align}

Moreover, by using the formula in \eqref{dt-int-Omega-2}, we have
\begin{align*}
\eqref{ep-eq-h}
&= \Big( \int_{\Omega^{\rm f}[\bs X_H]} - \int_{\Omega^{\rm f}[\bs X_H^*]} \Big) (p_h^\#- p_h^*) \grad\cdot ({\bs u}_h^* - {\bs u}) \\
&\quad\, + \Big( \int_{\Omega^{\rm f}[\bs X_H]} - \int_{\Omega^{\rm f}[\bs X_H^*]} \Big) (p_h^\#- p_h^*) \grad\cdot {\bs u}\\
&= \Big( \int_{\Omega^{\rm f}[\bs X_H]} - \int_{\Omega^{\rm f}[\bs X_H^*]} \Big) (p_h^\#- p_h^*) \grad\cdot ({\bs u}_h^* - {\bs u}) \\
&\quad\, + \int_0^1\int_{\Gamma[\bs X_H^\theta]}  (p_h^\#- p_h^*) (\grad\cdot {\bs u}) ({\bs e}_{\bs d}^\theta \cdot{\bs n}_{\Gamma[\bs X_H^\theta]}) \d\theta \\
&\lesssim h^{2k} + h^k \|{\bs e}_{\bs d}\|_{H^1(\widehat{\Omega}_H^{\rm s})} , 
\end{align*}
where the last inequality uses Lemma \ref{Lemma:Ritz} and the finite element inverse trace estimates, as well as that $\grad\cdot{\bs u} = O(h^m)$ for arbitrarily large $m$ on $\Gamma[{\bs X}_H^\theta]$, as shown in \eqref{div-u-small-2}. 

The first term in \eqref{ep-eq-l} can be estimated as follows (using integration by parts in time):  
\begin{align*}
&\int_0^s \epsilon_h g_h^{\rm u}(\partial_t{\bs e}_{\bs u},{\bs u}_h^*-{\bs u}_h^\#)\d t \\
&= \epsilon_h g_h^{\rm u}({\bs e}_{\bs u}(s),{\bs u}_h^*(s)-{\bs u}_h^\#(s)) - \epsilon_h g_h^{\rm u}({\bs e}_{\bs u}(0),{\bs u}_h^*(0)-{\bs u}_h^\#(0)) 
- \int_0^s \epsilon_h g_h^{\rm u}({\bs e}_{\bs u},\partial_t{\bs u}_h^*-\partial_t{\bs u}_h^\#)\d t \\
&\leq C\epsilon_h g^{\rm u}_h({\bs e}_{\bs u}(s),{\bs e}_{\bs u}(s))^{\frac12} h^k
+ C\epsilon_h h^{2k}
+ \varepsilon \int_0^s g_h^{\rm u}({\bs e}_{\bs u},{\bs e}_{\bs u}) \d t
+ C_\varepsilon \int_0^s \epsilon_h^2 g_h^{\rm u}(\partial_t{\bs u}_h^*-\partial_t{\bs u}_h^\#,\partial_t{\bs u}_h^*-\partial_t{\bs u}_h^\#) \d t \\
&\leq \varepsilon \epsilon_h^2 g^{\rm u}_h({\bs e}_{\bs u}(s),{\bs e}_{\bs u}(s)) 
+ \varepsilon \int_0^s g_h^{\rm u}({\bs e}_{\bs u},{\bs e}_{\bs u}) \d t + C_\varepsilon h^{2k} \quad\mbox{(here Lemma \ref{Lemma:Ritz} and $\epsilon_h\leq h$ are used)} \\
&\leq \varepsilon \epsilon_h^2 g^{\rm u}_h({\bs e}_{\bs u}(s),{\bs e}_{\bs u}(s)) 
+ \varepsilon \int_0^s \|{\bs e}_{\bs u}\|_{1,\Omega[{\bs X}_H]}^2 \d t + C_\varepsilon h^{2k} ,  
\end{align*}
where we have estimated $g_h^{\rm u}({\bs e}_{\bs u},{\bs e}_{\bs u})$ by $\|{\bs e}_{\bs u}\|_{1,\Omega[{\bs X}_H]}^2$. Therefore, using Lemma \ref{Lemma:RhH} and Lemma \ref{Lemma:Ritz}, we have 
\begin{align}
\Big|\int_0^s \Big( \eqref{ep-eq-l}+\eqref{ep-eq-m} \Big) \d t\Big|
&\leq
\varepsilon \epsilon_h^2 g^{\rm u}_h({\bs e}_{\bs u}(s),{\bs e}_{\bs u}(s)) 
+ \varepsilon \int_0^s \|{\bs e}_{\bs u}\|_{1,\Omega[{\bs X}_H]}^2 \d t
+ C_\varepsilon (h^{2k}+H^{2k}) .
\end{align}

Moreover, \eqref{ep-eq-n} can be estimated by using the fundamental theorem of calculus and the transport formula in \eqref{dt-int-Omega-2}, i.e., 
\begin{align}\label{requive-kgeq3}
|\eqref{ep-eq-n}|
&= \Big| \int_0^1 \Big( \int_{\Omega^{\rm f}[\bs X_H]} - \int_{\Omega^{\rm f}[\bs X_H^*]} \Big) e_p \grad\cdot({\bs u}_h^*-{\bs u}_h^\#) \d\theta \Big| \notag\\
&= \Big| \int_0^1 \int_{\Gamma[\bs X_H^\theta]} e_p \grad\cdot({\bs u}_h^*-{\bs u}_h^\#) {\bs e}_{\bs d}^\theta\cdot{\bs n}_{\Gamma[\bs X_H^\theta]} \d\theta \Big| \notag\\
&\lesssim \int_0^1 \|e_p\|_{L^2(\Gamma[\bs X_H^\theta])} \|\grad({\bs u}_h^*-{\bs u}_h^\#) \|_{L^4(\Gamma[\bs X_H^\theta])} \|{\bs e}_{\bs d}^\theta\|_{L^4(\Gamma[\bs X_H^\theta])} \d\theta \notag\\
&\lesssim h^{-\frac34}\|e_p\|_{L^2(\Omega^{\rm f}_{\rm ext}[{\bs X}_H])} \|\grad({\bs u}_h^*-{\bs u}_h^\#) \|_{L^4(\Omega^{\rm f}_{\rm ext}[{\bs X}_H])} \|{\bs e}_{\bs d}\|_{L^4(\widehat\Gamma_H)} \notag\\
&\hspace{85pt}\mbox{(here the inverse trace estimates are used)} \notag\\
&\lesssim h^{-\frac32}\|e_p\|_{L^2(\Omega^{\rm f}_{\rm ext}[{\bs X}_H])} \|\grad({\bs u}_h^*-{\bs u}_h^\#) \|_{L^2(\Omega^{\rm f}_{\rm ext}[{\bs X}_H])} \|{\bs e}_{\bs d}\|_{L^4(\widehat\Gamma_H)} \notag\\
&\hspace{85pt}\mbox{(here the inverse estimates are used)} \notag\\
&\lesssim h^{k-\frac32}\|e_p\|_{L^2(\Omega^{\rm f}_{\rm ext}[{\bs X}_H])} \|{\bs e}_{\bs d}\|_{H^1(\widehat\Omega^{\rm s}_H)} 
\quad\mbox{(\eqref{Ritz-H1} and}\, \|{\bs e}_{\bs d}\|_{L^4(\widehat\Gamma_H)}\lesssim \|{\bs e}_{\bs d}\|_{H^1(\widehat\Omega^{\rm s}_H)}) \notag\\
&\lesssim \varepsilon h^2 \|e_p\|_{L^2(\Omega^{\rm f}_{\rm ext}[{\bs X}_H])}^2 + C_\varepsilon \|{\bs e}_{\bs d}\|_{H^1(\widehat\Omega^{\rm s}_H)}^2 ,
\end{align}
where the last inequality uses $k\ge 3$ and Young's inequality. Then, using the estimate of relation between $\|e_p\|_{L^2(\Omega^{\rm f}_{\rm ext}[{\bs X}_H])}$ and $\| e_p \|_{0,\Omega[{\bs X}_H]}$ as shown in \eqref{0-0-norm-b}, as well as the estimate of $\int_0^s \| e_p \|_{0,\Omega[{\bs X}_H]}^2 \d t$ in \eqref{ep-esigma-estimates-L4} (with $\varepsilon=1$ therein), we obtain the following result under the condition $h\lesssim H$: 
\begin{align} 
\Big| \int_0^s \eqref{ep-eq-n} \d t \Big| 
&
\le 
\varepsilon h^2
\big( \|\grad{\bs e}_{\bs d}(s)\|_{L^2(\widehat\Omega^{\rm s}_H)}^2 + \|\partial_t{\bs e}_{\bs d}(s)\|_{L^2(\widehat\Omega^{\rm s}_H)}^2 \big)
+ \varepsilon \int_0^s \| {\bs e}_{\bs u}  \|_{1,\Omega[{\bs X}_H]}^2  \d t \notag\\
&\quad\, 
+ C_\varepsilon  \int_0^s (\|{\bs e}_\dep\|_{H^1(\widehat{\Omega}^{\rm s}_H)}^2 + \|\partial_t{\bs e}_\dep\|_{L^2(\widehat{\Omega}^{\rm s}_H)}^2) \d t 
+ C_\varepsilon  (h^{2k} + H^{2k} ) . 
\end{align}

To conclude, we have presented estimates for \eqref{ep-eq-a}-\eqref{ep-eq-h} and \eqref{ep-eq-j}-\eqref{ep-eq-n}. The only missing term which has not been estimated is \eqref{ep-eq-i}. Therefore, in view of the expression of \eqref{RhH3-e-l-a}+\eqref{RhH3-e-l-d} in \eqref{ep-eq-a-m}, we have 
\begin{align}
\int_0^s( \eqref{RhH3-e-l-a}+\eqref{RhH3-e-l-d} )\d t  
&\le \varepsilon \big( \|{\bs e}_{\bs u}(s)\|_{L^2(\Omega^{\rm f}[\bs X_H])}^2 
+ h^2
\|\grad{\bs e}_{\bs d}(s)\|_{L^2(\widehat\Omega^{\rm s}_H)}^2 + h^2 \|\partial_t{\bs e}_{\bs d}(s)\|_{L^2(\widehat\Omega^{\rm s}_H)}^2 \big) \notag\\
&\quad\, + \varepsilon \epsilon_h^2 g^{\rm u}_h({\bs e}_{\bs u}(s),{\bs e}_{\bs u}(s)) + \varepsilon \int_0^s \|{\bs e}_{\bs u}\|_{1,\Omega[{\bs X}_H]}^2 \d t \notag\\
&\quad\, + C_\varepsilon  \int_0^s \Big(\|{\bs e}_\dep\|_{H^1(\widehat{\Omega}^{\rm s}_H)}^2 + \|\partial_t{\bs e}_\dep\|_{L^2(\widehat{\Omega}^{\rm s}_H)}^2\Big) \d t \notag\\
&\quad\, 
+ C_\varepsilon (h^{2k} + H^{2k}) + \int_0^s \eqref{ep-eq-i} \d t .
\end{align}
This, together with \eqref{RhH3-e-l-bc} and the expression of \eqref{RhH3-e}$+$\eqref{RhH3-l} in \eqref{RhH3-e-l-abcd}, shows that 
\begin{align}
\int_0^s[ \eqref{RhH3-e} + \eqref{RhH3-l} ] \d t 
&\le \varepsilon \big( \|{\bs e}_{\bs u}(s)\|_{L^2(\Omega^{\rm f}[\bs X_H])}^2 
+ h^2
\|\grad{\bs e}_{\bs d}(s)\|_{L^2(\widehat\Omega^{\rm s}_H)}^2 + h^2 \|\partial_t{\bs e}_{\bs d}(s)\|_{L^2(\widehat\Omega^{\rm s}_H)}^2 \big) \notag\\
&\quad\, + \varepsilon \epsilon_h^2 g^{\rm u}_h({\bs e}_{\bs u}(s),{\bs e}_{\bs u}(s)) + \varepsilon \int_0^s \Big(\|{\bs e}_{\bs u}\|_{1,\Omega[{\bs X}_H]}^2 
+ H^2\|e_p\|_{L^2(\Omega^{\rm f}[\bs X_H^*])}^2 \Big) \d t \notag\\
&\quad\, + C_\varepsilon  \int_0^s \Big(\|{\bs e}_\dep\|_{H^1(\widehat{\Omega}^{\rm s}_H)}^2 + \|\partial_t{\bs e}_\dep\|_{L^2(\widehat{\Omega}^{\rm s}_H)}^2 \Big) \d t \notag\\
&\quad\, 
+ C_\varepsilon (h^{2k} + H^{2k}) + \int_0^s \eqref{ep-eq-i} \d t \notag\\ 
&\le \varepsilon \big( \|{\bs e}_{\bs u}(s)\|_{L^2(\Omega^{\rm f}[\bs X_H])}^2 
+ 
\|\grad{\bs e}_{\bs d}(s)\|_{L^2(\widehat\Omega^{\rm s}_H)}^2 + \|\partial_t{\bs e}_{\bs d}(s)\|_{L^2(\widehat\Omega^{\rm s}_H)}^2 \big) \notag\\
&\quad\, + \varepsilon \epsilon_h^2 g^{\rm u}_h({\bs e}_{\bs u}(s),{\bs e}_{\bs u}(s)) + \varepsilon \int_0^s \|{\bs e}_{\bs u}\|_{1,\Omega[{\bs X}_H]}^2  \d t \notag\\
&\quad\, + C_\varepsilon  \int_0^s (\|{\bs e}_\dep\|_{H^1(\widehat{\Omega}^{\rm s}_H)}^2 + \|\partial_t{\bs e}_\dep\|_{L^2(\widehat{\Omega}^{\rm s}_H)}^2) \d t \notag\\
&\quad\, 
+ C_\varepsilon (h^{2k} + H^{2k}) + \int_0^s \eqref{ep-eq-i} \d t ,
\end{align}
where the last inequality follows from using the estimate of $\|e_p\|_{L^2(\Omega_{\rm ext}^{\rm f}[\bs X_H])}$ in \eqref{ep-esigma-estimates} (with $\varepsilon=1$ therein) under condition $H\lesssim h$.

\noindent {\bf Part 3:} Estimate of \eqref{RhH3-f}. Clearly, using the estimates of the interpolation errors, we have 
\begin{align}
|\eqref{RhH3-f}|
\le CH^k \|\partial_t{\bs e}_{\bs d}\|_{L^2(\widehat{\Omega}_H^{\rm s})}. 
\end{align}

\noindent {\bf Part 4:} Estimate of \eqref{RhH3-g}. Using integration by parts, we have 
\begin{align*}
\int_0^s\eqref{RhH3-g} \d t
&= \int_0^s\int_{\widehat{\Omega}^{\rm s}_H} {\bs D}(\dep_H^*) : {\bs D}(\partial_t{\bs e}_{\bs d}) \d t - \int_0^s\int_{\widehat{\Omega}^{\rm s}} {\bs D}(\dep) : {\bs D}(\partial_t{\bs e}_{\bs d} \circ \widehat{\bs\Phi}_H^{-1}) \d t \notag\\
&\quad\, + \int_0^s\int_{\widehat{\Omega}^{\rm s}_H} (\grad\cdot \dep_H^* ) (\grad\cdot \partial_t{\bs e}_{\bs d}) \d t - \int_0^s\int_{\widehat{\Omega}^{\rm s}} (\grad\cdot \dep) (\grad\cdot \partial_t{\bs e}_{\bs d} \circ \widehat{\bs\Phi}_H^{-1}) \d t \\
&= \Big[\int_{\widehat{\Omega}^{\rm s}_H} {\bs D}(\dep_H^*) : {\bs D}({\bs e}_{\bs d}) - \int_{\widehat{\Omega}^{\rm s}} {\bs D}(\dep) : {\bs D}({\bs e}_{\bs d} \circ \widehat{\bs\Phi}_H^{-1}) \Big]\Big|_{t=0}^{t=s} \\
&\quad\, - \int_0^s\int_{\widehat{\Omega}^{\rm s}_H} {\bs D}(\partial_t\dep_H^*) : {\bs D}({\bs e}_{\bs d}) \d t  + \int_0^s \int_{\widehat{\Omega}^{\rm s}} {\bs D}( \partial_t\dep) : {\bs D}({\bs e}_{\bs d} \circ \widehat{\bs\Phi}_H^{-1}) \d t \\
&\quad\, + \Big[\int_{\widehat{\Omega}^{\rm s}_H} (\grad\cdot \dep_H^*) (\grad \cdot{\bs e}_{\bs d}) - \int_{\widehat{\Omega}^{\rm s}} (\grad\cdot \dep) (\grad\cdot {\bs e}_{\bs d} \circ \widehat{\bs\Phi}_H^{-1}) \Big]\Big|_{t=0}^{t=s} \\
&\quad\, - \int_0^s\int_{\widehat{\Omega}^{\rm s}_H} (\grad\cdot\partial_t\dep_H^*) (\grad\cdot {\bs e}_{\bs d}) \d t  + \int_0^s \int_{\widehat{\Omega}^{\rm s}} (\grad\cdot \partial_t\dep)   \grad\cdot ({\bs e}_{\bs d} \circ \widehat{\bs\Phi}_H^{-1}) \d t 
\end{align*}
and therefore
\begin{align*}
\Big| \int_0^s\eqref{RhH3-g} \d t \Big| 
\le CH^k \| \grad{\bs e}_{\bs d}(s) \|_{L^2(\widehat{\Omega}^{\rm s}_H)} 
+ \int_0^s CH^k \| \grad {\bs e}_{\bs d} \|_{L^2(\widehat{\Omega}^{\rm s}_H)} \d t .
\end{align*}

\noindent {\bf Part 5:} Estimate of \eqref{RhH3-h} + \eqref{RhH3-i}. Similarly to Part 4, using integration by parts in time, we have  
\begin{align*}
\Big|\int_0^s [\eqref{RhH3-h} + \eqref{RhH3-i}]\d t \Big|
\lesssim H^k \|{\bs e}_{\bs d}(s) \|_{H^1(\widehat{\Omega}^{\rm s}_H)} 
+ \int_0^s H^k (\| {\bs e}_{\bs u} \|_{H^1({\Omega}^{\rm f}_{\rm ext}[{\bs X}_H])} 
+ \| {\bs e}_{\bs d} \|_{H^1(\widehat{\Omega}^{\rm s}_H)} ) \d t .
\end{align*}

\noindent {\bf Part 6:} Estimate of \eqref{RhH3-j} + \eqref{RhH3-k}. Clearly, using the estimates of the interpolation error, we have 
\begin{align}
&\quad\, \Big|\int_0^s [\eqref{RhH3-j} + \eqref{RhH3-k}] \d t \Big| \notag\\
&\lesssim (h^{k+1} +H^{k+1}) \int_0^s \|{\bs e}_{\bs\sigma}\|_{L^2(\widehat\Gamma_H)} \d t \notag\\
&\le C_\varepsilon (h^{2k} +H^{2k})
+\varepsilon (h^2+H^2) \int_0^s \|{\bs e}_{\bs\sigma}\|_{L^4(\widehat\Gamma_H)}^2 \d t \notag\\
&\le C_\varepsilon (h^{2k} +H^{2k}) 
+ \varepsilon \big(\|\partial_t{\bs e}_\dep(s)\|_{L^2(\widehat{\Omega}^{\rm s}_H)}^2 + \|{\bs e}_\dep(s)\|_{H^1(\widehat{\Omega}^{\rm s}_H)}^2 \big) \notag\\
&\quad\, + \varepsilon \int_0^s \big(\| {\bs e}_{\bs u} \|_{1,\Omega[{\bs X}_H]}^2 + \|\partial_t{\bs e}_\dep\|_{L^2(\widehat{\Omega}^{\rm s}_H)}^2 + \|{\bs e}_\dep\|_{H^1(\widehat{\Omega}^{\rm s}_H)}^2 \big) \d t ,
\end{align}
where we have used the estimate of $\| {\bs e}_{\bs\sigma} \|_{L^4(\widehat\Gamma_H)}$ in \eqref{ep-esigma-estimates-L4}, by choosing $\varepsilon=O(1)$ therein and using the mesh-size condition $h\simeq H$.

In summary, by combining the estimates in Part 1 -- Part 6, we have proved the following result:
\begin{align}\label{R-estimate-1}
&\Big| \int_0^s R_{h,H}^*({\bs e}_{\bs u}, e_p, \partial_t{\bs e}_{\bs d}, {\bs e}_{{\bs \sigma} }) \d t\Big| \notag\\
&\le \Big| \int_0^s \eqref{ep-eq-i} \d t\Big| 
+ C_\varepsilon(h^{2k} +H^{2k}) \notag\\
&\quad\,
+ \varepsilon \epsilon_h^2 g^{\rm u}_h({\bs e}_{\bs u}(s),{\bs e}_{\bs u}(s)) + \varepsilon \int_0^s \| {\bs e}_{\bs u} \|_{1,\Omega[{\bs X}_H]}^2 \d t 
\notag\\ 
&\quad\, 
+ \varepsilon \big(\|{\bs e}_{\bs u}(s)\|_{L^2(\Omega^{\rm f}[\bs X_H])}^2 + \|\partial_t{\bs e}_\dep(s)\|_{L^2(\widehat{\Omega}^{\rm s}_H)}^2 + \|{\bs e}_\dep(s)\|_{H^1(\widehat{\Omega}^{\rm s}_H)}^2 \big) \notag\\ 
&\quad\, 
+ C_\varepsilon \int_0^s \big( \|\partial_t{\bs e}_\dep\|_{L^2(\widehat{\Omega}^{\rm s}_H)}^2 + \|{\bs e}_\dep\|_{H^1(\widehat{\Omega}^{\rm s}_H)}^2 \big) \d t  .
\end{align}

\noindent {\bf Part 7:} Estimate of $|\int_0^s \eqref{ep-eq-i} \d t|$. It remains to estimate $|\int_0^s \eqref{ep-eq-i} \d t|$. To this end, we denote by ${\bs w}$ an extension of $({\bs u}_h^*-{\bs u}_h^\#)\circ {\bs X}_H \circ \widehat{\bs\Phi}_H^{-1}$ from the smooth interface $\widehat\Gamma\times[0,t_*]$ to the smooth domain $\widehat{\Omega}^{\rm s}\times[0,t_*]$, and define ${\bs w}_H$ as a finite element approximation of ${\bs w}$ in the bulk domain $\widehat{\Omega}^{\rm s}_H\times[0,t_*]$, satisfying ${\bs w}_H = [{\bf P}_{\Gamma[{\bs X}_H]}({\bs w}\circ \widehat{\bs\Phi}_H\circ {\bs X}_H^{-1})] \circ {\bs X}_H $ on $\widehat\Gamma_H$, where ${\bf P}_{\Gamma[{\bs X}_H]}$ denotes the $L^2$ projection onto the finite element space on $\Gamma[{\bs X}_H]$. The construction of such extensions ${\bs w}$ and ${\bs w}_H$ is discussed in Part 8 of the proof, and the constructed ${\bs w}$ and ${\bs w}_H$ will have the following properties: 
\begin{subequations}\label{def-w-wH}
\begin{align}
&{\bs w}=({\bs u}_h^*-{\bs u}_h^\#)\circ {\bs X}_H \circ \widehat{\bs\Phi}_H^{-1}\,\,\,\mbox{on}\,\,\,\widehat\Gamma \times [0,t_*], \label{def-w-w}\\[5pt]
&{\bs w}_H \circ {\bs X}_H^{-1} = {\bf P}_{\Gamma[{\bs X}_H]}({\bs w}\circ \widehat{\bs\Phi}_H\circ {\bs X}_H^{-1}) 
= {\bf P}_{\Gamma[{\bs X}_H]}({\bs u}_h^*-{\bs u}_h^\#) \,\,\,\mbox{on}\,\,\,\Gamma[{\bs X}_H], \label{def-wH-wH}\\[5pt]
& \|\bw\|_{H^1(\widehat{\Omega}^{\rm s})} \lesssim h^k , \label{estimate-w-wH-1} \\
&\|\partial_t\bw\|_{L^2(\widehat{\Omega}^{\rm s})} 
\lesssim \|{\bs e}_{\bs d}\|_{H^1(\widehat\Omega^{\rm s}_H)}
+ h^k + H^k , \label{estimate-dtw-L2}\\[5pt]
& \|\bw_H-\bw\circ\widehat{\bs\Phi}_H\|_{L^2(\widehat{\Omega}_H^{\rm s})}
\lesssim H \|\bw\|_{H^1(\widehat{\Omega}^{\rm s})} 
\lesssim Hh^k , \label{estimate-w-wH-2}\\[5pt]
&\|\bw_H\|_{H^1(\widehat{\Omega}_H^{\rm s})}
\lesssim \|\bw\|_{H^1(\widehat{\Omega}^{\rm s})} 
\lesssim h^k . \label{estimate-w-wH-3}
\end{align}
\end{subequations}
Then, by using relation \eqref{esigma-test} with this ${\bs w}_H$, we have for $s\in(0,t_*]$, 
\begin{subequations}\label{ep-eq-i-e}
\begin{align}
\int_0^s \eqref{ep-eq-i} \d t 
&= -\int_0^s\int_{\Gamma[\bs X_H]} ({\bs e}_{{\bs\sigma}}\circ {\bs X}_H^{-1} ) \cdot {\bf P}_{\Gamma[{\bs X}_H]}({\bs u}_h^*-{\bs u}_h^\#) \d t \notag\\
&= -\int_0^s\int_{\Gamma[\bs X_H]} ({\bs e}_{{\bs\sigma}}\circ {\bs X}_H^{-1} ) \cdot (\bw_H \circ{\bs X}_H^{-1}) \d t \notag\\
&= \int_0^s \int_{\widehat{\Omega}^{\rm s}_H} \partial_{tt}{\bs e}_{\dep} \cdot \bw_H \d t \label{ep-eq-i-e-a}\\
&\quad\, 
+ \int_0^s\int_{\widehat{\Omega}^{\rm s}_H} {\bs D}({\bs e}_\dep):{\bs D}(\bw_H) \d t 
+\int_0^s\int_{\widehat{\Omega}^{\rm s}_H} (\grad\cdot{\bs e}_\dep) (\grad\cdot \bw_H) \d t\label{ep-eq-i-e-b}\\ 
&\quad\, +\Big( \int_0^s \int_{\Gamma[\bs X_H]} {\widehat{\bs\sigma} }_H^{*|1} \cdot \bw_H^1 \d t - \int_0^s \int_{\Gamma[\bs X_H^*]} {\widehat{\bs\sigma} }_H^{*|0} \cdot \bw_H^0 \d t \Big) \label{ep-eq-i-e-c} \\
&\quad\, + \int_0^s R_{h,H}^*(0, 0, \bs w_H, 0) \d t. \label{ep-eq-i-e-d}
\end{align}
\end{subequations}

Using the results in \eqref{estimate-w-wH-1} and \eqref{estimate-w-wH-2}, we have  
\begin{align}\label{ep-eq-i-e-a-b}
|\eqref{ep-eq-i-e-a}| 
&= \Big| \int_0^s \int_{\widehat{\Omega}^{\rm s}_H} \partial_{tt}{\bs e}_{\dep} \cdot (\bw_H - {\bs w}\circ \widehat{\bs\Phi}_H) \d t \Big| 
+ \Big| \int_0^s \int_{\widehat{\Omega}^{\rm s}_H} \partial_{tt}{\bs e}_{\dep} \cdot ({\bs w}\circ \widehat{\bs\Phi}_H) \d t \Big| \notag\\ 
&\lesssim Hh^k \|\partial_{tt}{\bs e}_{\dep} \|_{L^2(0,s;L^2(\widehat\Omega^{\rm s}_H))} 
+ \Big| \int_0^s \int_{\widehat{\Omega}^{\rm s}_H} \partial_{tt}{\bs e}_{\dep} \cdot ({\bs w}\circ \widehat{\bs\Phi}_H) \d t \Big| , 
\end{align}
where the first term on the right-hand side of \eqref{ep-eq-i-e-a-b} can be estimated by using the first result of \eqref{ep-esigma-estimates} under the condition $H\lesssim h$, i.e., 
\begin{align}
Hh^k \|\partial_{tt}{\bs e}_{\dep} \|_{L^2(0,s;L^2(\widehat\Omega^{\rm s}_H))}  
&\lesssim \varepsilon^{-1}(h^{2k} + H^{2k} )
+ \varepsilon H^2 \int_0^s \|\partial_{tt}{\bs e}_{\dep} \|_{L^2(\widehat{\Omega}^{\rm s}_H)}^2 \d t  \notag\\
&\lesssim \varepsilon^{-1}(h^{2k} + H^{2k}) 
+ \varepsilon H^2\big( \|\grad{\bs e}_{\bs d}(s)\|_{L^2(\widehat\Omega^{\rm s}_H)}^2 + \|\partial_t{\bs e}_{\bs d}(s)\|_{L^2(\widehat\Omega^{\rm s}_H)}^2 \big)\notag\\
&\quad\, 
+  \varepsilon\int_0^s \big(\| {\bs e}_{\bs u}  \|_{1,\Omega[{\bs X}_H]}^2 + H\|\partial_t{\bs e}_\dep\|_{L^2(\widehat{\Omega}^{\rm s}_H)}^2 
+ \|{\bs e}_\dep\|_{H^1(\widehat{\Omega}^{\rm s}_H)}^2\big) \d t . \notag
\end{align}
The second term on the right-hand side of \eqref{ep-eq-i-e-a-b} can be estimated via integration by parts in time, i.e.,
\begin{align}
\Big| \int_0^s \int_{\widehat{\Omega}^{\rm s}_H} \partial_{tt}{\bs e}_{\dep} \cdot ({\bs w}\circ \widehat{\bs\Phi}_H) \d t \Big|
&\le \Big|\int_{\widehat{\Omega}^{\rm s}_H} \partial_t {\bs e}_{\dep}(s) \cdot ({\bs w}(s)\circ\widehat{\bs\Phi}_H) - \int_{\widehat{\Omega}^{\rm s}_H} \partial_t {\bs e}_{\dep}(0) \cdot  ({\bs w}(0)\circ\widehat{\bs\Phi}_H) \Big| \notag\\
&\quad\, + \Big| \int_0^s \int_{\widehat{\Omega}^{\rm s}_H} \partial_t {\bs e}_{\dep} \cdot (\partial_t {\bs w}\circ\widehat{\bs\Phi}_H) \d t \Big| \notag\\
&\lesssim h^k (\|\partial_t {\bs e}_{\dep}(s) \|_{L^2(\widehat{\Omega}^{\rm s}_H)} 
+\|\partial_t{\bs e}_{\dep}(0) \|_{L^2(\widehat{\Omega}^{\rm s}_H)} ) \notag\\
&\quad\, + (\|{\bs e}_{\bs d}\|_{L^2(0,s;H^1(\widehat\Omega^{\rm s}_H))} 
+ h^k + H^k)\|\partial_t {\bs e}_{\dep}\|_{L^2(0,s;L^2(\widehat{\Omega}^{\rm s}_H))} , \notag
\end{align}
where we have used \eqref{estimate-w-wH-1} and \eqref{estimate-dtw-L2} in deriving the last inequality. Since $\|\partial_t{\bs e}_{\dep}(0)\|_{L^2(\widehat\Omega_H^{\rm s})}\lesssim h^k$ as shown in \eqref{dtdh0-error}, combining the last two estimates above, we obtain 
\begin{align}
|\eqref{ep-eq-i-e-a}|
&\le \varepsilon \big( \|\grad{\bs e}_{\bs d}(s)\|_{L^2(\widehat\Omega^{\rm s}_H)}^2 + \|\partial_t{\bs e}_{\bs d}(s)\|_{L^2(\widehat\Omega^{\rm s}_H)}^2 \big) \notag\\
&\quad\, 
+  \varepsilon\int_0^s \big(\| {\bs e}_{\bs u}  \|_{1,\Omega[{\bs X}_H]}^2 + \|\partial_t{\bs e}_\dep\|_{L^2(\widehat{\Omega}^{\rm s}_H)}^2 \big) \d t\notag\\
&\quad\,
+ C_\varepsilon \int_0^s \|{\bs e}_\dep\|_{H^1(\widehat{\Omega}^{\rm s}_H)}^2 \d t 
+ C_\varepsilon(h^{2k} + H^{2k}) . 
\end{align}
From \eqref{estimate-w-wH-3} we see that
\begin{align}
|\eqref{ep-eq-i-e-b}|
&\lesssim \int_0^s \|\grad{\bs e}_{\bs d}\|_{L^2(\widehat{\Omega}_H^{\rm s})} \|{\bs w}_H\|_{H^1(\widehat{\Omega}_H^{\rm s})} \d t
\lesssim h^{k} \|\grad{\bs e}_{\bs d}\|_{L^2(0,s;L^2(\widehat{\Omega}_H^{\rm s}))}  .
\end{align}
We can estimate \eqref{ep-eq-i-e-c} by using the formulas in \eqref{dt-int-Gamma-1}--\eqref{dt-int-Gamma-1c}, with the notations in \eqref{def-eXs} and \eqref{notation-sigma}, i.e.,
\begin{align}
\eqref{ep-eq-i-e-c} 
&= \int_0^s \int_0^1 \int_{\Gamma[\bs X_H^\theta]} {\widehat{\bs\sigma} }_H^{*|\theta}\cdot \bw_H^\theta \grad_{\Gamma[\bs X_H^\theta]}\cdot {\bs e}_{\bs d}^\theta \,\d\theta \d t \notag\\
&=  \int_0^s  \int_{\Gamma[\bs X]} ({\widehat{\bs \sigma} }_H^*\circ\widehat{\bs\Phi}_H^{-1}\circ{\bs X}^{-1})\cdot (\bw_H\circ\widehat{\bs\Phi}_H^{-1}\circ{\bs X}^{-1}) \grad_{\Gamma[\bs X]}\cdot ({\bs e}_{\bs d}\circ\widehat{\bs\Phi}_H^{-1}\circ{\bs X}^{-1}) \, \d t \notag\\
&\quad\, + \int_0^s \int_0^1\int_0^1 \int_{\Gamma[\bs X_H^{\alpha\theta}]} {\widehat{\bs \sigma} }_H^{*|\alpha\theta}\cdot \bw_H^{\alpha\theta} \grad_{\Gamma[\bs X_H^{\alpha\theta}]}\cdot {\bs e}_{\bs d}^{\alpha\theta}\, O(|\grad_{\Gamma[\bs X_H^{\alpha\theta}]} {\bs e}_{\bs d}^{\alpha\theta}|)\,\d\alpha\d\theta \d t \notag\\
&\quad\, + \int_0^s \int_0^1 \int_{\Gamma[\bs X_H^{*,\theta}]} {\widehat{\bs \sigma} }_H^{*,\theta}\cdot \bw_H^{*,\theta} \grad_{\Gamma[\bs X_H^{*,\theta}]}\cdot {\bs e}_{\bs d}^{*,\theta}\, O(|\grad_{\Gamma[\bs X_H^{*,\theta}]} {\bs e}_{\bs X}^{*,\theta}|)\,\d\theta \d t \notag\\
&\le \int_0^s C\|\bw_H\circ\widehat{\bs\Phi}_H^{-1}\circ{\bs X}^{-1}\|_{H^{\frac12}(\Gamma[\bs X])} \|{\bs e}_{\bs d}\circ\widehat{\bs\Phi}_H^{-1}\circ{\bs X}^{-1}\|_{H^{\frac12}(\Gamma[\bs X])} \, \d t \notag\\
&\quad\, 
+ \int_0^s \int_0^1 \int_0^1  C\|\bw_H^{\alpha\theta}\|_{L^4(\Gamma[\bs X_H^{\alpha\theta}])} 
\|{\bs e}_{\bs d}^{\alpha\theta}\|_{H^1(\Gamma[\bs X_H^{\alpha\theta}])} 
\|{\bs e}_{\bs d}^{\alpha\theta}\|_{W^{1,4}(\Gamma[\bs X_H^{\alpha\theta}])}\, \d\alpha\d\theta\d t\notag\\
&\quad\, 
+ \int_0^s \int_0^1 C\|\bw_H^{*,\theta}\|_{L^4(\Gamma[\bs X_H^{*,\theta}])} 
\|{\bs e}_{\bs d}^{*,\theta}\|_{H^1(\Gamma[\bs X_H^{*,\theta}])} 
\|{\bs e}_{\bs X}^{*,\theta}\|_{W^{1,4}(\Gamma[\bs X_H^{*,\theta}])}\, \d\theta\d t\notag\\
&\le \int_0^s C\|\bw_H\|_{H^{\frac12}(\widehat\Gamma_H)} \|{\bs e}_{\bs d}\|_{H^{\frac12}(\widehat\Gamma_H)} \d t 
\quad\mbox{(norm equivalence)} \notag\\
&\quad\, 
+ \int_0^sC\|\bw_H\|_{L^4(\widehat\Gamma_H)} 
H^{-\frac12}\|{\bs e}_{\bs d}\|_{H^1(\widehat{\Omega}_H^{\rm s})} 
H^{-\frac{d-1}{4}-\frac12}\|{\bs e}_{\bs d}\|_{H^1(\widehat{\Omega}_H^{\rm s})}\, \d t \notag\\
&\quad\, 
+ \int_0^s C\|\bw_H\|_{L^4(\widehat\Gamma_H)} 
H^{-\frac12}\|{\bs e}_{\bs d}\|_{H^1(\widehat{\Omega}_H^{\rm s})} H^k \, \d t \quad\mbox{(inverse estimates)}\notag\\
&\le \int_0^s C\|\bw_H\|_{H^1(\widehat{\Omega}_H^{\rm s})} \|{\bs e}_{\bs d}\|_{H^1(\widehat{\Omega}_H^{\rm s})} \d t \quad\mbox{(we have used \eqref{ind-1-3} with $2\le d\le 3$)}\notag\\
&\le Ch^{2k} + C\int_0^s \|{\bs e}_{\bs d}\|_{H^1(\widehat{\Omega}_H^{\rm s})}^2 \d t.
\end{align}
From Lemma \ref{Lemma:RhH} we see that 
\begin{align}
|\eqref{ep-eq-i-e-d}|
&\le C(h^k+H^k) \|{\bs w}_H\|_{H^1(\widehat{\Omega}_H^{\rm s})} 
\le C(h^{2k}+H^{2k}) .
\end{align}
Now, substituting the estimates of \eqref{ep-eq-i-e-a}--\eqref{ep-eq-i-e-d} into \eqref{ep-eq-i-e}, we obtain 
\begin{align}
\Big| \int_0^s \eqref{ep-eq-i} \d t \Big| 
&\le \varepsilon \big( \|\grad{\bs e}_{\bs d}(s)\|_{L^2(\widehat\Omega^{\rm s}_H)}^2 + \|\partial_t{\bs e}_{\bs d}(s)\|_{L^2(\widehat\Omega^{\rm s}_H)}^2 \big) \notag\\
&\quad\, 
+  \varepsilon\int_0^s \big(\| {\bs e}_{\bs u}  \|_{1,\Omega[{\bs X}_H]}^2 + \|\partial_t{\bs e}_\dep\|_{L^2(\widehat{\Omega}^{\rm s}_H)}^2 \big) \d t\notag\\
&\quad\,
+ C_\varepsilon \int_0^s \|{\bs e}_\dep\|_{H^1(\widehat{\Omega}^{\rm s}_H)}^2 \d t 
+ C_\varepsilon(h^{2k} + H^{2k}) .  
\end{align}
Then, substituting this result into \eqref{R-estimate-1}, we have 
\begin{align}\label{R-estimate-2}
&\Big| \int_0^s R_{h,H}^*({\bs e}_{\bs u}, e_p, \partial_t{\bs e}_{\bs d}, {\bs e}_{{\bs \sigma} }) \d t\Big| \notag\\
&\le \varepsilon \big(\|{\bs e}_{\bs u}(s)\|_{L^2(\Omega^{\rm f}[\bs X_H])}^2 + \|\partial_t{\bs e}_\dep(s)\|_{L^2(\widehat{\Omega}^{\rm s}_H)}^2 + \|{\bs e}_\dep(s)\|_{H^1(\widehat{\Omega}^{\rm s}_H)}^2 \big) \notag\\
&\quad\, 
+  \varepsilon\int_0^s \| {\bs e}_{\bs u}  \|_{1,\Omega[{\bs X}_H]}^2  \d t \notag\\
&\quad\,
+ C_\varepsilon \int_0^s \big( \|\partial_t{\bs e}_\dep\|_{L^2(\widehat{\Omega}^{\rm s}_H)}^2 + \|{\bs e}_\dep\|_{H^1(\widehat{\Omega}^{\rm s}_H)}^2 \big)  \d t 
+ C_\varepsilon(h^{2k} + H^{2k}) .  
\end{align}
Since
\begin{align*} 
&\|{\bs e}_{\bs d}(s)\|_{L^2(\widehat{\Omega}^{\rm s}_H)}^2 \notag\\
&= \|{\bs e}_{\bs d}(0)\|_{L^2(\widehat{\Omega}^{\rm s}_H)}^2
+ \int_0^s \frac{\d}{\d t}\|{\bs e}_{\bs d}(t)\|_{L^2(\widehat{\Omega}^{\rm s}_H)}^2 \d t \notag\\
&\le \|{\bs e}_{\bs d}(0)\|_{L^2(\widehat{\Omega}^{\rm s}_H)}^2
+ \int_0^s 2\|{\bs e}_{\bs d}(t)\|_{L^2(\widehat{\Omega}^{\rm s}_H)}
\|\partial_t{\bs e}_{\bs d}(t)\|_{L^2(\widehat{\Omega}^{\rm s}_H)} \d t \notag\\
&\le \|{\bs e}_{\bs d}(0)\|_{L^2(\widehat{\Omega}^{\rm s}_H)}^2
+ \int_0^s \left(\frac12\|{\bs e}_{\bs d}(t)\|_{L^2(\widehat{\Omega}^{\rm s}_H)}^2+
2(s-t)^2\|\partial_t{\bs e}_{\bs d}(t)\|_{L^2(\widehat{\Omega}^{\rm s}_H)}^2\right)\d t , 
\end{align*}
it follows that, by applying Gronwall's inequality to absorb the term $\int_0^s\frac12\|{\bs e}_{\bs d}(t)\|_{L^2(\widehat{\Omega}^{\rm s}_H)}^2\d t$ on the left-hand side and using ${\bs e}_{\bs d}(0)=0$, 
\begin{align*}
&\|{\bs e}_{\bs d}(s)\|_{L^2(\widehat{\Omega}^{\rm s}_H)}^2 
\lesssim \int_0^s \|\partial_t{\bs e}_{\bs d}\|_{L^2(\widehat{\Omega}^{\rm s}_H)}^2 \d t .
\end{align*}  
Therefore, the term $\|{\bs e}_\dep(s)\|_{H^1(\widehat{\Omega}^{\rm s}_H)}^2$ in \eqref{R-estimate-2} can be replaced by $\|\grad{\bs e}_\dep(s)\|_{L^2(\widehat{\Omega}^{\rm s}_H)}^2$. This proves the result of Lemma \ref{Lemma:RhH-2} based on the hypothesis of the existence of ${\bs w}$ and ${\bs w}_H$ satisfying the conditions in \eqref{def-w-wH}. This hypothesis is proved in the next part.\bigskip

\noindent {\bf Part 8:} Existence of ${\bs w}$ and ${\bs w}_H$ satisfying the conditions in \eqref{def-w-wH}. We define $\bs x=\bs y+\tau{\bs n}_{\widehat\Gamma}(\bs y)$ for $\bs y\in\widehat\Gamma$ and $\tau\in[0,\delta]$, with ${\bs n}_{\widehat\Gamma}(\bs y)$ denoting the outward unit normal vector of $\widehat\Gamma$ at $\bs y\in\widehat\Gamma$. For a sufficiently small $\delta$, the map from $(\bs y,\tau)\in\widehat\Gamma\times[0,\delta]$ to $\bs x=\bs y+\tau{\bs n}_{\widehat\Gamma}(\bs y)$ is invertible and bounded in $W^{1,\infty}$ norm, and the inverse map is also bounded in the $W^{1,\infty}$ norm. We denote by $\widehat{\Omega}^{\rm f}_\delta$ a one-sided neighborhood of $\widehat\Gamma$ in $\Omega\backslash\widehat{\Omega}^{\rm s}$, defined as  
$$
\widehat{\Omega}^{\rm f}_\delta =\{\bs  x: \bs x= \bs y+\tau{\bs n}_{\widehat\Gamma}(\bs y)\,\,\mbox{for some}\,\, (\bs y,\tau)\in\widehat\Gamma\times[0,\delta] \} 
$$ 
and extend the map ${\bs X}_H(t)\circ\widehat{\bs\Phi}_H^{-1}:\widehat\Gamma\rightarrow\Gamma[{\bs X}_H]$ to $\widehat{\bs X}_H(t):\widehat{\Omega}^{\rm f}_\delta\rightarrow\Omega^{\rm f}[{\bs X}_H]$ as follows: 
\begin{align}\label{def-map-XH}
\widehat{\bs X}_H(\bs x,t)
&\eqd {\bs X}_H(\widehat{\bs\Phi}_H^{-1}(\bs y),t)  + \tau {\bs n}_{\Gamma[{\bs X}(t)]}({\bs X}(\bs y,t)) . 
\end{align}
Let $\Omega^{\rm f}_\delta(t)$ be the image of $\widehat\Omega_\delta^f$ under the map $\widehat{\bs X}_H(t)$. For sufficiently small $\delta$ and $H$, the normal offset in the definition of $\widehat{\bs X}_H$ stays on the fluid side of $\Gamma[{\bs X}_H]$, whence $\Omega^{\rm f}_\delta(t)\subset\Omega^{\rm f}[{\bs X}_H]$. The discussions in Appendix~\ref{appendix:map_fluid_region} show that the map $\widehat{\bs X}_H(t):\widehat{\Omega}^{\rm f}_\delta \rightarrow \Omega^{\rm f}_\delta(t)\subset\Omega^{\rm f}[{\bs X}_H]$ is invertible, and the pulled-back function $({\bs u}_h^*-{\bs u}_h^\#)\circ\widehat{\bs X}_H$ satisfies the following estimates: 
\begin{align}
\|({\bs u}_h^*-{\bs u}_h^\#)\circ\widehat{\bs X}_H\|_{H^1(\widehat{\Omega}^{\rm f}_\delta)}
&\lesssim \|{\bs u}_h^*-{\bs u}_h^\#\|_{H^1(\Omega^{\rm f}[{\bs X}_H])} 
\lesssim h^k, \\
\|\partial_t[({\bs u}_h^*-{\bs u}_h^\#)\circ\widehat{\bs X}_H]\|_{L^2(\widehat{\Omega}^{\rm f}_\delta)}\big) 
&\lesssim \|{\bs u}_h^*-{\bs u}_h^\#\|_{H^1(\Omega^{\rm f}[{\bs X}_H])}
+ \|\partial_t({\bs u}_h^*-{\bs u}_h^\#)\|_{L^2(\Omega^{\rm f}[{\bs X}_H])} \notag\\
&\lesssim \|{\bs e}_{\bs d}\|_{H^1(\widehat\Omega^{\rm s}_H)}
+ h^k + H^k ,
\end{align} 
where the last inequality is obtained by using \eqref{Ritz-H1} and \eqref{Ritz-L2}. 

We define $\widehat{\Omega}^{\rm f,-}_\delta\eqd \{\bs x= \bs y-\tau{\bs n}_{\widehat\Gamma}(\bs y): (\bs y,\tau)\in\widehat\Gamma\times[0,\delta] \}$ to be the reflection of $\widehat{\Omega}^{\rm f}_\delta$ via $\widehat\Gamma$, and define the value of ${\bs w}$ on $\widehat{\Omega}^{\rm f,-}_\delta$ as follows: 
\begin{align}
{\bs w}(\bs x,t) 
\eqd ({\bs u}_h^*-{\bs u}_h^\#)\circ\widehat{\bs X}_H (\bs x',t) , 
\quad
\bs x= \bs y-\tau{\bs n}_{\widehat\Gamma}(\bs y),\quad \bs x'= \bs y+\tau{\bs n}_{\widehat\Gamma}(\bs y).
\end{align}
This definition (via reflection) guarantees that \eqref{def-w-w} and the following estimate are satisfied: 
\begin{align}
\|{\bs w}\|_{H^1(\widehat{\Omega}^{\rm f,-}_\delta)}
&\lesssim \|({\bs u}_h^*-{\bs u}_h^\#)\circ\widehat{\bs X}_H\|_{H^1(\widehat{\Omega}^{\rm f}_\delta)} 
\lesssim h^k , \\
\|\partial_t{\bs w}\|_{L^2(\widehat{\Omega}^{\rm f,-}_\delta)} 
&\lesssim \|\partial_t[({\bs u}_h^*-{\bs u}_h^\#)\circ\widehat{\bs X}_H]\|_{L^2(\widehat{\Omega}^{\rm f}_\delta)}
\lesssim \|{\bs e}_{\bs d}\|_{H^1(\widehat\Omega^{\rm s}_H)}
+ h^k + H^k .
\label{dtwL2Omegadelta}
\end{align}

Then we can extend ${\bs w}$ to all of $\widehat\Omega^{\rm s}$ by multiplying ${\bs w}$ by a smooth cut-off function $\chi$ that satisfies $\chi=1$ on $\widehat\Gamma$ and $\chi=0$ in $\widehat\Omega^{\rm s}\backslash\widehat{\Omega}^{\rm f,-}_\delta$. This proves \eqref{def-w-w}, \eqref{estimate-w-wH-1} and \eqref{estimate-dtw-L2}. 

Next, we define ${\bs w}_H$ on $\widehat\Gamma_H$ as  
${\bs w}_H = [{\bf P}_{\Gamma[{\bs X}_H]}({\bs w}\circ \widehat{\bs\Phi}_H\circ {\bs X}_H^{-1})] \circ {\bs X}_H $. This definition and the equivalence of norms for functions transported between $\widehat\Gamma_H$ and $\Gamma[{\bs X}_H]$ guarantee that 
$$
\| {\bs w}_H \|_{L^2(\widehat\Gamma_H)} 
\le C\| {\bs w} \|_{L^2(\widehat\Gamma)} . 
$$
Then we extend ${\bs w}_H$ from $\widehat\Gamma_H$ to $\widehat{\Omega}^{\rm s}_H$ as follows: For every node $x$ in the interior of $\widehat{\Omega}^{\rm s}_H$, we define ${\bs w}_H(x,t)$ to be the value of the local $L^2$ projection of ${\bs w}\circ\widehat{\bs\Phi}_H$ onto a patch of size $O(H)$ sharing $x$. This definition guarantees the properties in \eqref{estimate-w-wH-2} and \eqref{estimate-w-wH-3} (see Definition \ref{L2-L2-proj2} and Lemma \ref{Lemma:PH2} in Appendix \ref{appendix:projection}). This proves the existence of ${\bs w}$ and ${\bs w}_H$ satisfying the conditions in \eqref{def-w-wH}.  
\end{proof}

\section{Numerical tests}
\label{sect:num}

We test the convergence of the proposed CutFEM by using the analytical solution of Problem~II in \cite{wang-pironneau-26}, which describes a radially oscillating elastic annulus coupled to an incompressible Newtonian fluid. 
We set $\widehat\Omega^{\rm s} =\{\widehat{\bm x}\in{\mathbb R}^2:\ R_0<|\widehat{\bm x}|<R_1\}$
with $R_0=3$ and $R_1=4$,
and let the fixed computational fluid background domain be $\Omega=(-5,5)^2$. The inner boundary $\widehat\Gamma_0\eqd \{\widehat{\bm x}:|\widehat{\bm x}|=R_0\}$
is clamped, while the outer solid boundary
$
  \widehat\Gamma=\{\widehat{\bm x}:|\widehat{\bm x}|=R_1\}
$
denotes the reference configuration of  the fluid-solid interface. 
It should be noted that the present setting slightly differs from the one of Section~\ref{sect:setting}, as the inner solid boundary $\widehat\Gamma_0$ is clamped. The discrete formulation is modified accordingly by imposing the homogeneous Dirichlet condition on $\widehat\Gamma_0$.
\begin{figure}[h!]
\centering
\includegraphics[width=0.35\textwidth]{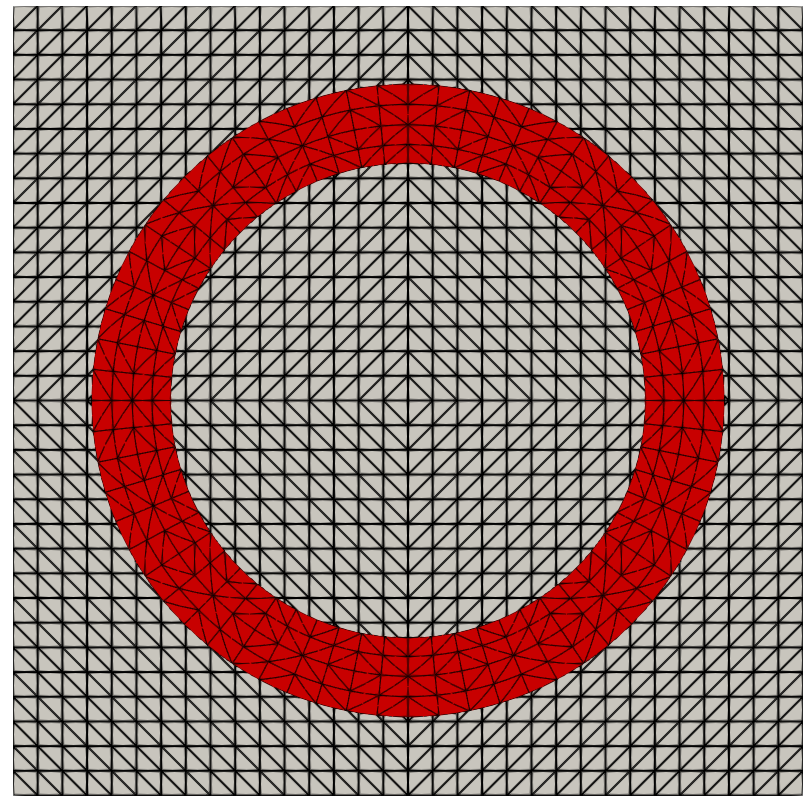}
\caption{
Overlapping meshes of  the fluid and solid computational domains.
}
\label{fig:mesh}
\end{figure}
The current configuration of the interface is the circle
$$
  \Gamma(t)=\{\bm x\in{\mathbb R}^2:\ |\bm x|=r_\Gamma(t)\},
  \qquad
  r_\Gamma(t)\eqd R_1+\bar d\sin(\omega t),
$$
and the fluid occupies the exterior domain
$$
  \Omega^{\rm f}(t)=\{\bm x\in\Omega:\ |\bm x|>r_\Gamma(t)\}.
$$
The interpolated exact velocity is imposed strongly on the outer boundary of the box $\partial \Omega$, while the discrete initial data are defined as in Section~\ref{section:FEM}. 
The expressions of the exact solutions can be found in \cite{wang-pironneau-26}, where we choose the following parameters:
$$
\begin{gathered}
  \bar d=0.05,
  \quad
  \omega=2\pi,\quad T=0.1,\\
  \rho^{\rm f}=1,
  \qquad
  \mu^{\rm f}=2,
  \qquad
  \rho^{\rm s}=2,
  \qquad
  \mu^{\rm s}=10,
  \qquad
  \lambda^{\rm s}=100 .   
\end{gathered}
$$

The overlapping computational meshes are  shown in Figure~\ref{fig:mesh}. The background fluid mesh and the solid mesh are refined simultaneously, with $H\simeq h$. 


Although Theorem~\ref{THM} is established only for $k\geq3$, numerical  experiments  are conducted with $k=2$ and $k=3$  to investigate whether the technical restriction discussed in Remark~\ref{remark0_THM} manifests in practice. We set $\epsilon_h=h^2$ and employ a two-step backward differentiation formula for the time discretization, with time step size $\Delta t=h^2$ for $k=2$ and $\Delta t=h^{2.5}$ for $k=3$. The resulting temporal discretization errors are expected to be $O(h^4)$ and $O(h^5)$, respectively, so that the observed convergence rates are dominated by the spatial discretization error. 


\begin{figure}[h!]
\centering
\vspace{15pt}
\begin{minipage}[t]{0.45\textwidth}
\centering
\includegraphics[width=\textwidth]{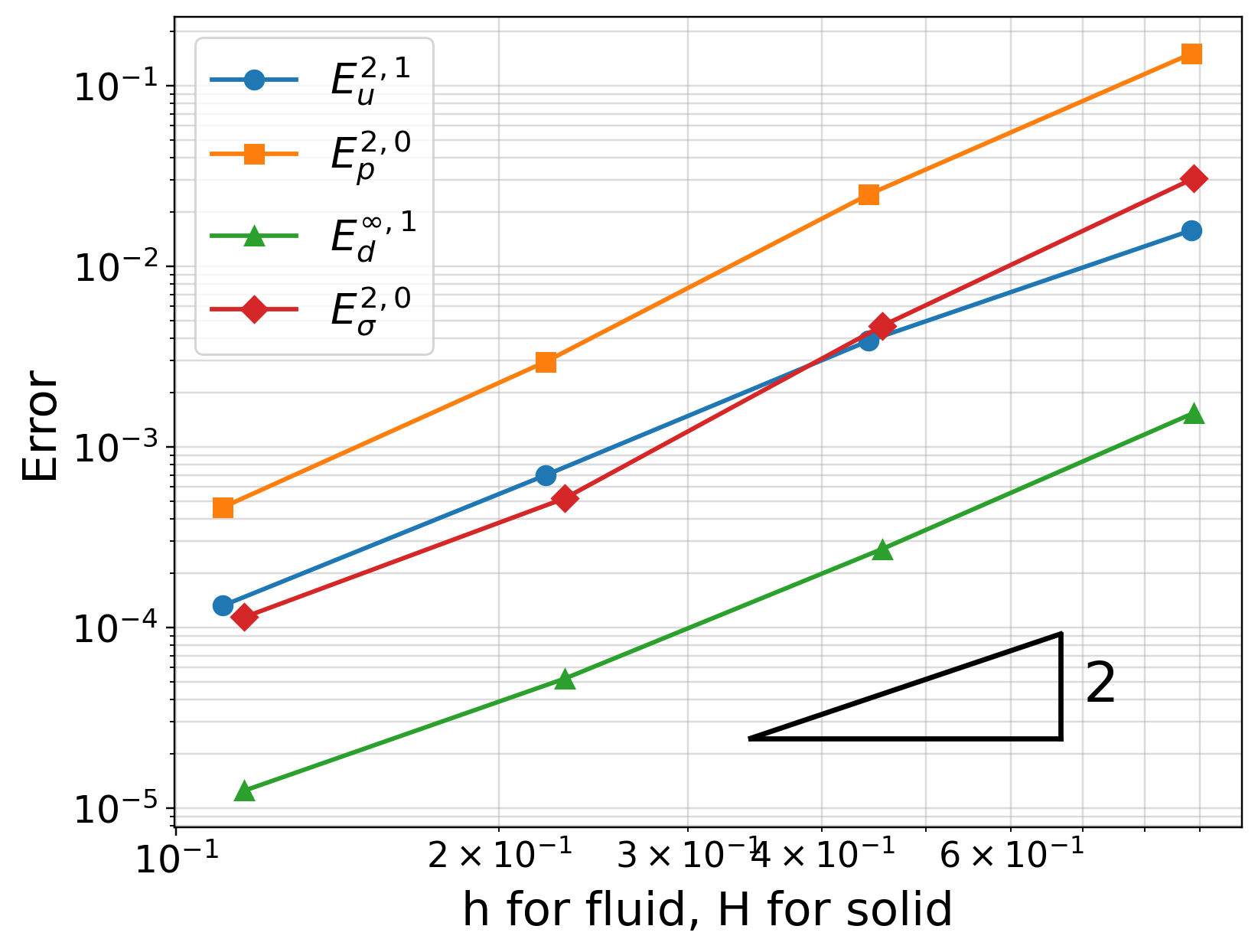}
\end{minipage}
\hfill
\begin{minipage}[t]{0.45\textwidth}
\centering
\includegraphics[width=\textwidth]{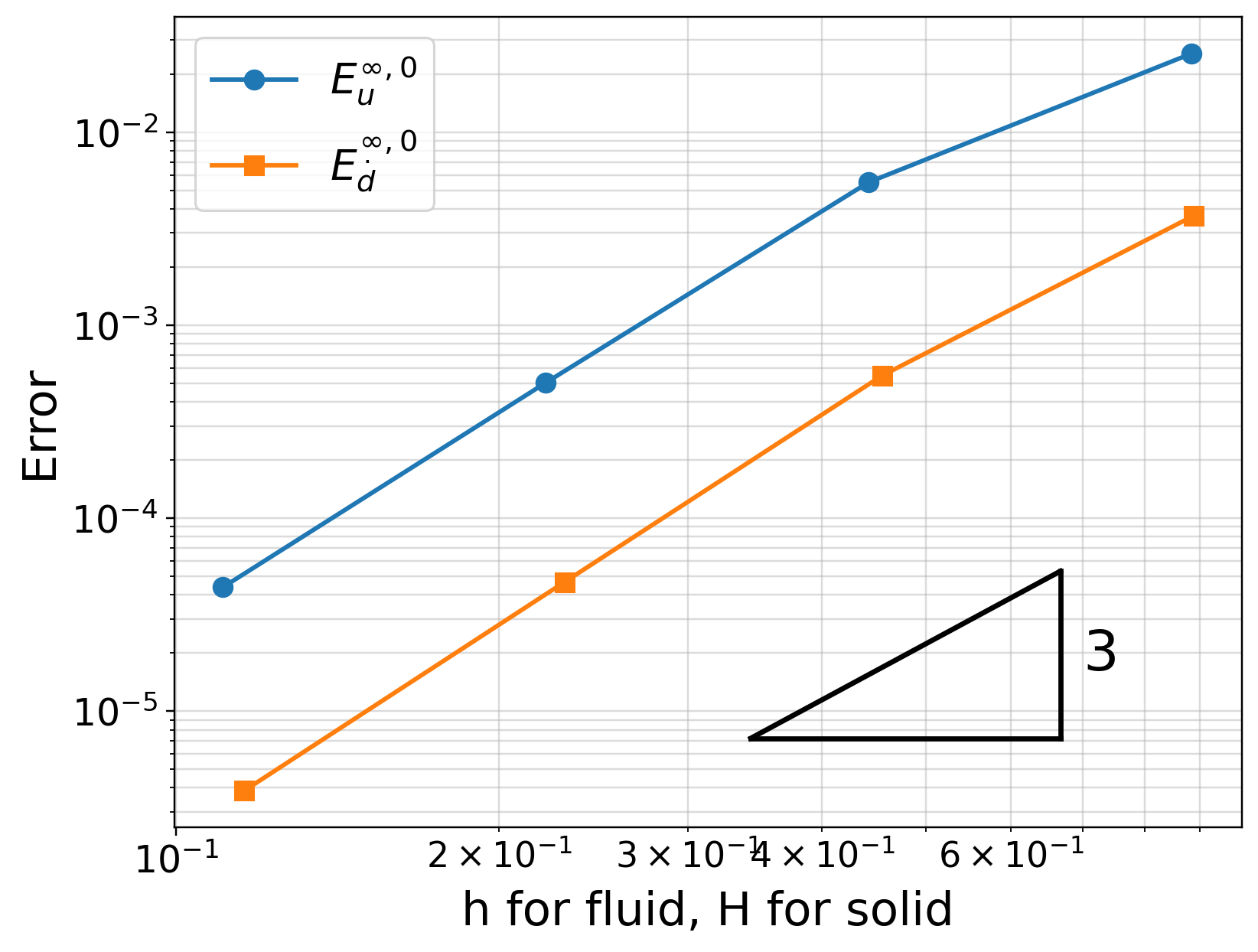}
\end{minipage}
\caption{
Spatial convergence with finite elements of degree $k=2$.  Left: errors in the natural norms of Theorem~\ref{THM}, showing 2nd-order convergence.  Right:
$L^\infty(0,T;L^2)$-type errors, showing an observed 3rd-order convergence.
}
\label{fig:convergence_k=2}
\end{figure}
\begin{figure}[h!]
\centering
\vspace{15pt}
\begin{minipage}[t]{0.45\textwidth}
\centering
\includegraphics[width=\textwidth]{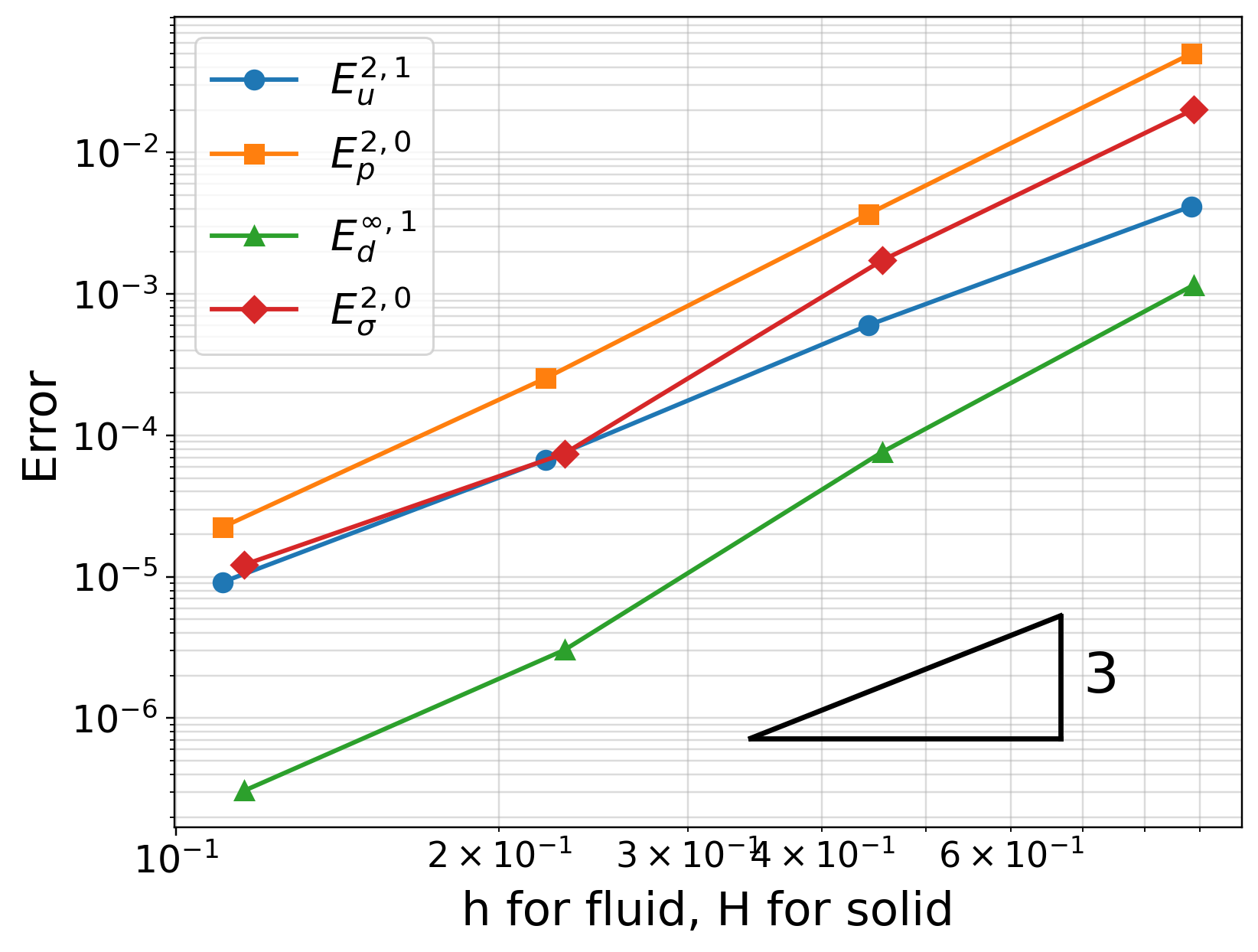}
\end{minipage}
\hfill
\begin{minipage}[t]{0.45\textwidth}
\centering
\includegraphics[width=\textwidth]{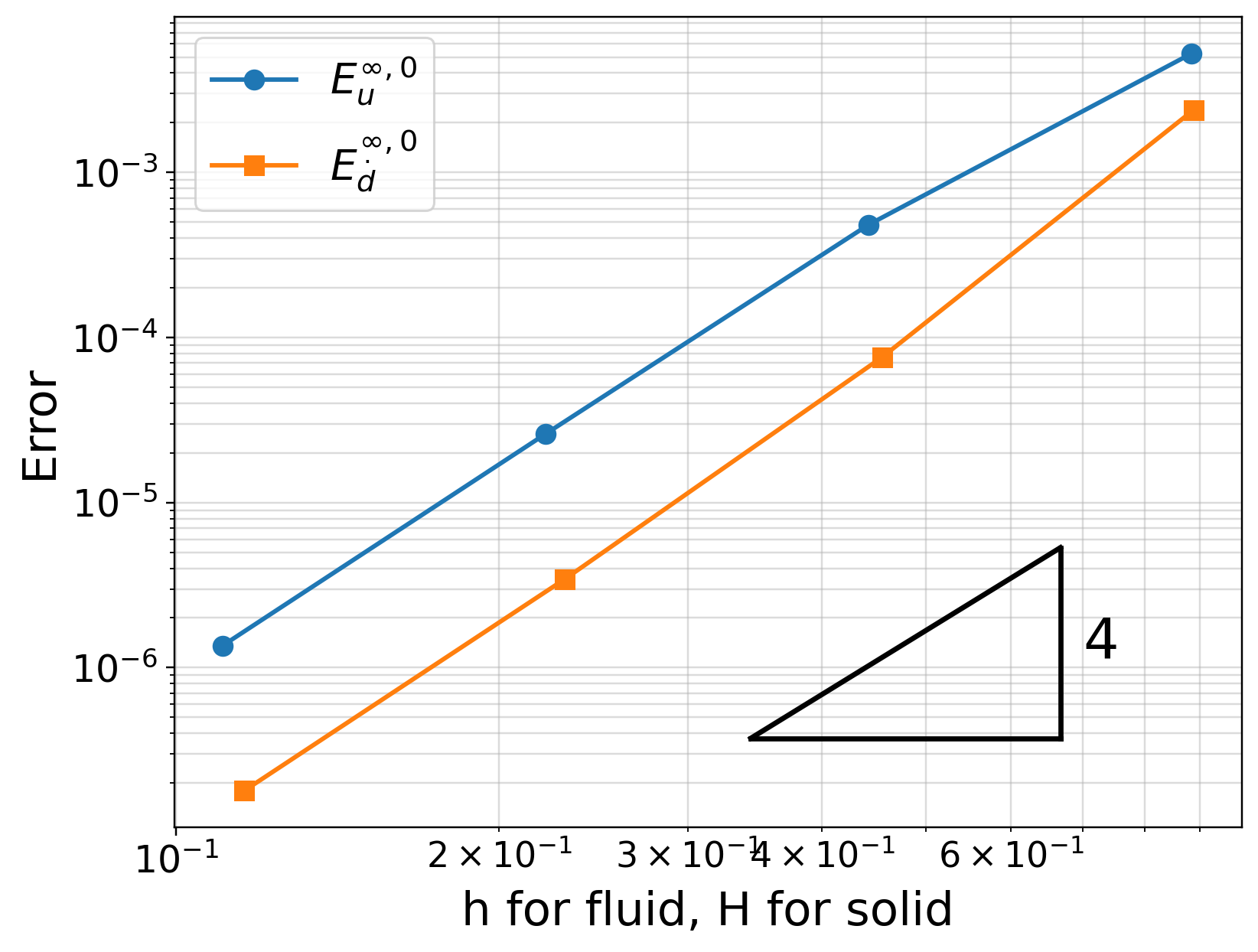}
\end{minipage}
\caption{
Spatial convergence with finite elements of degree $k=3$.  Left: errors in the natural norms of Theorem~\ref{THM}, showing 3rd-order convergence.  Right:
$L^\infty(0,T;L^2)$-type errors, showing an observed 4th-order convergence.
}
\label{fig:convergence_k=3}
\end{figure}
The convergence results for $k=2$ and $k=3$ are shown in Figures \ref{fig:convergence_k=2} and \ref{fig:convergence_k=3}, respectively, where we examine the following norms of the errors appearing in Theorem~\ref{THM}, i.e., 
\begin{align*}
E_u^{\infty,0}
&\eqd
\max_{0\le n\le N}
\|\bm u(t_n)-\bm u_h^n\|_{L^2(\Omega^{\rm f}(t_n))},
&&
E_u^{2,1}
\eqd
\Big(
\Delta t\sum_{n=1}^N
\|\bm u(t_n)-\bm u_h^n\|_{H^1(\Omega^{\rm f}(t_n))}^2
\Big)^{\frac12},\\
E_p^{2,0}
&\eqd
\Big(
\Delta t\sum_{n=1}^N
\|p(t_n)-p_h^n\|_{L^2(\Omega^{\rm f}(t_n))}^2
\Big)^{\frac12}, 
&&
E_d^{\infty,1}
\eqd
\max_{0\le n\le N}
\|\bm d(t_n)-\bm d_H^n\|_{H^1(\widehat\Omega_H^{\rm s})},\\
E_d^{\infty,0}
&\eqd
\max_{0\le n\le N}
\|\bm d(t_n)-\bm d_H^n\|_{L^2(\widehat\Omega_H^{\rm s})}, 
&&
E_{\dot d}^{\infty,0}
\eqd
\max_{0\le n\le N}
\|\partial_t\bm d(t_n)-\dot{\bm d}_H^n\|_{L^2(\widehat\Omega_H^{\rm s})},\\
E_\sigma^{2,0}
&\eqd
\Big(
\Delta t\sum_{n=1}^N
\|\widehat{\bm\sigma}(t_n)-\widehat{\bm\sigma}_H^n\|_{L^2(\widehat\Gamma_H)}^2
\Big)^{\frac12} ,
\end{align*} 
where we use the $L^2(\widehat\Gamma_H)$-norm in $E_{\sigma}^{2,0}$ because it is difficult to calculate the $H^{-\frac12}(\widehat\Gamma_H)$ norm numerically.

From Figures \ref{fig:convergence_k=2} and \ref{fig:convergence_k=3}  we see that the velocity error and solid-displacement error exhibit the expected $k$th-order convergence in the energy norms provided by Theorem~\ref{THM}. The pressure and traction errors also display $k$th-order convergence in this experiment, which is one order higher than the rate directly implied by Theorem~\ref{THM}.
The results for the case $k=2$ indicate that the technical finite element degree restriction $k\ge 3$ may be relaxed; see the discussions in Remark \ref{remark0_THM}. We additionally observe $(k+1)$th-order convergence for the $L^\infty(0,T;L^2)$-type errors $E_u^{\infty,0}$ and $E_d^{\infty,0}$. These convergence rates are not covered by the present analysis.

\section*{Acknowledgement}
The authors would like to thank Dr. Guotao Lin for the computer implementation of  the fully discrete numerical scheme and for providing the numerical results of Section~\ref{sect:num}. 

\appendix

\section{Local well-posedness of the numerical scheme}
\label{section:wellposed}

In this section, we show that the spatial semidiscrete scheme  \eqref{eq:discrete-semi-fsi} can be equivalently formulated as a system of ordinary differential equations (ODEs), which guarantees the local existence and uniqueness of solution. For this purpose, we let $Q_h^{\rm f,\perp}$ be the orthogonal complement of $\mathbb{P}_{k-1}(\Omega)$ in $Q_h^{\rm f}$ with respect to the $L^2(\Omega)$ inner product, i.e., 
$$
Q_h^{\rm f,\perp} \eqd  \bigg\{ q_h\in Q_h^{\rm f}:
\int_{\Omega} q_h\chi_h= 0
\quad \forall  \chi_h\in \mathbb{P}_{k-1}(\Omega) \bigg\} .
$$
Then $g_h^{\rm p} (p_h,q_h)=0$ for all $q_h\in \mathbb{P}_{k-1}(\Omega) $, and $\mathbb{P}_{k-1}(\Omega)$ is the kernel of $g_h^{\rm p}$, so that $g_h^{\rm p}$ is an inner product on $Q_h^{\rm f,\perp}$ (see, e.g., \cite{burman-10,massing-et-al-14,GuzmOlsh}).  For $p_h \in Q_h^{\rm f}$, we let $p_h= p_h^\perp + \hat p_h$ be its direct sum decomposition, with $p_h^\perp\in Q_h^{\rm f,\perp}$ and $\hat p_h\in \mathbb{P}_{k-1}(\Omega) $. 

Setting $q_h=0$ and ${\bs\lambda}_H=\bs 0$ in \eqref{eq:discrete-semi-fsi} yields (using the notation ${\bs\dot{\bs d}}_H\eqd \partial_{t}{\bs d}_H$)
\begin{align}\label{eq:discrete-semi-fsi2}
&\int_{\Omega^{\rm f}[\bs X_H]} \partial_t \bs u_h \cdot {\bs v}_h 
+ \epsilon_h g_h^{\rm u} (\partial_t\bu_h,\bv_h ) 
-\int_{\Omega^{\rm f}[\bs X_H]} \hat p_h \grad \cdot {\bs v}_h \notag\\
&\quad\,
+ \int_{\widehat{\Omega}^{\rm s}_H} \partial_{t}{\bs\dot{\bs d}}_H \cdot \bw_H - \int_{\Gamma[\bs X_H]} \widehat{\bs \sigma} _H \circ {\bs X}_H^{-1} \cdot ( \bv_h - \bw_H \circ {\bs X}_H^{-1}) \notag\\
&= 
{\bs F}_1({\bs v}_h) + {\bs F}_2({\bs w}_H) 
\qquad\forall\, ({\bs v}_h,{\bs w}_H)\in {\bs V}_h^{\rm f} \times \widehat{\bs V}_H^{\rm s} ,
\end{align}
where ${\bs F}_1$ and  ${\bs F}_2$ are linear functionals   defined by 
\begin{align*}
{\bs F}_1({\bs v}_h)
&\eqd - \int_{\Omega^{\rm f}[\bs X_H]}   (\bs u_h \cdot \grad) \bu_h \cdot {\bs v}_h 
-   \int_{\Omega^{\rm f}[\bs X_H]} \bs D(\bu_h): \bs D(\bv_h)  \notag\\
&\quad\, + \int_{\Omega^{\rm f}[\bs X_H]} p_h^\perp \grad\cdot \bv_h - g_h^{\rm u} (\bu_h,\bv_h ) ,\\
{\bs F}_2({\bs w}_H)
&\eqd  - \int_{\widehat{\Omega}^{\rm s}_H} [{\bs D}({\bs d}_H) : {\bs D}({\bs w}_H) + (\grad\cdot{\bs d}_H)(\grad\cdot{\bs w}_H) ] .
\end{align*}

If $\big(\bu_h,p_h, {\bs d}_H, \widehat{\bs\sigma} _H\big)\in {\bs V}_h^{\rm f} \times  Q_h^{\rm f} \times \widehat{\bs V}_H^{\rm s} \times \widehat{\bs \Lambda}_H$ is a temporally smooth solution of \eqref{eq:discrete-semi-fsi}, then choosing ${\bs v}_h=\bs 0$, ${\bs w}_H=\bs 0$ and ${\bs \lambda}_H=\bs 0$ in \eqref{eq:discrete-semi-fsi} yields 
\begin{align}\label{eq:semi-qh}
\int_{\Omega^{\rm f}[\bs X_H]} q_h \grad\cdot \bu_h + g_h^{\rm p} (p_h,q_h) = 0 ,
\end{align}
and differentiating in time \eqref{eq:semi-qh} yields 
\begin{equation}\label{dteq:semi-qh}
\begin{aligned}
\int_{\Omega^{\rm f}[\bs X_H]} q_h \grad\cdot \partial_t\bu_h +
g_h^{\rm p} (\partial_tp_h,q_h ) 
= {\bs F}_3(q_h) \quad\forall\, q_h\in Q_h^{\rm f}, 
\end{aligned}
\end{equation}
where
$$
{\bs F}_3(q_h) \eqd  - \int_{\Gamma[\bs X_H]} q_h (\grad\cdot\bu_h){\bs\dot{\bs d}}_H\cdot{\bs n}_{\Gamma[\bs X_H]} . 
$$
If we choose $q_h=\hat q_h\in \mathbb{P}_{k-1}(\Omega)$ in \eqref{dteq:semi-qh}, then $g_h^{\rm p} (\partial_tp_h,\hat q_h) =0$ and therefore
\begin{equation}\label{dteq:semi-qh2}
\begin{aligned}
\int_{\Omega^{\rm f}[\bs X_H]} \hat q_h \grad\cdot \partial_t\bu_h
= {\bs F}_3(\hat q_h) \quad\forall\, \hat q_h\in \mathbb{P}_{k-1}(\Omega) .
\end{aligned}
\end{equation}
On the other hand, if we restrict $q_h=q_h^\perp\in Q_h^{\rm f,\perp}$ in \eqref{dteq:semi-qh}, then $g_h^{\rm p} (\partial_tp_h,q_h^\perp) = g_h^{\rm p} (\partial_tp_h^\perp,q_h^\perp)$ and therefore
\begin{equation}\label{dteq:semi-qh3}
\begin{aligned}
g_h^{\rm p} (\partial_t p_h^\perp,q_h^\perp) = 
- \int_{\Omega^{\rm f}[\bs X_H]} q_h^\perp \grad\cdot \partial_t\bu_h
- {\bs F}_3(q_h^\perp) \quad\forall\, q_h^\perp\in Q_h^{\rm f,\perp}.
\end{aligned}
\end{equation}

Similarly, choosing ${\bs v}_h=\bs 0$, ${\bs w}_H=\bs 0$ and $q_h=0$ in \eqref{eq:discrete-semi-fsi} yields 
\begin{align}\label{eq:semi-lambdah}
\int_{\Gamma[\bs X_H]} \bs \lambda_H \circ {\bs X}_H^{-1} \cdot ( \bu_h - {\bs\dot{\bs d}}_H \circ {\bs X}_H^{-1}) = 0 .
\end{align}
and differentiating in time \eqref{eq:semi-lambdah} yields 
\begin{equation}\label{dteq:semi-lambdah}
\int_{\Gamma[\bs X_H]} {\bs\lambda}_H \circ {\bs X}_H^{-1} \cdot ( \partial_t{\bs u}_h - (\partial_t{\bs\dot{\bs d}}_H) \circ {\bs X}_H^{-1}) = {\bs F}_4({\bs\lambda}_H) 
\quad\forall\, {\bs\lambda}_H\in \widehat{\bs \Lambda}_H,
\end{equation}
with 
$$
{\bs F}_4({\bs\lambda}_H)
\eqd  - \int_{\Gamma[\bs X_H]} {\bs\lambda}_H \circ {\bs X}_H^{-1} \cdot ( {\bs\dot{\bs d}}_H\cdot\grad {\bs u}_h) -\int_{\Gamma[\bs X_H]} {\bs\lambda}_H \circ {\bs X}_H^{-1} \cdot ({\bs u}_h - {\bs\dot{\bs d}}_H \circ {\bs X}_H^{-1}) \grad_{\Gamma[\bs X_H]}\cdot {\bs\dot{\bs d}}_H .
$$
Therefore, we have shown that if $\big(\bu_h,p_h, {\bs d}_H, \widehat{\bs\sigma} _H\big)\in {\bs V}_h^{\rm f} \times  Q_h^{\rm f} \times \widehat{\bs V}_H^{\rm s} \times \widehat{\bs \Lambda}_H$ is a temporally smooth solution of \eqref{eq:discrete-semi-fsi}, then $\big(\partial_t\bu_h, \hat p_h,\partial_{t}{\bs\dot{\bs d}}_H,\widehat{\bs \sigma} _H\big)\in {\bs V}_h^{\rm f} \times \mathbb{P}_{k-1}(\Omega) \times \widehat{\bs V}_H^{\rm s}\times \widehat{\bs \Lambda}_H$ satisfies \eqref{eq:discrete-semi-fsi2}, \eqref{dteq:semi-qh2} and \eqref{dteq:semi-lambdah}, and $\partial_tp_h^\perp\in Q_h^{\rm f,\perp}$ satisfies \eqref{dteq:semi-qh3}. 

Conversely, for $\epsilon_h>0$ and given $\big(\bu_h, p_h^\perp, {\bs d}_H, {\bs\dot{\bs d}}_H\big)\in {\bs V}_h^{\rm f} \times  Q_h^{\rm f} \times \widehat{\bs V}_H^{\rm s} \times \widehat{\bs V}_H^{\rm s}$ (with $\Omega^{\rm f}[{\bs X}_H]$ well defined such as at $t=0$), we can uniquely determine 
$(\partial_t\bu_h, \hat p_h,\partial_{t}{\bs\dot{\bs d}}_H,\widehat{\bs \sigma} _H ) \in {\bs V}_h^{\rm f} \times \mathbb{P}_{k-1}(\Omega) \times \widehat{\bs V}_H^{\rm s}\times \widehat{\bs \Lambda}_H$ 
from \eqref{eq:discrete-semi-fsi2}, \eqref{dteq:semi-qh2} and \eqref{dteq:semi-lambdah}. This can be shown by considering whether the corresponding homogeneous linear system has only a trivial zero solution. Indeed, for the homogeneous linear system corresponding to ${\bs F}_j=0$, $j=1,\ldots,4$, in \eqref{eq:discrete-semi-fsi2}, \eqref{dteq:semi-qh2} and \eqref{dteq:semi-lambdah}, choosing $({\bs v}_h, \hat q_h,{\bs w}_H,{\bs\lambda}_H)=( \partial_t\bu_h, \hat p_h ,\partial_{t}{\bs\dot{\bs d}}_H,\widehat{\bs \sigma} _H)$ immediately gives
$$
\int_{\Omega^{\rm f}[\bs X_H]} |\partial_t {\bs u}_h|^2 
+ \epsilon_h g_h^{\rm u} (\partial_t{\bs u}_h,\partial_t{\bs u}_h) 
+ \int_{\widehat{\Omega}^{\rm s}_H} |\partial_{t}{\bs\dot{\bs d}}_H |^2 = 0 .
$$
Here $g_h^{\rm u} (\partial_t{\bs u}_h,\partial_t{\bs u}_h) =0$ enforces $\partial_t{\bs u}_h$ to be a polynomial of degree $k$ (see \cite{burman-10,massing-et-al-14,GuzmOlsh}) and $\int_{\Omega^{\rm f}[\bs X_H]} |\partial_t {\bs u}_h|^2 =0$ further enforces this polynomial to be zero. Hence $\partial_t {\bs u}_h$ and $\partial_{t}{\bs\dot{\bs d}}_H$ must be both zero. Then choosing $\bw_H \circ {\bs X}_H^{-1}$ to be the $L^2$ projection of ${\bs v}_h$ onto the finite element space ${\bs \Lambda}_H(\Gamma[{\bs X}_H])$ on $\Gamma[{\bs X}_H]$, we have 
\begin{align*}
\int_{\Gamma[\bs X_H]} \widehat{\bs \sigma} _H \circ {\bs X}_H^{-1} \cdot ( \bv_h - \bw_H \circ {\bs X}_H^{-1}) = 0 
\end{align*}
and substituting this into \eqref{eq:discrete-semi-fsi2} (with ${\bs F}_1=0$ and ${\bs F}_2=0$ therein)
\begin{align*}
&\int_{\Omega^{\rm f}[\bs X_H]} \hat p_h \grad \cdot {\bs v}_h = 0 
\qquad\forall\, {\bs v}_h\in {\bs V}_h^{\rm f} . 
\end{align*}
This and the inf-sup condition (see \eqref{inf-sup-ep-L2} in Section \ref{section:ep}), as well as $g^{\rm p}_h(\hat p_h,\hat p_h)=0$ because of $\hat p_h\in \mathbb{P}_{k-1}(\Omega)$, imply that $\hat p_h=0$. Then \eqref{eq:discrete-semi-fsi2} (with ${\bs F}_1=0$ and ${\bs F}_2=0$ therein) reduces to 
\begin{align*}
\int_{\Gamma[\bs X_H]} \widehat{\bs \sigma}_H \circ {\bs X}_H^{-1} \cdot{\bs w}_H \circ {\bs X}_H^{-1} = 0 \quad\forall\, {\bs w}_H\in \widehat{\bs \Lambda}_H ,
\end{align*}
which gives $\widehat{\bs \sigma}_H=\bs 0$. 

This proves that the finite-dimensional homogeneous linear system corresponding to ${\bs F}_j=0$, $j=1,\ldots, 4$, in \eqref{eq:discrete-semi-fsi2}, \eqref{dteq:semi-qh2} and \eqref{dteq:semi-lambdah} has only the zero solution $(\partial_t\bu_h, \hat p_h, \partial_t{\bs\dot{\bs d}}_H,\widehat{\bs\sigma}_H)=(\bs 0,0,\bs 0,\bs 0)$, and therefore the inhomogeneous linear system \eqref{eq:discrete-semi-fsi2}, \eqref{dteq:semi-qh2} and \eqref{dteq:semi-lambdah} has a unique solution $(\partial_t\bu_h, \hat p_h, \partial_t{\bs\dot{\bs d}}_H,\widehat{\bs\sigma}_H)$. Then $\partial_tp_h^\perp$ can be determined uniquely from \eqref{dteq:semi-qh3} because $g_h^{\rm p} (\cdot,\cdot)$ is an inner product on $Q_h^{\rm f,\perp}$. 

Therefore, $(\partial_t\bu_h, \partial_t p_h^\perp, \partial_t{\bs d}_H, \partial_t{\bs\dot{\bs d}}_H)$ can be determined by $(\bu_h, p_h^\perp, {\bs d}_H, {\bs\dot{\bs d}}_H)$ from \eqref{eq:discrete-semi-fsi2}, \eqref{dteq:semi-qh2}, \eqref{dteq:semi-lambdah} and \eqref{dteq:semi-qh3}. This is essentially a system of ODEs, i.e., 
$$
(\partial_t\bu_h, \partial_tp_h^\perp, \partial_t{\bs d}_H, \partial_t{\bs\dot{\bs d}}_H)
= {\bs F} (\bu_h,p_h^\perp, {\bs d}_H, {\bs\dot{\bs d}}_H) . 
$$
As long as ${\bs X}_H(\cdot,t)$ remains an orientation-preserving
diffeomorphism (such as at time $t=0$), the geometry-dependent finite-dimensional operators are locally Lipschitz. Standard finite-dimensional ODE theory
therefore gives a unique local solution (existence and uniqueness in a short time period). 
The initial data satisfy both the discrete divergence constraint and
the discrete interface constraint as shown in \eqref{def-Ritz-0} (choosing ${\bs v}_h=\bs 0$ therein) and \eqref{initial-constraint}, while equations \eqref{dteq:semi-qh2} and
\eqref{dteq:semi-lambdah} guarantee that the time derivatives of
the corresponding constraint residuals vanish. Hence the constraints
remain valid throughout the interval of existence, and the ODE
solution is a solution of the original semidiscrete scheme. 


\section{A map from a one-sided neighborhood of $\widehat\Gamma$ into \texorpdfstring{$\Omega^{\rm f}[{\bs X}_H]$}{x}}\label{appendix:map_fluid_region}

For a sufficiently small $\delta$, the map from $(\bs y,\tau)\in\widehat\Gamma\times[0,\delta]$ to $\bs x=\bs y+\tau{\bs n}_{\widehat\Gamma}(\bs y) \in \mathbb{R}^d$, with ${\bs n}_{\widehat\Gamma}$ being the outward unit normal vector of $\widehat\Gamma$ at $y\in\widehat\Gamma$, is invertible and bounded in the $W^{1,\infty}$ norm, with the inverse map also bounded in the $W^{1,\infty}$ norm. We denote by $\widehat{\Omega}^{\rm f}_\delta$ a one-sided tubular neighborhood of $\widehat\Gamma$ on the exterior side of $\widehat{\Omega}^{\rm s}$, defined as  
$$
\widehat{\Omega}^{\rm f}_\delta \eqd \{ \bs x= \bs y+\tau{\bs n}_{\widehat\Gamma}(\bs y): (\bs y,\tau)\in\widehat\Gamma\times[0,\delta] \} 
$$ 
and extend the map ${\bs X}_H(\cdot,t)\circ\widehat{\bs\Phi}_H^{-1}:\widehat\Gamma\rightarrow\Gamma[{\bs X}_H]$ to $\widehat{\bs X}_H(\cdot,t):\widehat{\Omega}^{\rm f}_\delta\rightarrow\Omega^{\rm f}[{\bs X}_H]$ 
as follows: 
\begin{align}\label{def-map-XH-appd}
\widehat{\bs X}_H(\bs x,t)
\eqd &  {\bs X}_H(\widehat{\bs\Phi}_H^{-1}(\bs y),t)  + \tau {\bs n}_{\Gamma[{\bs X}(\cdot,t)]}({\bs X}(\bs y,t)) \notag\\
=& \underbrace{[{\bs X}(\bs y,t)  + \tau {\bs n}_{\Gamma[{\bs X}(\cdot,t)]}({\bs X}(\bs y,t))]}_{\eqd {\bs F}(\bs x,t)} + \underbrace{[{\bs X}_H(\widehat{\bs\Phi}_H^{-1}(\bs y),t) - {\bs X}(\bs y,t)]}_{\eqd {\bs E}(\bs x,t)},
\end{align}
where ${\bs F}(\cdot,t) $ is an invertible map with $\|{\bs F}(\cdot,t)\|_{W^{1,\infty}(\widehat{\Omega}^{\rm f}_\delta)}$ and $\|1/\det \!\grad {\bs F}(\cdot,t) \|_{L^\infty(\widehat{\Omega}^{\rm f}_\delta)}$ being uniformly bounded for $t\in[0,T]$, 
and ${\bs E}(\cdot,t) $ is $O(H^{\frac14})$ in the $W^{1,\infty}$ norm for $t\in[0,t_*]$ in view of the estimates in \eqref{aux865} and \eqref{ind-1-2}. Therefore, for sufficiently small $H$, the total map $\widehat{\bs X}_H(\cdot,t) = {\bs F}(\cdot,t) + {\bs E}(\cdot,t)$ is invertible for $t\in[0,t_*]$ and satisfies the following estimate:
\begin{align}\label{W1inf-hat-XH}
\| \widehat{\bs X}_H(\cdot,t)\|_{W^{1,\infty}(\widehat{\Omega}^{\rm f}_\delta)} 
+ \| \widehat{\bs X}_H(\cdot,t)^{-1} \|_{W^{1,\infty}(\Omega^{\rm f}_\delta(t))} 
\le C ,
\end{align}
where $\Omega^{\rm f}_\delta(t)$ is the image of $\widehat{\Omega}^{\rm f}_\delta $ under the map $\widehat{\bs X}_H(\cdot,t)$. 
The estimate of $\|\partial_t{\bs e}_{\bs d}\|_{L^\infty(\widehat\Omega^{\rm s}_H)}\lesssim H^{\frac14}$ in \eqref{ind-1-2} and the finite element inverse estimate, together with the expression of $\widehat{\bs X}_H(x,t)$ in \eqref{def-map-XH-appd}, guarantee that 
\begin{align}\label{Wtinf-hat-XH}
\max_{t\in[0,t_*]}\|\partial_t\widehat{\bs X}_H(\cdot,t)\|_{L^{\infty}(\widehat{\Omega}^{\rm f}_\delta)}
\le C .
\end{align} 

Since $\widehat{\bs X}_H(\cdot,t)$ maps $\widehat\Gamma$ to $\Gamma[{\bs X}_H]$, it follows that 
$\Omega^{\rm f}_\delta(t)\subset\Omega^{\rm f}[{\bs X}_H]$. 
Therefore, by using the chain rule of partial differentiation and the estimates in \eqref{W1inf-hat-XH}--\eqref{Wtinf-hat-XH}, as well as the estimates in \eqref{Ritz-H1} and \eqref{Ritz-L2}, we see that the pulled-back function $({\bs u}_h^*-{\bs u}_h^\#)\circ\widehat{\bs X}_H$ satisfies the following estimates:
\begin{align}
\|({\bs u}_h^*-{\bs u}_h^\#)\circ\widehat{\bs X}_H\|_{H^1(\widehat{\Omega}^{\rm f}_\delta)}
&\lesssim \|{\bs u}_h^*-{\bs u}_h^\#\|_{H^1(\Omega^{\rm f}[{\bs X}_H])} 
\lesssim h^k, \\
\|\partial_t[({\bs u}_h^*-{\bs u}_h^\#)\circ\widehat{\bs X}_H]\|_{L^2(\widehat{\Omega}^{\rm f}_\delta)}
&\lesssim \|{\bs u}_h^*-{\bs u}_h^\#\|_{H^1(\Omega^{\rm f}[{\bs X}_H])}
+ \|\partial_t({\bs u}_h^*-{\bs u}_h^\#)\|_{L^2(\Omega^{\rm f}[{\bs X}_H])} \notag\\
&\lesssim \|{\bs e}_{\bs d}\|_{H^1(\widehat\Omega^{\rm s}_H)}
+ h^k + H^k .
\end{align}

\section{Trace inequality from \texorpdfstring{$\Gamma[{\bs X}_H^\theta]$}{x} uniformly for $\theta\in[0,1]$ and $t\in[0,t_*]$}
\label{appendix:trace_inequlaity}

Using the mapping $\widehat{\bs X}_H(\cdot,t): \widehat{\Omega}^{\rm f}_\delta \to \Omega^{\rm f}_\delta(t)\subset \Omega^{\rm f}_{\rm ext}[{\bs X}_H]$, we can pull back a function $f$ defined on $\Omega^{\rm f}_{\rm ext}[{\bs X}_H]$ to $\widehat{\Omega}^{\rm f}_\delta$. By \eqref{W1inf-hat-XH}, the pullback $f\circ \widehat{\bs X}_H(\cdot,t)$ satisfies the following norm-equivalence relations:
\begin{align*}
\begin{aligned}
 \|f\circ \widehat{\bs X}_H(\cdot,t)\|_{W^{1,p}(\widehat{\Omega}^{\rm f}_\delta)}
&\simeq \|f\|_{W^{1,p}(\Omega^{\rm f}_\delta(t))} , \\
 \|f\circ \widehat{\bs X}_H(\cdot,t)\|_{L^p(\widehat\Gamma)}
&\simeq \|f\|_{L^p(\Gamma[{\bs X}_H])} , \\
\|f\circ \widehat{\bs X}_H(\cdot,t)\|_{W^{1,p}(\widehat\Gamma)}
&\simeq \|f\|_{W^{1,p}(\Gamma[{\bs X}_H])}
\end{aligned}
\end{align*}
for all $1 \le p\le \infty$.
The last two inequalities imply that, through complex interpolation between the $L^p$ and $W^{1,p}$ spaces, 
$$
\|f\circ \widehat{\bs X}_H(\cdot,t)\|_{W^{s,p}(\widehat\Gamma)}
\simeq \|f\|_{W^{s,p}(\Gamma[{\bs X}_H])}
\quad \forall  s \in [0,1], \quad  p \in [1,\infty] . 
$$
Because of these norm equivalence relations, the trace inequality from $\widehat\Gamma$ to $\widehat{\Omega}^{\rm f}_\delta$ carries over to the corresponding trace inequality from $\Gamma[{\bs X}_H]$ to $\Omega^{\rm f}_\delta(t)$, with equivalence constants that are uniform for all $t\in[0,t_*]$. Namely, we have 
$$
\|f\|_{W^{1-1/p,p}(\Gamma[{\bs X}_H])}
 \le C\|f\|_{W^{1,p}(\Omega^{\rm f}_{\rm ext}[{\bs X}_H])} 
$$
for all $ 1\le p\le \infty$ and $t\in[0,t_*]$. 
Similarly, the same result also holds for the intermediate interfaces $\Gamma[{\bs X}_H^\theta]$ with $\theta\in[0,1]$ (the argument is similar and therefore omitted): 
$$
\|f\|_{W^{1-1/p,p}(\Gamma[{\bs X}_H^\theta])}
 \le C\|f\|_{W^{1,p}(\Omega^{\rm f}_{\rm ext}[{\bs X}_H])} 
$$
for all $ 1\le p\le \infty$ and $t\in[0,t_*]$. 

\section{Boundedness of \texorpdfstring{${\bf P}_{\Gamma[{\bs X}_H]}$}{x} on \texorpdfstring{$H^s(\Gamma[{\bs X}_H])$}{x}}
\label{appendix:L2-projection}

Recall that ${\bf P}_{\Gamma[\bs X_H]}$ denotes the $L^2$-orthogonal projection onto the finite element space ${\bs\Lambda}_H(\Gamma[\bs X_H])$. At time $t=0$, it is known that the $L^2$ projection ${\bf P}_{\widehat\Gamma_H}$ is bounded in both $L^2(\widehat\Gamma_H)$  and $H^1(\widehat\Gamma_H)$, and has the optimal-order approximation property (see \cite[Appendix B]{Li-Tang-2025}), i.e., 
\begin{align}
&\|{\bf P}_{\widehat\Gamma_H} f\|_{L^2(\widehat\Gamma_H)} 
\le C\|f\|_{L^2(\widehat\Gamma_H)},\quad  
\|{\bf P}_{\widehat\Gamma_H}f\|_{H^1(\widehat\Gamma_H)} 
\le C\|f\|_{H^1(\widehat\Gamma_H)} ,\\
&\|f - {\bf P}_{\widehat\Gamma_H} f\|_{L^2(\widehat\Gamma_H)} 
\le CH\|f\|_{H^1(\widehat\Gamma_H)} .
\end{align}
The norm equivalence relations in \eqref{norm-equiv-interpl-1} and \eqref{norm-equiv-2} imply the following estimates for $t\in[0,t_*]$: 
\begin{subequations}\label{norm-equiv-GXH-G0}
\begin{align}
\|f\|_{L^p(\Gamma[{\bs X}_H])} &\simeq \|f\circ {\bs X}_H\|_{L^p(\widehat\Gamma_H )} , \\
\|\grad_{\Gamma[{\bs X}_H]} f\|_{L^p(\Gamma[{\bs X}_H])} &\simeq \|\grad_{\widehat\Gamma_H} (f\circ {\bs X}_H)\|_{L^p(\widehat\Gamma_H)} .
\end{align}
This implies, through complex interpolation between $L^2(\Gamma[{\bs X}_H])$ and $H^1(\Gamma[{\bs X}_H])$, the following norm-equivalence relation:
\begin{align}\label{norm-equivl-Hs}
\|f\|_{H^s(\Gamma[{\bs X}_H])} &\simeq \|f\circ {\bs X}_H\|_{H^s(\widehat\Gamma_H )} 
\end{align}
\end{subequations}
for $s\in[0,1].$
Therefore, if we denote $f_h= [{\bf P}_{\widehat\Gamma_H} (f\circ{\bs X}_H)]\circ{\bs X}_H^{-1}$, then 
\begin{align}\label{L2-f-Phf}
\|f - {\bf P}_{\Gamma[\bs X_H]} f\|_{L^2(\Gamma[\bs X_H])}
&\le \|f - f_h\|_{L^2(\Gamma[\bs X_H])} \notag\\
&\simeq \|f\circ{\bs X}_H - {\bf P}_{\widehat\Gamma_H} (f\circ{\bs X}_H)\|_{L^2(\widehat\Gamma_H)} \notag\\
&\le CH \|f\circ{\bs X}_H \|_{H^1(\widehat\Gamma_H)} \notag\\
&\le CH \|f\|_{H^1(\Gamma[\bs X_H])} \quad\mbox{for}\,\,\,t\in[0,t_*], 
\end{align}
where the last inequality uses $\|{\bs X}_H \|_{W^{1,\infty}(\widehat\Gamma_H)} \le C$, which follows from \eqref{aux964}. 

Similarly, since $({\bf P}_{\Gamma[\bs X_H]} f)\circ {\bs X}_H$ is a finite element function in ${\bs\Lambda}_H(\widehat\Gamma_H)$, it follows that 
\begin{align}\label{H1-f-Phf}
&\|f - {\bf P}_{\Gamma[\bs X_H]} f\|_{H^1(\Gamma[\bs X_H])} \notag\\
&\simeq 
\|f\circ{\bs X}_H - ({\bf P}_{\Gamma[\bs X_H]} f)\circ{\bs X}_H\|_{H^1(\widehat\Gamma_H)} \notag\\
&\le \|f \circ{\bs X}_H - {\bf P}_{\widehat\Gamma_H}(f\circ{\bs X}_H)\|_{H^1(\widehat\Gamma_H)} + 
\|{\bf P}_{\widehat\Gamma_H}(f\circ{\bs X}_H) - ({\bf P}_{\Gamma[\bs X_H]} f)\circ{\bs X}_H\|_{H^1(\widehat\Gamma_H)} \notag\\
&\le C\|f \circ{\bs X}_H \|_{H^1(\widehat\Gamma_H)} + CH^{-1}\|{\bf P}_{\widehat\Gamma_H}(f \circ{\bs X}_H) - ({\bf P}_{\Gamma[\bs X_H]} f)\circ {\bs X}_H\|_{L^2(\widehat\Gamma_H)} \,\,\mbox{(inverse inequality)}\notag\\
&\le C\|f \circ{\bs X}_H \|_{H^1(\widehat\Gamma_H)} + CH^{-1}\|f\circ{\bs X}_H - ({\bf P}_{\Gamma[{\bs X}_H]} f)\circ {\bs X}_H\|_{L^2(\widehat\Gamma_H)} \quad\mbox{($L^2$ stability of ${\bf P}_{\widehat\Gamma_H}$)} \notag\\
&\le C\|f\|_{H^1(\Gamma[{\bs X}_H])} + CH^{-1}\|f - {\bf P}_{\Gamma[\bs X_H]} f\|_{L^2(\Gamma[{\bs X}_H])} \quad\mbox{(norm equivalence in \eqref{norm-equiv-GH})} \notag\\
&\le C \|f\|_{H^1(\Gamma[\bs X_H])} ,
\end{align}
where the last inequality follows from \eqref{L2-f-Phf}. These two inequalities imply that 
\begin{align*}
\|{\bf P}_{\Gamma[\bs X_H]} f\|_{L^2(\Gamma[\bs X_H])}
&\le C\|f\|_{L^2(\Gamma[\bs X_H])} &&\mbox{(by the definition of $L^2$ projection)}\\
\|{\bf P}_{\Gamma[\bs X_H]} f\|_{H^1(\Gamma[\bs X_H])}
&\le C\|f\|_{H^1(\Gamma[\bs X_H])} &&\mbox{(by \eqref{H1-f-Phf} and the triangle inequality)}.
\end{align*}
By the complex interpolation between $L^2(\Gamma[\bs X_H])$ and $H^1(\Gamma[\bs X_H])$, we obtain 
\begin{align}\label{Hs-Phf}
\|{\bf P}_{\Gamma[\bs X_H]} f\|_{H^s(\Gamma[\bs X_H])}
&\le C\|f\|_{H^s(\Gamma[\bs X_H])} \quad\mbox{for all}\,\,\,s\in[0,1].
\end{align}

Since ${\bf P}_{\Gamma[\bs X_H]}$ is self-adjoint, i.e., 
$$
\int_{\Gamma[\bs X_H]} ({\bf P}_{\Gamma[\bs X_H]} f)g = 
\int_{\Gamma[\bs X_H]} f ({\bf P}_{\Gamma[\bs X_H]} g) 
$$
for all $f,g\in L^2(\Gamma[\bs X_H])$, 
it follows that 
\begin{align*} 
\Big|\int_{\Gamma[\bs X_H]} ({\bf P}_{\Gamma[\bs X_H]} f)g\Big|
&=\Big|\int_{\Gamma[\bs X_H]} f({\bf P}_{\Gamma[\bs X_H]} g)\Big| \notag\\
&\le \|f\|_{H^{-s}(\Gamma[\bs X_H])} \|{\bf P}_{\Gamma[\bs X_H]} g\|_{H^{s}(\Gamma[\bs X_H])} \notag\\
&\le C\|f\|_{H^{-s}(\Gamma[\bs X_H])} \|g\|_{H^{s}(\Gamma[\bs X_H])} \quad\mbox{for all}\,\,\, s\in[0,1] , 
\end{align*}
where the last inequality follows from \eqref{Hs-Phf}. 
This implies that, via the duality argument, 
\begin{align}\label{H-s-Phf}
\|{\bf P}_{\Gamma[\bs X_H]} f\|_{H^{-s}(\Gamma[\bs X_H])}
&\le C\|f\|_{H^{-s}(\Gamma[\bs X_H])} 
\end{align}
for all $s\in[0,1]$. 

\section{A local ${\bf L^2}$ projection in the bulk while being the ${\bf L^2}$ projection on the boundary} \label{appendix:projection}

Let $K$ be a closed simplex in the triangulation of $\widehat\Omega^{\rm s}_H$, and let $N_K$ be the set of nodes on $K$. For any node $x\in N_K$, let $\widetilde K_x$ be the set of closed simplices which contain the node $x$. Moreover, let ${\bf P}_{\widetilde K_x}$ be the $L^2$-orthogonal projection onto the finite element spaces on $\widetilde K_x$, and let $\widetilde K=\bigcup_{x\in N_K}\widetilde K_x$. 

\begin{definition}\label{L2-L2-proj}
For ${\bs w}\in H^1(\widehat\Omega^{\rm s}_H)^d$, we define $\widehat{\bf P}_H{\bs w}\in \widehat{\bs V}^{\rm s}_H$ to be a finite element function determined by the following conditions: 
\begin{enumerate}
\item 
$(\widehat{\bf P}_H{\bs w})|_{\widehat\Gamma_H}$ is defined as ${\bf P}_{\widehat\Gamma_H}({\bs w}|_{\widehat\Gamma_H})$, i.e., the $L^2$ projection of ${\bs w}|_{\widehat\Gamma_H}$ on $\widehat\Gamma_H$. 

\item 
$(\widehat{\bf P}_H{\bs w})(x_j)=({\bf P}_{\widetilde K_{x_j}}{\bs w})(x_j)$ for every interior node $x_j$ {\rm(}excluding nodes on $\widehat\Gamma_H${\rm)}.
\end{enumerate}
\end{definition}

\begin{lemma}\label{Lemma:PH}
The projection $\widehat{\bf P}_H:H^1(\widehat\Omega^{\rm s}_H)^d\rightarrow \widehat{\bs V}^{\rm s}_H$ defined in Definition \ref{L2-L2-proj} has the following properties: 
\begin{align*}
\|\widehat{\bf P}_H{\bs w} - {\bs w}\|_{L^2(\widehat\Omega^{\rm s}_H)}
&\lesssim H\|{\bs w}\|_{H^1(\widehat\Omega^{\rm s}_H)} \\
\|\widehat{\bf P}_H{\bs w}\|_{H^1(\widehat\Omega^{\rm s}_H)}
&\lesssim \|{\bs w}\|_{H^1(\widehat\Omega^{\rm s}_H)} .
\end{align*}
\end{lemma}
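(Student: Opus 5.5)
The plan is to compare $\widehat{\bf P}_H{\bs w}$ with a local, Scott--Zhang-type quasi-interpolant and to control the difference element by element through nodal values. Fix a simplex $K$. On $\widetilde K$, subtract a constant (or an element of $\mathbb{P}_0$, or the local average $\bar{\bs w}_{\widetilde K}$) and use that $\widehat{\bf P}_H$ reproduces constants: interior nodal values are defined by the local $L^2$ projection ${\bf P}_{\widetilde K_x}$, which is exact on constants, and the global boundary projection ${\bf P}_{\widehat\Gamma_H}$ is exact on constants as well. Hence
\[
\widehat{\bf P}_H{\bs w}-{\bs w}=\widehat{\bf P}_H({\bs w}-\bar{\bs w}_{\widetilde K})-({\bs w}-\bar{\bs w}_{\widetilde K})\quad\text{on }K .
\]
By Poincaré on the patch, $\|{\bs w}-\bar{\bs w}_{\widetilde K}\|_{L^2(\widetilde K)}\lesssim H\|\grad{\bs w}\|_{L^2(\widetilde K)}$. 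The $L^2$ estimate therefore reduces to local $L^2$ stability of $\widehat{\bf P}_H$ on $K$. The $H^1$ estimate will follow from the same bound together with the inverse inequality $\|\grad \widehat{\bf P}_H({\bs w}-\bar{\bs w})\|_{L^2(K)}\lesssim H^{-1}\|\widehat{\bf P}_H({\bs w}-\bar{\bs w})\|_{L^2(K)}$.

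For local stability, write $\|\widehat{\bf P}_H{\bs v}\|_{L^2(K)}^2\simeq H^d\sum_{x\in N_K}|(\widehat{\bf P}_H{\bs v})(x)|^2$ for shape-regular isoparametric elements. At interior nodes, ${\bf P}_{\widetilde K_x}$ is $L^2(\widetilde K_x)$-stable, and an inverse estimate on the patch gives
\[
|({\bf P}_{\widetilde K_x}{\bs v})(x)|\lesssim H^{-d/2}\|{\bs v}\|_{L^2(\widetilde K_x)} .
\]
At boundary nodes, the value is that of ${\bf P}_{\widehat\Gamma_H}{\bs v}$. Here I use the local inverse estimate on boundary faces, $|({\bf P}_{\widehat\Gamma_H}{\bs v})(x)|\lesssim H^{-(d-1)/2}\|{\bf P}_{\widehat\Gamma_H}{\bs v}\|_{L^2(F)}$, so boundary nodes contribute $H\|{\bf P}_{\widehat\Gamma_H}{\bs v}\|^2_{L^2(\partial K\cap\widehat\Gamma_H)}$.

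The \textbf{main obstacle} is that ${\bf P}_{\widehat\Gamma_H}$ is a global, nonlocal projection. Local stability on $\partial K\cap\widehat\Gamma_H$ therefore cannot be taken for granted. My first attempt avoids localizing it and simply sums over all boundary elements. The boundary contribution to $\|\widehat{\bf P}_H{\bs v}-{\bs v}\|^2_{L^2}$ is then controlled by
\[
H\|{\bf P}_{\widehat\Gamma_H}{\bs w}-{\bs w}\|^2_{L^2(\widehat\Gamma_H)}+(\text{local terms}).
\]
Here I use the global approximation property $\|{\bs w}-{\bf P}_{\widehat\Gamma_H}{\bs w}\|_{L^2(\widehat\Gamma_H)}\lesssim H^{1/2}\|{\bs w}\|_{H^{1/2}(\widehat\Gamma_H)}\lesssim H^{1/2}\|{\bs w}\|_{H^1(\widehat\Omega^{\rm s}_H)}$, obtained by interpolating the $L^2$ stability with the $H^1$ approximation from Appendix~\ref{appendix:L2-projection}, plus the trace theorem. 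This gives an $H\cdot H=H^2$ contribution, as required. Concretely, I would introduce the Scott--Zhang-type function $\widetilde{\bs w}_H$ with the same interior nodal values but boundary values equal to a local $L^2$ projection on boundary patches. The difference $\widehat{\bf P}_H{\bs w}-\widetilde{\bs w}_H$ is then supported in the boundary layer of elements. Its $L^2$ norm is $\lesssim H^{1/2}\|{\bf P}_{\widehat\Gamma_H}{\bs w}-\widetilde{\bs w}_H\|_{L^2(\widehat\Gamma_H)}$ and its $H^1$ norm is $\lesssim H^{-1/2}\|{\bf P}_{\widehat\Gamma_H}{\bs w}-\widetilde{\bs w}_H\|_{L^2(\widehat\Gamma_H)}$. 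Both are bounded, via the triangle inequality through ${\bs w}|_{\widehat\Gamma_H}$, by $H^{1/2}\|{\bs w}\|_{H^1}$, which yields both claimed estimates.

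The remaining routine ingredients are the standard Scott--Zhang bounds for $\widetilde{\bs w}_H$ and the $H^{1/2}$ approximation of ${\bf P}_{\widehat\Gamma_H}$. The latter I take from Appendix~\ref{appendix:L2-projection} by complex interpolation.
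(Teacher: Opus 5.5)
Your argument is correct. It shares the paper's core mechanism: ${\bf P}_{\widehat\Gamma_H}$ is never localized, and you instead pay a factor $H^{1/2}$ for a finite element function supported in the boundary layer whose trace is controlled in $L^2(\widehat\Gamma_H)$. The difference lies in the comparison function.

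\textbf{The paper's route.} It compares $\widehat{\bf P}_H{\bs w}$ with the global bulk $L^2$ projection ${\bf P}_{\widehat\Omega^{\rm s}_H}{\bs w}$. Since $\widehat{\bf P}_H$ reproduces $\widehat{\bs V}^{\rm s}_H$, the difference equals $\widehat{\bf P}_H({\bs w}-{\bf P}_{\widehat\Omega^{\rm s}_H}{\bs w})$. It then applies a global bound $\|\widehat{\bf P}_H{\bs v}\|_{L^2}^2\lesssim\|{\bs v}\|_{L^2}^2+H\|{\bs v}\|_{L^2(\widehat\Gamma_H)}^2$, which uses only the $L^2(\widehat\Gamma_H)$-stability of ${\bf P}_{\widehat\Gamma_H}$, together with the multiplicative trace inequality. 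The $H^1$ bound follows from a global inverse estimate and the $H^1$-stability of ${\bf P}_{\widehat\Omega^{\rm s}_H}$ on quasi-uniform meshes. This is short and needs no auxiliary construction.

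\textbf{Your route.} Your Scott--Zhang-type $\widetilde{\bs w}_H$ shares the interior nodal values of $\widehat{\bf P}_H{\bs w}$. Hence $\widehat{\bf P}_H{\bs w}=\widetilde{\bs w}_H$ exactly on every element with no node on $\widehat\Gamma_H$. This gives genuinely element-local estimates away from the interface and confines the nonlocality explicitly to the boundary layer. The cost is constructing $\widetilde{\bs w}_H$ and quoting its standard bounds.

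\textbf{An unnecessary detour.} You do not need the $H^{1/2}\to L^2$ approximation property of ${\bf P}_{\widehat\Gamma_H}$ obtained by interpolation. Since $\widetilde{\bs w}_H|_{\widehat\Gamma_H}\in\widehat{\bs\Lambda}_H$, you have ${\bf P}_{\widehat\Gamma_H}{\bs w}-\widetilde{\bs w}_H={\bf P}_{\widehat\Gamma_H}({\bs w}-\widetilde{\bs w}_H)$ on $\widehat\Gamma_H$. Plain $L^2$-stability of ${\bf P}_{\widehat\Gamma_H}$ and the standard trace bound $\|{\bs w}-\widetilde{\bs w}_H\|_{L^2(\widehat\Gamma_H)}\lesssim H^{1/2}\|{\bs w}\|_{H^1(\widehat\Omega^{\rm s}_H)}$ then already give both estimates. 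This is precisely the projection-plus-$L^2$-stability mechanism the paper uses.

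Your opening paragraph on local stability via subtracting patch averages is superseded by the boundary-layer argument and can be dropped.
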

\begin{proof}
If $K$ does not intersect the boundary $\widehat\Gamma_H$, then 
$\widehat{\bf P}_H{\bs w}(x) = ({\bf P}_{\widetilde K_x}{\bs w})(x)$ for every $x\in N_K$, and therefore
\begin{align}\label{L2-wH-w}
\|\widehat{\bf P}_H{\bs w}\|_{L^2(K)}^2&
\simeq H^d\|\widehat{\bf P}_H{\bs w}\|_{L^\infty(K)}^2 
\simeq H^d\max_{x\in N_K}|({\bf P}_{\widetilde K_x}{\bs w})(x)|^2 
\lesssim H^d \max_{x\in N_K} \|{\bf P}_{\widetilde K_x}{\bs w}\|_{L^\infty(\widetilde K_x)}^2 \notag\\
&\lesssim \max_{x\in N_K} \|{\bf P}_{\widetilde K_x}{\bs w}\|_{L^2(\widetilde K_x)}^2 
\lesssim \max_{x\in N_K} \|{\bs w}\|_{L^2(\widetilde K_x)}^2  \lesssim \|{\bs w}\|_{L^2(\widetilde K)}^2 . 
\end{align}

If $K$ intersects the boundary $\widehat\Gamma_H$, then we denote by $\mathring N_K$ and $\partial N_K$ the set of interior and boundary nodes on $K$, respectively. Thus 
\begin{align*}
\|\widehat{\bf P}_H{\bs w}\|_{L^2(K)}^2
&\simeq H^d\|\widehat{\bf P}_H{\bs w}\|_{L^\infty(K)}^2 \\
&\simeq H^d\max_{x\in \mathring N_K}|({\bf P}_{\widetilde K_x}{\bs w})(x)|^2 + H^d\max_{x\in \partial N_K}|({\bf P}_{\widehat\Gamma_H}{\bs w})(x)|^2 \\
&\lesssim \|{\bs w}\|_{L^2(\widetilde K)}^2 + H^d\max_{x\in \partial N_K}|({\bf P}_{\widehat\Gamma_H}{\bs w})(x)|^2 \\
&\lesssim \|{\bs w}\|_{L^2(\widetilde K)}^2 + H \|{\bf P}_{\widehat\Gamma_H}{\bs w}\|_{L^2(\partial K \cap \widehat\Gamma_H)}^2 .
\end{align*}
Therefore, 
\begin{align*}
\sum_{K\cap\widehat\Gamma_H\neq\emptyset} \|\widehat{\bf P}_H{\bs w}\|_{L^2(K)}^2
&\lesssim \sum_{K\cap\widehat\Gamma_H\neq\emptyset}\|{\bs w}\|_{L^2(\widetilde K)}^2 + 
H \|{\bf P}_{\widehat\Gamma_H}{\bs w}\|_{L^2(\widehat\Gamma_H)}^2 \\
&\lesssim \sum_{K\cap\widehat\Gamma_H\neq\emptyset}\|{\bs w}\|_{L^2(\widetilde K)}^2 + 
H \|{\bs w}\|_{L^2(\widehat\Gamma_H)}^2 .
\end{align*}
Combining this result with the interior estimates in \eqref{L2-wH-w}, we obtain  
\begin{align*}
\|\widehat{\bf P}_H{\bs w}\|_{L^2(\widehat\Omega^{\rm s}_H)}^2
&\lesssim \|{\bs w}\|_{L^2(\widehat\Omega^{\rm s}_H)}^2  + 
H \|{\bs w}\|_{L^2(\widehat\Gamma_H)}^2 .
\end{align*}
Let ${\bf P}_{\widehat\Omega^{\rm s}_H}{\bs w}$ be the $L^2$-orthogonal projection of ${\bs w}$ onto $\widehat{\bs V}^{\rm s}_H$. Then 
\begin{align}\label{PHw-POmegaw}
\|\widehat{\bf P}_H{\bs w}-{\bf P}_{\widehat\Omega^{\rm s}_H}{\bs w}\|_{L^2(\widehat\Omega^{\rm s}_H)}^2
&= 
\|\widehat{\bf P}_H({\bs w}-{\bf P}_{\widehat\Omega^{\rm s}_H}{\bs w})\|_{L^2(\widehat\Omega^{\rm s}_H)}^2 \notag\\
&\lesssim \|{\bs w}-{\bf P}_{\widehat\Omega^{\rm s}_H}{\bs w}\|_{L^2(\widehat\Omega^{\rm s}_H)}^2  + 
H \|{\bs w}-{\bf P}_{\widehat\Omega^{\rm s}_H}{\bs w}\|_{L^2(\widehat\Gamma_H)}^2 \notag\\
&\lesssim H^2\|{\bs w}\|_{H^1(\widehat\Omega^{\rm s}_H)}^2  .
\end{align}
By applying the triangle inequality, we obtain 
\begin{align*}
\|\widehat{\bf P}_H{\bs w}-{\bs w}\|_{L^2(\widehat\Omega^{\rm s}_H)}
\le 
\|\widehat{\bf P}_H{\bs w}-{\bf P}_{\widehat\Omega^{\rm s}_H}{\bs w}\|_{L^2(\widehat\Omega^{\rm s}_H)}
+ \|{\bs w}-{\bf P}_{\widehat\Omega^{\rm s}_H}{\bs w}\|_{L^2(\widehat\Omega^{\rm s}_H)} 
&\lesssim H\|{\bs w}\|_{H^1(\widehat\Omega^{\rm s}_H)}.
\end{align*}
This proves the first result of Lemma \ref{Lemma:PH}. 

By applying the finite element inverse estimate to \eqref{PHw-POmegaw} and using the $H^1$ stability of the $L^2$ projection ${\bf P}_{\widehat\Omega^{\rm s}_H}$, we obtain the second result of Lemma \ref{Lemma:PH}. 
\end{proof}

Similarly, we can define a projection ${\bf P}_H(t){\bs w}$ which uses the $L^2$ projection on $\Gamma[{\bs X}_H(\cdot,t)]$ instead of the $L^2$ projection on $\widehat\Gamma_H$. 

\begin{definition}\label{L2-L2-proj2}
Let $t\in[0,t_*]$. For ${\bs w}\in H^1(\widehat\Omega^{\rm s}_H)^d$, we define ${\bf P}_H(t){\bs w}\in \widehat{\bs V}^{\rm s}_H$ to be a finite element function determined by the following conditions: 
\begin{enumerate}
\item 
$({\bf P}_H(t){\bs w})|_{\widehat\Gamma_H}$ is defined as $[{\bf P}_{\Gamma[{\bs X}_H(\cdot,t)]}({\bs w}|_{\widehat\Gamma_H}\circ{\bs X}_H(\cdot,t)^{-1})]\circ{\bs X}_H(\cdot,t)$. 

\item 
$({\bf P}_H(t){\bs w})(x_j)=({\bf P}_{\widetilde K_{x_j}}{\bs w})(x_j)$ for every interior node $x_j$ {\rm(}excluding nodes on $\widehat\Gamma_H${\rm)}.
\end{enumerate}
\end{definition}

The same stability and approximation property also hold for the projection ${\bf P}_H(t){\bs w}$, as shown in the following lemma. The proof is similar and therefore omitted. 
\begin{lemma}\label{Lemma:PH2}
The projection ${\bf P}_H(t):H^1(\widehat\Omega^{\rm s}_H)^d\rightarrow \widehat{\bs V}^{\rm s}_H$ defined in Definition \ref{L2-L2-proj2} has following properties: 
\begin{align*}
\|{\bf P}_H(t){\bs w} - {\bs w}\|_{L^2(\widehat\Omega^{\rm s}_H)}
&\lesssim H\|{\bs w}\|_{H^1(\widehat\Omega^{\rm s}_H)} \\
\|{\bf P}_H(t){\bs w}\|_{H^1(\widehat\Omega^{\rm s}_H)}
&\lesssim \|{\bs w}\|_{H^1(\widehat\Omega^{\rm s}_H)} .
\end{align*}
\end{lemma}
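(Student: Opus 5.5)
The plan is to compare $\widehat{\bf P}_H{\bs w}$ with the global $L^2$ projection ${\bf P}_{\widehat\Omega^{\rm s}_H}{\bs w}$ onto $\widehat{\bs V}^{\rm s}_H$. That projection is known to satisfy $\|{\bs w}-{\bf P}_{\widehat\Omega^{\rm s}_H}{\bs w}\|_{L^2}\lesssim H\|{\bs w}\|_{H^1}$ and $\|{\bf P}_{\widehat\Omega^{\rm s}_H}{\bs w}\|_{H^1}\lesssim\|{\bs w}\|_{H^1}$ on quasi-uniform meshes. The key structural fact is that $\widehat{\bf P}_H$ reproduces finite element functions: for ${\bs v}_H\in\widehat{\bs V}^{\rm s}_H$, the local projections ${\bf P}_{\widetilde K_x}{\bs v}_H={\bs v}_H$ and ${\bf P}_{\widehat\Gamma_H}({\bs v}_H|_{\widehat\Gamma_H})={\bs v}_H|_{\widehat\Gamma_H}$. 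Since $\widehat{\bf P}_H$ is linear, this gives $\widehat{\bf P}_H{\bs w}-{\bf P}_{\widehat\Omega^{\rm s}_H}{\bs w}=\widehat{\bf P}_H({\bs w}-{\bf P}_{\widehat\Omega^{\rm s}_H}{\bs w})$. Both claims therefore reduce to a stability bound for $\widehat{\bf P}_H$ that mixes bulk and trace norms.

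The first step is that stability bound,
\[
\|\widehat{\bf P}_H{\bs w}\|_{L^2(\widehat\Omega^{\rm s}_H)}^2\lesssim \|{\bs w}\|_{L^2(\widehat\Omega^{\rm s}_H)}^2+H\|{\bs w}\|_{L^2(\widehat\Gamma_H)}^2,
\]
which I would prove element by element. On each (possibly curved) simplex $K$, norm equivalence on the reference element gives $\|\widehat{\bf P}_H{\bs w}\|_{L^2(K)}^2\simeq H^d\max_{x\in N_K}|\widehat{\bf P}_H{\bs w}(x)|^2$. At an interior node $x$, the nodal value is $({\bf P}_{\widetilde K_x}{\bs w})(x)$. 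Applying an inverse $L^\infty$–$L^2$ estimate on the patch $\widetilde K_x$ and then $L^2$-stability of the local projection bounds $H^d|\cdot|^2$ by $\|{\bs w}\|_{L^2(\widetilde K_x)}^2$. At a boundary node, the nodal value comes from ${\bf P}_{\widehat\Gamma_H}{\bs w}$. The inverse estimate on the $(d-1)$-dimensional face gives $H^d|\cdot|^2\lesssim H\|{\bf P}_{\widehat\Gamma_H}{\bs w}\|_{L^2(\partial K\cap\widehat\Gamma_H)}^2$. Summing over $K$ uses the bounded overlap of the patches $\widetilde K$ and the $L^2(\widehat\Gamma_H)$-stability of ${\bf P}_{\widehat\Gamma_H}$.

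The second step applies this bound to ${\bs w}-{\bf P}_{\widehat\Omega^{\rm s}_H}{\bs w}$. The bulk term is $\lesssim H^2\|{\bs w}\|_{H^1}^2$. For the boundary term I would use the multiplicative trace inequality $\|f\|_{L^2(\widehat\Gamma_H)}^2\lesssim\|f\|_{L^2}\|f\|_{H^1}$, which gives $H\|{\bs w}-{\bf P}_{\widehat\Omega^{\rm s}_H}{\bs w}\|^2_{L^2(\widehat\Gamma_H)}\lesssim H\cdot H\|{\bs w}\|_{H^1}^2$. Hence $\|\widehat{\bf P}_H{\bs w}-{\bf P}_{\widehat\Omega^{\rm s}_H}{\bs w}\|_{L^2}\lesssim H\|{\bs w}\|_{H^1}$, and the triangle inequality yields the first claimed estimate. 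For the second, apply the inverse estimate $\|\cdot\|_{H^1}\lesssim H^{-1}\|\cdot\|_{L^2}$ to the finite element difference $\widehat{\bf P}_H{\bs w}-{\bf P}_{\widehat\Omega^{\rm s}_H}{\bs w}$, then combine with $H^1$-stability of ${\bf P}_{\widehat\Omega^{\rm s}_H}$.

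I expect the main obstacle to be the boundary part of the stability step. One must make sure the isoparametric curved elements still allow the nodal-value/norm equivalences with uniform constants; this follows from \eqref{GeomApprox} and shape regularity. One must also make sure the $H^1$-stability of the global $L^2$ projection is available on the quasi-uniform isoparametric mesh. If that is in doubt, I would instead compare $\widehat{\bf P}_H{\bs w}$ with a Scott–Zhang type quasi-interpolant, which is $H^1$-stable and reproduces finite element functions, and run the same argument.
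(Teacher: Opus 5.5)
Your argument is, step for step, the paper's proof of Lemma~\ref{Lemma:PH}: comparison with the global $L^2$ projection ${\bf P}_{\widehat\Omega^{\rm s}_H}$, the elementwise bound with the boundary term $H\|{\bs w}\|_{L^2(\widehat\Gamma_H)}^2$ applied to ${\bs w}-{\bf P}_{\widehat\Omega^{\rm s}_H}{\bs w}$, then an inverse estimate. The paper proves Lemma~\ref{Lemma:PH2} only by saying that argument carries over. However, as written you have proved the estimate for the wrong operator. ${\bf P}_H(t)$ does not take its boundary values from ${\bf P}_{\widehat\Gamma_H}$. It takes them from $f\mapsto[{\bf P}_{\Gamma[{\bs X}_H]}(f\circ{\bs X}_H^{-1})]\circ{\bs X}_H$. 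This is the orthogonal projection onto $\widehat{\bs\Lambda}_H$ in the $L^2(\Gamma[{\bs X}_H(t)])$ inner product pulled back to $\widehat\Gamma_H$, i.e.\ weighted by the time-dependent surface Jacobian of ${\bs X}_H$. So both places where you use the boundary projection must be re-checked for this operator: the reproduction identity ${\bf P}_{\widehat\Gamma_H}({\bs v}_H|_{\widehat\Gamma_H})={\bs v}_H|_{\widehat\Gamma_H}$, and the $L^2(\widehat\Gamma_H)$-stability used when summing over elements. The resulting constants must also be independent of $t\in[0,t_*]$, which your proof never addresses because $t$ does not appear in it.

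The fix is short but needs the moving geometry. Reproduction holds because ${\bs v}_H|_{\widehat\Gamma_H}\circ{\bs X}_H^{-1}\in{\bs\Lambda}_H(\Gamma[{\bs X}_H])$ by \eqref{FE-space-GammaXH}, so ${\bf P}_{\Gamma[{\bs X}_H]}$ returns it unchanged. Stability follows from
\[
\big\|[{\bf P}_{\Gamma[{\bs X}_H]}(f\circ{\bs X}_H^{-1})]\circ{\bs X}_H\big\|_{L^2(\widehat\Gamma_H)}\simeq\|{\bf P}_{\Gamma[{\bs X}_H]}(f\circ{\bs X}_H^{-1})\|_{L^2(\Gamma[{\bs X}_H])}\le\|f\circ{\bs X}_H^{-1}\|_{L^2(\Gamma[{\bs X}_H])}\simeq\|f\|_{L^2(\widehat\Gamma_H)} .
\]
This uses the norm equivalence \eqref{norm-equiv-GH}, whose constants are uniform on $[0,t_*]$ because of the continuation hypothesis \eqref{ind-1} (via \eqref{aux964}). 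With these two facts in place of the corresponding properties of ${\bf P}_{\widehat\Gamma_H}$, the rest of your argument goes through unchanged. This includes the identity ${\bf P}_H(t){\bs w}-{\bf P}_{\widehat\Omega^{\rm s}_H}{\bs w}={\bf P}_H(t)({\bs w}-{\bf P}_{\widehat\Omega^{\rm s}_H}{\bs w})$ and the final inverse estimate, and it yields constants independent of $t$.
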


\section{Duality between FE subspaces of \texorpdfstring{$H^s(\Gamma[{\bs X}_H])$}{x} and \texorpdfstring{$H^{-s}(\Gamma[{\bs X}_H])$}{x}}
\label{appendix:duality}

The following result is frequently used. 
\begin{lemma}\label{Lemma-A-duality}
Let $s\in[0,1]$. Then the following relation holds for $t\in[0,t_*]$: 
\begin{align}
\|{\bs w}_H\|_{H^{-s}(\Gamma[{\bs X}_H])}
\simeq
\sup_{\substack{{\bs\lambda}_H\in {\bs\Lambda}_H(\Gamma[{\bs X}_H])\\ \|{\bs\lambda}_H\|_{H^{s}(\Gamma[{\bs X}_H])}\le 1}} \int_{\Gamma[{\bs X}_H]} {\bs w}_H\cdot {\bs\lambda}_H 
\quad\mbox{for}\,\,\,{\bs w}_H\in {\bs\Lambda}_H(\Gamma[{\bs X}_H]) ,\\
\|{\bs w}_H\|_{L^p(\Gamma[{\bs X}_H])}
\simeq
\sup_{\substack{{\bs\lambda}_H\in {\bs\Lambda}_H(\Gamma[{\bs X}_H])\\ \|{\bs\lambda}_H\|_{L^{p'}(\Gamma[{\bs X}_H])}\le 1}} \int_{\Gamma[{\bs X}_H]} {\bs w}_H\cdot {\bs\lambda}_H 
\quad\mbox{for}\,\,\,{\bs w}_H\in {\bs\Lambda}_H(\Gamma[{\bs X}_H]) ,
\end{align}
for $1\le p,p'\leq \infty$ such that $\frac{1}{p}+\frac{1}{p'}=1$.
\end{lemma}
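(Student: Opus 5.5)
The plan is to prove both equivalences by the same two-sided argument. In each, one direction is trivial (Hölder or the $H^{-s}$–$H^s$ duality); the other needs a finite element test function ${\bs\lambda}_H$ that nearly attains the dual norm.

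\emph{Easy direction.} For ${\bs\lambda}_H\in{\bs\Lambda}_H(\Gamma[{\bs X}_H])$ we have
$\int_{\Gamma[{\bs X}_H]}{\bs w}_H\cdot{\bs\lambda}_H\le \|{\bs w}_H\|_{H^{-s}}\|{\bs\lambda}_H\|_{H^s}$
by the definition of $H^{-s}(\Gamma[{\bs X}_H])$ as the dual of $H^s(\Gamma[{\bs X}_H])$. Likewise, Hölder gives $\int_{\Gamma[{\bs X}_H]}{\bs w}_H\cdot{\bs\lambda}_H\le \|{\bs w}_H\|_{L^p}\|{\bs\lambda}_H\|_{L^{p'}}$. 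Hence the supremum is bounded by the norm on the left in both cases.

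\emph{Hard direction: restricting to the discrete space.} Take an arbitrary ${\bs\lambda}\in H^s(\Gamma[{\bs X}_H])$ (resp.\ $L^{p'}(\Gamma[{\bs X}_H])$). Since ${\bs w}_H$ lies in the finite element space, self-adjointness of ${\bf P}_{\Gamma[{\bs X}_H]}$ gives
$$
\int_{\Gamma[{\bs X}_H]}{\bs w}_H\cdot{\bs\lambda}=\int_{\Gamma[{\bs X}_H]}{\bs w}_H\cdot{\bf P}_{\Gamma[{\bs X}_H]}{\bs\lambda}.
$$
The function ${\bf P}_{\Gamma[{\bs X}_H]}{\bs\lambda}$ is an admissible discrete test function. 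Its size is controlled by the $H^s$-stability of the projection, \eqref{Hs-Phf} in Appendix~\ref{appendix:L2-projection}, valid for $s\in[0,1]$ and $t\in[0,t_*]$:
$$
\|{\bf P}_{\Gamma[{\bs X}_H]}{\bs\lambda}\|_{H^s}\le C\|{\bs\lambda}\|_{H^s}.
$$
In the $L^p$ case the same role is played by the $L^{p'}$-stability of ${\bf P}_{\Gamma[{\bs X}_H]}$ for all $1\le p'\le\infty$. This follows from \cite[Appendix B]{Li-Tang-2025} for $p'\ge2$ and by duality for $p'\le2$, transported from $\widehat\Gamma_H$ to $\Gamma[{\bs X}_H]$ via the norm equivalence \eqref{norm-equiv-GH}. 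Dividing by $C\|{\bs\lambda}\|$ and taking the supremum over ${\bs\lambda}$ yields $\|{\bs w}_H\|_{H^{-s}}\le C\sup_{{\bs\lambda}_H}(\cdots)$, and the analogous bound for $L^p$.

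\emph{Main obstacle: uniformity of constants.} The constants must not depend on $t\in[0,t_*]$ or on $H$. This requires the stability constants of ${\bf P}_{\Gamma[{\bs X}_H]}$ to be uniform in the moving interface. For the $H^s$ case this uniformity is exactly what Appendix~\ref{appendix:L2-projection} provides. It rests on \eqref{norm-equiv-GXH-G0} and the $W^{1,\infty}$ bound on ${\bs X}_H$ from \eqref{aux964}, with complex interpolation handling fractional $s$.

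For the $L^p$ case the key step is the $L^\infty$ bound. The plan is to pull back to $\widehat\Gamma_H$, where ${\bf P}_{\Gamma[{\bs X}_H]}$ becomes a weighted $L^2$ projection whose weight is the area element. That weight is bounded above and below uniformly by \eqref{area-element} and \eqref{aux964}. The exponential-decay argument for the inverse mass matrix then applies with uniform constants. The endpoint $p'=1$ (i.e.\ $p=\infty$) follows by duality from $p'=\infty$.
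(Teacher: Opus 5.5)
Your proposal is correct and follows the paper's proof. The easy direction comes from duality and H\"older; for the nontrivial direction you replace ${\bs\lambda}$ by ${\bf P}_{\Gamma[{\bs X}_H]}{\bs\lambda}$ and use the $H^s$-stability \eqref{Hs-Phf}, or the $L^{p'}$-stability from \cite[Appendix B]{Li-Tang-2025} together with duality for $p'<2$. The only difference is that you sketch why the $L^{p'}$ bound is uniform on the moving interface, whereas the paper simply cites it; you are right that the pulled-back projection is a weighted one, so the norm equivalence \eqref{norm-equiv-GH} alone would not transfer the bound from $\widehat\Gamma_H$.
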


\begin{proof}
Since $\|{\bf P}_{\Gamma[{\bs X}_H]}{\bs\lambda}\|_{H^s(\Gamma[{\bs X}_H])} \lesssim \|{\bs\lambda}\|_{H^s(\Gamma[{\bs X}_H])}$ for $s\in[-1,1]$, as shown in \eqref{Hs-Phf} and \eqref{H-s-Phf}, it follows that for ${\bs w}_H\in {\bs\Lambda}_H(\Gamma[{\bs X}_H])$:
\begin{align*}
\|{\bs w}_H\|_{H^{-s}(\Gamma[{\bs X}_H])}
&\simeq \sup_{\substack{{\bs\lambda}\in H^{s}(\Gamma[{\bs X}_H])\\ \|{\bs\lambda}\|_{H^s(\Gamma[{\bs X}_H])}\le 1}} 
\int_{\Gamma[{\bs X}_H]} {\bs w}_H\cdot {\bs\lambda} \notag\\
&= \sup_{\substack{{\bs\lambda}\in H^{s}(\Gamma[{\bs X}_H])\\ \|{\bs\lambda}\|_{H^s(\Gamma[{\bs X}_H])}\le 1}} 
\int_{\Gamma[{\bs X}_H]} {\bs w}_H\cdot {\bf P}_{\Gamma[{\bs X}_H]}{\bs\lambda} 
&&\mbox{(by the definition of ${\bf P}_{\Gamma[{\bs X}_H]}$)}\notag\\
&\le \sup_{\substack{{\bs\lambda}_H\in {\bs\Lambda}_H(\Gamma[{\bs X}_H])\\ \|{\bs\lambda}_H\|_{H^s(\Gamma[{\bs X}_H])}\le C}} 
\int_{\Gamma[{\bs X}_H]} {\bs w}_H\cdot {\bs\lambda}_H 
&&\mbox{(here \eqref{Hs-Phf} and \eqref{H-s-Phf} are used)} \notag\\
&\le C\sup_{\substack{{\bs\lambda}_H\in {\bs\Lambda}_H(\Gamma[{\bs X}_H])\\ \|{\bs\lambda}_H\|_{H^s(\Gamma[{\bs X}_H])}\le 1}} 
\int_{\Gamma[{\bs X}_H]} {\bs w}_H\cdot {\bs\lambda}_H .
\end{align*}
The converse inequality holds naturally, i.e., 
\begin{align*}
\|{\bs w}_H\|_{H^{-s}(\Gamma[{\bs X}_H])}
&\simeq \sup_{\substack{{\bs\lambda}\in H^{s}(\Gamma[{\bs X}_H])\\ \|{\bs\lambda}\|_{H^s(\Gamma[{\bs X}_H])}\le 1}} 
\int_{\Gamma[{\bs X}_H]} {\bs w}_H\cdot {\bs\lambda} 
\ge \sup_{\substack{{\bs\lambda}_H\in {\bs\Lambda}_H(\Gamma[{\bs X}_H])\\ \|{\bs\lambda}_H\|_{H^s(\Gamma[{\bs X}_H])}\le 1}} 
\int_{\Gamma[{\bs X}_H]} {\bs w}_H\cdot {\bs\lambda}_H .
\end{align*}
This proves the first result of Lemma \ref{Lemma-A-duality}. 

Since the $L^2$ projection ${\bf P}_{\Gamma[\bs X_H]}$ is also bounded on $L^p(\Gamma[\bs X_H])$ for all $1\le p\le \infty$ (see \cite[Appendix B]{Li-Tang-2025}, where $p\ge 2$ is proved and $1\le p\le 2$ follows from duality), the same argument as above can be used to show the second result of Lemma \ref{Lemma-A-duality}. 
\end{proof}

Since ${\bs w}_H\circ{\bs X}_H^{-1}\in {\bs\Lambda}_H(\Gamma[{\bs X}_H])$ iff ${\bs w}_H\in \widehat{\bs\Lambda}_H = {\bs\Lambda}_H(\widehat\Gamma_H)$, and $\|{\bs\lambda}_H\circ{\bs X}_H^{-1}\|_{H^s(\Gamma[{\bs X}_H])} \simeq \|{\bs\lambda}_H\|_{H^s(\widehat\Gamma_H )}$ when $s\in[0,1]$ in view of \eqref{norm-equivl-Hs}, Lemma \ref{Lemma-A-duality} implies the following result. 

\begin{lemma}[Duality estimates on {$\Gamma[{\bs X}_H]$}]\label{Lemma-A-duality0}
Let $s\in[0,1]$. Then the following relation holds for ${\bs w}_H\in \widehat{\bs\Lambda}_H $ and $t\in[0,t_*]$: 
\begin{align*}
\|{\bs w}_H\circ{\bs X}_H^{-1}\|_{H^{-s}(\Gamma[{\bs X}_H])}
\simeq
\sup_{\substack{{\bs\lambda}_H\in \widehat{\bs\Lambda}_H\\ \|{\bs\lambda}_H\|_{H^{s}(\widehat\Gamma_H)}\le 1}} \int_{\Gamma[{\bs X}_H]} ({\bs w}_H\circ{\bs X}_H^{-1}) \cdot ({\bs\lambda}_H\circ{\bs X}_H^{-1} ) .
\end{align*}
\end{lemma}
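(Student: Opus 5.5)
The statement is Lemma~\ref{Lemma-A-duality0}. I would deduce it directly from Lemma~\ref{Lemma-A-duality} by a change of variables on the discrete test space; there is no genuinely new estimate to prove.

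Fix $t\in[0,t_*]$ and ${\bs w}_H\in\widehat{\bs\Lambda}_H$, and set ${\bs w}_H^1\eqd{\bs w}_H\circ{\bs X}_H^{-1}$. By the definition \eqref{FE-space-GammaXH}, ${\bs w}_H^1\in{\bs\Lambda}_H(\Gamma[{\bs X}_H])$. Moreover, the map ${\bs\lambda}_H\mapsto{\bs\lambda}_H\circ{\bs X}_H^{-1}$ is a bijection from $\widehat{\bs\Lambda}_H$ onto ${\bs\Lambda}_H(\Gamma[{\bs X}_H])$. It is surjective by definition, and it is injective because ${\bs X}_H$ is invertible for $t\le t_*$ (see \eqref{W1infty-XH-inv}).

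Applying the first relation of Lemma~\ref{Lemma-A-duality} to ${\bs w}_H^1$ gives
\begin{align*}
\|{\bs w}_H^1\|_{H^{-s}(\Gamma[{\bs X}_H])}\simeq\sup_{\substack{{\bs\mu}_H\in{\bs\Lambda}_H(\Gamma[{\bs X}_H])\\ \|{\bs\mu}_H\|_{H^s(\Gamma[{\bs X}_H])}\le1}}\int_{\Gamma[{\bs X}_H]}{\bs w}_H^1\cdot{\bs\mu}_H .
\end{align*}
Now write ${\bs\mu}_H={\bs\lambda}_H\circ{\bs X}_H^{-1}$ with ${\bs\lambda}_H\in\widehat{\bs\Lambda}_H$. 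The integrand then coincides with the one in the claim, so only the constraint set changes.

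By the norm equivalence \eqref{norm-equivl-Hs}, there are constants $c_1,c_2>0$, uniform in $t\in[0,t_*]$, such that
\begin{align*}
c_1\|{\bs\lambda}_H\|_{H^s(\widehat\Gamma_H)}\le\|{\bs\mu}_H\|_{H^s(\Gamma[{\bs X}_H])}\le c_2\|{\bs\lambda}_H\|_{H^s(\widehat\Gamma_H)}.
\end{align*}
Hence the unit ball $\{\|{\bs\mu}_H\|_{H^s(\Gamma[{\bs X}_H])}\le1\}$ contains the ball $\{\|{\bs\lambda}_H\|_{H^s(\widehat\Gamma_H)}\le c_2^{-1}\}$ and is contained in the ball $\{\|{\bs\lambda}_H\|_{H^s(\widehat\Gamma_H)}\le c_1^{-1}\}$.

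The functional being maximized is linear in ${\bs\lambda}_H$, and the supremum over a symmetric set is nonnegative. Rescaling the radius therefore changes the supremum only by the constant factors $c_1^{-1}$ or $c_2^{-1}$. This gives both inequalities in the claimed equivalence.

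The only point needing care is that all constants are independent of $t\in[0,t_*]$ and of $H$. For Lemma~\ref{Lemma-A-duality} this follows from the uniform $H^{\pm s}$ boundedness of ${\bf P}_{\Gamma[{\bs X}_H]}$ in \eqref{Hs-Phf}--\eqref{H-s-Phf}. For \eqref{norm-equivl-Hs} it follows from the uniform $W^{1,\infty}$ bounds on ${\bs X}_H$ and its inverse, which are guaranteed by the continuation hypothesis \eqref{ind-1} through \eqref{aux964}. I expect no real obstacle beyond checking that the interpolation step in \eqref{norm-equivl-Hs} is uniform. This holds because complex interpolation preserves the uniform $L^2$ and $H^1$ equivalence constants.
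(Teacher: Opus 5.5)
Your proposal is correct and follows the paper's argument. The paper also obtains Lemma~\ref{Lemma-A-duality0} directly from Lemma~\ref{Lemma-A-duality}, by identifying ${\bs\Lambda}_H(\Gamma[{\bs X}_H])$ with $\widehat{\bs\Lambda}_H$ through composition with ${\bs X}_H^{-1}$ and invoking the norm equivalence \eqref{norm-equivl-Hs}; your explicit rescaling of the constraint set simply writes out the step the paper leaves implicit.
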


It is known that there exists a standard extension operator $\mathcal{E}:H^{\frac12}(\widehat\Gamma_H)^d\rightarrow H^1(\widehat\Omega^{\rm s}_H)^d$; see \cite[Theorem 1.5.1.3]{Grisvard-2011}. 
For any ${\bs w}_H\in \widehat{\bs\Lambda}_H$, let $\mathcal{E}_H{\bs w}_H\eqd \widehat{\bf P}_H\mathcal{E}{\bs w}_H$, where $\widehat{\bf P}_H$ is defined in Definition \ref{L2-L2-proj} of Appendix \ref{appendix:projection}. Then $\mathcal{E}_H{\bs w}_H={\bs w}_H$ on $\widehat\Gamma_H$, and Lemma \ref{Lemma:PH} implies that 
$$
\|\mathcal{E}_H{\bs w}_H\|_{H^1(\widehat\Omega^{\rm s}_H)}
\lesssim \|\mathcal{E}{\bs w}_H\|_{H^1(\widehat\Omega^{\rm s}_H)}
\lesssim \|{\bs w}_H\|_{H^{\frac12}(\widehat\Gamma_H)} . 
$$
This proves the following result.
\begin{lemma}\label{Lemma:extension}
There exists an extension operator $\mathcal{E}_H: \widehat{\bs\Lambda}_H \rightarrow \widehat{\bs V}^{\rm s}_H $ such that 
\begin{align*}
\|\mathcal{E}_H {\bs w}_H\|_{H^1(\widehat\Omega^{\rm s}_H)}
\lesssim \|{\bs w}_H\|_{H^{\frac12}(\widehat\Gamma_H)} . 
\end{align*}
\end{lemma}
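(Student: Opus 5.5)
The statement to prove is Lemma~\ref{Lemma:extension}: every interface finite element function ${\bs w}_H\in\widehat{\bs\Lambda}_H$ admits a finite element extension $\mathcal{E}_H{\bs w}_H\in\widehat{\bs V}^{\rm s}_H$ with $H^1$ norm controlled by $\|{\bs w}_H\|_{H^{\frac12}(\widehat\Gamma_H)}$. The plan is to compose a continuous right inverse of the trace with the boundary-preserving quasi-projection of Appendix~\ref{appendix:projection}.

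\textbf{Step 1.} Take the continuous trace-lifting operator $\mathcal{E}:H^{\frac12}(\widehat\Gamma_H)^d\to H^1(\widehat\Omega^{\rm s}_H)^d$ of \cite[Theorem 1.5.1.3]{Grisvard-2011}. It satisfies $(\mathcal{E}{\bs w})|_{\widehat\Gamma_H}={\bs w}$ and $\|\mathcal{E}{\bs w}\|_{H^1}\lesssim\|{\bs w}\|_{H^{\frac12}}$.

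\textbf{Step 2.} The constant in Step~1 must be independent of $H$, even though $\widehat\Omega^{\rm s}_H$ varies with $H$. Since $\widehat\Omega^{\rm s}_H$ is only Lipschitz (piecewise smooth), I would pull back through $\widehat{\bs\Phi}_H$. By \eqref{GeomApprox}, $\widehat{\bs\Phi}_H$ is bi-Lipschitz with constants uniform in $H$. Hence the $H^{\frac12}$ norms on $\widehat\Gamma_H$ and $\widehat\Gamma$ are uniformly equivalent, and likewise the $H^1$ norms on $\widehat\Omega^{\rm s}_H$ and $\widehat\Omega^{\rm s}$. It therefore suffices to define $\mathcal{E}{\bs w}=[\mathcal{E}_0({\bs w}\circ\widehat{\bs\Phi}_H^{-1})]\circ\widehat{\bs\Phi}_H$, where $\mathcal{E}_0$ is the fixed extension for the smooth domain $\widehat\Omega^{\rm s}$.

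\textbf{Step 3.} Set $\mathcal{E}_H{\bs w}_H=\widehat{\bf P}_H\mathcal{E}{\bs w}_H$, with $\widehat{\bf P}_H$ as in Definition~\ref{L2-L2-proj}. On $\widehat\Gamma_H$, $\widehat{\bf P}_H$ acts as the interface $L^2$ projection ${\bf P}_{\widehat\Gamma_H}$. Because $(\mathcal{E}{\bs w}_H)|_{\widehat\Gamma_H}={\bs w}_H$ already lies in $\widehat{\bs\Lambda}_H$, this projection fixes it, so $\mathcal{E}_H{\bs w}_H={\bs w}_H$ on $\widehat\Gamma_H$.

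\textbf{Step 4.} The $H^1$ stability in Lemma~\ref{Lemma:PH} then gives
$\|\mathcal{E}_H{\bs w}_H\|_{H^1}\lesssim\|\mathcal{E}{\bs w}_H\|_{H^1}\lesssim\|{\bs w}_H\|_{H^{\frac12}(\widehat\Gamma_H)}$.

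The main obstacle is the uniform $H^1$ stability of $\widehat{\bf P}_H$ near the boundary, which this proof borrows from Lemma~\ref{Lemma:PH}. That lemma rests on three ingredients: the trace inequality $H\|{\bs w}\|_{L^2(\widehat\Gamma_H)}^2\lesssim\|{\bs w}\|_{L^2}^2+H^2\|{\bs w}\|_{H^1}^2$; the $H^1$ stability of the global $L^2$ projection on quasi-uniform meshes; and an inverse estimate. I would take all three as given on the isoparametric mesh. The uniformity of the constants in $H$ from Step~2 is the secondary point to check.
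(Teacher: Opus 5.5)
Your proposal is correct and is essentially the paper's proof: lift ${\bs w}_H$ with Grisvard's trace extension, apply $\widehat{\bf P}_H$ from Definition~\ref{L2-L2-proj} (which fixes the trace because it already lies in $\widehat{\bs\Lambda}_H$), and conclude with the $H^1$ stability of Lemma~\ref{Lemma:PH}. Your Step~2, which pulls the lifting back through $\widehat{\bs\Phi}_H$ so that its constant is uniform in $H$, supplies a detail the paper leaves implicit when it cites Grisvard directly on $\widehat\Omega^{\rm s}_H$.
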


By choosing $s=\frac12$ in Lemma \ref{Lemma-A-duality0} and using Lemma \ref{Lemma:extension}, we obtain the following relation for ${\bs w}_H\in \widehat{\bs\Lambda}_H $ and $t\in[0,t_*]$: 
\begin{align*}
\|{\bs w}_H\circ{\bs X}_H^{-1}\|_{H^{-\frac12}(\Gamma[{\bs X}_H])}
&\simeq
\sup_{\substack{{\bs\lambda}_H\in \widehat{\bs\Lambda}_H\\ \|{\bs\lambda}_H\|_{H^{\frac12}(\widehat\Gamma_H)}\le 1}} \int_{\Gamma[{\bs X}_H]} ({\bs w}_H\circ{\bs X}_H^{-1}) \cdot ({\bs\lambda}_H\circ{\bs X}_H^{-1} ) \\
&\lesssim
\sup_{\substack{\mathcal{E}_H{\bs\lambda}_H\in \widehat{\bs V}^{\rm s}_H\\\ \|\mathcal{E}_H{\bs\lambda}_H\|_{H^1(\widehat\Omega^{\rm s}_H)}\le C}} \int_{\Gamma[{\bs X}_H]} ({\bs w}_H\circ{\bs X}_H^{-1}) \cdot (\mathcal{E}_H{\bs\lambda}_H\circ{\bs X}_H^{-1} ) \\
&\lesssim
\sup_{\substack{{\bs\lambda}_H\in \widehat{\bs V}^{\rm s}_H\\ \|{\bs\lambda}_H\|_{H^1(\widehat\Omega^{\rm s}_H)}\le 1}} \int_{\Gamma[{\bs X}_H]} ({\bs w}_H\circ{\bs X}_H^{-1}) \cdot ({\bs\lambda}_H\circ{\bs X}_H^{-1} ) .
\end{align*}
The converse inequality holds naturally, i.e., 
\begin{align*}
&\hspace{-20pt}\sup_{\substack{{\bs\lambda}_H\in \widehat{\bs V}^{\rm s}_H\\ \|{\bs\lambda}_H\|_{H^1(\widehat\Omega^{\rm s}_H)}\le 1}} \int_{\Gamma[{\bs X}_H]} ({\bs w}_H\circ{\bs X}_H^{-1}) \cdot ({\bs\lambda}_H\circ{\bs X}_H^{-1} ) \\
&\lesssim
\sup_{\substack{{\bs\lambda}_H\in \widehat{\bs V}^{\rm s}_H\\ \|{\bs\lambda}_H\|_{H^1(\widehat\Omega^{\rm s}_H)}\le 1}} \|{\bs w}_H\circ{\bs X}_H^{-1}\|_{H^{-\frac12}(\Gamma[{\bs X}_H])}
\|{\bs \lambda}_H\circ{\bs X}_H^{-1}\|_{H^{\frac12}(\Gamma[{\bs X}_H])} \\
&\lesssim
\sup_{\substack{{\bs\lambda}_H\in \widehat{\bs V}^{\rm s}_H\\ \|{\bs\lambda}_H\|_{H^1(\widehat\Omega^{\rm s}_H)}\le 1}} \|{\bs w}_H\circ{\bs X}_H^{-1}\|_{H^{-\frac12}(\Gamma[{\bs X}_H])}
\|{\bs \lambda}_H\|_{H^1(\widehat\Omega^{\rm s}_H)} 
\quad\mbox{(see \eqref{norm-equivl-Hs})}\\
&\lesssim
\|{\bs w}_H\circ{\bs X}_H^{-1}\|_{H^{-\frac12}(\Gamma[{\bs X}_H])} ,
\end{align*}
where the second-to-last inequality follows from the norm-equivalence relation in \eqref{norm-equivl-Hs} and the trace inequality on $\widehat\Omega^{\rm s}_H$. This proves the following result. 

\begin{lemma}\label{Lemma:H-half}
For $t\in[0,t_*]$, the following relation holds: 
\begin{align*}
\|{\bs w}_H\circ{\bs X}_H^{-1}\|_{H^{-\frac12}(\Gamma[{\bs X}_H])}
&\simeq 
\sup_{\substack{{\bs\lambda}_H\in \widehat{\bs V}^{\rm s}_H\\ \|{\bs\lambda}_H\|_{H^1(\widehat\Omega^{\rm s}_H)}\le 1}} \int_{\Gamma[{\bs X}_H]} ({\bs w}_H\circ{\bs X}_H^{-1}) \cdot ({\bs\lambda}_H\circ{\bs X}_H^{-1} ) .
\end{align*}
\end{lemma}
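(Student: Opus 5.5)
The plan is to combine Lemma~\ref{Lemma-A-duality0} with $s=\tfrac12$ and the discrete extension operator of Lemma~\ref{Lemma:extension}, and then bound the converse direction by a duality and trace argument.

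\emph{Upper bound of the $H^{-\frac12}$ norm by the supremum over $\widehat{\bs V}^{\rm s}_H$.}
\begin{enumerate}
\item By Lemma~\ref{Lemma-A-duality0}, $\|{\bs w}_H\circ{\bs X}_H^{-1}\|_{H^{-\frac12}(\Gamma[{\bs X}_H])}$ is equivalent to the supremum over ${\bs\lambda}_H\in\widehat{\bs\Lambda}_H$ with $\|{\bs\lambda}_H\|_{H^{\frac12}(\widehat\Gamma_H)}\le1$. That lemma rests on the $H^{\pm s}$ stability \eqref{Hs-Phf}--\eqref{H-s-Phf} of ${\bf P}_{\Gamma[{\bs X}_H]}$ and on the norm equivalence \eqref{norm-equivl-Hs}.
\item For each admissible ${\bs\lambda}_H$, I replace it by $\mathcal{E}_H{\bs\lambda}_H=\widehat{\bf P}_H\mathcal{E}{\bs\lambda}_H$. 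This function coincides with ${\bs\lambda}_H$ on $\widehat\Gamma_H$, so the interface integral is unchanged.
\item Its norm satisfies $\|\mathcal{E}_H{\bs\lambda}_H\|_{H^1(\widehat\Omega^{\rm s}_H)}\lesssim\|{\bs\lambda}_H\|_{H^{\frac12}(\widehat\Gamma_H)}$ by Lemma~\ref{Lemma:extension}.
\item Rescaling by that constant, the supremum over the $H^{\frac12}$ unit ball of $\widehat{\bs\Lambda}_H$ is bounded by $C$ times the supremum over the $H^1$ unit ball of $\widehat{\bs V}^{\rm s}_H$.
\end{enumerate}

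\emph{Reverse inequality.} Take any ${\bs\lambda}_H\in\widehat{\bs V}^{\rm s}_H$ with $\|{\bs\lambda}_H\|_{H^1(\widehat\Omega^{\rm s}_H)}\le1$.
\begin{enumerate}
\item The $H^{-\frac12}$--$H^{\frac12}$ duality pairing on $\Gamma[{\bs X}_H]$ bounds the integral by $\|{\bs w}_H\circ{\bs X}_H^{-1}\|_{H^{-\frac12}}\,\|{\bs\lambda}_H\circ{\bs X}_H^{-1}\|_{H^{\frac12}}$.
\item By \eqref{norm-equivl-Hs}, $\|{\bs\lambda}_H\circ{\bs X}_H^{-1}\|_{H^{\frac12}(\Gamma[{\bs X}_H])}\simeq\|{\bs\lambda}_H\|_{H^{\frac12}(\widehat\Gamma_H)}$.
\item The standard trace inequality on the fixed curved domain $\widehat\Omega^{\rm s}_H$, which is uniformly Lipschitz in $H$ thanks to $\widehat{\bs\Phi}_H$, bounds this by $\|{\bs\lambda}_H\|_{H^1(\widehat\Omega^{\rm s}_H)}$.
\item Constants stay uniform in $t\in[0,t_*]$ because of \eqref{aux964} and \eqref{W1infty-XH-inv}.
\end{enumerate}

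\emph{Main obstacle.} The only delicate point is Lemma~\ref{Lemma:extension}, specifically $H^1$ stability of $\widehat{\bf P}_H$. This has already been established in Lemma~\ref{Lemma:PH}, together with the boundedness of the continuous extension $\mathcal{E}:H^{\frac12}(\widehat\Gamma_H)\to H^1(\widehat\Omega^{\rm s}_H)$ uniformly in $H$. If uniformity of $\mathcal{E}$ on the $H$-dependent domain were in doubt, I would instead do the following:
\begin{enumerate}
\item pull back by $\widehat{\bs\Phi}_H$ to the fixed smooth domain $\widehat\Omega^{\rm s}$ and extend there;
\item push the extension forward again, using \eqref{GeomApprox} to control Jacobians uniformly.
\end{enumerate}
Since $\widehat{\bf P}_H$ reproduces the boundary $L^2$ projection, and ${\bs\lambda}_H$ is already a boundary finite element function, the traces match exactly.
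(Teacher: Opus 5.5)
Your proposal is correct and follows the paper's proof essentially step for step. The upper bound comes from Lemma~\ref{Lemma-A-duality0} with $s=\tfrac12$, combined with the trace-preserving discrete extension $\mathcal{E}_H=\widehat{\bf P}_H\mathcal{E}$ of Lemma~\ref{Lemma:extension}; the reverse bound comes from the $H^{-\frac12}$--$H^{\frac12}$ pairing, the norm equivalence \eqref{norm-equivl-Hs}, and the trace inequality on $\widehat\Omega^{\rm s}_H$. Your fallback of pulling back through $\widehat{\bs\Phi}_H$ to get an $H$-uniform continuous extension is a sensible way to justify a point the paper settles only by citing Grisvard.
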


\section{Finite element inverse estimates on \texorpdfstring{$\Gamma[{\bs X}_H]$}{x}}
\label{appendix:inverse}

The following results are frequently used in this paper.

\begin{lemma}[Inverse estimates on {$\Gamma[{\bs X}_H]$}]\label{Lemma:inverse}
Let $s\in[0,1]$. Then the following inverse inequalities hold for $t\in[0,t_*]$: 
\begin{align}\label{wHL2-H-s}
\begin{aligned}
\|{\bs w}_H\|_{H^s(\Gamma[{\bs X}_H])}
&\lesssim H^{-s}\|{\bs w}_H\|_{L^2(\Gamma[{\bs X}_H])} 
&&\mbox{for}\,\,\, {\bs w}_H\in {\bs\Lambda}_H(\Gamma[{\bs X}_H]) ,\\
\|{\bs w}_H\|_{L^2(\Gamma[{\bs X}_H])}
&\lesssim H^{-s}\|{\bs w}_H\|_{H^{-s}(\Gamma[{\bs X}_H])} 
&&\mbox{for}\,\,\, {\bs w}_H\in {\bs\Lambda}_H(\Gamma[{\bs X}_H]) . 
\end{aligned}
\end{align}
\end{lemma}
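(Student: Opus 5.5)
The plan is to reduce both inequalities to the reference interface $\widehat\Gamma_H$ through the pullback by ${\bs X}_H$, prove them there with standard element-wise arguments on the quasi-uniform curved isoparametric mesh, and then transfer them back using the norm equivalences \eqref{norm-equiv-GXH-G0}--\eqref{norm-equivl-Hs}. These equivalences hold uniformly for $t\in[0,t_*]$ because of \eqref{aux964}. The pullback sends ${\bs\Lambda}_H(\Gamma[{\bs X}_H])$ onto $\widehat{\bs\Lambda}_H$, so it is enough to prove the two estimates for ${\bs w}_H\in\widehat{\bs\Lambda}_H$ on $\widehat\Gamma_H$.

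\textbf{First inequality.} For $s=1$ this is the elementwise inverse inequality $\|\nabla_{\widehat\Gamma_H}{\bs w}_H\|_{L^2(K)}\lesssim H^{-1}\|{\bs w}_H\|_{L^2(K)}$. It follows by mapping each curved boundary element to the flat reference simplex, using the equivalence of norms on the finite-dimensional space $\mathbb{P}_k$ there, and the uniform bounds on the isoparametric element maps. Summing over the elements of $\widehat\Gamma_H$ gives the global $H^1$ bound. For $s\in(0,1)$ we interpolate between $L^2$ and $H^1$ in the form
\[
\|{\bs w}_H\|_{H^s}\lesssim \|{\bs w}_H\|_{L^2}^{1-s}\|{\bs w}_H\|_{H^1}^{s}\lesssim H^{-s}\|{\bs w}_H\|_{L^2}.
\]
This is the standard interpolation inequality for the complex interpolation space $H^s=[L^2,H^1]_s$, which is the same definition used in \eqref{norm-equivl-Hs}.

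\textbf{Second inequality.} Take $\lambda={\bs w}_H$ itself as the test function in the duality characterization of Lemma~\ref{Lemma-A-duality}. Then
\[
\|{\bs w}_H\|_{L^2}^2=\int_{\Gamma[{\bs X}_H]}{\bs w}_H\cdot{\bs w}_H\le \|{\bs w}_H\|_{H^{-s}}\|{\bs w}_H\|_{H^s}\lesssim H^{-s}\|{\bs w}_H\|_{H^{-s}}\|{\bs w}_H\|_{L^2},
\]
where the last step uses the first inequality. Dividing by $\|{\bs w}_H\|_{L^2}$ gives the claim. No projection argument is needed here, because ${\bs w}_H$ itself lies in $H^s$.

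The main point to check is uniformity. The constants in the elementwise inverse estimate depend only on shape regularity of the isoparametric mesh of $\widehat\Omega^{\rm s}_H$, through \eqref{GeomApprox}. The transfer to $\Gamma[{\bs X}_H]$ depends only on the $W^{1,\infty}$ bounds on ${\bs X}_H$ and ${\bs X}_H^{-1}$ from \eqref{aux964} and \eqref{W1infty-XH-inv}. All of these are independent of $t\in[0,t_*]$ and $H$, so both estimates hold with uniform constants.
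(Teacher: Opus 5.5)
Your proposal is correct and follows the paper's proof. The first estimate comes from pulling back to $\widehat\Gamma_H$ via the norm equivalence \eqref{norm-equivl-Hs} and applying the standard inverse inequality there; the second comes from duality combined with the first estimate. Testing with ${\bs w}_H$ itself is just the maximizer of the paper's supremum over the unit $L^2$ ball of ${\bs\Lambda}_H(\Gamma[{\bs X}_H])$, so the two arguments coincide.
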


\begin{proof}
For $s\in[0,1]$ and $t\in[0,t_*]$, the equivalence of norms of functions transported between $\Gamma[{\bs X}_H]$ and $\widehat\Gamma_H$ and the inverse inequality on $\widehat\Gamma_H$ imply that 
\begin{align}\label{inverse-Hs-L2}
\| {\bs\lambda}_H \circ {\bs X}_H^{-1} \|_{H^s(\Gamma[{\bs X}_H])}
\simeq \|{\bs\lambda}_H \|_{H^s(\widehat\Gamma_H)} 
&\lesssim H^{-s}\|{\bs\lambda}_H \|_{L^2(\widehat\Gamma_H)} \notag\\
&\simeq H^{-s}\|{\bs\lambda}_H \circ {\bs X}_H^{-1} \|_{L^2(\Gamma[{\bs X}_H])} . 
\end{align} 
This proves the first result of Lemma \ref{Lemma:inverse}. By using Lemma \ref{Lemma-A-duality} and \eqref{inverse-Hs-L2}, we have 
\begin{align}\label{wHL2-H-s1}
\|{\bs w}_H\|_{L^2(\Gamma[{\bs X}_H])}
&\simeq \sup_{\substack{{\bs\lambda}_H\in {\bs\Lambda}_H(\Gamma[{\bs X}_H])\\ \|{\bs\lambda}_H\|_{L^2(\Gamma[{\bs X}_H])}\le 1}} 
\int_{\Gamma[{\bs X}_H]} {\bs w}_H\cdot {\bs\lambda}_H \\
&\lesssim \|{\bs w}_H\|_{H^{-s}(\Gamma[{\bs X}_H])} \|{\bs\lambda}_H \|_{H^s(\Gamma[{\bs X}_H])} \notag\\
&\lesssim H^{-s}\|{\bs w}_H\|_{H^{-s}(\Gamma[{\bs X}_H])} \|{\bs\lambda}_H \|_{L^2(\Gamma[{\bs X}_H])} \quad\mbox{for}\,\,\, s\in[0,1] . \notag
\end{align}
Since $\|{\bs\lambda}_H\|_{L^2(\Gamma[{\bs X}_H])}\le 1$ in \eqref{wHL2-H-s1}, we obtain the second result of Lemma \ref{Lemma:inverse}. 
\end{proof}

\section{Uniform in time inf-sup condition}\label{appendix:inf-sup}

Since the numerical domains $\Omega^{\rm f}[{\bs X}_H]$ are Lipschitz, for any given 
$t\in[0,t_*]$ the inf--sup condition \eqref{inf-sup-ep-L2} with a constant $c_0$ 
independent of $h$ and of the position of $\Gamma[\bX_H]$ in the background fluid mesh 
follows from \cite[Corollary~1]{GuzmOlsh} for all $h\le h_0$, once $h_0>0$ is chosen 
sufficiently small and the pressure function satisfies $\int_{\Omega^{\rm f}_{\rm i}[{\bs X}_H]}e_p=0$, where 
$\Omega^{\rm f}_{\rm i}[{\bs X}_H]\eqd \Omega^{\rm f}[{\bs X}_H]\setminus \Omega^{\rm f}_{\Gamma}[{\bs X}_H]$, the domain consisting of simplices completely inside the fluid domain. We  show that the constants $c_0$ and $h_0$ in 
\eqref{inf-sup-ep-L2} can be taken \emph{uniform in time} $t\in[0,t_*]$.  We also extend \eqref{inf-sup-ep-L2} to $e_p$ without the zero mean condition. 

The uniformity  in time  follows from the arguments in \cite{GuzmOlsh} provided that the three 
conditions formulated below are satisfied. Let ${\mathcal T}_h^{\rm i}(t)$ denote the set of 
simplices from ${\mathcal T}_h$ lying entirely in the interior of 
$\Omega^{\rm f}[{\bs X}_H]$. The following conditions are required:
\begin{description}
  \item[1] The constant $\beta(\Omega^{\rm f}[{\bs X}_H])$ in the Bogovski\u{\i} inequality 
  (the continuous inf--sup inequality) is uniformly bounded away from zero for all 
  $t\in[0,t_*]$, $h\le h_0$, and $H\le H_0$.

  \item[2] For any $T\in {\mathcal T}_h$ intersected by $\Gamma[\bX_H]$, we can assign a simplex $K_T\in {\mathcal T}_h^{\rm i}(t)$ (a ``base'' simplex for $T$) such that $K_T$ can be 
  reached from $T$ by crossing at most $L$ simplices from ${\mathcal T}_h^{\rm i}(t)$, where $L$ 
  is uniformly bounded in $h$ and in $t$. Moreover, any $K\in {\mathcal T}_h^{\rm i}(t)$ can 
  serve as a base for only a uniformly bounded number of simplices intersected by 
  $\Gamma[\bX_H]$, independently of $h$ and~$t$.

  \item[3] For any adjacent $T_1,T_2\in {\mathcal T}_h$ intersected by $\Gamma[\bX_H]$, there 
  exists a path from $K_{T_1}$ to $K_{T_2}$ crossing at most $M$ simplices from 
  ${\mathcal T}_h^{\rm i}(t)$, where $M$ is uniformly bounded in $h$ and~$t$.
\end{description}

To verify Condition~1, 
it suffices to build a diffeomorphism ${\bs\Psi}(t):\,\Omega^{\rm f}(t)\to\Omega^{\rm f}[{\bs X}_H(t)]$ such 
that 
\begin{equation}\label{eq:aux5191}
\|{\bs\Psi}(t)-\mathbf{Id}\|_{W^{1,\infty}(\Omega^{\rm f}(t))} \le \eps_0,
\end{equation}
with sufficiently small $\eps_0>0$ independent of $t$ and $h,H$.  Then  \cite[Theorem~4.4]{bernardi2016continuity} implies the estimate
\begin{equation}\label{eq:aux5196}
|\beta(\Omega^{\rm f}[{\bs X}_H])-\beta(\Omega^{\rm f}(t))|\le c\,\eps_0,
\end{equation}
with some absolute constant $c$. Since the evolution of $\Omega^{\rm f}(t)$ is smooth, by the same theorem $\beta(\Omega^{\rm f}(t))$ depends continuously on $t\in[0,t_*]$ and stays positive, which implies $\inf_{t\in[0,t_*]}\beta(\Omega^{\rm f}(t))>0$. Therefore, \eqref{eq:aux5196} yields Condition~1.  

The required map ${\bs\Psi}(t)$ can be constructed as follows. First, for $u\in W^{1,\infty}({\Omega}^s(t))$ we define an extension $\mathcal{E} u\in W^{1,\infty}(\R^d)$ as follows: Since ${\Omega}^s(t)$ is  $C^2$-smooth with curvatures bounded uniformly in time, there is a band neighborhood $O(\Gamma(t))$ of width $\rho>0$ (where $\rho$ can be chosen independent of $t$) such that the closest point projection $\bp:\,O(\Gamma(t))\to \Gamma(t)$ is well defined. We let  $\mathcal{E} u(\bx) =u(\bp(\bx))$ for $\bx\in O(\Gamma(t))\setminus{\Omega}^s(t) $.   By construction it holds that $\mathcal{E} u\in W^{1,\infty}(O(\Gamma(t))\cup{\Omega}^s(t))$
and $\|\mathcal{E} u\|_{W^{1,\infty}}\le C\|u\|_{W^{1,\infty}}$ with $C$ depending only on principal curvatures of $\Gamma(t)$. Hence $C$ is uniformly bounded in time. The extension to $\R^d$ can be defined by applying a smooth cut-off function. Then we have 
\[
\|\mathcal{E} u\|_{W^{1,\infty}(\R^d)}\le C\|u\|_{W^{1,\infty}(\Omega^s(t))}\quad \forall u\in W^{1,\infty}({\Omega}^s(t)),
\]
with a finite $C$ independent of $u$ and $t$. 

For the mapping $\widetilde{\bs\Psi}\eqd  \bX_H\circ\widehat{\bs\Phi}_H^{-1}\circ\bX^{-1}:\,\Omega^{\rm s}(t)\to \Omega^{\rm s}[{\bs X}_H]$, the condition in \eqref{ind-1-2} guarantees that 
\[
\|\widetilde{\bs\Psi}-\mathbf{Id}\|_{W^{1,\infty}(\Omega^{\rm s}(t))}\lesssim
\|\bX_H\circ\widehat{\bs\Phi}_H^{-1}-\bX\|_{W^{1,\infty}(\widehat\Omega^{\rm s})}\lesssim H^{\frac14}.
\]

Let $\eta$ be a smooth cut-off function, compactly supported in $\Omega$, such that $0\le \eta\le 1$ and $\eta\equiv1$ on ${\Omega}^s(t)$. We may assume $\|\eta\|_{W^{1,\infty}(\Omega\times[0,t^\ast])}\le C$. Then
\[
{\bs\Psi} = \mathbf{Id} + \eta\,\mathcal{E} (\widetilde{\bs\Psi}-\mathbf{Id})
\]
is the desired diffeomorphism for $H\le H_0$ when $H_0$ is sufficiently small.\smallskip

Conditions~2 and~3 follow for sufficiently small $h$ from the time uniform  bounds on 
the principal curvatures of $\Gamma(t)$ and from the observation that \eqref{ind-1} 
implies that $\Gamma[\bX_H]$ remains within a $\kappa h$-wide narrow band around 
$\Gamma(t)$.

\medskip 

To extend  \eqref{inf-sup-ep-L2} for $e_p$ without zero mean condition, consider the (time-dependent) decomposition of $\Omega$ into three non-overlapping domains $\overline{\Omega}=\overline{\Omega^{\rm f}_{\rm i}[{\bs X}_H]} \cup \overline{\Omega^{\rm f}_{\Gamma}[{\bs X}_H]} \cup \overline{\Omega^{\rm f}_{\rm e}[{\bs X}_H]}$, representing the fluid interior, fluid boundary, and fluid exterior subregions, respectively, with each of the subregions being unions of elements, satisfying the following properties:
\begin{enumerate}
\item 
$\Gamma[{\bs X}_H]\subset\Omega^{\rm f}_{\Gamma}[{\bs X}_H]$ and 
$|\Omega^{\rm f}_{\Gamma}[{\bs X}_H]|\simeq h$;

\item
$\overline{\Omega^{\rm f}_{\rm i}[{\bs X}_H]}\cap \overline{\Omega^{\rm f}_{\rm e}[{\bs X}_H]}=\emptyset$ 
and ${\rm dist}(\Omega^{\rm f}_{\rm i}[{\bs X}_H], \Omega^{\rm f}_{\rm e}[{\bs X}_H])=O(h)$. 
\end{enumerate}
Let $m_1 \eqd |\Omega^{\rm f}_{\rm i}[{\bs X}_H]|^{-1} \int_{\Omega^{\rm f}_{\rm i}[{\bs X}_H]}e_p$ and consider $e^{\rm ext}\in Q_h^{\rm f}$ such that 
\[
e^{\rm ext} \eqd \left\{\begin{aligned}
     m_1& \text{ in }\Omega^{\rm f}_{\rm i}[{\bs X}_H] , \\
     m_2& \text{ in }\Omega^{\rm f}_{\rm e}[{\bs X}_H] ,\\
     0 & \text{ at the remaining nodes in }\Omega^{\rm f}_{\Gamma}[{\bs X}_H] 
\end{aligned}\right.
\]
and $\int_{\Omega}e^{\rm ext}=0$,
with $m_2\in\mathbb{R}$ satisfying $m_2\simeq -m_1$ and therefore 
$$
\|e^{\rm ext}\|_{L^\infty(\Omega)}
\lesssim |m_1| 
\lesssim \|e_p\|_{L^2(\Omega^{\rm f}_i[{\bs X}_H])}  .
$$
This yields a decomposition $e_p = e_p^0 + e^{\rm ext}$ with an $e_p^0$ satisfying $\int_{\Omega^{\rm f}_{\rm i}[{\bs X}_H]}e_p^0=0$. 

Since $\int_{\Omega^{\rm f}_{\rm i}[{\bs X}_H]}e_p^0=0$ and $e^{\rm ext}$ is constant on $\Omega^{\rm f}_{\rm i}[{\bs X}_H]$, it follows that $\int_{\Omega^{\rm f}_{\rm i}[{\bs X}_H]} e_p^0e^{\rm ext} = 0$ and therefore 
\begin{equation}\label{aG1}
\begin{split}
\|e_p\|^2_{L^2(\Omega^{\rm f}_i[{\bs X}_H])} &= \|e_p^0\|^2_{L^2(\Omega^{\rm f}_i[{\bs X}_H])} 
+ \|e^{\rm ext}\|_{L^2(\Omega^{\rm f}_i[{\bs X}_H])}^2 .
\end{split}
\end{equation}
Since $e_p^0$ satisfies the proper zero mean condition, i.e., $\int_{\Omega^{\rm f}_{\rm i}[{\bs X}_H]}e_p^0=0$, it has been shown in \cite[Lemma 4 and Section 6]{GuzmOlsh} that the first term on the right-hand side of \eqref{aG1} can be estimated by 
\begin{equation}\label{aG1-1}
\|e_p^0\|^2_{L^2(\Omega^{\rm f}_i[{\bs X}_H])}  
\lesssim \int_{\Omega^{\rm f}_i[{\bs X}_H]} e_p^0 \,\grad\cdot {\bs v}_h^0
= \int_{\Omega^{\rm f}_i[{\bs X}_H]} e_p \,\grad\cdot {\bs v}_h^0
= \int_{\Omega^{\rm f}[{\bs X}_H]} e_p \,\grad\cdot {\bs v}_h^0 ,
\end{equation}
where $\bv_h^0\in\bV_h^{\rm f}$ is a finite element function such that $\text{supp}(\bv_h^0)\subset \overline{\Omega^{\rm f}_{\rm i}[{\bs X}_H]}$ and 
\begin{align}
\|\bv_h^0\|_{H^1(\Omega^{\rm f}_i[{\bs X}_H])}
\lesssim \|e_p^0\|_{L^2(\Omega^{\rm f}_i[{\bs X}_H])} 
&\lesssim \|e_p\|_{L^2(\Omega^{\rm f}_i[{\bs X}_H])} + \|e^{\rm ext}\|_{L^2(\Omega^{\rm f}_i[{\bs X}_H])} \notag\\
&\lesssim \|e_p\|_{L^2(\Omega^{\rm f}_i[{\bs X}_H])} . 
\end{align}
Therefore, using the result in \eqref{aG1-1}, we have 
\begin{align}\label{aG1-2}
\|e_p^0\|^2_{L^2(\Omega^{\rm f}[{\bs X}_H])}  
&\lesssim 
\|e_p^0\|^2_{L^2(\Omega^{\rm f}_i[{\bs X}_H])}  
+ g_h^{\rm p}(e_p^0,e_p^0)
\quad\mbox{(see \cite[inequality (4.2)]{GuzmOlsh})}\notag\\
&\lesssim 
\int_{\Omega^{\rm f}[{\bs X}_H]} e_p \,\grad\cdot {\bs v}_h^0
+ g_h^{\rm p}(e_p,e_p) + g_h^{\rm p}(e^{\rm ext},e^{\rm ext})\notag\\
&\lesssim 
\int_{\Omega^{\rm f}[{\bs X}_H]} e_p \,\grad\cdot {\bs v}_h^0
+ g_h^{\rm p}(e_p,e_p) + h \|e^{\rm ext}\|_{L^\infty(\Omega)}^2 \notag\\
&\lesssim 
\int_{\Omega^{\rm f}[{\bs X}_H]} e_p \,\grad\cdot {\bs v}_h^0
+ g_h^{\rm p}(e_p,e_p) + h \|e_p\|_{L^2(\Omega^{\rm f}_i[{\bs X}_H])}^2 , 
\end{align}
where the second-to-last inequality follows from the property of $e^{\rm ext}$, i.e., the jump of $e^{\rm ext}$ is nonzero only on the faces contained in $\overline{\Omega^{\rm f}_{\Gamma}[{\bs X}_H]}$. Substituting \eqref{aG1-2} into \eqref{aG1} yields
\begin{equation*}
\|e_p\|^2_{L^2(\Omega^{\rm f}_i[{\bs X}_H])} 
\lesssim 
\int_{\Omega^{\rm f}[{\bs X}_H]} e_p \,\grad\cdot {\bs v}_h^0 
+ g_h^{\rm p}(e_p,e_p) 
+ \|e^{\rm ext}\|_{L^2(\Omega^{\rm f}_i[{\bs X}_H])}^2 .
\end{equation*}
Therefore, for some constant $c_1>0$, 
\begin{align}\label{aG2}
\|e_p\|^2_{L^2(\Omega^{\rm f}[{\bs X}_H])} 
&\lesssim 
\|e_p\|^2_{L^2(\Omega^{\rm f}_i[{\bs X}_H])} + g_h^{\rm p}(e_p,e_p) \quad\mbox{(see \cite[inequality (4.2)]{GuzmOlsh})} \notag\\
&\le 
c_1 \Big( \int_{\Omega^{\rm f}[{\bs X}_H]} e_p \,\grad\cdot {\bs v}_h^0 
+ g_h^{\rm p}(e_p,e_p) 
+ \|e^{\rm ext}\|_{L^2(\Omega^{\rm f}_i[{\bs X}_H])}^2 \Big) .
\end{align}

The standard inf-sup condition on the domain $\Omega$ guarantees the existence of $\bw_h\in \bV_h^{\rm f}$, with $\bw_h=0$ on $\partial \Omega$, such that 
\begin{equation}\label{aG3}
\|\bw_h\|_{H^1(\Omega)}\lesssim \|e^{\rm ext}\|_{L^2(\Omega)} \lesssim \|e_p\|_{L^2(\Omega^{\rm f}_i[{\bs X}_H])} ,\quad 
\|e^{\rm ext}\|_{L^2(\Omega)}^2\lesssim \int_\Omega e^{\rm ext} \nabla\cdot \bw_h .
\end{equation}
Noting that $\int_\Omega\nabla\cdot\bw_h=0$ holds, we also have
\begin{align*}
\int_\Omega e^{\rm ext} \nabla\cdot \bw_h=& \int_{\Omega^{\rm f}_i[{\bs X}_H]\cup \Omega^{\rm f}_{\Gamma}[{\bs X}_H]}e^{\rm ext} \nabla\cdot \bw_h  -
m_2\int_{\Omega^{\rm f}_i[{\bs X}_H]\cup \Omega^{\rm f}_{\Gamma}[{\bs X}_H]} \nabla\cdot \bw_h\\
=& (m_1-m_2)\int_{\Omega^{\rm f}_i[{\bs X}_H]} \nabla\cdot \bw_h
+\int_{\Omega^{\rm f}_{\Gamma}[{\bs X}_H]}(e^{\rm ext}-m_2) \nabla\cdot \bw_h\\ 
=&\frac{m_1-m_2}{m_1}\int_{\Omega^{\rm f}[{\bs X}_H]} e^{\rm ext} 
\nabla\cdot \bw_h
+\int_{\Omega^{\rm f}_{\Gamma}[{\bs X}_H]}(e^{\rm ext}-m_2) \nabla\cdot \bw_h
\\ &- \frac{m_1-m_2}{m_1}\int_{\Omega^{\rm f}[{\bs X}_H]\cap \Omega^{\rm f}_\Gamma[{\bs X}_H]}e^{\rm ext} \nabla\cdot \bw_h \\
\le &\frac{m_1-m_2}{m_1}\int_{\Omega^{\rm f}[{\bs X}_H]}e^{\rm ext} 
\nabla\cdot \bw_h + c\, h^{\frac12} \|e^{\rm ext}\|_{L^2(\Omega)}\|\nabla\cdot \bw_h\|_{L^2(\Omega)} ,
\end{align*}
where the last inequality uses $\|e^{\rm ext}\|_{L^\infty(\Omega)}\simeq |m_1|\simeq \|e^{\rm ext}\|_{L^2(\Omega)}$. 
Hence, for sufficiently small $h$, we conclude that
\begin{align}\label{aG4}
\|e^{\rm ext}\|_{L^2(\Omega)}^2
&\le c_2\int_{\Omega^{\rm f}[{\bs X}_H]} e^{\rm ext} \nabla\cdot \bw_h \notag\\
&=c_2\int_{\Omega^{\rm f}[{\bs X}_H]} e_p \nabla\cdot \bw_h 
-c_2\int_{\Omega^{\rm f}[{\bs X}_H]} e_p^0 \nabla\cdot \bw_h \notag\\
&\le c_2\int_{\Omega^{\rm f}[{\bs X}_H]} e_p \nabla\cdot \bw_h 
+ c_2 \| e_p^0 \|_{L^2(\Omega^{\rm f}[{\bs X}_H])}\| e_p \|_{L^2(\Omega^{\rm f}[{\bs X}_H])}
\quad\mbox{(here \eqref{aG3} is used)}\notag\\
&\le c_2\int_{\Omega^{\rm f}[{\bs X}_H]} e_p \nabla\cdot \bw_h 
+\frac{1}{2c_1} \|e_p\|_{L^2(\Omega^{\rm f}[{\bs X}_H])}^2 \notag\\
&\quad\, + c_3\Big(\int_{\Omega^{\rm f}[{\bs X}_H]} e_p \,\grad\cdot {\bs v}_h^0
+ g_h^{\rm p}(e_p,e_p) + h\|e_p\|_{L^2(\Omega^{\rm f}[{\bs X}_H])}^2 \Big) ,
\end{align}
where the last inequality follows from Young's inequality and \eqref{aG1-2}. 
For sufficiently small $h$, substituting this result into \eqref{aG2} yields 
\begin{equation}\label{aG5}
\|e_p\|^2_{L^2(\Omega^{\rm f}[{\bs X}_H])} 
\lesssim \int_{\Omega^{\rm f}[{\bs X}_H]} e_p \,\grad\cdot {\bs v}_h
+ g_h^{\rm p}(e_p,e_p) 
\end{equation}
where ${\bs v}_h = c_1c_2 \bw_h + c_1(1+c_3){\bs v}_h^0 $ satisfies the following estimate in view of \eqref{aG3} and that ${\bs v}_h^0$ has compact support on $\overline{\Omega^{\rm f}_i[{\bs X}_H]}$: 
\begin{align}\label{vh-H1-ep-L2}
\|{\bs v}_h\|_{H^1(\Omega)}
\lesssim
\|e_p\|_{L^2(\Omega^{\rm f}[{\bs X}_H])}  .
\end{align}
By the finite element inverse trace inequality and the finite element inverse estimates,
\begin{align*}
g_h^{\rm u}({\bs v}_h,{\bs v}_h)
=
\sum_{F\in\mathcal F_h}
\sum_{j=1}^k
h^{2j-1}
\bigl\|
[\partial_{n_F}^j{\bs v}_h]
\bigr\|_{L^2(F)}^2
&\lesssim
\sum_{K\in\mathcal T_h}
\sum_{j=1}^k
h^{2j-2}
\|\nabla^j{\bs v}_h\|_{L^2(K)}^2
\\
&\lesssim
\sum_{K\in\mathcal T_h}
\|\nabla {\bs v}_h\|_{L^2(K)}^2\\
&\lesssim
\|{\bs v}_h\|_{H^1(\Omega)}^2
\lesssim \|e_p\|_{L^2(\Omega^{\rm f}[{\bs X}_H])}^2 ,
\end{align*}
where the last inequality follows from \eqref{vh-H1-ep-L2}. Combining this estimate with the restriction of ${\bs v}_h$ to the physical fluid domain, we obtain 
\[
\|{\bs v}_h\|_{1,\Omega[{\bs X}_H]}
\lesssim
\|e_p\|_{L^2(\Omega^{\rm f}[{\bs X}_H])} .
\]
This result and the estimate in \eqref{aG5} give, using the notation in \eqref{1-0-norm}, 
\begin{align*}
\|{\bs v}_h\|_{1,\Omega[{\bs X}_H]}
&\lesssim
\|e_p\|_{L^2(\Omega^{\rm f}[{\bs X}_H])},
\\
\|e_p\|_{0,\Omega[{\bs X}_H]}^2
&\lesssim
\int_{\Omega^{\rm f}[{\bs X}_H]}
e_p\nabla\cdot {\bs v}_h
+
g_h^{\rm p}(e_p,e_p).
\end{align*}
This proves the results in \eqref{inf-sup-ep-L2}.

\section{Summary of the notation}
\label{appendix:notation-summary}

This appendix summarizes the principal notation used in the paper. For a time-dependent function $f$, we often write $f(t)=f(\cdot,t)$. Unless explicitly stated otherwise, the transport of a scalar- or vector-valued function between interfaces means composition with the corresponding parametrization.

\begingroup
\small

\newlength{\NotationColOne}
\newlength{\NotationColTwo}
\newlength{\NotationColThree}

\setlength{\NotationColOne}{0.32\linewidth}
\setlength{\NotationColThree}{0.14\linewidth}
\setlength{\NotationColTwo}{0.47\linewidth}   

\setlength{\LTleft}{\fill}
\setlength{\LTright}{\fill}

\begin{longtable}{
  |>{\raggedright\arraybackslash}p{\NotationColOne}
  |>{\raggedright\arraybackslash}p{\NotationColTwo}
  |>{\raggedright\arraybackslash}p{\NotationColThree}|}
\caption{Summary of the notation used in the paper.}
\label{tab:notation-summary}\\
\hline
\textbf{Notation} & \textbf{Description} & \textbf{Definition}\\
\hline
\endfirsthead

\multicolumn{3}{c}{\tablename~\thetable\ (continued)}\\
\hline
\textbf{Notation} & \textbf{Description} & \textbf{Definition}\\
\hline
\endhead

\hline
\multicolumn{3}{r}{Continued on the next page}\\
\endfoot

\hline
\endlastfoot

\multicolumn{3}{|l|}{\textbf{Domains, interfaces, and exact configuration}}\\
\hline

$\Omega\subset\R^d$, $d=2,3$
&
Fixed ambient computational domain.
&
Section~\ref{sect:setting}
\\
\hline

$\widehat{\Omega}^{\rm s}$
&
Reference configuration of the solid.
&
Section~\ref{sect:setting}
\\
\hline

$\widehat\Gamma=\partial\widehat{\Omega}^{\rm s}$
&
Reference fluid--solid interface.
&
Section~\ref{sect:setting}
\\
\hline

$\Gamma^{\rm f}=\partial\Omega$
&
Fixed exterior boundary of the fluid computational domain.
&
Section~\ref{sect:setting}
\\
\hline

${\bs X}(\widehat{\bs x},t)$
&
Exact solid motion map from the reference solid to its current
configuration.
&
Section~\ref{sect:setting}
\\
\hline

$\dep={\bs X}-{\bs I}$
&
Exact solid displacement.
&
Section~\ref{sect:setting}
\\
\hline

$\vel=\partial_t\dep$
&
Exact solid velocity in the reference configuration.
&
Section~\ref{sect:interp}
\\
\hline

$\Omega^{\rm s}(t)=\Omega^{\rm s}[{\bs X}(t)]$
&
Current exact solid domain,
$\Omega^{\rm s}(t)={\bs X}(\widehat{\Omega}^{\rm s},t)$.
&
Section~\ref{sect:setting}
\\
\hline

$\Omega^{\rm f}(t)=\Omega^{\rm f}[{\bs X}(t)]$
&
Current exact fluid domain,
$\Omega^{\rm f}(t)=\Omega\setminus
\overline{\Omega^{\rm s}(t)}$.
&
\eqref{eq:omegaft}
\\
\hline

$\Gamma(t)=\Gamma[{\bs X}(t)]$
&
Current exact fluid--solid interface,
$\Gamma(t)={\bs X}(\widehat\Gamma,t)$.
&
Section~\ref{sect:setting}
\\
\hline

${\bs n}_{\widehat\Gamma}$
&
Unit normal on the reference interface $\widehat\Gamma$.
&
\eqref{eq:coupling-nl}
\\
\hline

${\bs n}_{\Gamma[{\bs Y}]}$
&
Unit normal on the interface determined by a configuration map
${\bs Y}$, where ${\bs Y}$ can be ${\bs X}$, ${\bs X}_H$ or ${\bs X}_H^*$. In particular, ${\bs n}_{\Gamma(t)}
={\bs n}_{\Gamma[{\bs X}]}$.
&
Section~\ref{sect:prelim}
\\
\hline

\multicolumn{3}{|l|}{\textbf{Exact solution and stress variables}}\\
\hline

$\bu$, $p$
&
Exact fluid velocity and pressure, smoothly extended to
$\Omega\times[0,T]$ when needed.
&
\eqref{eq:coupling-PDE}
\\
\hline

${\bs D}(\bu)$
&
Symmetric velocity gradient,
${\bs D}(\bu)=\frac12(\grad\bu+(\grad\bu)^{\rm T})$.
&
Section~\ref{sect:setting}
\\
\hline

$\stress(\bu,p)$
&
Fluid Cauchy stress,
$\stress(\bu,p)=2\mu^{\rm f}{\bs D}(\bu)
-p{\bs I}_{d\times d}$.
&
Section~\ref{sect:setting}
\\
\hline

${\bs\sigma}^{\rm s}(\dep)$
&
Solid stress,
${\bs\sigma}^{\rm s}(\dep)
=2\mu^{\rm s}{\bs D}(\dep)
+\lambda^{\rm s}(\grad\cdot\dep){\bs I}_{d\times d}$.
&
Section~\ref{sect:setting}
\\
\hline

$\widehat{\bs\sigma}$
&
Fluid traction pulled back to the exact reference interface:
$\widehat{\bs\sigma}
=[{\bs\sigma}(\bu,p){\bs n}_{\Gamma(t)}]\circ{\bs X}$.
&
\eqref{def-hat-sigma}
\\
\hline

$\bs\lambda$
&
Test variable for the kinematic interface constraint in the continuous
Lagrange multiplier formulation.
&
\eqref{eq:weak-fsi}
\\
\hline

\multicolumn{3}{|l|}{\textbf{Meshes and finite element spaces}}\\
\hline

${\mathcal T}_h$, $h$
&
Background fluid triangulation and its mesh size.
&
Section~\ref{section:mesh}
\\
\hline

$\widehat{\mathcal T}^{\rm s}_H$, $H$
&
Isoparametric solid triangulation and its mesh size.
&
Section~\ref{section:mesh}
\\
\hline

$\widehat{\Omega}^{\rm s}_H$
&
Discrete reference solid domain,
$\widehat{\Omega}^{\rm s}_H
=\bigcup_{\widehat K\in\widehat{\mathcal T}^{\rm s}_H}\widehat K$.
&
Section~\ref{section:mesh}
\\
\hline

$\widehat\Gamma_H$
&
Discrete reference interface,
$\widehat\Gamma_H=\partial\widehat{\Omega}^{\rm s}_H$.
&
Section~\ref{section:mesh}
\\
\hline

$\widehat{\bs\Phi}_H$
&
Geometry approximation map
$\widehat{\bs\Phi}_H:\widehat{\Omega}^{\rm s}_H
\to\widehat{\Omega}^{\rm s}$ satisfying
$\widehat{\bs\Phi}_H(\widehat\Gamma_H)=\widehat\Gamma$.
&
\eqref{GeomApprox}
\\
\hline

${\bs V}^{\rm f}_h$
&
Continuous piecewise-$\mathbb P_k$ finite element space for the fluid
velocity on the background mesh.
&
Section~\ref{section:mesh}
\\
\hline

$Q^{\rm f}_h$
&
Continuous piecewise-$\mathbb P_{k-1}$ finite element space for the
fluid pressure.
&
Section~\ref{section:mesh}
\\
\hline

$\widehat{\bs V}^{\rm s}_H$
&
Isoparametric piecewise-$\mathbb P_k$ finite element space for the
solid displacement.
&
Section~\ref{section:mesh}
\\
\hline

$\widehat{\bs\Lambda}_H$
&
Reference-interface multiplier space,
$\widehat{\bs\Lambda}_H
=\{{\bs w}_H|_{\widehat\Gamma_H}:
{\bs w}_H\in\widehat{\bs V}^{\rm s}_H\}$.
&
Section~\ref{section:mesh}
\\
\hline

${\bs\Lambda}_H(\Gamma[{\bs X}_H])$
&
Multiplier space transported to the numerical interface:
$\{{\bs\lambda}_H\circ{\bs X}_H^{-1}:
{\bs\lambda}_H\in\widehat{\bs\Lambda}_H\}$.
&
Section~\ref{sect:error}
\\
\hline

\multicolumn{3}{|l|}{\textbf{Numerical solution and numerical geometry}}\\
\hline

$\bu_h$, $p_h$
&
Semidiscrete fluid velocity and pressure.
&
\eqref{eq:discrete-semi-fsi}
\\
\hline

$\dep_H$
&
Semidiscrete solid displacement on
$\widehat{\Omega}^{\rm s}_H$.
&
\eqref{eq:discrete-semi-fsi}
\\
\hline

$\vel_H=\partial_t\dep_H$
&
Semidiscrete solid velocity.
&
Section~\ref{sect:interp}
\\
\hline

$\widehat{\bs\sigma}_H$
&
Semidiscrete traction multiplier on $\widehat\Gamma_H$.
&
\eqref{eq:discrete-semi-fsi}
\\
\hline

${\bs X}_H={\bs I}+\dep_H$
&
Numerical solid motion map.
&
Section~\ref{section:mesh}
\\
\hline

$\Omega^{\rm s}[{\bs X}_H]$
&
Numerical current solid domain,
${\bs X}_H(\widehat{\Omega}^{\rm s}_H)$.
&
Section~\ref{section:mesh}
\\
\hline

$\Omega^{\rm f}[{\bs X}_H]$
&
Numerical fluid domain,
$\Omega\setminus\overline{\Omega^{\rm s}[{\bs X}_H]}$.
&
Section~\ref{section:mesh}
\\
\hline

$\Gamma[{\bs X}_H]$
&
Numerical fluid--solid interface,
${\bs X}_H(\widehat\Gamma_H)$.
&
Section~\ref{section:mesh}
\\
\hline

${\bf P}_{\Gamma[{\bs X}_H]}$
&
$L^2$-orthogonal projection onto
${\bs\Lambda}_H(\Gamma[{\bs X}_H])$.
&
Section~\ref{sect:error}
\\
\hline

\multicolumn{3}{|l|}{\textbf{Interpolated exact solution}}\\
\hline

$\dep_H^*$
&
Lagrange interpolant of the exact displacement:
$\dep_H^*={\bs I}^{\rm s}_H
(\dep\circ\widehat{\bs\Phi}_H)$.
&
\eqref{def-sigma-s}
\\
\hline

$\vel_H^*=\partial_t\dep_H^*$
&
Lagrange interpolant of the exact solid velocity:
$\vel_H^*={\bs I}^{\rm s}_H
(\partial_t\dep\circ\widehat{\bs\Phi}_H)$.
&
Section~\ref{sect:interp}
\\
\hline

$\widehat{\bs\sigma}_H^*$
&
Lagrange interpolant of the pulled-back exact traction:
$\widehat{\bs\sigma}_H^*
={\bs I}^{\rm s}_H
(\widehat{\bs\sigma}\circ\widehat{\bs\Phi}_H)$.
&
\eqref{def-sigma-s}
\\
\hline

${\bs X}_H^*={\bs I}+\dep_H^*$
&
Interpolated solid configuration.
&
\eqref{interpl-error-X}
\\
\hline

$\bu_h^*$, $p_h^*$
&
Lagrange interpolants
$\bu_h^*={\bs I}^{\rm f}_h\bu$ and
$p_h^*=I^{\rm f}_hp$.
&
\eqref{interpl-u}
\\
\hline

$\Omega^{\rm s}[{\bs X}_H^*]$
&
Solid domain determined by the interpolated configuration.
&
\eqref{def-Omega*}
\\
\hline

$\Omega^{\rm f}[{\bs X}_H^*]$
&
Fluid domain determined by the interpolated configuration.
&
\eqref{def-Omega*}
\\
\hline

$\Gamma[{\bs X}_H^*]$
&
Interface determined by the interpolated configuration.
&
\eqref{def-Omega*}
\\
\hline

${\bs\Phi}_H(t)$
&
Map from the interpolated current solid domain to the exact current
solid domain:
${\bs X}(t)\circ\widehat{\bs\Phi}_H
\circ({\bs X}_H^*(t))^{-1}$.
&
\eqref{eq:map-phiHt}
\\
\hline

$\Omega^{\rm f}_{\rm ext}[{\bs X}_H]$
&
Extended fluid region consisting of elements within distance
$2\kappa h$ of $\Omega^{\rm f}[{\bs X}_H]$.
&
\eqref{Extended-region}
\\
\hline

\multicolumn{3}{|l|}{\textbf{Basic finite element errors}}\\
\hline

${\bs e}_{\bs u}=\bu_h-\bu_h^*$
&
Fluid velocity error relative to the Lagrange interpolant, defined on
the background domain $\Omega$.
&
\eqref{def-eu-ed}
\\
\hline

$e_p=p_h-p_h^*$
&
Fluid pressure error relative to the Lagrange interpolant.
&
\eqref{def-eu-ed}
\\
\hline

${\bs e}_{\bs d}=\dep_H-\dep_H^*$
&
Solid displacement error relative to the Lagrange interpolant, defined
on $\widehat{\Omega}^{\rm s}_H$.
&
\eqref{def-eu-ed}
\\
\hline

$\partial_t{\bs e}_{\bs d}
=\vel_H-\vel_H^*$
&
Solid velocity error on $\widehat{\Omega}^{\rm s}_H$.
&
\eqref{def-eu-ed}
\\
\hline

${\bs e}_{\bs\sigma}
=\widehat{\bs\sigma}_H-\widehat{\bs\sigma}_H^*$
&
Traction multiplier error on the discrete reference interface
$\widehat\Gamma_H$.
&
\eqref{def-eu-ed}
\\
\hline

${\bs e}_{\bs X}^*
={\bs X}_H^*\circ\widehat{\bs\Phi}_H^{-1}-{\bs X}$
&
Geometric interpolation error on the exact reference configuration.
&
\eqref{def-eXs}
\\
\hline

\multicolumn{3}{|l|}{\textbf{Consistency errors}}\\
\hline

$R_{h,H}^*({\bs v}_h,q_h,{\bs w}_H,{\bs\lambda}_H)$
&
Consistency remainder obtained by inserting the interpolated exact
solution into the semidiscrete formulation.
&
\eqref{def-RhH}
\\
\hline

$({\bs u}_h^\#,p_h^\#)$
&
Cut-domain Ritz projection of the exact fluid velocity and pressure.
&
\eqref{def-Ritz}
\\
\hline

${\bs u}_h^*-{\bs u}_h^\#$,
$p_h^*-p_h^\#$
&
Differences between the Lagrange interpolants and the Ritz projection.
&
Section~\ref{section:consistency-2}
\\
\hline

\multicolumn{3}{|l|}{\textbf{Intermediate configurations and interfaces}}\\
\hline

${\bs X}_H^\theta
=(1-\theta){\bs X}_H^*+\theta{\bs X}_H$
&
Intermediate configuration between the interpolated and numerical
configurations; ${\bs X}_H^0={\bs X}_H^*$ and
${\bs X}_H^1={\bs X}_H$.
&
Section~\ref{section:norm-equiv-2}
\\
\hline

$\Gamma[{\bs X}_H^\theta]$
&
Intermediate interface between $\Gamma[{\bs X}_H^*]$ and
$\Gamma[{\bs X}_H]$.
&
Section~\ref{section:norm-equiv-2}
\\
\hline

${\bs X}_H^{*,\theta}
=(1-\theta){\bs X}
+\theta{\bs X}_H^*\circ\widehat{\bs\Phi}_H^{-1}$
&
Intermediate configuration between the exact and interpolated
configurations.
&
\eqref{def-XH-star-theta}
\\
\hline

$\Gamma[{\bs X}_H^{*,\theta}]$
&
Intermediate interface between $\Gamma(t)$ and
$\Gamma[{\bs X}_H^*]$.
&
\eqref{def-XH-star-theta}
\\
\hline

${\bs X}_H^{\#,\theta}
=(1-\theta){\bs X}
+\theta{\bs X}_H\circ\widehat{\bs\Phi}_H^{-1}$
&
Intermediate configuration between the exact and numerical
configurations.
&
Section~\ref{section:norm-equiv-2}
\\
\hline

$\Gamma[{\bs X}_H^{\#,\theta}]$
&
Intermediate interface between $\Gamma(t)$ and
$\Gamma[{\bs X}_H]$.
&
Section~\ref{section:norm-equiv-2}
\\
\hline

${\bs e}_{\bs d}^\theta
={\bs e}_{\bs d}\circ({\bs X}_H^\theta)^{-1}$
&
Velocity of $\Gamma[{\bs X}_H^\theta]$ with respect to the parameter
$\theta$; equivalently, the displacement error transported from
$\widehat\Gamma_H$ to $\Gamma[{\bs X}_H^\theta]$.
&
\eqref{dt-int-Omega}
\\
\hline

${\bs e}_{\bs X}^{*,\theta}
={\bs e}_{\bs X}^*\circ({\bs X}_H^{*,\theta})^{-1}$
&
Velocity of the exact-to-interpolated interface family with respect to
$\theta$.
&
\eqref{def-eXs}
\\
\hline

${\bs e}_{\bs X}^{\#,\theta}$
&
Velocity of the exact-to-numerical interface family:
$({\bs X}_H\circ\widehat{\bs\Phi}_H^{-1}-{\bs X})
\circ({\bs X}_H^{\#,\theta})^{-1}$.
&
\eqref{def-ej-theta}
\\
\hline

$\vel_H^\theta$
&
Physical-time velocity of $\Gamma[{\bs X}_H^\theta]$:
$[(1-\theta)\vel_H^*+\theta\vel_H]
\circ({\bs X}_H^\theta)^{-1}$.
&
Section~\ref{sect:prelim}
\\
\hline

\multicolumn{3}{|l|}{\textbf{Transported functions on $\Gamma[{\bs X}_H^\theta]$}}\\
\hline

${\bs\lambda}_H^\theta
={\bs\lambda}_H\circ({\bs X}_H^\theta)^{-1}$
&
Reference multiplier transported to
$\Gamma[{\bs X}_H^\theta]$.
&
\eqref{notation-lambda}
\\
\hline

${\bs e}_{\bs d}^\theta
={\bs e}_{\bs d}\circ({\bs X}_H^\theta)^{-1}$
&
Solid displacement error transported to
$\Gamma[{\bs X}_H^\theta]$.
&
\eqref{notation-lambda}
\\
\hline

${\bs\dot{\bs e}}_{\bs d}^\theta
=\partial_t{\bs e}_{\bs d}
\circ({\bs X}_H^\theta)^{-1}$
&
Solid velocity error transported to
$\Gamma[{\bs X}_H^\theta]$.
&
\eqref{notation-lambda}
\\
\hline

${\bs d}_H^{*|\theta}
={\bs d}_H^*\circ({\bs X}_H^\theta)^{-1}$
&
Interpolated solid displacement transported using
${\bs X}_H^\theta$.
&
\eqref{notation-lambda}
\\
\hline

${\bs\dot{\bs d}}_H^{*|\theta}
=\partial_t{\bs d}_H^*
\circ({\bs X}_H^\theta)^{-1}$
&
Interpolated solid velocity transported using
${\bs X}_H^\theta$.
&
\eqref{notation-lambda}
\\
\hline

$\vel_H^{\,|\theta}
=\partial_t\dep_H\circ({\bs X}_H^\theta)^{-1}$
&
Numerical solid velocity represented on
$\Gamma[{\bs X}_H^\theta]$.
&
Section~\ref{section:dteu}
\\
\hline

${\widehat{\bs\sigma}}_H^{*|\theta}
=\widehat{\bs\sigma}_H^*
\circ({\bs X}_H^\theta)^{-1}$
&
Interpolated traction transported to
$\Gamma[{\bs X}_H^\theta]$.
&
Section~\ref{sect:error}
\\
\hline

${\bs e}_{\bs\sigma}^{\theta}
={\bs e}_{\bs\sigma}
\circ({\bs X}_H^\theta)^{-1}$
&
Traction error transported to
$\Gamma[{\bs X}_H^\theta]$.
&
\eqref{error-eq-n-abcd}
\\
\hline


\multicolumn{3}{|l|}{\textbf{Transported functions on
$\Gamma[{\bs X}_H^{*,\theta}]$}}\\
\hline

${\bs\lambda}_H^{*,\theta}
={\bs\lambda}_H\circ\widehat{\bs\Phi}_H^{-1}
\circ({\bs X}_H^{*,\theta})^{-1}$
&
Discrete reference multiplier transported to the
exact-to-interpolated interface.
&
\eqref{notation-lambda}
\\
\hline

${\bs e}_{\bs d}^{*,\theta}
={\bs e}_{\bs d}\circ\widehat{\bs\Phi}_H^{-1}
\circ({\bs X}_H^{*,\theta})^{-1}$
&
Solid displacement error transported to
$\Gamma[{\bs X}_H^{*,\theta}]$.
&
\eqref{notation-sigma}
\\
\hline

${\bs\dot{\bs e}}_{\bs d}^{*,\theta}
=\partial_t{\bs e}_{\bs d}
\circ\widehat{\bs\Phi}_H^{-1}
\circ({\bs X}_H^{*,\theta})^{-1}$
&
Solid velocity error transported to
$\Gamma[{\bs X}_H^{*,\theta}]$.
&
Section~\ref{sect:error}
\\
\hline

${\bs d}^{*,\theta}
=\dep\circ({\bs X}_H^{*,\theta})^{-1}$
&
Exact solid displacement represented on
$\Gamma[{\bs X}_H^{*,\theta}]$.
&
\eqref{notation-lambda}
\\
\hline

${\bs\dot{\bs d}}^{*,\theta}
=\partial_t\dep\circ({\bs X}_H^{*,\theta})^{-1}$
&
Exact solid velocity represented on
$\Gamma[{\bs X}_H^{*,\theta}]$.
&
\eqref{notation-lambda}
\\
\hline

${\bs\dot{\bs d}}_H^{*,\theta}
=\partial_t\dep_H^*\circ\widehat{\bs\Phi}_H^{-1}
\circ({\bs X}_H^{*,\theta})^{-1}$
&
Interpolated solid velocity represented on
$\Gamma[{\bs X}_H^{*,\theta}]$.
&
\eqref{notation-lambda}
\\
\hline

${\widehat{\bs\sigma}}_H^{*,\theta}
=\widehat{\bs\sigma}_H^*
\circ\widehat{\bs\Phi}_H^{-1}
\circ({\bs X}_H^{*,\theta})^{-1}$
&
Interpolated traction transported to
$\Gamma[{\bs X}_H^{*,\theta}]$.
&
\eqref{notation-sigma}
\\
\hline

${\bs w}_H^{*,\theta}
={\bs w}_H\circ\widehat{\bs\Phi}_H^{-1}
\circ({\bs X}_H^{*,\theta})^{-1}$
&
Solid finite element function transported to
$\Gamma[{\bs X}_H^{*,\theta}]$.
&
Section~\ref{section:consistency}
\\
\hline

\multicolumn{3}{|l|}{\textbf{Transported functions on
$\Gamma[{\bs X}_H^{\#,\theta}]$}}\\
\hline

${\bs\lambda}_H^{\#,\theta}
={\bs\lambda}_H\circ\widehat{\bs\Phi}_H^{-1}
\circ({\bs X}_H^{\#,\theta})^{-1}$
&
Discrete reference multiplier transported to the
exact-to-numerical interface.
&
Section~\ref{section:PHeu}
\\
\hline

${\bs e}_{\bs d}^{\#,\theta}
={\bs e}_{\bs d}\circ\widehat{\bs\Phi}_H^{-1}
\circ({\bs X}_H^{\#,\theta})^{-1}$
&
Solid displacement error transported to
$\Gamma[{\bs X}_H^{\#,\theta}]$.
&
\eqref{notation-sigma}
\\
\hline

${\bs\dot{\bs e}}_{\bs d}^{\#,\theta}
=\partial_t{\bs e}_{\bs d}
\circ\widehat{\bs\Phi}_H^{-1}
\circ({\bs X}_H^{\#,\theta})^{-1}$
&
Solid velocity error transported to
$\Gamma[{\bs X}_H^{\#,\theta}]$.
&
\eqref{eu-dot-ed-Lp-HH}
\\
\hline

${\bs e}_{\bs\sigma}^{\#,\theta}
={\bs e}_{\bs\sigma}\circ\widehat{\bs\Phi}_H^{-1}
\circ({\bs X}_H^{\#,\theta})^{-1}$
&
Traction error transported to
$\Gamma[{\bs X}_H^{\#,\theta}]$.
&
\eqref{notation-sigma}
\\
\hline

${\bs e}_{\bs\sigma}^{\#,1}
={\bs e}_{\bs\sigma}\circ{\bs X}_H^{-1}$
&
Traction error represented on $\Gamma[{\bs X}_H]$.
&
Section~\ref{section:esigma}
\\
\hline

${\bs w}_H^{\#,\theta}
={\bs w}_H\circ\widehat{\bs\Phi}_H^{-1}
\circ({\bs X}_H^{\#,\theta})^{-1}$
&
Solid finite element function transported to
$\Gamma[{\bs X}_H^{\#,\theta}]$.
&
\eqref{notation-sigma}
\\
\hline

${\widehat{\bs\sigma}}_H^{\#,\theta}
=\widehat{\bs\sigma}_H^*
\circ\widehat{\bs\Phi}_H^{-1}
\circ({\bs X}_H^{\#,\theta})^{-1}$
&
Interpolated traction transported to
$\Gamma[{\bs X}_H^{\#,\theta}]$.
&
\eqref{notation-sigma}
\\
\hline

$({\bf P}_{\Gamma[{\bs X}_H]}
{\bs e}_{\bs u})^{\#,\theta}$
&
Projected fluid velocity error transported from
$\Gamma[{\bs X}_H]$ to $\Gamma[{\bs X}_H^{\#,\theta}]$.
&
\eqref{eu-dot-ed-Lp-HH}
\\
\hline

$({\bf P}_{\Gamma[{\bs X}_H]}
{\bs e}_{\bs u})^{\#,0}$
&
Projected fluid velocity error transported from the numerical
interface to the exact interface $\Gamma(t)$.
&
Section~\ref{section:PHeu}
\\
\hline

\multicolumn{3}{|l|}{\textbf{Frequently used endpoint identities}}\\
\hline

${\bs X}_H^0={\bs X}_H^*$,
${\bs X}_H^1={\bs X}_H$
&
Endpoints of the interpolated-to-numerical interface family.
&
Section~\ref{section:norm-equiv-2}
\\
\hline

${\bs X}_H^{*,0}={\bs X}$,
${\bs X}_H^{*,1}
={\bs X}_H^*\circ\widehat{\bs\Phi}_H^{-1}$
&
Endpoints of the exact-to-interpolated interface family.
&
\eqref{def-XH-star-theta}
\\
\hline

${\bs X}_H^{\#,0}={\bs X}$,
${\bs X}_H^{\#,1}
={\bs X}_H\circ\widehat{\bs\Phi}_H^{-1}$
&
Endpoints of the exact-to-numerical interface family.
&
Section~\ref{section:norm-equiv-2}
\\
\hline

${\bs\lambda}_H^0={\bs\lambda}_H^{*,1}$
&
The two representations of the multiplier coincide on
$\Gamma[{\bs X}_H^*]$.
&
\eqref{notation-lambda}
\\
\hline

$\vel_H^{*|0}=\vel_H^{*,1}$
&
The two representations of the interpolated solid velocity coincide on
$\Gamma[{\bs X}_H^*]$.
&
\eqref{notation-lambda}
\\
\hline

${\bs e}_{\bs d}^{1}
={\bs e}_{\bs d}\circ{\bs X}_H^{-1}$
&
Displacement error represented on the numerical interface.
&
\eqref{notation-lambda}
\\
\hline

${\bs\dot{\bs e}}_{\bs d}^{1}
=\partial_t{\bs e}_{\bs d}\circ{\bs X}_H^{-1}$
&
Solid velocity error represented on the numerical interface.
&
\eqref{notation-lambda}
\\
\hline

$({\bf P}_{\Gamma[{\bs X}_H]}
{\bs e}_{\bs u})^{*,0}
=
({\bf P}_{\Gamma[{\bs X}_H]}
{\bs e}_{\bs u})^{\#,0}$
&
Coinciding representations of the projected velocity error on the
exact interface.
&
\eqref{relation-0-1}
\\
\hline

\multicolumn{3}{|l|}{\textbf{Projected kinematic mismatch}}\\
\hline

${\bf P}_{\Gamma[{\bs X}_H]}{\bs e}_{\bs u}
-\partial_t{\bs e}_{\bs d}\circ{\bs X}_H^{-1}$
&
Projected mismatch between the fluid velocity error and the solid
velocity error on the numerical interface.
&
Section~\ref{sect:error}
\\
\hline

${\bf P}_{\Gamma[{\bs X}_H]}{\bs e}_{\bs u}
-{\bs\dot{\bs e}}_{\bs d}^{1}$
&
Abbreviated form of the projected kinematic mismatch on
$\Gamma[{\bs X}_H]$.
&
\eqref{eu-dot-ed-Lp-H}
\\
\hline

$({\bf P}_{\Gamma[{\bs X}_H]}
{\bs e}_{\bs u})^{\#,\theta}
-{\bs\dot{\bs e}}_{\bs d}^{\#,\theta}$
&
Projected kinematic mismatch transported to
$\Gamma[{\bs X}_H^{\#,\theta}]$.
&
\eqref{eu-dot-ed-Lp-HH}
\\
\hline

\multicolumn{3}{|l|}{\textbf{Transport maps used in interface estimates}}\\
\hline

${\bs Y}_H^{\#,\theta}
={\bs X}_H^{\#,\theta}
\circ\widehat{\bs\Phi}_H\circ{\bs X}_H^{-1}$
&
Map from the numerical interface $\Gamma[{\bs X}_H]$ to the
intermediate interface $\Gamma[{\bs X}_H^{\#,\theta}]$.
&
Section~\ref{sect:error}
\\
\hline

${\bs Z}^{\theta}
={\bs X}\circ({\bs X}_H^{\#,\theta})^{-1}$
&
Map from $\Gamma[{\bs X}_H^{\#,\theta}]$ to the exact interface
$\Gamma(t)$.
&
Section~\ref{sect:error}
\\
\hline

${\bs v}_h^{\#,\theta}
={\bs v}_h\circ{\bs X}_H\circ\widehat{\bs\Phi}_H^{-1}
\circ({\bs X}_H^{\#,\theta})^{-1}$
&
Transport of a finite element function from the numerical interface to
$\Gamma[{\bs X}_H^{\#,\theta}]$.
&
Section~\ref{section:ep}
\\
\hline






\multicolumn{3}{|l|}{\textbf{Ghost penalties and mesh-dependent norms}}\\
\hline



$g_h^{\rm u}(\cdot,\cdot)$
&
Velocity ghost-penalty stabilization and extension form.
&
\eqref{eqn:GPForms}
\\
\hline

$g_h^{\rm p}(\cdot,\cdot)$
&
Pressure ghost-penalty stabilization and extension form.
&
\eqref{eqn:GPForms}
\\
\hline

$\epsilon_h\in(0,h]$
&
Stabilization parameter multiplying the velocity time-derivative ghost
penalty.
&
\eqref{eq:discrete-semi-fsi}
\\
\hline

$\|{\bs v}_h\|_{1,\Omega[{\bs X}_H]}$
&
Mesh-dependent velocity $H^1$ norm:
$
\|{\bs v}_h\|_{H^1(\Omega^{\rm f}[{\bs X}_H])}^2
+g_h^{\rm u}({\bs v}_h,{\bs v}_h).
$
&
\eqref{1-0-norm}
\\
\hline

$\|{\bs v}_h\|_{0,\Omega[{\bs X}_H]}$
&
Mesh-dependent velocity $L^2$ norm:
$
\|{\bs v}_h\|_{L^2(\Omega^{\rm f}[{\bs X}_H])}^2
+h^2g_h^{\rm u}({\bs v}_h,{\bs v}_h).
$
&
\eqref{0-0-norm-a2}
\\
\hline

$\|q_h\|_{0,\Omega[{\bs X}_H]}$
&
Mesh-dependent pressure norm:
$
\|q_h\|_{L^2(\Omega^{\rm f}[{\bs X}_H])}^2
+g_h^{\rm p}(q_h,q_h).
$
&
\eqref{1-0-norm}
\\
\hline

\hline
\end{longtable}
\endgroup


\bibliographystyle{plain}
\bibliography{biblio}
 
\end{document}